\RequirePackage{fix-cm}

%
%
%
%
%
%
%
\documentclass[smallextended]{svjour3}  
\usepackage{amsfonts}
\usepackage{graphicx}
\usepackage{amssymb}
\usepackage{amsmath}
\usepackage{amscd}
\usepackage[all]{xy}
\usepackage{dsfont}
\usepackage{hyperref}

\newtheorem{thm}[subsubsection]{Theorem}
\newtheorem{cor}[subsubsection]{Corollary}
\newtheorem{lem}[subsubsection]{Lemma}

\newtheorem{prop}[subsubsection]{Proposition}
\newtheorem{conj}[subsection]{Conjecture}
\newtheorem{defn}[subsubsection]{Definition}

\numberwithin{equation}{subsection}

\newcommand{\set}[1]{\left\{#1\right\}}

\newcommand{\V}{\mathbb{V}_\lambda^\vee(\mathbb{C})}
\makeatletter

\newcommand{\Rmnum}[1]{\expandafter\@slowromancap\romannumeral #1@}
\makeatother

\begin{document}

\title{Twisted eigenvarieties and self-dual representations}
\subtitle{}


\author{Zhengyu Xiang}


\institute{Zhengyu Xiang \at 
              SCMS and Fudan University, \\
              Tel.: +86-21-55665634\\
              \email{fauthor@example.com}          }

\date{Received: date / Accepted: date}

\maketitle

\begin{abstract}
For a reductive group $G$ and a finite order Cartan-type automorphism $\iota$ of $G$, we construct an eigenvariety parameterizing $\iota$-invariant cuspidal Hecke eigensystems of $G$. In particular, for $G=Gl_n$, we prove, any self-dual cuspidal Hecke eigensystem can be deformed in a p-adic family of self-dual cuspidal Hecke eigensystems containing a Zariski dense subset of classical points. 
\keywords{eigenvarities \and $p$-adic automorphic forms \and self-dual representations}
\end{abstract}

\section{INTRODUCTION}

Consider $G$ a reductive group over $\mathbb{Q}$. Let $S_G(K_f)$ be the locally symmetric space associated to $G$ and a neat open subgroup $K_f$ of the finite adelic points of $G$. Let $T$ be a maximal torus of $G$ and $\lambda$ a regular dominant algebraic weight of $G$ with respect to $T$. Consider $\mathbb{V}_\lambda$, the finite dimensional irreducible algebraic representation of $G$ with highest weight $\lambda$, and its dual $\mathbb{V}_\lambda^\vee$. There is a standard action of the Hecke algebra $\mathcal{H}_G$ on the cohomology spaces $H^*(S_G(K_f), \mathbb{V}^\vee_\lambda(\mathbb{C}))$. Those automorphic representations which can be realized in $H^*(S_G(K_f),\mathbb{V}^\vee_\lambda(\mathbb{C}))$ are said of level $K_f$ and cohomological weight $\lambda$.

\vspace{1pc}
Let's fix a prime $p$ and an embedding $i_p: \overline{\mathbb{Q}}_p\hookrightarrow\mathbb{C}$. Ones are interested in the behavior of automorphic representations when their weights varying $p$-adically. This leads to the study of $p$-adic automorphic representations. For simplicity, assume $G$ splits over $\mathbb{Q}_p$. Let $B$ be a Borel subgroup of $G_{/\mathbb{Q}_p}$ containing $T$, consider the situation $K_f=K^pI_m$. Here $K^p\subset G(\mathbb{A}_f^p)$ and $I_m$ is an Iwahori subgroup of $G(\mathbb{Q}_p)$ in good position with respect to the pair $(B,T)$. Let $\mathcal{H}_p$ be the $p$-adic Hecke algebras of $G$ under this setting (see \S2.1). If $\pi$ is a finite slope automorphic representation of $G$ of algebraic cohomological weight $\lambda^{alg}$, its $p$-stabilizations are irreducible representations of $\mathcal{H}_p$ that can be realized in the cohomology space $H^*(S_G(K_f), \mathcal{V}^\vee_{\lambda}(\overline{\mathbb{Q}}_p))$ (refer \cite[\S4.1.9]{Urban}). Here $\lambda=\lambda^{alg}\epsilon$ is a $p$-adic arithmetic weight obtained by twisting $\lambda^{alg}$ with some finite order character $\epsilon$ of $T(\mathbb{Z}_p)$, and $\mathcal{V}_{\lambda}$ is the locally algebraic induced representation of a $p$-adic cell of $G(\mathbb{Q}_p)$ from $\lambda$ (see \S2.3). Those representations of $\mathcal{H}_p$ from $p$-stabilization are most important examples of $p$-adic automorphic representations, and are called classical. If we further remove the ``bad" places form $\mathcal{H}_p$, we obtain a commutative algebra $R_{\mathcal{S},p}$, which can be identified in the center of $\mathcal{H}_p$. Here $\mathcal{S}$ is the finite subset of ``bad" places defined in \S2.1. The central character of a classical $p$-adic automorphic representation defines a character of $R_{\mathcal{S},p}$ appearing in $H^*(S_G(K_f), \mathcal{V}^\vee_\lambda(\overline{\mathbb{Q}}_p))$ for some arithmetic $p$-adic weight $\lambda$. It is called a $p$-adic arithmetic Hecke eigensystem of weight $\lambda$. 

\vspace{1pc}
So ones are interested in interpolating the arithmetic Hecke eigensystems for weight $\lambda$ over the $p$-adic weight space $\mathfrak{X}$. To do this, Ash and Stevens developped the notion of ``overconvergent" cohomology, which played the role of ``overconvergent modular forms" in the classical theory of $p$-adic modular forms (\cite{Ash-Stevens}). Concretely speaking, for a $p$-adic weight $\lambda\in\mathfrak{X}(\overline{\mathbb{Q}}_p)$, one can construct a distribution space $\mathcal{D}_\lambda$, on which the $U_p$ operators acts as compact operaters (see \S2.3). This gives an action of $\mathcal{H}_p$ on the ``overconvergent " cohomology spaces $H^*(S_G(K_f), \mathcal{D}_\lambda)$. We call an irreducible representation of $\mathcal{H}_p$ (\emph{resp.} a character of $R_{\mathcal{S},p}$) appearing in $H^*(S_G(K_f), \mathcal{D}_\lambda(\overline{\mathbb{Q}}_p))$ a $p$-adic overconvergent automorphic representation (\emph{resp.} Hecke eigensystem). It is proved in \cite{Ash-Stevens}, \cite[Theorem 5.4.4]{Urban} and \cite[Corollary 8.6]{Xiang} that every finite slope arithmetic cuspidal Hecke eigensystem $\theta$ can be deformed into a $p$-adic analytic family of finite slope overconvergent cuspidal Hecke eigensystems. This result is a consequence of the existence of a geometric object named ``eigenvariety" after Coleman-Mazur's work of ``eigencurves". An eigenvariety for group $G$ is a rigid analytic space whose points parametrize overconvergent Hecke eigensystems. A large part of the works \cite{Ash-Stevens}, \cite{Urban} and \cite{Xiang} mentioned above are devoted to the construction of eigenvarieties for different groups.

\vspace{1pc}
There are two motivations for this paper. The first one is about the arithmeticity of a family of overconvergent Hecke eigensystems, that is, if a $p$-adic family obtained above contains enough arithmetic Hecke eigensystems. In the language of eigenvariety, one asks if $\theta$ is lying in an irreducible component of an eigenvareity containing a Zariski dense subset of arithmetic points (such a component is called arithmetic. If $\theta$ is \emph{not} in any arithmetic component, it is called arithmetically rigid). In \cite{APG}, Ash, Pollack and Stevens show that the answer is not always positive, in particular, for $G=Gl_3$, they make the next conjecture:

\begin{conj}[Ash-Pollack-Stevens]\label{APS}
Let $\theta$ be a finite slope cuspidal Hecke eigensystem of $Gl_3.$ If $\theta$ is not arithmetically rigid, then $\theta$ is essentially self-dual.
\end{conj}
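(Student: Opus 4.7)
The plan is to establish the conjecture by attaching $p$-adic Galois representations to the purported arithmetic deformation family of $\theta$ and then showing that the existence of a Zariski dense set of classical cuspidal points in such a family forces a self-duality constraint on $\theta$. The strategy mirrors the author's main construction but runs in the reverse direction: instead of producing self-dual families from self-dual points, one extracts self-duality from the mere existence of an arithmetic family.

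Suppose $\theta$ is not arithmetically rigid, so that it lies on an irreducible component $\mathcal{C}$ of the $Gl_3$-eigenvariety $\mathcal{E}$ containing a Zariski dense set of classical cuspidal points. By Harris-Lan-Taylor-Thorne and Scholze, each classical cuspidal point $x \in \mathcal{C}$ carries a continuous $3$-dimensional $p$-adic Galois representation $\rho_x : G_{\mathbb{Q}} \to Gl_3(\overline{\mathbb{Q}}_p)$ whose Frobenius traces recover the Hecke eigenvalues at the unramified primes. Interpolating these over $\mathcal{C}$ yields a pseudo-representation $T_{\mathcal{C}}$ on the nilreduction of $\mathcal{C}$, specializing at each arithmetic point to the trace of $\rho_x$. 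The density of classical points then ensures that any identity of traces holding at every classical point holds identically on $\mathcal{C}$.

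Next, I would use the Chevalley involution $g \mapsto (g^t)^{-1}$ of $Gl_3$, which is a Cartan-type involution of order two. By the construction of the paper, this involution induces an involution $\iota$ on $\mathcal{E}$, and its fixed locus $\mathcal{E}^{\iota}$ is a Zariski closed subvariety whose classical points correspond precisely to essentially self-dual cuspidal Hecke eigensystems. The paper's main theorem tells us that $\mathcal{E}^{\iota}$ itself carries Zariski dense arithmetic points and is an eigenvariety in its own right. The crux of the proof would be to establish the containment $\mathcal{C} \subseteq \mathcal{E}^{\iota}$; passing to $\theta$ then delivers the conjectured self-duality. Equivalently, one wants $T_{\mathcal{C}}$ to equal its pullback $\iota^{*} T_{\mathcal{C}}$ up to a family of Hecke characters, which by density of classical points reduces to the assertion that every classical point of $\mathcal{C}$ is essentially self-dual.

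The main obstacle lies precisely here. Proving $\mathcal{C} \subseteq \mathcal{E}^{\iota}$ requires a rigidity statement of the form: \emph{the only components of $\mathcal{E}$ meeting the classical cuspidal locus in a Zariski dense set are those contained in $\mathcal{E}^{\iota}$}. Heuristically this should follow from two independent inputs that one would need to combine: first, a dimension count comparing the Krull dimension of an arithmetic component (the rank of the weight space, namely $3$) with the dimension of the deformation space of non-self-dual classical points (which for $Gl_3$ cuspidal forms should be strictly smaller, essentially because the Bloch-Kato Selmer group $H^1_f(G_{\mathbb{Q}},\mathrm{ad}\rho)$ vanishes conjecturally for generic $\rho$); and second, a $p$-adic Hodge-theoretic constraint on the trianguline parameters along $\mathcal{C}$ forcing the triangulation to be symmetric around the middle Hodge-Tate-Sen weight. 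Neither of these ingredients is available in current technology in the precise form needed -- the Selmer dimension bound is itself a consequence of Bloch-Kato, and extracting symmetry of the triangulation from the variation of Hodge-Tate-Sen weights across $\mathcal{C}$ requires local-global compatibility refinements that are not yet proven -- which explains why the statement remains a conjecture.
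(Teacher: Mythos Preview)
The statement you are trying to prove is labeled in the paper as a \emph{Conjecture} (due to Ash--Pollack--Stevens), and the paper does not claim or supply a proof of it. What the paper actually proves is the \emph{converse} direction (Theorem~1.0.2, Corollary~7.3.1): every essentially self-dual finite slope cuspidal Hecke eigensystem of $Gl_n$ is \emph{not} arithmetically rigid. So there is no ``paper's own proof'' of Conjecture~\ref{APS} to compare your proposal against; you are attempting something the paper explicitly leaves open.

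That said, your write-up is honest about this: you correctly identify that the crux is the containment $\mathcal{C}\subseteq\mathfrak{E}^\iota$ (in the paper's notation, $\mathfrak{E}^e$), and you correctly flag that the two ingredients you would need---a Bloch--Kato-type Selmer bound and a symmetry constraint on the trianguline parameters---are not available, which is why the statement remains conjectural. So your proposal is not a proof, and you say as much. One minor correction: for $Gl_3$ the arithmetic components are known (by \cite{APG}, and used in the paper's Theorem~7.3.2) to have dimension $[\tfrac{n}{2}]+1=2$, not the full weight-space dimension $3$; this is already a nontrivial constraint, and it is exactly the dimension of $\mathfrak{E}^e$.

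It is worth noting that the paper does offer a partial result in the direction of the conjecture (Theorem~7.3.2 and Remark~7.3.3), but by a different mechanism than yours. Rather than Galois representations and Selmer groups, the paper argues geometrically: assuming smoothness at arithmetic points, if an arithmetic component $\mathfrak{A}$ of $\mathfrak{E}$ contains even a single essentially self-dual arithmetic point, then since $\dim\mathfrak{A}=\dim\mathfrak{E}^e=2$ and they meet at a smooth point, $\mathfrak{A}$ must contain an entire irreducible component of $\mathfrak{E}^e$. The remaining gap in the paper's approach is showing that $\mathfrak{A}$ must meet $\mathfrak{E}^e$ at all, which Remark~7.3.3 suggests should follow from a dimension count inside the ambient $\mathfrak{E}$. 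Your Galois-theoretic route and the paper's geometric route are thus genuinely different strategies toward the same open problem, and neither is currently complete.
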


In this paper, we obtain the inverse of its statement for $Gl_n$:

\begin{thm}
Every essentially self-dual finite slope cuspidal Hecke eigensystem of $Gl_n$ is not arithmetically rigid.
\end{thm}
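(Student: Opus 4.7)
The plan is to realize essentially self-dual finite slope cuspidal Hecke eigensystems of $Gl_n$ as $\iota$-fixed points of the $Gl_n$ eigenvariety, for $\iota$ an appropriate Cartan-type involution, and to transfer the Zariski density of classical points from the twisted eigenvariety $\mathcal{E}^\iota$ constructed in the body of the paper down to the usual eigenvariety $\mathcal{E}$.

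I first take $\iota$ to be the Cartan involution $g\mapsto {}^{t}g^{-1}$ on $Gl_n$. A cuspidal automorphic representation $\pi$ is essentially self-dual precisely when $\pi\circ\iota \cong \pi\otimes\chi$ for some character $\chi$, so the associated Hecke eigensystem $\theta$ becomes genuinely $\iota$-invariant after absorbing $\chi$ into the action of $\iota$ on the spherical Hecke algebra (which is allowed since ``Cartan-type'' includes composition with a finite order central twist). Thus $\theta$ defines a point $x_\theta \in \mathcal{E}^\iota$, where $\mathcal{E}^\iota$ is the twisted eigenvariety produced by the main construction of the paper.

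Next I use two structural features of $\mathcal{E}^\iota$ that are part of that construction: the weight map $\mathcal{E}^\iota \to \mathfrak{X}^\iota$ to the $\iota$-invariant weight space is finite, so every irreducible component of $\mathcal{E}^\iota$ has dimension $\dim\mathfrak{X}^\iota > 0$, and classical arithmetic Hecke eigensystems are Zariski dense in $\mathcal{E}^\iota$. There is a natural map $j\colon \mathcal{E}^\iota \to \mathcal{E}$ realizing $\mathcal{E}^\iota$ on the $\iota$-fixed locus of $\mathcal{E}$ and sending $x_\theta$ to $\theta$. Let $Z$ be an irreducible component of $\mathcal{E}^\iota$ through $x_\theta$; then $Z$ is positive-dimensional with Zariski dense classical locus, so $j(Z)$ is a positive-dimensional analytic subset of $\mathcal{E}$ containing $\theta$ with a Zariski dense set of classical points. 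Any irreducible component of $\mathcal{E}$ containing $j(Z)$ is then arithmetic, which shows that $\theta$ is not arithmetically rigid.

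The main difficulty, which is the content of the rest of the paper and not of this reduction, is to establish Zariski density of classical points in $\mathcal{E}^\iota$ itself rather than in the ambient $\mathcal{E}$. This splits into two parts: a weight-space statement that arithmetic weights are Zariski dense in $\mathfrak{X}^\iota$, which is explicit once $\mathfrak{X}^\iota$ is described as a product of closed unit discs; and an equivariant classicality criterion showing that on small slope loci the $\iota$-invariant part of the overconvergent cohomology $H^*(S_G(K_f),\mathcal{D}_\lambda)^\iota$ is fully classical, so that a numerical density argument in the spirit of \cite{Urban} and \cite{Xiang} can be carried out inside the $\iota$-fixed subspaces. Carrying out this $\iota$-equivariant refinement of the Ash--Stevens--Urban overconvergent cohomology machinery, while simultaneously controlling the Fredholm hypersurfaces of $U_p$ acting on $\iota$-fixed cohomology, is where the twisted eigenvariety construction does its real work.
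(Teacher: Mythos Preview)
Your overall strategy---construct a twisted eigenvariety $\mathfrak{E}^\iota$ over $\mathfrak{X}^\iota$, show it is equidimensional with arithmetic components, locate $\theta$ on it, and push forward to $\mathfrak{E}$---is correct and matches the paper's architecture. However, two genuine gaps remain in your reduction, both specific to $Gl_n$ and not absorbed by ``the rest of the paper.''

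First, your passage from \emph{essentially} self-dual to $\iota$-invariant is not valid as stated. You claim the twist character $\chi$ can be absorbed into $\iota$ because ``Cartan-type includes composition with a finite order central twist,'' but $\chi$ need not have finite order, and in any case the paper's setup requires $\iota$ to be a finite-order automorphism of $G$ independent of $\pi$. The paper's solution (\S7.2) is to replace $G=Gl_n$ by $\tilde{G}=Gl_n\times Gl_1$ with the involution $\mu:(g,x)\mapsto(g^\iota,\det(g)x)$; then $\mu$-invariance on $\tilde{G}$ corresponds exactly to essential self-duality on $Gl_n$, and the construction runs for $(\tilde{G},\mu)$ to produce $\mathfrak{E}^e$ over the essentially self-dual weight space $\mathfrak{X}^e$ (of dimension $\lfloor n/2\rfloor+1$).

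Second, and more seriously, you never verify that your cuspidal $\theta$ actually lies on $\mathfrak{E}^\iota$. The twisted eigenvariety of Theorem~6.4.1 does \emph{not} parametrize all $\iota$-invariant finite slope eigensystems: a point $(\lambda,\theta)$ belongs to $\mathfrak{E}^\iota$ if and only if the multiplicity $m^{\iota,\dagger}_{G,0}(\theta,\lambda)$ is nonzero. For arithmetic non-critical $\theta$ this multiplicity equals the twisted Euler--Poincar\'e characteristic $m^\iota_{EP}(\pi,\lambda)$, and showing this is nonzero is a computation specific to $Gl_n$: one needs the Barbasch--Speh Lefschetz number $L(\iota,A_{\mathfrak{b}}(\lambda),\lambda)=(-1)^{\lceil n/2\rceil}2^{\lceil n/2\rceil}\neq 0$ (Proposition~7.1.1, Corollary~7.1.2), together with multiplicity one for $Gl_n$. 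Without this input your argument does not place $\theta$ on any positive-dimensional piece of $\mathfrak{E}^\iota$, and the deformation conclusion does not follow. Your final paragraph, which locates the difficulty in an ``equivariant classicality criterion'' on $H^*(\ldots)^\iota$, is also off the mark: the paper works with twisted \emph{traces} and a twisted Franke formula to define and control the cuspidal distribution $I^\dagger_{G,0}$, not with $\iota$-fixed subspaces of cohomology.
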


We actually work on a more general situation. Let $\iota$ be Cartan-type automorphism of $G$ such that $\iota$ stabilizes $(B,T)$, and consider the $\iota$-invariant automorphic representations (\emph{resp.} overconvergent representations, Hecke eigensystems, etc.). Let $\mathfrak{X}^\iota$ be the subspace of $\mathfrak{X}$ consisting of $\iota$-invariant $p$-adic weights (see \S2.2), to study the family of $\iota$-invariant Hecke eigensystems with weights varying in $\mathfrak{X}^\iota$, we construct twisted eigenvarieties over $\mathfrak{X}^\iota$ parametrizing $\iota$-invariant finite slope overconvergent Hecke eigensystems (see \S6):

\begin{thm}[twisted eigenvarities]
There is an eigenvariety $\mathfrak{G}_{K^p}^\iota$ parameterizing $\iota$-invariant finite slope overconvergent Hecke eigensystems of $G$. Every point $y\in\mathfrak{G}_{K^p}^\iota(\overline{\mathbb{Q}}_p)$ can be viewed as a pair $(\lambda, \theta)$, where $\theta$ is a $\iota$-invariant finite slope overconvergent Hecke eigensystem of weight $\lambda\in\mathfrak{X}^\iota(\overline{\mathbb{Q}}_p)$. There is a subvariety $\mathfrak{E}_{K^p}^\iota$ of $\mathfrak{G}^\iota_{K^p}$, satisfying:
 \begin{itemize}
\item[$(a)$] For any arithmetic $(\lambda,\theta)\in\mathfrak{G}^\iota_{K^p}(\overline{\mathbb{Q}}_p)$, $(\lambda,\theta)$ is in $\mathfrak{E}^\iota_{K^p}(\overline{\mathbb{Q}}_p)$ if and only if $\theta$ is cuspidal and has a non-trivial $\iota$-twisted Euler-Poincare characteristic.
\item[$(b)$] Every irreducible component of $\mathfrak{E}^\iota_{K^p}$ is arithmetic, equipped with a projection onto a Zariski dense subset of $\mathfrak{X}^\iota$.
\item[$(c)$] $\mathfrak{E}_{K^p}^\iota$ is equidimensional with the same dimension to $\mathfrak{X}^\iota$.
\end{itemize}
\end{thm}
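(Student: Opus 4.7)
The proof follows Urban's eigenvariety machine \cite{Urban}, twisted by $\iota$, extending the construction in \cite{Xiang}. Since $\iota$ preserves $(B,T)$, it acts compatibly on the Iwahori $I_m$, on the weight space $\mathfrak{X}$ with fixed locus $\mathfrak{X}^\iota$, on the distribution modules $\mathcal{D}_\lambda$ for $\lambda\in\mathfrak{X}^\iota$, and on the Hecke algebra $\mathcal{H}_p$, so one gets an $\iota$-endomorphism commuting (up to the obvious twist) with the Hecke action on overconvergent cohomology. The plan is to globalize over an admissible affinoid $\mathcal{U}\subset\mathfrak{X}^\iota$: construct a complex of projective Banach $\mathcal{O}(\mathcal{U})$-modules $M^\bullet(\mathcal{U})$ computing $H^*(S_G(K_f),\mathcal{D}_\mathcal{U})$ and carrying commuting actions of $U_p$, of $R_{\mathcal{S},p}$, and of $\iota$. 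Since $U_p$ is compact and $\iota$ has finite order, $\iota\circ U_p$ is again compact, so its Fredholm determinant is well defined and the alternating product
\[
P^\iota_{\mathcal{U}}(T) \;=\; \prod_i \det\bigl(1-T\cdot\iota U_p \,\bigm|\, M^i(\mathcal{U})\bigr)^{(-1)^i}
\]
is a $\iota$-twisted characteristic series which, after checking compatibility under restriction of $\mathcal{U}$, glues to a global Fredholm series on $\mathfrak{X}^\iota$.

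Applying the Buzzard--Coleman--Mazur formalism to $P^\iota$ produces a spectral variety $\mathfrak{Z}^\iota\to\mathfrak{X}^\iota$, and $\mathfrak{G}^\iota_{K^p}$ is then built in the standard way as the relative spectrum of the image of $R_{\mathcal{S},p}$ in the endomorphism algebra of the $\iota$-isotypic finite-slope subquotient of $M^\bullet(\mathcal{U})$. By construction, its $\overline{\mathbb{Q}}_p$-points are pairs $(\lambda,\theta)$ with $\theta$ a $\iota$-invariant finite-slope overconvergent Hecke eigensystem occurring at weight $\lambda\in\mathfrak{X}^\iota$. To isolate the cuspidal subvariety, I would separately form the $\iota$-twisted interior characteristic ``series'' $P^\iota_{!,\mathcal{U}}(T)$, obtained either from compactly supported minus boundary cohomology or, equivalently, by subtracting off Eisenstein contributions indexed by $\iota$-stable standard parabolics in the spirit of \cite[\S5]{Urban} and \cite[\S7]{Xiang}. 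The support of $P^\iota_{!,\mathcal{U}}$ inside $\mathfrak{G}^\iota_{K^p}$ defines a closed subvariety $\mathfrak{E}^\iota_{K^p}$, and the explicit expression of $P^\iota_{!,\mathcal{U}}$ as an $\iota$-twisted alternating trace shows that a classical arithmetic $(\lambda,\theta)$ lies on $\mathfrak{E}^\iota_{K^p}$ precisely when $\theta$ is cuspidal with non-trivial $\iota$-twisted Euler-Poincar\'e characteristic, giving (a).

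Statement (c) is then a formal consequence of the Fredholm hypersurface structure: $\mathfrak{Z}^\iota$ is equidimensional of dimension $\dim\mathfrak{X}^\iota$, and $\mathfrak{E}^\iota_{K^p}\to\mathfrak{Z}^\iota$ is finite. For (b), arithmetic weights are Zariski dense in $\mathfrak{X}^\iota$, and at each such weight the small-slope classicality criterion (carried over to the $\iota$-fixed part) forces sufficiently small-slope overconvergent $\iota$-invariant eigensystems to be classical and cuspidal; a standard density argument along each connected component then yields a Zariski dense set of arithmetic points. The principal obstacle I anticipate is the very existence and good behavior of $P^\iota_{!,\mathcal{U}}$: one needs a twisted analogue of Urban's decomposition of overconvergent cohomology into interior and boundary parts, which reduces to controlling the $\iota$-twisted trace of $U_p$ on parabolically induced boundary cohomology as an alternating combination of Fredholm determinants attached to $\iota$-stable Levi subgroups. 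The combinatorics of $\iota$-stable standard parabolics and the accompanying twisted Euler-Poincar\'e computations are the technical heart of the construction, but they should run in parallel with the untwisted case once the correct sign conventions and compatibility of the $\iota$-action with parabolic restriction are set up.
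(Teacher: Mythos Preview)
Your outline captures the right high-level strategy (twisted finite-slope character distributions, subtracting Eisenstein pieces indexed by $\iota$-stable parabolics, then an eigenvariety machine), and you correctly flag the twisted Franke-type decomposition as the technical heart. But there is a genuine gap in how you pass from twisted Fredholm data to an eigenvariety, and the paper is explicit that this is a real obstruction, not a routine adaptation.

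First, your proposed $P^\iota_{\mathcal{U}}(T)=\prod_i\det(1-T\,\iota U_p\mid M^i)^{(-1)^i}$ is the Fredholm ratio attached to the full twisted alternating trace $I^\dagger_G(\iota\times f,\lambda)$. There is no reason for this to be entire: only the \emph{cuspidal} twisted distribution $e_{G,\iota}I^\dagger_{G,0}$ is shown to be effective (Proposition~5.4.1), and that already requires the twisted Franke formula and a density argument. So you cannot feed $P^\iota_{\mathcal{U}}$ directly into Buzzard--Coleman--Mazur to build $\mathfrak{G}^\iota_{K^p}$. The paper instead constructs $\mathfrak{G}^\iota_{K^p}$ by the method of \cite{Xiang}: one takes slope decompositions with respect to the \emph{untwisted} compact operator $f$, intersects over the orbit $\{f^{\iota^i}\}$ to get $\iota$-stable pieces $N_{f,\iota}(Q)$, and then looks at the image of the enlarged algebra $^\iota R_{\mathcal{S},p}=R_{\mathcal{S},p}\rtimes\langle\iota\rangle$ in $\mathrm{End}(N_{f,\iota}(Q))$. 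This produces $\tilde{\mathfrak{G}}^\iota$, whose image under restriction $\mathfrak{i}$ to $R_{\mathcal{S},p}$ is $\mathfrak{G}^\iota$.

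Second, and more seriously, even once $P^\iota_{!}:=P^\dagger_{G,0}(\iota\times f,\lambda,X)$ is known to be entire, you cannot run Urban's construction to go from the spectral hypersurface to an eigenvariety: the twisted trace $f\mapsto\mathrm{tr}(\iota\times f)$ is \emph{not} a pseudo-representation of $R_{\mathcal{S},p}$, so Urban's ``second construction'' (\cite[\S5.3]{Urban}) is unavailable. The paper highlights this in the introduction and circumvents it as follows: having already built the ambient $\mathfrak{G}^\iota$ as above, one defines $\tilde{\mathfrak{E}}^\iota=\prod_f R_{\iota\times f}^{-1}(\mathfrak{S}^\iota(f))$ inside $\mathfrak{Z}^\iota=\mathfrak{X}^\iota\times\mathfrak{B}$ and sets $\mathfrak{E}^\iota=\mathfrak{i}(\tilde{\mathfrak{E}}^\iota)$. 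Showing that $\mathfrak{E}^\iota(\overline{\mathbb{Q}}_p)\subset\mathfrak{G}^\iota(\overline{\mathbb{Q}}_p)$ and that its points are exactly those with $m^{\iota,\dagger}_{G,0}(\theta,\lambda)\neq 0$ requires a hands-on argument (proof of Theorem~6.4.1(a)) separating eigenvalues via a Jacobson-density trick on $V^{\dagger,\lambda}_{G,0}(K^p)$. The equidimensionality (c) and arithmeticity (b) then come from sandwiching $\mathfrak{E}^\iota$ between the Fredholm hypersurfaces $\mathfrak{S}^\iota(f)$ and the locally-finite-over-$\mathfrak{X}^\iota$ space $\mathfrak{G}^\iota$, not from a direct finite map $\mathfrak{E}^\iota\to\mathfrak{Z}^\iota$ as you suggest.

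In short: your sketch treats ``twisted Fredholm series $\Rightarrow$ eigenvariety'' as formal, but in the twisted setting this step fails for the reason above, and the paper's actual contribution is precisely the workaround via the bigger space $\mathfrak{G}^\iota$ built from $^\iota R_{\mathcal{S},p}$.
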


In particular, if $G=Gl_n$ and $\iota$ is a Cartan-type involution, the notion of $\iota$-invariant is same to self-dual. Then the twisted Euler-Poincare characteristic of an (essentially) self-dual Hecke eigensystem is always non-trivial. So $\mathfrak{E}_{K^p}^\iota$ parameterizes all self-dual finite slope cuspidal Hecke eigensystems (see \S7). In case that $n=3$, this also gives some hint for Ash-Pollack-Stevens' conjecture, as in Theorem 7.3.2 and Remark 7.3.3 below.

\vspace{1pc}
Our second motivation is to develop a twisted version of Urban's theory of finite slope character distribution \cite[\S4.5]{Urban}. A finite slope character distribution is a morphism $J: \mathcal{H}_p\rightarrow \overline{\mathbb{Q}}_p$ which is a linear combination of the traces of finite slope overconvergent representations. Urban proves that, there is an eigenvariety associated to every analytic family of effective finite slope character distributions, \cite[\S5]{Urban}. This eigenvariety parameterizes the finite slope overconvergent Hecke eigensystems appearing in the character distributions. However, Urban's theory excludes many interesting cases, like $Gl_n$ with $n>2$. The reason is, the coefficients of Urban's distributions are essentially given by the Euler-Poincare characteristics. So for a group $G$ such that $G(\mathbb{R})$ does not satisfy the Harish-Chandra condition, they are trivial. To avoids this issue, we construct the notion of ``twisted" finite slope character distributions (see \S5). Concretely, we construct a distribution which is a linear combination of the twisted traces of $\iota$-invariant finite slope overconvergent representations. We show this distribution has similar property as Urban's character distributions and it gives a construction of the twisted eigenvariety $\mathfrak{E}^\iota$ in the theorem above (see \S6).

\vspace{1pc}
In practice, there are two new difficulties occur since the involving of twisting action by $\iota$. The first one is the lack of a twisted version of Franke's trace formula as in \cite[Theorem 1.4.2]{Urban}, which plays an essential role to cut out the cuspidal representations from the whole cohomology. To cure this, we have to go through Franke's theory of Eisenstein spectral sequence (\cite{Franke}), and study how $\iota$ acting on each step of Franke's theory carefully. This is done in \S4 and we proves a twisted version of Franke's trace formula there (Theorem 4.3.10). The second difficulty appears during the construction of the twisted eigenvariety. Since we consider the twisted trace instead, locally our twisted distributions are no longer pseudo-representations as in \cite[\S5.3.1]{Urban}, so we do not have the ``second construction" as Urban did (\cite[\S5.3]{Urban}). We bypass this difficulty by borrowing the construction of the full eigenvariety in \cite{Xiang} to construct a ``bigger twisted eigenvariety" first and then working in this bigger space. This is done in \S6.3.

\vspace{1pc}
One can view Urban's finite slope character distribution as a $p$-adic analogue to Selberg's trace formula, then our theory gives an analogue to the twisted trace formula. In \cite[\S6]{Urban}, Urban gives a simplified geometric expansion for his distribution following the work of Franke \cite{Franke} and Arthur \cite{Arthur}. However, a complete expansion as \cite[(3)]{Arthur} can also be given. In a consequent paper \cite{Xiang2}, we will establish a corresponding geometric expansion for our twisted distributions as well. One can then expect a $p$-adic family version comparison between them as Arthur-Clozel's theory in \cite{AC}. This comparison will give a relation between eigenvarieties.

\subsection{Acknowledgment}
I'd like to thank Professor Eric Urban here, the base of this work on his paper \cite{Urban} is obvious. Without his help this paper will not exist.

\vspace{1pc}
\section{PRELIMINARY}
\subsection{notation}
Throughout this paper, we fix $p$ a rational prime number and an identification $\hat{\overline{Q}}_p\cong\mathbb{C}$. Let $\mathbb{A}=\mathbb{A}_{\mathbb{Q}}$ be the adelic ring of $\mathbb{Q}$, $\mathbb{A}_\infty$ and $\mathbb{A}_f$ its archimedean and finite part respectively. For any algebraic group $H$ over $\mathbb{Q}$, put $H_\infty=H(\mathbb{A}_\infty)$ and $H_f=H(\mathbb{A}_f)$. We also denote by $H(\mathbb{A})^1\subset H(\mathbb{A})$ the subgroup of all $h\in H(\mathbb{A})$ with $\prod_{v}|\xi(h)|_v=1$ for all characters of $H$ defined over $\mathbb{Q}$, where the product is running over all places of $\mathbb{Q}$.

\vspace{1pc} 
Let $G$ be a quasi-split\footnote{This assumption is not necessary but for the convenience of discussion only. Otherwise, one has to use the notation as in \cite[\S1.3.1]{Urban}.} reductive group over $\mathbb{Q}$, denote by $Z=Z_G$ its center. Let $K_{\infty}$ be a fixed maximal compact subgroup of $G_{\infty}$, and fix a good maximal compact subgroup $\mathbb{K}\subset G(\mathbb{A})$ whose archimedean component is $K_\infty$. For every prime number $l$, denote by $K_l$ an open compact subgroup of $G(\mathbb{Q}_l)$. Put $K_f=\prod_{l}K_l$  such that for almost all $l\neq p$, $K_l$ to be maximal. Denote by $K^p=K_f^p=\prod_{l\neq p}K_l$ and $K=K_{\infty}K_{f}$. Consider the locally symmetric space of $G$ associated to $K_f$:
\begin{equation}S_G(K_f):=G(\mathbb{Q})\backslash G(\mathbb{A})/KZ_{\infty}.\end{equation}
Properly choose a finite set of representatives $\{g_i\}_i$ in $G(\mathbb{A})$ such that
\begin{equation}G(\mathbb{A})=\bigsqcup_{i}G(\mathbb{Q})\times
G_{\infty}^{+}\times g_iK_f,\end{equation}
where $G_\infty^+$ is the identity component of $G_\infty$. We then have 
\begin{equation}S_G(K_f)\cong\bigsqcup_i\Gamma_i\setminus\mathcal {H}_G,\end{equation}
where $\Gamma_i = \Gamma(g_i, K)$ is the image of $g_iKg_i^{-1}\cap G(\mathbb{Q})^{+}$ in $G^{ad}(\mathbb{Q})$ and $\mathcal{H}_G =G^{+}_{\infty}/K_{\infty}Z_{\infty}$. We further assume $K$ is neat (that is, $\Gamma_i$ contains no element of finite order), then $S_G(K_f)$ is a smooth real analytic variety of a finite dimension, say, $d$. We also write
\begin{equation}
S_G:=G(\mathbb{Q})\backslash G(\mathbb{A})/K_\infty Z_{\infty}
\end{equation}

\vspace{1pc}
Let $T$ be a maximal torus of $G$ and $B$ a Borel subgroup of $G$ containing $T$. Let $N$ be the unipotent radical of $B$, and $N^-$ its opposite. At $p$, we fix a Iwahori subgroup $I$ of $G(\mathbb{Q}_p)$ with respect to $B$, this means that we have fixed compatible integral models $\mathcal{G, B, T, N, N^-}$ for $G, B, T, N, N^-$ over $\mathbb{Z}_p$ (according to a fixed chamber $C_I$ of the Bruhat-Tits building $\mathcal{BL}$ of $G_{\mathbb{Q}_p}$), such that $I=I_1$, where for any integer $m\geq1$, \begin{equation}I_m = \{g\in \mathcal{G}(\mathbb{Z}_p)|g\in
\mathcal B(\mathbb{Z}/p^m\mathbb{Z})\ mod\ p^m\}.\end{equation} 
By Iwahori decomposition,
\begin{equation}\label{Iwahori}I_m=(I_m\cap  N^-(\mathbb{Q}_p))\mathcal T(\mathbb{Z}_p)\mathcal N(\mathbb{Z}_p).\end{equation}
We normalize the Haar measure on $G(\mathbb{Q}_p)$ such that the measure of $I$ is $1$. Once fixing the Iwahori level at $p$, we write
\begin{equation}
\tilde{S}_{G,m}:=G(\mathbb{Q})\backslash G(\mathbb{A})/K_\infty I_m Z_{\infty}.
\end{equation}

Now put
\begin{equation}\label{T+}
T^+:=\{t\in T(\mathbb{Q}_p)|t\mathcal N(\mathbb{Z}_p)t^{-1}\subset \mathcal N(\mathbb{Z}_p)\}\end{equation}
\begin{equation}\label{T++}
T^{++}:=\{t\in T^+|\bigcap_{i\geq 1} t^i\mathcal N(\mathbb{Z}_p)t^{-i}=\{1\}\},\end{equation}
\begin{equation}\Delta^+_m := I_mT^+I_m,\  \Delta^{++}_m: =I_mT^{++}I_m,\end{equation} and consider the $p$-adic cells
\begin{equation}\Omega_m={I_m\cap N^-(\mathbb{Q}_p)}\backslash I_m \subseteq N^-(\mathbb{Q}_p)\backslash G(\mathbb{Q}_p).\end{equation}

For any $g\in \Delta^+_m$, write $g=n_g^-t_gn_g^+$ by Iwahori decomposition, then the $*$-action of $\Delta_m^+$ on $\Omega_m$ is defined as follow (see \cite[\S3.1.3]{Urban} and \cite[\S5.2]{Ash-Stevens}): Fixing a splitting $\xi$ of the exact sequence \begin{equation}1\rightarrow \mathcal T(\mathbb{Z}_p)\rightarrow T(\mathbb{Q}_p)\rightarrow T(\mathbb{Q}_p)/\mathcal T(\mathbb{Z}_p)\rightarrow 1,\end{equation}
for any $[x]\in \Omega$, define
\begin{equation}\label{diff}[x]*g= [\xi(t_g)^{-1}xg].\end{equation}
As in \cite[\S3.1.2 (11)]{Urban}, we choose $\xi$ so that for any algebraic character $\lambda^{alg}\in X^*(T)$ and $t\in T(\mathbb{Q}_p)$
\begin{equation}\label{split}
\lambda^{alg}(\xi(t))=|\lambda^{alg}(t)|_p^{-1}.
\end{equation}

\vspace{1pc}
The Atkin-Lehner algebra of $G$ at $p$ is defined by:
\begin{equation}\mathcal{U}_p=C_c^{\infty}(\Delta_m^+//I_m, \mathbb{Z}_p) \simeq \mathbb{Z}_p[T^+/\mathcal T(\mathbb{Z}_p)],\end{equation}
it does not depend on $m$. We then define the global $p$-adic Hecke algebras:
\begin{equation}\mathcal{H}_p:=\mathcal{H}_p(G)=C_c^{\infty}(G(\mathbb{A}_f^p))\hat{\otimes}\mathcal{U}_p,\end{equation}
and for any open compact subgroup $K^p$ of $G(\mathbb{A}_f^p)$, define its subalgebra of $K^p$-bi-invariant functions by:
\begin{equation}\mathcal{H}_p(K^p)=C_c^{\infty}(K^p\backslash G(\mathbb{A}_f^p)/K^p)\hat{\otimes}\mathcal{U}_p\end{equation}

Given $t\in T^+$, denote by $u_t$ the element in $\mathcal{U}_p$ whose image in $\mathbb{Z}_p[T^+/T(\mathbb{Z}_p)]$ is $t$. A Hecke operator $f$ is called admissible, if $f=f^p\otimes u_t$ and $t\in T^{++}$. We denote by $\mathcal{H}'_p$ the subalgebra of $\mathcal{H}_p$ generated by admissible operators. For fixed $K^p$, let $\mathcal {S}$ be the finite set of primes $l$ such that $K_l$ is not maximal, define
\begin{equation}R_{\mathcal{S},p}:=C_c^{\infty}(G(\mathbb{A}_f^{\mathcal{S}\cup\{p\}})//K^{\mathcal{S}\cup\{p\}})\otimes\mathcal{U}_p\end{equation}
$R_{\mathcal{S},p}$ is commutative and can be identified in the center of $\mathcal{H}_p(K^p)$. 

\vspace{1pc}
Throughout this paper, we assume that $G$ has a finite order automorphism $\iota$ of Cartan-type, that is, at $\infty$, $\iota$ is of the form $ad(g_\infty)\circ\theta$, for some $g_\infty\in G_\infty$ and the Cartan involution $\theta$ (with respect to $K_\infty$). It is innocuous to assume that the triples $(B,T,I_m)$ are stable under $\iota$. Indeed, let $(B,T,I_m)$ be such a triple and $\psi_0$ the based root datum associated to it, consider the splitting exact sequence \cite[2.14]{Springer}:
\begin{equation}
1\rightarrow Int(G)\rightarrow Aut(G)\xrightarrow{\beta} Aut(\psi_0)\rightarrow 1.
\end{equation}
If $(B,T,I_m)$ is not stable under $\iota$, we fix a splitting 
\begin{equation}
\gamma: Aut(\psi_0)\xrightarrow{\cong}Aut(G,B,T,\{u_\alpha\})\hookrightarrow Aut(G)
\end{equation}
and replace $\iota$ by its image $\iota'$ under $\gamma\beta$, then $\iota'$ fixes the pair $(B,T)$. Since $\iota'$ has the same image under $\beta$ as $\iota$, it differs $\iota$ by a conjugation. So $\iota'$ is also of Cartan-type. Since $Aut(\psi_0)$ is finite, that $\iota'$ is of finite order. Finally, noticing that $\{u_\alpha\}$ is the set of an arbtary choice of $1\neq u_\alpha\in U_\alpha$ in each unipotent root subgroup $U_\alpha$ associated to the basis $\{\alpha\}$ in $\psi_0$, we can properly choose $\{u_\alpha\}$ such that each $u_\alpha$ corresponds to a wall of one same chambre $\mathcal{C}$ in $\mathcal{BL}$. $\mathcal{C}$ gives an Iwahori subgrop which is stable under $\iota'$. 

\vspace{1pc}
Further assuming that $K^p_f$ is stable under $\iota$, we define $\iota$ acting on the Hecke algebra $\mathcal{H}_p(K^p)$ by sending $f$ to $f^\iota$, where $f^\iota(g):=f(g^{\iota^{-1}})$ for any $g\in G$. Noting that $T^+$ and $T^{++}$ are stable under $\iota$ by (\ref{T+}) and (\ref{T++}), This is well defined. Moreover, for $u_t\in\mathcal{U}_p$, 
\begin{equation}
u_t^\iota=u_{t^\iota}
\end{equation}

\subsection{weight spaces}
\subsubsection{classical weight and coweight}
Let $X^*(T)$ be the set of algebraic weights of $T$, and $X_*(T)$ the set of algebraic coweights. There is a canonical duality pairing 
\begin{equation}
(\ ,\ ): X^*(T)\times X_*(T)\rightarrow \mathbb{Z}
\end{equation}
such that for any $\lambda\in X^*(T)$, $\mu^\vee\in X_*(T)$ and $a\in\mathbb{G}_m$, 
\begin{equation}
\lambda\circ\mu^\vee(a)=a^{(\lambda ,\mu^\vee )}
\end{equation}
We define $\iota$ acting on $X^*(T)$ by sending $\lambda$ to $\lambda^\iota$ such that $\lambda^\iota(t)=\lambda(t^{\iota^{-1}})$ for any $t\in T$, and define $\iota$ acting on $X_*(T)$ by sending $\mu^\vee$ to $(\mu^\vee)^\iota$ such that $(\mu^\vee)^\iota(a)=(\mu^\vee(a))^{\iota^{-1}}$ for any $a\in\mathbb{G}_m$. It is easy to see that
\begin{equation}
(\lambda^\iota,(\mu^\vee)^{\iota})=(\lambda,\mu^\vee).
\end{equation}

\subsubsection{$p$-adic weight space}
There is a rigid space $\mathfrak{X}_{T}$ associated to $T$, such that for any field $L\subset\overline{\mathbb{Q}}_p$,
\begin{equation}\mathfrak{X}_{T}(L)=Hom_{cont}(\mathcal T(\mathbb{Z}_p),L^{\times}).\end{equation}
Since $\mathcal T(\mathbb{Z}_p)=\mathbb{Z}_p^r \times \Pi,$
with some finite group $\Pi$, that
\begin{equation}\mathfrak{X}_{T}(\overline{\mathbb{Q}}_p)=Hom_{qp}(\Pi, \overline{\mathbb{Q}}_p^{\times})\times
(B_{1,1}(\overline{\mathbb{Q}}_p)^{\circ})^r.\end{equation}
So the underlying space of $\mathfrak{X}_{T}$ is finite many copies of the $r$-tuple unit ball, its points are (continuous) $p$-adic weights. Put $Z_{K^p}=Z(\mathbb{Q})\bigcap K^pI$ and let $\mathfrak{X}:=\mathfrak{X}_{K^p}\subseteq
\mathfrak{X}_{T}$ be the Zariski closure of the subset of $p$-adic weights which are trivial on $Z_{K^p}$. The automorphism $\iota$ induces a operator on $\mathfrak{X}$ which sends $\lambda$ to $\lambda^{\iota}$, where $\lambda^\iota(t):=\lambda(t^{\iota^{-1}})$ for any $t\in \mathcal{T}(\mathbb{Z}_p)$. Denote by $\mathfrak{X}^\iota$ the subspace of $\mathfrak{X}$ consisting of $\iota$-invariant weights. 

\vspace{1pc} 
Recall, for any $n$, there is a rigid space $\mathfrak T_n$ such that for any field $L\subset\overline{\mathbb{Q}}_p$,
\begin{equation}\mathcal {O}(\mathfrak{T_n}_{/L})=\mathcal {A}_n(\mathcal T(\mathbb{Z}_p), L),\end{equation}
where $\mathcal {A}_n(\mathcal T(\mathbb{Z}_p), L)$ is the space of locally $n$-analytic $L$-valued functions on $\mathcal T(\mathbb{Z}_p)$. The natural pairing
\begin{equation}\label{above}\mathfrak{X}_{T}(L) \times \mathcal T(\mathbb{Z}_p)
\rightarrow L^\times, (\lambda, t)\mapsto \lambda(t)\end{equation} 
induces a continuous injective homomorphism $\mathcal T(\mathbb{Z}_p)\hookrightarrow
\mathcal {O}(\mathfrak{X}_{T})^{\times}$.

\begin{lem}\label{lemma2}
For any affinoid subdomain $\mathfrak{U} \subseteq
\mathfrak{X}$ or $\mathfrak{X}^\iota$, there exist a smallest integer
$n(\mathfrak{U})$, such that for any finite extension $L$ of
$\mathbb{Q}_p$, every element $\lambda \in \mathfrak{U}(L)$ is
$n(\mathfrak{U})$-locally analytic. Moreover, there is a
rigid analytic map $\mathfrak{U} \times
\mathfrak T_{n(\mathfrak{U})}\rightarrow B_{1,1}$, such that for any $L$, its realization at $L$-points is the pairing $(\ref{above})$.
\end{lem}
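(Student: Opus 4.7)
The plan is to reduce the statement to a one-variable question, bound the coordinate of a point in $\mathfrak{U}$ using maximum modulus, convert that bound into a uniform radius of local analyticity, and then construct the universal character as an explicit rigid analytic morphism.

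First, since the inclusion $\mathfrak{X}^\iota\hookrightarrow\mathfrak{X}$ is a closed immersion of rigid spaces and local analyticity is a property of individual points $\lambda$, it suffices to prove the lemma for affinoid subdomains of $\mathfrak{X}$. Using $\mathcal{T}(\mathbb{Z}_p)\cong\mathbb{Z}_p^r\times\Pi$ with $\Pi$ finite, the characters of $\Pi$ are locally constant and thus contribute nothing to the analyticity radius, while on the $\mathbb{Z}_p^r$ factor a continuous character decomposes coordinate-by-coordinate. Hence it is enough to treat $\mathcal{T}(\mathbb{Z}_p)=\mathbb{Z}_p$, in which case $\mathfrak{X}_T$ is (a finite disjoint union of copies of) the open unit ball via $\lambda\mapsto x(\lambda):=\lambda(1)-1$.

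Next, by the maximum modulus principle applied to the coordinate $x$ on the affinoid $\mathfrak{U}$, together with the inclusion of $\mathfrak{U}$ in the open unit ball, one has $\sup_{\lambda\in\mathfrak{U}}|x(\lambda)|<1$. A continuous character $\lambda:\mathbb{Z}_p\to L^\times$ is $n$-locally analytic precisely when $s\mapsto(1+x(\lambda))^{a+p^n s}$ is analytic for $s$ in the closed unit disk, which translates into an inequality of the form $|x(\lambda)|<c_n$ with constants $c_n<1$ tending to $1$. From the uniform bound on $|x|$ over $\mathfrak{U}$ one therefore obtains some $n$ for which every $\lambda\in\mathfrak{U}(L)$ is $n$-locally analytic, and the well-ordering of $\mathbb{N}$ yields a smallest such $n(\mathfrak{U})$.

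To produce the rigid analytic map $\mathfrak{U}\times\mathfrak{T}_{n(\mathfrak{U})}\to B_{1,1}$, I would write down the universal character explicitly. The canonical homomorphism $\mathcal{T}(\mathbb{Z}_p)\hookrightarrow\mathcal{O}(\mathfrak{U})^\times$ induced by (\ref{above}), written in terms of $x$, is $t\mapsto(1+x)^t=\sum_{k\geq0}\binom{t}{k}x^k$. Thanks to the bound on $|x|$ over $\mathfrak{U}$, this series extends to a convergent element of $\mathcal{O}(\mathfrak{U})\hat{\otimes}\mathcal{A}_{n(\mathfrak{U})}(\mathcal{T}(\mathbb{Z}_p),\mathbb{Q}_p)=\mathcal{O}(\mathfrak{U}\times\mathfrak{T}_{n(\mathfrak{U})})$ of sup-norm $\leq 1$, and the associated morphism to $B_{1,1}$ has the required behavior on $L$-points by construction.

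The main obstacle is the quantitative matching between the norm bound on $|x|$ over $\mathfrak{U}$ and the locally analytic radius on $\mathcal{T}(\mathbb{Z}_p)$, which underlies both the existence of $n(\mathfrak{U})$ and the convergence of the universal series on $\mathfrak{U}\times\mathfrak{T}_{n(\mathfrak{U})}$; this amounts to a uniform version of the Mahler/Amice description of $p$-adic analytic characters and must be handled carefully to ensure that the morphism lands in the closed rather than the open unit ball.
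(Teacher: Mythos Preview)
Your sketch is essentially the standard argument and is correct; the paper, however, gives no proof at all beyond the single line ``It follows immediately from \cite[lemma 3.4.6]{Urban}.'' So there is nothing to compare at the level of method: what you have written is precisely the content that the cited lemma of Urban supplies, namely the reduction to the rank-one case, the maximum-modulus bound $\sup_{\lambda\in\mathfrak{U}}|\lambda(1)-1|<1$ on an affinoid, the Amice/Mahler translation of that bound into a uniform radius of local analyticity, and the explicit binomial series $(1+x)^t=\sum_k\binom{t}{k}x^k$ as the universal character. Your closing caveat about matching the norm bound to the analytic radius and landing in the closed ball is exactly the technical point Urban's lemma packages; once that is granted, the rest is formal.
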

It follows immediately from \cite[lemma 3.4.6]{Urban}.

\subsection{analytic induced modules and distribution spaces}
\subsubsection{induced modules}
We recall some definitions from \cite[\S3.2]{Urban} first. For $\lambda^{alg} \in X^*(T)$, let $\mathbb{V}_{\lambda^{alg}}$ be the finite dimensional irreducible algebraic representation of $G$ with highest weight $\lambda^{alg}$. It can be viewed as the algebraic induced representation:
 \begin{equation}
 \mathbb{V}_{\lambda^{alg}}=ind_{B}^G(\lambda^{alg})^{alg}.
 \end{equation}
For field $L$ as last section, let $\epsilon: \mathcal T(\mathbb{Z}_p)\rightarrow L^\times$ be a finite character of order $m$. We identify $\lambda^{alg}$ with the $p$-adic weight obtained by the composition
\begin{equation}
\mathcal{T}(\mathbb{Z}_p)\hookrightarrow T(L)\xrightarrow{\lambda^{alg}}L^\times
\end{equation}
Now for the $p$-adic weight $\lambda= \lambda^{alg}\epsilon$, consider its $m$-locally analytic induction： \begin{equation}\mathcal{V}_\lambda=ind_{B}^G(\lambda)^{m-an}.\end{equation} 
There is a natural map \begin{equation}\mathbb{V}_{\lambda^{alg}}(L)(\epsilon)\hookrightarrow \mathcal{V}_\lambda (L).\end{equation}
The $*$-action described in \S2.1 induces an action of  $\Delta^+$  on $\mathcal{V}_{\lambda}(L)$, via the right $*$-translation.

\vspace{1pc}
For any $\lambda \in\mathfrak{X}(L)$, let $\mathcal{A}_{\lambda}(L)$ be the space of locally $L$-analytic functions $f$ on $I$ such that
\begin{equation}f(n^-tg)=\lambda(t)f(g),\end{equation}
where, as in (\ref{Iwahori}), $n^-\in I\cap N^-(\mathbb Q_p)$, $t\in \mathcal T(\mathbb{Z}_p)$ and $g\in I$. $\mathcal {A}_{\lambda}(L)$ can be viewed as a subspace of $\mathcal{A}(\Omega_1, L)$, the space of locally $L$-analytic functions on $\Omega_1$: let $\mathcal T(\mathbb{Z}_p)$ act on $\mathcal{A}(\Omega_1, L)$ by the natural translation, then
\begin{equation}\label{2.3.5}
\mathcal{A}_{\lambda}(L)=\mathcal{A}(\Omega_1, L)[\lambda]:=\{\phi\in\mathcal{A}(\Omega_1, L)|t\phi=\lambda(t)\phi\}
\end{equation}
The $*$-action $\Delta^+$ on $\mathcal{A}(\Omega_1, L)$ is naturally defined, it commutes with the translation action of $\mathcal T(\mathbb{Z}_p)$ above. So the $*$-action of $\Delta^+$ is well defined on
$\mathcal{A}_{\lambda}(L)$. For  $g \in \Delta^+$ and $\phi\in \mathcal{A}_\lambda(L)$, we have \begin{equation}g*\phi([x]): = \phi([x]*g)\end{equation}

\vspace{1pc}
Now define the distribution space
\begin{equation}\mathcal{D}_{\lambda}(L):=Hom_{cont}(\mathcal{A}_{\lambda}(L),L)\end{equation}
be the continuous dual of $\mathcal{A}_{\lambda}(L)$. The $*$-action of $\Delta^+$ on $\mathcal{D}_{\lambda}(L)$ is naturally defined. A deatiled study of $\mathcal{A}_{\lambda}(L)$ and $\mathcal{D}_{\lambda}(L)$ can be found in \cite[lemma 3.2.8]{Urban}, in particular, we have next proposition:

\begin{prop}\label{prop232}
$\mathcal{D}_{\lambda}(L)$ is a compact Frechet space over $L$. If
$\delta \in \Delta^{++}$, then the $*$-action of $\delta$ defines a
compact operator on $\mathcal{D}_{\lambda}(L)$.
\end{prop}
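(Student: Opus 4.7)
The plan is to realize $\mathcal{A}_\lambda(L)$ as a countable union of Banach spaces with compact transition maps, then dualize. By (\ref{2.3.5}), $\mathcal{A}_\lambda(L)$ is the $\lambda$-eigenspace of $\mathcal{T}(\mathbb{Z}_p)$ in $\mathcal{A}(\Omega_1,L)$. The cell $\Omega_1$ is a compact $p$-adic analytic manifold (via the Iwahori decomposition it is identified with an open compact in $N^-(\mathbb{Q}_p)$), so one has a natural decomposition
\begin{equation*}
\mathcal{A}(\Omega_1,L)=\varinjlim_n \mathcal{A}_n(\Omega_1,L),
\end{equation*}
where $\mathcal{A}_n(\Omega_1,L)$ denotes the Banach space of locally $n$-analytic $L$-valued functions (those admitting Taylor expansions on the cover of $\Omega_1$ by balls of radius $p^{-n}$). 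Intersecting with the closed $\lambda$-eigenspace condition produces Banach subspaces $\mathcal{A}_{\lambda,n}(L)\subset \mathcal{A}_{\lambda}(L)$ whose inductive limit is $\mathcal{A}_\lambda(L)$.

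The key analytic input is that each inclusion $\mathcal{A}_{\lambda,n}(L)\hookrightarrow \mathcal{A}_{\lambda,n+1}(L)$ is compact: this is the standard $p$-adic analogue of Montel's theorem, saying that restricting rigid analytic functions from a larger polydisk to a strictly smaller one is a compact operator (each coefficient estimate forces uniform decay, so a bounded set in $\mathcal{A}_{\lambda,n}$ becomes precompact in $\mathcal{A}_{\lambda,n+1}$). Dualizing, $\mathcal{D}_\lambda(L)=\varprojlim_n \mathcal{D}_{\lambda,n}(L)$ is a projective limit of Banach spaces with compact transition maps, which by definition is a compact Fr\'echet space. This establishes the first assertion.

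For the second assertion, the plan is to show the $*$-action of $\delta\in\Delta^{++}$ improves analyticity. By formula (\ref{diff}), $[x]\ast\delta=[\xi(t_\delta)^{-1}x\delta]$, and one writes $x\delta=x'\xi(t_\delta)y$ with $x'\in\Omega_1$ and $y\in I$; the point $[x']\in\Omega_1$ is obtained by conjugating the $N^-$-component of $x$ by $t_\delta$. Since $t_\delta\in T^{++}$, the defining property $\bigcap_i t_\delta^i\mathcal{N}(\mathbb{Z}_p)t_\delta^{-i}=\{1\}$ together with the analogous contraction on $N^-$ (after an appropriate inversion) guarantees that this map contracts $\Omega_1$ into a proper sub-cell by a factor $p^{-k(\delta)}$ with $k(\delta)\geq 1$. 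Consequently the pullback operator sends $\mathcal{A}_{\lambda,n}(L)$ into $\mathcal{A}_{\lambda,n-k(\delta)}(L)$, so the $\ast$-action of $\delta$ factors as
\begin{equation*}
\mathcal{A}_{\lambda,n}(L)\;\longrightarrow\;\mathcal{A}_{\lambda,n-k(\delta)}(L)\;\hookrightarrow\;\mathcal{A}_{\lambda,n}(L),
\end{equation*}
with the second arrow compact by the previous paragraph. Transposing, the $\ast$-action of $\delta$ on $\mathcal{D}_\lambda(L)$ is compact.

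The main technical obstacle is the precise contraction estimate: one must verify quantitatively, in terms of the normalization (\ref{split}) of the splitting $\xi$ and the Iwahori decomposition (\ref{Iwahori}), that conjugation by an element of $T^{++}$ shrinks the relevant analytic balls strictly, yielding a definite gain $k(\delta)\geq 1$ in the index of analyticity. Everything else is formal functional analysis, and the contraction computation is the content of \cite[Lemma 3.2.8]{Urban}, from which the proposition follows.
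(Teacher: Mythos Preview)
Your proposal is correct and matches the paper's approach: the paper gives no proof of this proposition beyond citing \cite[Lemma 3.2.8]{Urban}, and your sketch is precisely the argument found there. One small slip: via the Iwahori decomposition, $\Omega_1$ is identified with $\mathcal{T}(\mathbb{Z}_p)\mathcal{N}(\mathbb{Z}_p)$ (and $\mathcal{A}_\lambda$ with functions on $\mathcal{N}(\mathbb{Z}_p)$), not with a subset of $N^-(\mathbb{Q}_p)$; the contraction you need is $t\mathcal{N}(\mathbb{Z}_p)t^{-1}\subsetneq\mathcal{N}(\mathbb{Z}_p)$ for $t\in T^{++}$, which is exactly (\ref{T++}).
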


\begin{remark}
The theory of compact operators on orthonormalizable ($p$-adic) Banach spaces is originally due to Serre and generalized by Coleman \cite{Coleman}. The theory is generalized to compact Frechet spaces by Urban in \cite[section 2]{Urban}, where he shows that most results of compact operators on Banach spaces still hold for compact Frechet spaces.
\end{remark}

For $\lambda\in\mathfrak{X}^\iota$, $\iota$ acts on $\mathbb{V}_{\lambda}$, $\mathcal{V}_\lambda$ and $\mathcal{A}_\lambda$. Concretely, let $f$ be a function on $N^-(L)\backslash G(L)$, define $f^\iota(g)=f(g^{\iota^{-1}})$. If $f$ is in one of those induced modules, $f^\iota(bg)=f(b^{\iota^{-1}}g^{\iota^{-1}})=\lambda(t^{\iota^{-1}})f(g^{\iota^{-1}})=\lambda(t)f^{\iota}(g)$, for any $b=tn\in B$. So $\mathbb{V}_{\lambda}$, $\mathcal{V}_\lambda$ and $\mathcal{A}_\lambda$ are stable under $\iota$. We let $\iota$ act on $\mathcal{D}_\lambda$ via duality.

\subsubsection{analytic family of induced modules}
Let $\mathfrak{U}$ be an affinoid subdomain of $\mathfrak{X}$ or $\mathfrak{X}^\iota$. Fix $n\geq n(\mathfrak{U})$. There is a rigid space $(\Omega_m)_n^{rig}$ such that $\mathcal{O}({(\Omega_m)_n^{rig}}_{/L})=\mathcal{A}_n(\Omega_m, L)$ for any $L\subset\overline{\mathbb{Q}}_p$, where $\mathcal{A}_n(\Omega_m, L)$ is the space of locally $L$-analytic functions on $\Omega_m$ with local analytic radius $p^{-n}$. Keeping this identity, let $\mathcal{A}_{\mathfrak{U}, n}(L)$ be the ring of rigid analytic $L$-valued functions on $\mathfrak{U}\times{(\Omega_1)}_n^{rig}$ such that
\begin{equation}
f(\lambda, [tn])=\lambda(t)f(\lambda, [n])
\end{equation}
for any $\lambda\in\mathfrak{U}(L)$, $t\in
\mathcal T(\mathbb{Z}_p)$ and $n\in {\mathcal N(\mathbb{Z}_p)}$. Here we view $f(\lambda, -)$ as a function in $\mathcal{A}_n(\Omega_m, L)$. This implies that
\begin{equation}
\mathcal{A}_{\mathfrak{U},n}=\mathcal {O}(\mathfrak{U})\hat{\otimes}\mathcal{A}_n(\mathcal{N}(\mathbb{Z}_p)).
\end{equation}
In particular, $\mathcal{A}_{\mathfrak{U},n}$ is an
$\mathcal{O}(\mathfrak{U})$-orthonormalizable Banach space. Similar to (\ref{2.3.5}), since
\begin{equation}\mathcal{A}_{\mathfrak{U},n}=
\{f\in\mathcal{O}((\Omega_1)^{rig}_n)\otimes\mathcal{O}(\mathfrak{U})|t(f\otimes 1)=f\otimes t, t\in
T(\mathbb{Z}_p)^{rig}_n\},\end{equation}
that the $*$-action of $\Delta^+$ is well defined on $\mathcal{A}_{\mathfrak{U},n}$.

\vspace{1pc}
Now define
\begin{equation}\mathcal{A}_{\mathfrak{U}}:=\bigcup_{n\geq n(\mathfrak{U})}\mathcal{A}_{\mathfrak{U},n},\end{equation}
and let $\mathcal{D}'_{\mathfrak{U},n}:= Hom_{\mathcal{O}(\mathfrak{U})}(\mathcal{A}_{\mathfrak{U},n}, \mathcal{O}(\mathfrak{U}))$ be the continuous $\mathcal{O}(\mathfrak{U})$-dual of $\mathcal{A}_{\mathfrak{U},n}$. There is a canonical injective map
\begin{equation}
\mathcal{O}(\mathfrak{U})\hat{\otimes}_L\mathcal{D}_n(\mathcal N(\mathbb{Z}_p), L)\rightarrow \mathcal{D}_{\mathfrak{U},n}'.
\end{equation}
Let $\mathcal{D}_{\mathfrak{U},n}$ be the image of this map, define
\begin{equation}
\mathcal{D}_\mathfrak{U}:= \varprojlim \mathcal{D}_{\mathfrak{U},n}.
\end{equation}
$\mathcal{A}_{\mathfrak{U}}$ and $\mathcal{D}_\mathfrak{U}$ are $\Delta^+$-modules with the $*$-action. Since the inclusions
$\mathcal{A}_{\mathfrak{U},n}\subset\mathcal{A}_{\mathfrak{U},n+1}$
are completely continuous, $\mathcal{D}_\mathfrak{U}$ is a Frechet space over $\mathcal{O}(\mathfrak{U})$.

\vspace{1pc}
\begin{prop}\label{prop}
Notation as above, we have 
\begin{itemize}
\item[$(a)$] $\mathcal{A}_{\mathfrak{U}}\otimes_{\lambda}L\cong\mathcal{A}_{\lambda}(L)$ and $\mathcal{D}_{\mathfrak{U}}\otimes_{\lambda}L\cong\mathcal{D}_{\lambda}(L)$ via specialization. 
\item[$(b)$] If $\delta\in\Delta^{++}$, the $*$-action of $\delta$ gives a compact operator on the $\mathcal{O}(\mathfrak{U})$-projective compact Frechet space $\mathcal{D}_{\mathfrak{U}}$.
\end{itemize}
\end{prop}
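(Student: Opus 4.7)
The plan is to handle (a) by working at finite level $n\geq n(\mathfrak{U})$ and then passing to the limit. I would use the Iwahori decomposition (\ref{Iwahori}) together with the transformation rule $f(n^-tg)=\lambda(t)f(g)$ to identify any $n$-analytic element of $\mathcal{A}_{\lambda}(L)$ with its restriction to $\mathcal{N}(\mathbb{Z}_p)$, yielding a natural isomorphism onto $\mathcal{A}_n(\mathcal{N}(\mathbb{Z}_p),L)$ for $n\geq n(\lambda)$. Combined with the identification $\mathcal{A}_{\mathfrak{U},n}=\mathcal{O}(\mathfrak{U})\hat{\otimes}\mathcal{A}_n(\mathcal{N}(\mathbb{Z}_p))$ recorded just above the statement, specialization at $\lambda$ yields
\[
\mathcal{A}_{\mathfrak{U},n}\otimes_\lambda L\cong\mathcal{A}_n(\mathcal{N}(\mathbb{Z}_p),L)\cong\mathcal{A}_\lambda(L)\big|_{n\text{-an}},
\]
and the first half of (a) follows by taking the direct limit in $n$. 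The distribution-side isomorphism is then obtained by dualizing, using that $\mathcal{A}_n(\mathcal{N}(\mathbb{Z}_p))$ is an orthonormalizable Banach space, so that the canonical map $\mathcal{O}(\mathfrak{U})\hat{\otimes}\mathcal{D}_n(\mathcal{N}(\mathbb{Z}_p),L)\to\mathcal{D}_{\mathfrak{U},n}'$ is injective with image $\mathcal{D}_{\mathfrak{U},n}$, and specialization is exact on orthonormalizable modules. Passing to the projective limit in $n$ gives $\mathcal{D}_{\mathfrak{U}}\otimes_\lambda L\cong\mathcal{D}_\lambda(L)$.

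For part (b), the key step is a local-analyticity-shift computation. For $\delta\in\Delta^{++}_m$, write $\delta=n_\delta^- t_\delta n_\delta^+$ by Iwahori decomposition with $t_\delta\in T^{++}$. The $*$-action formula (\ref{diff}) states $[x]*\delta=[\xi(t_\delta)^{-1}x\delta]$, and the defining property (\ref{T++}) of $T^{++}$, namely $\bigcap_{i\geq 1}t_\delta^i\mathcal{N}(\mathbb{Z}_p)t_\delta^{-i}=\{1\}$, forces conjugation by $t_\delta^{-1}$ to strictly contract $\mathcal{N}(\mathbb{Z}_p)$ (after raising $\delta$ to a fixed power if necessary). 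Consequently the $*$-action of $\delta$ carries the rigid cell $(\Omega_1)_n^{rig}$ into a strictly smaller analytic sub-polydisc, and hence sends $\mathcal{A}_{\mathfrak{U},n}$ into $\mathcal{A}_{\mathfrak{U},n+s}$ for some integer $s=s(\delta)\geq 1$, so the $*$-action factors as
\[
\mathcal{A}_{\mathfrak{U},n}\longrightarrow\mathcal{A}_{\mathfrak{U},n+s}\hookrightarrow\mathcal{A}_{\mathfrak{U},n},
\]
in which the second arrow is completely continuous. Dualizing and passing to the inverse limit, the $*$-action of $\delta$ on $\mathcal{D}_\mathfrak{U}$ factors through a completely continuous transition map in the inverse system defining $\mathcal{D}_\mathfrak{U}$; by Urban's extension of Coleman's theory to compact Frechet spaces \cite[\S2]{Urban}, this is exactly what it means for $\delta$ to act compactly on the $\mathcal{O}(\mathfrak{U})$-projective Frechet space $\mathcal{D}_\mathfrak{U}$.

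The main obstacle I expect is the bookkeeping in (b): confirming that a single element of $\Delta^{++}$ (or at worst a fixed power of it) already produces a strict improvement of the analyticity radius, and matching the transition maps on the dual side so that $\mathcal{O}(\mathfrak{U})$-projectivity of $\mathcal{D}_{\mathfrak{U},n}$ is preserved throughout. Both points should reduce, via orthonormalizability of $\mathcal{A}_n(\mathcal{N}(\mathbb{Z}_p))$, to the corresponding Banach-space statements already handled by Urban in the single-weight case (Proposition \ref{prop232} above), but one must track the shift index $s(\delta)$ by an explicit computation in coordinates on $\mathcal{N}(\mathbb{Z}_p)$.
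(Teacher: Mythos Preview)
Your sketch is correct and is essentially the argument that the paper defers to: the paper gives no proof of its own here, simply recording that ``All of these results can be found in \cite[section 3.4]{Urban}.'' What you have outlined---identifying $\mathcal{A}_{\mathfrak{U},n}$ with $\mathcal{O}(\mathfrak{U})\hat\otimes\mathcal{A}_n(\mathcal{N}(\mathbb{Z}_p))$ via Iwahori decomposition, specializing, and passing to the limit for (a); and for (b) using the strict contraction of $\mathcal{N}(\mathbb{Z}_p)$ under conjugation by $t_\delta\in T^{++}$ to obtain an analyticity shift $\mathcal{A}_{\mathfrak{U},n}\to\mathcal{A}_{\mathfrak{U},n+s}$ that factors the $*$-action through a completely continuous transition map---is precisely Urban's approach in \S3.2 and \S3.4 of \cite{Urban}. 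Your anticipated bookkeeping issues (the shift index $s(\delta)$ and the behavior of $\mathcal{O}(\mathfrak{U})$-projectivity under duality and limits) are indeed the substance of Urban's lemmas 3.2.8 and 3.4.4--3.4.6, so there is nothing further to add beyond pointing to those references.
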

All of these results can be found in \cite[section 3.4]{Urban}.

\begin{remark}
We make some remarks here:
\begin{itemize}
\item[$(a)$] The $*$-action of $\Delta^+$ on $\mathcal{D}$ is compatible with the
natural action of $I$ on it. 
\item[$(b)$] The $*$-actions of $\Delta^+$ on $\mathbb{V}^\vee_{\lambda^{alg}}(L)$, $\mathcal{V}^\vee_\lambda(L)$, $\mathcal{D}_\lambda(L)$ and $\mathcal{D}_{\mathfrak{U}}(L)$ are right action, we translate it into a left action by defining for every $\delta\in\Delta^+$
\begin{equation*}\delta*:=*\delta^{-1}\end{equation*}
\item[$(c)$] For $K_f=K^pI$, we view $\mathcal{D}$ as a $K_f$-module via the projection $K_f\rightarrow I$. 
\end{itemize}
\end{remark}

\vspace{1pc}
\section{TWISTED ACTIONS ON RESOLUTIONS AND COHOMOLOGY SPACES}

\subsection{cohomology spaces and resolutions} 
We recall first some standard results on the cohomology spaces on which we work later. Let $M$ be a $(G(\mathbb{Q}), K)$-module on which $Z_K$ acts trivially. $M$ defines a local system on $S_G(K_f)$, which is denoted by ${M}$ as well. One is interested in the cohomology space 
$H^*(S_G(K_f), M)$. In this paper, $M$ is one of $\mathbb{V}^\vee_{\lambda^{alg}}(L)$, $\mathcal{V}^\vee_\lambda(L)$, $\mathcal{D}_\lambda(L)$ and $\mathcal{D}_{\mathfrak{U}}(L)$, where the upper index $^\vee$ indicates the continuous dual space. 

\vspace{1pc}
There are two equivalent ways to define the cohomology. Let $\overline{S}_G(K_f)=\overline{S}_G/K_f$ be the Borel-Serre compactification of $S_G(K_f)$. Then $\overline{S}_G=G(\mathbb{Q})\backslash G(\mathbb{A}_f)\times\overline{\mathcal{H}}_G$ and  $\overline{\mathcal{H}}_G$ is a contractible real manifold with corners. There is a canonical projection:
\begin{equation}
\pi: \overline{S}_G\rightarrow\overline{S}_G(K_f),
\end{equation}
which extends the natural projection $\pi:S_G\rightarrow S_G(K_f)$.

\vspace{1pc}
Fix a finite triangulation of $\overline{S}_G(K_f)$ and pull it back to $\overline{S}_G$. Let $C_*(K_f)$ be the corresponding chain complex, that is, $C_q(K_f)$ is the free $\mathbb{Z}$-module over the set of $q$-dimensional simplexes of the pull-back triangulation. $C_*(K_f)$ admits a right $K_f$-action, and $C_q(K_f)$ is a free right $\mathbb{Z}[K_f]$-module of finite rank. We define
\begin{equation}
R\Gamma^{*}(K_f, M):=Hom_{K_f}(C_*(K_f), M),
\end{equation}
then $R\Gamma^{j}(K_f, M)$ is isomorphic to finitely many copies of $M$ and
\begin{equation}
h^j(R\Gamma^{*}(K_f,M))=H^j(S_G(K_f), M).
\end{equation}

Another way to define the cohomology is using the $M$-valued de Rham complex $\Omega^*(S_G(K_f), M)$. The natural duality between $\Omega^*(S_G(K_f))$ and $C_*(K_f)$ implies that the two definitions are coincident.

\begin{remark}
As summarized in \cite[\S1.2]{Urban}, for a $(G(\mathbb{Q}), K)$-module $M$, there are two equivalent ways to define the local system $M$ on $S_G(K_f)$, with respect to the $K_f$-module structure and to the $G(\mathbb{Q})$-module structure respectively.  So are the two definitions of cohomology space above.
\end{remark} 

\subsubsection{functoriality}
\noindent There is a functoriality for $R\Gamma^*(K_f, M)$. Let $\varphi: K_f'\rightarrow K_f$ be a group homomorphism and $\varphi^\#: M\rightarrow M'$ a homomorphism between a $K_f$-module $M$ and a $K'_f$-module $M'$, such that $\varphi^\#(\varphi(k')m)=k'\varphi^\#(m)$ for any $k'\in K_f'$ and $m\in M$. The pair $(\varphi, \varphi^\#)$ then induces a morphism $\varphi^*: R\Gamma^*(K_f, M)\rightarrow R\Gamma^*(K_f', M')$ up to homotopy, see \cite[\S4.2.5]{Urban}.

\subsubsection{Hecke operators on resolution and cohomology}
\noindent
Apply the functoriality, as in \cite[\S4.2]{Urban}, $f=f^p\otimes u_t\in \mathcal{H}_p(K^p)$ defines a morphism $R\Gamma(t):R\Gamma^*(K_f, M)\rightarrow R\Gamma^*(K_f, M)$ by the composition:
$$R\Gamma^*(K_f, M)\rightarrow R\Gamma^*(tK_ft^{-1}, M)\rightarrow R\Gamma^*(K_f\cap tK_ft^{-1}, M)\rightarrow R\Gamma^*(K_f, M)$$
where the first map is given by the pair $(ad(t^{-1}), m\mapsto t*m)$, the second is by the restriction map from $K_f\cap tK_ft^{-1}$ to $tK_ft^{-1}$, and the last one is given by the corestriction as writing 
\begin{equation}
K_f=\sqcup_{j}k_j(K_f\cap tK_ft^{-1}).
\end{equation}
It is easy to see that $R\Gamma(t_1)\circ R\Gamma(t_2)=R\Gamma(t_1t_2)$. This defines an action of $\mathcal{H}_p(K^p)$ on $R\Gamma^*(K_f, M)$ and therefore defines an action on the cohomology spaces $H^*(S_G(K_f), M)$. We denote this action by $*$ as well.

\vspace{1pc}
If $M=\mathcal{D}_\lambda(L)$ and $t \in T^{++}$, by the fact that $R\Gamma^q(K_f, M)$ is a finite copy of $M$, Proposition \ref{prop232} implies that $f$ is a compact operator on $R\Gamma^*(K^pI, \mathcal{D}_\lambda(L))$. If $\mathfrak{U}$ is an open affinoid of $\mathfrak{X}$ and $\lambda\in \mathfrak{U}$, by Proposition \ref{prop}, $R\Gamma^*(K^pI, \mathcal{D}_\lambda(L))$ can be obtained by the specialization of $R\Gamma^{*}(K^pI, \mathcal{D}_{\mathfrak{U}})$ at $\lambda$. Moreover, for affinoids $\mathfrak{U}'\subset\mathfrak{U}$, $R\Gamma^{*}(K^pI, \mathcal{D}_{\mathfrak{U}'})$ can be obtained via the natural restriction morphism $\mathcal{O}(\mathfrak{U})\rightarrow\mathcal{O}(\mathfrak{U}')$. The Hecke action is compatible with specialization and restriction.

\subsubsection{$\iota$ action on resolution and cohomology}
Assume $\lambda\in\mathfrak{X}^\iota$ and $\mathfrak{U}$ is an open affinoid of $\mathfrak{X}^\iota$. Let $M$ be one of $\mathbb{V}^\vee_{\lambda}(L)$, $\mathcal{V}^\vee_\lambda(L)$, $\mathcal{D}_\lambda(L)$ and $\mathcal{D}_{\mathfrak{U}}(L)$. Choose $K_f=K^pI\subset G(\mathbb{A}_f)$, such that $K^p$ is stable under $\iota$. Consider morphisms $\iota: K_f\rightarrow K_f$ and $\iota: M\rightarrow M$ defined as in \S2. 

\begin{lem}
Assume $M = \mathbb{V}^\vee_{\lambda}(L)$, $\mathcal{V}^\vee_\lambda(L)$, $\mathcal{D}_\lambda(L)$ or $\mathcal{D}_{\mathfrak{U}}(L)$. For $g\in I$, $x\in M$,
\begin{equation}g*x^\iota = (g^\iota*x)^\iota\end{equation}
Therefore, by the functoriality, $\iota$ defines an morphism on $R\Gamma^*(K_f, M)$ up to homotopy. In particular, $\iota$ acts on the cohomology $H^*(S_G(K_f), M)$.
\end{lem}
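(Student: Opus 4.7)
The plan is to verify the identity $g*x^\iota = (g^\iota * x)^\iota$ by direct computation, handling first the induced modules $\mathbb{V}^\vee_{\lambda}, \mathcal{V}^\vee_\lambda, \mathcal{A}_\lambda$, then passing to the distribution spaces $\mathcal{D}_\lambda$ and $\mathcal{D}_{\mathfrak{U}}$ via duality, and finally invoking the functoriality of $R\Gamma^*$ recalled in \S3.1.1.

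The key simplification is that for $g\in I$ the Iwahori decomposition $g=n_g^- t_g n_g^+$ forces $t_g\in\mathcal{T}(\mathbb{Z}_p)$, so $\xi(t_g)=1$ in $T(\mathbb{Q}_p)$, and the $*$-action of $g$ reduces to ordinary right translation on $\Omega_1=(I\cap N^-)\backslash I$, namely $(g*f)([z])=f([zg])$. Since $\iota$ stabilizes $I$, $I\cap N^-$, and $\mathcal{T}(\mathbb{Z}_p)$ by the choices of \S2.1, it descends to a bijection of $\Omega_1$, and hence both sides of the lemma unwind to values of $f$ at cosets that coincide because $\iota$ is a group automorphism. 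For $\mathbb{V}^\vee_{\lambda}$ and $\mathcal{V}^\vee_\lambda$, which sit inside $\mathcal{A}_\lambda$ via the natural inclusions, the same computation applies verbatim.

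For the continuous duals $\mathcal{D}_\lambda(L)$ and $\mathcal{D}_{\mathfrak{U}}$, the $\iota$-action is by definition the transpose of the one on $\mathcal{A}_\lambda$, so testing $g*\mu^\iota$ and $(g^\iota *\mu)^\iota$ against an arbitrary $f\in\mathcal{A}_\lambda$ reduces, via the adjointness between the $*$-action of $g$ and that of $g^{-1}$, to the identity already established on the function side (applied to an appropriate $\iota$-transform of $f$). The case $\mathcal{D}_{\mathfrak{U}}$ then follows by $\mathcal{O}(\mathfrak{U})$-linear extension, using $\mathcal{D}_\mathfrak{U}=\varprojlim\mathcal{D}_{\mathfrak{U},n}$ and the compatibility of the $\iota$-action with specialization in Proposition \ref{prop}(a).

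With the identity in hand, apply the functoriality of \S3.1.1 with $\varphi=\iota\colon K_f\to K_f$ (well-defined since $K_f=K^p I$ is $\iota$-stable) and $\varphi^\#\colon M\to M$ given by $x\mapsto x^\iota$. The required compatibility $\varphi^\#(\varphi(k')m)=k'\varphi^\#(m)$ is exactly $(k'^\iota m)^\iota=k' m^\iota$, i.e., the identity of the lemma with $g=k'^\iota$. Hence the pair $(\iota,\iota^\#)$ induces a morphism $R\Gamma^*(K_f,M)\to R\Gamma^*(K_f,M)$ up to homotopy, and passing to cohomology yields the action of $\iota$ on $H^*(S_G(K_f),M)$. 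The only substantive part of the argument is the bookkeeping of conventions---in particular, which direction $\iota$ acts on cosets of $\Omega_1$ versus on functions, and the fact that restricting to $g\in I$ eliminates all interaction with the splitting $\xi$, so no compatibility between $\xi$ and $\iota$ has to be established.
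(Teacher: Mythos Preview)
Your proposal is correct and takes the same approach as the paper, which says only that the lemma ``follows immediately from a computation by definition''; you have simply spelled out that computation, including the key simplification that $\xi(t_g)=1$ for $g\in I$. One minor slip: you write that $\mathbb{V}^\vee_\lambda$ and $\mathcal{V}^\vee_\lambda$ sit inside $\mathcal{A}_\lambda$, but it is $\mathbb{V}_\lambda$ and $\mathcal{V}_\lambda$ that do---the identity on the duals then follows by the same transpose argument you already give for $\mathcal{D}_\lambda$.
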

The lemma follows immediately from a computation by definition.

\subsubsection{Action of $^\iota\mathcal{H}_p(K^p)$}
For $\iota$-invariant $K^p$, define the $\iota$-twisted Hecke algebra:
\begin{equation}
^\iota \mathcal{H}_p(K^p): =\mathcal{H}_p(K^p) \rtimes\left<\iota\right>,
\end{equation} 
where $\left<\iota\right>$ is the finite group generated by $\iota$ and the semi-product $\rtimes$ is understood as a crossed product, since at every place the local Hecke algebra can be viewed as a group algebra of double cosets. We write $f\times\iota$ and $\iota\times f$ for the products of $f\in\mathcal{H}_p(K^p)$ and $\iota$. We similarly define $^\iota\mathcal{H}_p$, then $^\iota\mathcal{H}_p$ is the inductive limit of ${^\iota \mathcal{H}_p(K^p)}$.

\begin{lem}
There is an action of $^\iota\mathcal{H}_p(K^p)$ on $R\Gamma^*(K_f, M)$, extending the $*$-action of $\mathcal{H}_p(K^p)$ and $\iota$.
\end{lem}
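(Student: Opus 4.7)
The plan is to build the ${}^\iota\mathcal{H}_p(K^p)$-action by combining the two structures already in hand—the $*$-action of $\mathcal{H}_p(K^p)$ constructed in \S3.1.2 and the $\iota$-action constructed in \S3.1.3—and then to check the one relation that the crossed product $\mathcal{H}_p(K^p)\rtimes\langle\iota\rangle$ imposes, namely
$$\iota\circ R\Gamma(f)\ \simeq\ R\Gamma(f^\iota)\circ\iota$$
up to homotopy, for every $f\in\mathcal{H}_p(K^p)$. By linearity and the factorization $f=f^p\otimes u_t$, it suffices to treat an admissible generator; the relation $\iota^n=\mathrm{id}$ for $n=|\langle\iota\rangle|$ is inherited from the group level.

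To verify this relation I would unwind $R\Gamma(u_t)$ into the three functoriality steps recalled before the lemma: the conjugation morphism attached to the pair $(ad(t^{-1}),\ m\mapsto t*m)$, the restriction along $K_f\cap tK_ft^{-1}\hookrightarrow tK_ft^{-1}\subset K_f$, and the corestriction along a fixed decomposition $K_f=\sqcup_j k_j(K_f\cap tK_ft^{-1})$. I would then conjugate each step by $\iota$ and identify it with the corresponding step for $R\Gamma(u_{t^\iota})$. For the conjugation step the preceding Lemma giving $g*x^\iota=(g^\iota*x)^\iota$ is precisely what is needed: it intertwines $(ad(t^{-1}),\ m\mapsto t*m)$ with $(ad((t^{\iota})^{-1}),\ m\mapsto t^\iota*m)$ through $\iota$, using in addition that $\iota$ is a group automorphism so $ad(t^{-1})$ is transported to $ad((t^\iota)^{-1})$. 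For the restriction step the $\iota$-stability of $K_f=K^pI$ and of $T^+$ (noted after (\ref{T++})) implies $\iota(K_f\cap tK_ft^{-1})=K_f\cap t^\iota K_f(t^\iota)^{-1}$, so restriction commutes with $\iota$ strictly. For the corestriction step, applying $\iota$ to $K_f=\sqcup_j k_j(K_f\cap tK_ft^{-1})$ yields a decomposition $K_f=\sqcup_j k_j^{\iota}(K_f\cap t^\iota K_f(t^\iota)^{-1})$ which is a legitimate set of representatives computing $R\Gamma(u_{t^\iota})$; any two such choices give homotopic morphisms (\cite[\S4.2.5]{Urban}), hence the two compositions agree up to homotopy. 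The prime-to-$p$ factor $f^p$ is handled by the same three-step argument using $\iota$-stability of $K^p$.

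The main obstacle I anticipate is not conceptual but bookkeeping: the functoriality is only defined up to homotopy, and one must ensure that the homotopies produced at each step are coherent enough for the crossed product relations to hold simultaneously with composition of Hecke operators on $R\Gamma^*$. Fortunately, once one passes to cohomology $H^*(S_G(K_f),M)$ all such homotopies become equalities and one obtains a genuine ${}^\iota\mathcal{H}_p(K^p)$-module structure; at the level of the complex $R\Gamma^*$ the action is naturally formulated in the derived/homotopy sense, which is exactly the level of precision required for the trace-distribution manipulations of \S4 and \S5.
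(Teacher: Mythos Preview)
Your proposal is correct and follows essentially the same approach as the paper: both reduce to verifying the single compatibility $\iota\circ R\Gamma(t)\circ\iota^{-1}=R\Gamma(t^\iota)$ (equivalently $\iota\times f=f^\iota\times\iota$) up to homotopy. The paper dismisses this as ``directly from the definition'', whereas you have spelled out the three-step verification (conjugation, restriction, corestriction) that underlies that remark.
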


\begin{proof}  
We have to check that the $*$-actions of $\mathcal{H}_p(K^p)$ and $\iota$ on $R\Gamma^*(K, M)$ are compatible in the sense that $\iota \times f = f^\iota \times \iota$. So we only have to check: 
\begin{equation}\iota\circ R\Gamma(t)\circ\iota^{-1}=R\Gamma(t^\iota),\end{equation}
which is again directly from the definition.
\end{proof}

\subsubsection{compare to the standard sheaf-theoretic action}
Assume $M=\mathbb{V}_\lambda^\vee(\mathbb{C})$, we compute the cohomology by de Rham complex:
\begin{equation}
H^q(S_G(K_f), M)=h^q(\Omega^*(S_G(K_f), M)),
\end{equation}
and we have the standard sheaf-theoretic definion of $\mathcal{H}_p$ action on it. By (\ref{diff}), we have for any $\phi\in\mathbb{V}_\lambda$ and $\delta\in\Delta^{+}$,
\begin{equation}\label{twistfactor}
\delta*\phi=\lambda(\xi(t_\delta))^{-1}(\delta\cdot \phi).
\end{equation}
This implies for any $f=f^p\otimes u_t\in\mathcal{H}_p$, that the $*$-action of $f$ on $H^q(S_G(K_p), M)$ is the standard action of $f$ twisting by $\lambda(\xi(t))$.

\vspace{1pc}
The $\iota$-action on $\Omega^*(S_G(K_f), M)=\Omega^*(S_G(K_f))\otimes M$ is define by $(\iota^{-1})^*\otimes\iota$, 
where $(\iota^{-1})^*$ means the pull-back on differential forms induced by the map 
\begin{equation}
\iota^{-1}: S_G(K_f)\leftarrow S_G(K_f)
\end{equation}
This $\iota$-action can be described explicitly as follow. Let $T(S_G)$ and $T(S_G(K_f))$ be the sheaves of left invarinat vecter fields on $S_G$ and $S_G(K_f)$ respectively, the projection $\pi$ induces an push-forward surjection:
\begin{equation}
\pi_*: T(S_G)\rightarrow T(S_G(K_f)).
\end{equation}
One views an $q$ differential form $\tau$ in $\Omega^*(S_G(K_f), M)$ as a map
\begin{equation}
\tau: \wedge^qT(S_G(K_f)\rightarrow \mathcal{O}(S_G(K_f))\otimes M
\end{equation}
then $\tau^\iota$ is defined as 
\begin{equation}
\tau^\iota(\pi_*\bar{v}_1\wedge\cdots\wedge\pi_*\bar{v}_q)([g]):=(\tau((\pi_*\iota^{-1}_*\bar{v}_1\wedge\cdots\wedge\pi_*\iota^{-1}_*\bar{v}_q))([g]^\iota))^{\iota}
\end{equation}
where $\bar{v}$ is a left invariant vector field on $S_G$, $g\in G(\mathbb{A})^1$ and $[g]$ indicates the class of $g$ in $S_G$ or $S_G(K_f)$. It is easy to check that $\iota$ is well defined on $H^*(S_G(K_f), M)$ under this definition. The duality between $\Omega^*(S_G(K_f))$ and $C_*(K_f)$ implies that this action coincides with the one defined by functoriality.

\subsection{twisted action on finite slope cohomology}
We need a lemma on the slope decomposition of compact projective Frechet spaces according to compact operators.
\begin{lem}\label{slopedecomp}
Let $A$ be a $\mathbb{Q}_p$-Banach algebra, $M$ a compact projective
Frechet $A$-module, and $f$ a compact $A$-linear operator of $M$.
Then the Fredholm determinant $R(f,X)$ of $f$ is entire over $A$. If $R(f,X)=Q(X)S(X)$ over $A$, such that $Q$ and $S$ are relatively prime and $Q$ is a Fredholm polynomial with invertible leading
coefficient, then there is a decomposition of $M$: 
\begin{equation}M=N_f(Q)\oplus F_f(Q)\end{equation}
into $f$-stable close submodules satisfing:
\begin{itemize}
\item[$(a)$] $Q^*(f)$ annihilates $N_f(Q)$ and is invertible on $F_f(Q)$;
\item[$(b)$] the projector on $N_f(Q)$ is given by $E_Q(f)$ with
$E_Q(X)\in XA\{\{X\}\}$ whose coefficients are polynomials in the coefficients of $Q$ and $S$.
\end{itemize}
Moreover, if $A$ is semi-simple, then $N_f(Q)$ is of finite rank,
and the characteristic polynomial of $f$ on $N_f(Q)$ is $Q$. In particular, for $h\in\mathbb{Q}_{\geq0}$, we may choose $Q(x)$ such that $N_f(Q)=M^{\leq h}$, the $\leq h$-slope decomposition of $M$.
\end{lem}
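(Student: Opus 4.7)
The plan is to reduce the statement to Serre's theory of compact operators on $p$-adic Banach modules, then extend to the Frechet setting following Urban (\cite[\S2]{Urban}). Since $M$ is compact projective Frechet over $A$, we may write $M=\varprojlim_n M_n$ as an inverse limit of $A$-orthonormalizable Banach modules with completely continuous transition maps, and the compact $A$-linear operator $f$ factors compatibly through each $M_n$ as a compact operator $f_n$.

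For the Fredholm determinant, on each Banach piece $M_n$ the classical Serre--Coleman construction shows that $R(f_n,X)$ converges to an entire function over $A$. Compatibility with the transition maps gives $R(f,X)=\varprojlim_n R(f_n,X)$, which is thus entire over $A$. Given the factorization $R(f,X)=Q(X)S(X)$ with $(Q,S)=1$ and $Q$ Fredholm with invertible leading coefficient, I would produce the projector via Bezout: writing $Q^*(X)$ for the reciprocal polynomial $X^{\deg Q}Q(X^{-1})/\mathrm{lead}(Q)$ and similarly for $S$, there exist $u(X),v(X)\in A[X]$ with
\[
u(X)Q^*(X)+v(X)S^*(X)=1.
\]
Setting $E_Q(X):=v(X)S^*(X)-v(0)S^*(0)$ (so that $E_Q(X)\in XA\{\{X\}\}$ once expressed against the Taylor expansion of $R$), one checks on each $M_n$ that $e_n:=E_Q(f_n)$ is idempotent, its image is annihilated by $Q^*(f_n)$, and $Q^*(f_n)$ acts invertibly on the complement. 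The coefficients of $E_Q$ are polynomial in those of $Q$ and $S$, so the $e_n$ are compatible across the inverse system and assemble into an idempotent $e=E_Q(f)$ on $M$, giving the decomposition $M=N_f(Q)\oplus F_f(Q)$ with the stated properties (a) and (b).

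For the refinements when $A$ is semi-simple, the classical Riesz theory on each $M_n$ shows that the kernel of the Fredholm polynomial $Q^*$ evaluated at a compact operator is of finite rank over $A$, and a Cayley--Hamilton/degree-count argument identifies the characteristic polynomial of $f$ on $N_f(Q)$ with $Q$ itself. For the slope statement, I would choose $Q$ to be the factor of $R(f,X)$ whose reciprocal roots have $p$-adic valuation $\leq h$; the Newton polygon of $R$ ensures that this factor is indeed Fredholm with invertible leading coefficient, and the resulting $N_f(Q)$ agrees by construction with the usual $\leq h$-slope subspace $M^{\leq h}$.

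The main obstacle is verifying that the Bezout construction descends through the Frechet limit: one must check that the polynomial expressions for $E_Q$ do not accumulate denominators which blow up under $\varprojlim$, and that the projectors $e_n$ satisfy the required compatibility with the completely continuous transition maps. Both points are handled by Urban's machinery for compact operators on Frechet spaces, where the key input is that compact operators between orthonormalizable Banach modules have controllable Fredholm data that behaves well under inverse limits.
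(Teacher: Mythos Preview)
Your approach is essentially the paper's: reduce to the Banach case via Serre--Coleman, express the projector as a power series in $f$ depending only on the factorization, and pass to the inverse limit using compatibility with the transition maps. Two points of comparison are worth flagging.

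First, the paper uses a sharper fact than your $R(f,X)=\varprojlim_n R(f_n,X)$: for a compact operator on a compact Frechet space one actually has $R(f,X)=\det(1-Xf\mid M_n)$ for all sufficiently large $n$ (the determinant stabilizes). This means the \emph{same} factorization $Q\cdot S$ holds on each $M_n$, so the projector power series is literally the same at every level and the compatibility across the inverse system is automatic from $u_{n+1,n}\circ f_{n+1}=f_n\circ u_{n+1,n}$. Your formulation would require an extra argument to match up factorizations at different levels.

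Second, your explicit B\'ezout construction is not quite right as written: $S(X)$ is an entire power series, not a polynomial, so there is no reciprocal polynomial $S^*$, and one cannot invoke B\'ezout in $A[X]$ for the pair $(Q^*,S^*)$. The correct version (which is what Buzzard proves in \cite[Theorem~3.3]{Buzzard eigenvarieties}, and what the paper simply cites) works directly with $Q,S\in A\{\{X\}\}$, using that $Q$ is a polynomial with invertible leading coefficient to produce $E_Q(X)\in XA\{\{X\}\}$ with the stated properties. Your instinct that the projector is polynomial in the coefficients of $Q$ and $S$ is correct, but the route through reciprocal polynomials does not go through.
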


\begin{proof}
The lemma is known if $M$ is a projective Banach module by Serre \cite{Serre} and Coleman \cite{Coleman}. Now $M$ is a  projective compact Frechet space, there are projective $A$-Banach modules $M_n$ with compact operators $f_n$, such that \begin{equation}
M=\varprojlim M_n, \  f=\varprojlim f_n
\end{equation}
with $f_n=f|M_n$. Now $R(f,X)=det(1-Xf|M_n)$ for $n$ sufficiently large, so $R(f,X)=Q(X)S(X)$ gives the expected decomposition $M_n=N_{n,f}(Q)\oplus F_{n,f}(Q)$. Let $p_n$ be the projector of $M_n$ onto $N_{n,f}(Q)$, by \cite[Theorem 3.3]{Buzzard eigenvarieties}, there is a power series $\phi\in A[[T]]$ depending only on $Q$, such that $p_n=\phi(f_n)$. Moreover, $N_{n,f}(Q)$ and $F_{n,f}(Q)$ are given by the image and kernel of $p_n$ respectively. Denote by $u_{n+1, n}$ the transation map from $M_{n+1}$ to $M_n$. By definition, we have a commutative diagram: 
 \begin{equation}\begin{array}[c]{ccccc}
M&\rightarrow& M_{n+1}&\stackrel{u_{n+1,n}}{\rightarrow}& M_n\\
 \downarrow\scriptstyle{f}& &\downarrow\scriptstyle{f_{n+1}}&\swarrow\scriptstyle{} & \downarrow\scriptstyle{f_{n}}\\
M&\rightarrow& M_{n+1}&\stackrel{u_{n+1,n}}{\rightarrow}& M_n
 \end{array}\end{equation}
Taking projective limit, we have a projector $p=\varprojlim \phi(f_n)$ on $M$ and the decomposition $M=N_{f}(Q)\oplus F_{f}(Q)$. Indeed, by the definition of compact operator, $N_{n,f}(Q)$ are isomorphic for $n$ sufficiently large. So the last statement follows.
\end{proof}

\vspace{1pc}
Now consider $f=f^p\otimes u_t\in R_{\mathcal{S},p}$ admissible. Then $f$ defines a compact operator on the complex $R\Gamma(K_f, \mathcal{D}_\lambda(L))$. For given $h\in\mathbb{Q}_{\geq0}$, we define the $\leq h$-slope part of $H^*(S_G(K^pI), \mathcal{D}_\lambda(L))$ with respect to $f$ as in \cite[ \S2]{Urban}. Then the finite slope part of $H^*(S_G(K^pI), \mathcal{D}_\lambda(L))$ is defined by:
\begin{equation}
H^*_{fs}(S_G(K^pI), \mathcal{D}_\lambda(L)):=\varinjlim_{h} H^*(S_G(K^pI), \mathcal{D}_\lambda(L))^{\leq h}.
\end{equation} 
Since $R_{\mathcal{S},p}$ is in the center of $\mathcal{H}_p(K^p)$, $H^*_{fs}(S_G(K^pI), \mathcal{D}_\lambda(L))$ is independent of $f$, and endowed with the $*$-action of $\mathcal{H}_p(K^p)$. We also define
\begin{equation}
H^*_{fs}(\tilde S_G, \mathcal{D}_\lambda(L)):=\varinjlim_{K^p}H^*_{fs}(S_G(K^pI), \mathcal{D}_\lambda(L)).
\end{equation}

\begin{prop}\label{prop322}
Assume $\lambda\in\mathfrak{X}^\iota$. The $*$-action of $\mathcal{H}_p(K^p)$ on the finite slope cphomology  $H^*_{fs}(S_G(K^pI), \mathcal{D}_\lambda(L))$ extends to an action of $^\iota\mathcal{H}_p(K^p)$. Therefore the action of $\mathcal{H}_p$ on $H^*_{fs}(\tilde S_G, \mathcal{D}_\lambda(L))$ extends to an action of $^\iota\mathcal{H}_p$.
\end{prop}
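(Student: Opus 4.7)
The plan is to reduce the claim to a single fact: that the $\iota$-action preserves the finite slope subspace of $H^*(S_G(K^pI), \mathcal{D}_\lambda(L))$. Indeed, the preceding lemma already supplies a compatible action of $^\iota\mathcal{H}_p(K^p)$ on the full resolution $R\Gamma^*(K^pI, \mathcal{D}_\lambda(L))$ obeying $\iota\times f = f^\iota\times \iota$. Once we know that $\iota$ stabilizes the finite slope subspace, this action descends, giving the desired $^\iota\mathcal{H}_p(K^p)$-action; the statement for $^\iota\mathcal{H}_p$ then follows by passing to the direct limit over $\iota$-stable $K^p$ and using that the restriction maps commute with both $\iota$ and the Hecke operators.

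To establish stability, I would fix an admissible operator $f = f^p\otimes u_t \in R_{\mathcal{S},p}$ used to define the slope decomposition. Since $T^{++}$ is $\iota$-stable by (\ref{T++}) and $K^p$ is $\iota$-invariant by assumption, the twisted operator $f^\iota = (f^p)^\iota\otimes u_{t^\iota}$ is again admissible. The identity
\begin{equation*}
\iota\circ R\Gamma(t)\circ\iota^{-1} = R\Gamma(t^\iota),
\end{equation*}
checked inside the proof of the preceding lemma, says that $\iota$ intertwines the $*$-action of $f$ with that of $f^\iota$. Consequently, $\iota$ carries the $\leq h$-slope subspace $H^*(S_G(K^pI), \mathcal{D}_\lambda(L))^{\leq h}$ constructed from $f$ bijectively onto its analogue constructed from $f^\iota$.

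The decisive observation is then that $H^*_{fs}(S_G(K^pI), \mathcal{D}_\lambda(L))$ does not depend on the choice of admissible operator, because $R_{\mathcal{S},p}$ is central in $\mathcal{H}_p(K^p)$. Hence $\iota$ sends $\leq h$-slope vectors to vectors of finite slope; applying the same argument to $\iota^{-1}$ provides the reverse inclusion. Therefore $H^*_{fs}$ is $\iota$-stable, and the $*$-action of $^\iota\mathcal{H}_p(K^p)$ on it is obtained by restriction from $R\Gamma^*$.

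The only step requiring genuine verification is that the slope projector supplied by Lemma \ref{slopedecomp} really is transported cleanly by $\iota$, which I expect to be the main (and only mildly subtle) obstacle. Because $\iota$ is an isomorphism intertwining $f$ with $f^\iota$, the two operators share the same Fredholm determinant, hence the same factorization $R(f,X)=Q(X)S(X)$, and therefore the same power series $\phi\in XA\{\{X\}\}$ defines both slope projectors. The intertwining $\iota\circ f = f^\iota\circ\iota$ then propagates term by term to $\iota\circ\phi(f) = \phi(f^\iota)\circ\iota$, confirming that the $\iota$-image of the slope piece for $f$ is precisely the slope piece for $f^\iota$; once this identification is in hand, the rest of the proof is formal.
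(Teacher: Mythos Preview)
Your argument is correct and follows essentially the same route as the paper: both use the intertwining relation $\iota\circ f=f^\iota\circ\iota$ together with the independence of the finite slope part from the choice of admissible operator. The only difference is cosmetic: before passing to the limit, the paper introduces the intermediate $\iota$-stable piece $H^q(\ldots)^{\leqslant h}_\iota:=\bigcap_{i=1}^l H^q(\ldots)^{\leq h}_{f^{\iota^i}}$, which is reused later (e.g.\ in Proposition~\ref{induction1} and Proposition~\ref{classicity}), whereas you pass directly to the direct limit over $h$.
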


\begin{proof}
We only have to prove the first statement. For this we need the next result from \cite[lemma 2.3.2]{Urban}:
\begin{lem}
Let $M$, $M'$ be two $L$-Banach $($or Frechet $)$ spaces, $u$ and $u'$ endomorphism of $M$ and $M'$, and $M=M^{\leq h}_u\oplus M_1$  and $M'={M'}^{\leq h}_{u'}\oplus M'_1$ their $\leq h$-slope decompositions respectively. Assume $f$ is a continuous linear map from $M$ to $M'$ such that $f\circ u = u'\circ f$, then $f$ respects the slope decompositions.
\end{lem}
Since $\iota\times f = f^\iota \times \iota$, by the lemma,
\begin{equation}\iota : H^q_{fs}(S_G(K^pI), \mathcal{D}_\lambda(L))^{\leq h}_{f^\iota}\rightarrow H^q_{fs}(S_G(K^pI), \mathcal{D}_\lambda(L))^{\leq h}_{f}\end{equation}
is well defined. Let $l$ be the order of $\iota$, define
\begin{equation}H^q_{fs}(S_G(K^pI), \mathcal{D}_\lambda(L))^{\leqslant h}_\iota:=\bigcap_{i=1}^l H^q_{fs}(S_G(K^pI), \mathcal{D}_\lambda(L))^{\leq h}_{f^{\iota^i}}.\end{equation}
Then
\begin{equation}\iota: H^q_{fs}(S_G(K^pI), \mathcal{D}_\lambda(L))^{\leqslant h}_\iota\rightarrow H^q_{fs}(S_G(K^pI), \mathcal{D}_\lambda(L))^{\leqslant h}_\iota.\end{equation}
Since the finite slope part is independent of $f$, we prove the proposition by taking the inductive limit.
\end{proof}

\subsection{$\iota$-invariant finite slope representations}
In this section, we introduce the $\iota$-invariant finite slope automorphic representations, which are the main objects we concern in this paper. We first recall a well-known result for admissible representations of a locally profinite group. Let $G$ be a locally profinite group, and $K$ an open compact subgroup of $G$. Write $\mathcal{H}(G)$ the Hecke algebra of compact supported smooth functions of $G$ and $\mathcal{H}(G,K)$ its subalgebra of $K$ bi-invariant functions. Then

\begin{prop}\label{Ktype}
The map $\pi\mapsto\pi^K$ gives a bijection between equivalence classes of irreducible smooth representations $(\pi, V)$ of $\mathcal{H}(G)$ such that $V^K\neq0$ and equivalence classes of irreducible $\mathcal{H}(G,K)$-representations.
\end{prop}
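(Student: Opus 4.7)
The plan is to use the standard idempotent trick based on the projector $e_K \in \mathcal{H}(G)$, the normalized characteristic function of $K$, which satisfies $e_K^2 = e_K$ and $e_K \mathcal{H}(G) e_K = \mathcal{H}(G,K)$; for any smooth $\mathcal{H}(G)$-module $V$ one then has $V^K = e_K V$. With this in hand the proof splits into three steps: \emph{well-definedness} of the map, \emph{injectivity}, and \emph{surjectivity} on equivalence classes.

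First I would show the map $\pi \mapsto \pi^K$ is well-defined, i.e.\ that $V^K$ is irreducible over $\mathcal{H}(G,K)$. Let $(\pi,V)$ be irreducible smooth with $V^K\neq 0$, and fix $0\neq v \in V^K$. Irreducibility of $V$ gives $\mathcal{H}(G)\, v = V$, so applying $e_K$ on the left yields $\mathcal{H}(G,K)\, v = e_K \mathcal{H}(G) e_K\, v = e_K V = V^K$. Hence every nonzero vector of $V^K$ generates $V^K$ under $\mathcal{H}(G,K)$, which is exactly irreducibility.

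For injectivity on equivalence classes, I would verify that any isomorphism $V^K \cong (V')^K$ of $\mathcal{H}(G,K)$-modules lifts to an isomorphism $V \cong V'$ of $\mathcal{H}(G)$-modules. The key observation is that the natural map $\mathcal{H}(G) \otimes_{\mathcal{H}(G,K)} V^K \to V$ is surjective (its image is a nonzero $\mathcal{H}(G)$-submodule, hence all of $V$ by irreducibility), so $V$ is determined up to isomorphism by its $K$-invariants, and a lift can be built by extending $\mathcal{H}(G,K)$-linearly. For surjectivity, given an irreducible $\mathcal{H}(G,K)$-module $W$, I would form $\tilde V := \mathcal{H}(G) e_K \otimes_{\mathcal{H}(G,K)} W$, a smooth $\mathcal{H}(G)$-module with $e_K \tilde V \cong W$ via $e_K h \otimes w \mapsto (e_K h e_K)\cdot w$. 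By Zorn's lemma pick a maximal $\mathcal{H}(G)$-submodule $N \subset \tilde V$ with $e_K N = 0$, and set $V := \tilde V /N$. Then $V^K \cong W$ by construction.

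The main obstacle is the final irreducibility of $V$: one must show every nonzero $\mathcal{H}(G)$-submodule of $V$ meets the image of $W$, which is where the idempotent $e_K$ is essential. Concretely, if $U \subset V$ is a nonzero submodule with $U^K = e_K U = 0$, then its preimage in $\tilde V$ strictly enlarges $N$ while remaining annihilated by $e_K$, contradicting maximality; hence $U^K \neq 0$ and the pairing with $W \cong V^K$ (itself irreducible over $\mathcal{H}(G,K)$ by the first step) forces $U = V$. Combining the three steps yields the bijection.
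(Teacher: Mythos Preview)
The paper does not actually prove this proposition: it is introduced as ``a well-known result for admissible representations of a locally profinite group'' and simply stated without argument. Your proof is the standard one (essentially the argument found in Bushnell--Henniart or the Bernstein notes) and is correct in outline.

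Two small remarks. First, in the surjectivity step you invoke Zorn's lemma to find a maximal $N\subset\tilde V$ with $e_KN=0$, but this is unnecessary: the sum of all such submodules again satisfies $e_K(\cdot)=0$, so there is a \emph{unique} largest one. Second, your injectivity paragraph is the weakest link as written: the sentence ``a lift can be built by extending $\mathcal{H}(G,K)$-linearly'' is vague, and the surjectivity of $\mathcal{H}(G)\otimes_{\mathcal{H}(G,K)}V^K\to V$ alone does not immediately identify $V$ with $V'$. A cleaner way to close this gap is to take the graph $\Gamma\subset V^K\oplus (V')^K$ of the given $\mathcal{H}(G,K)$-isomorphism, let $U=\mathcal{H}(G)\cdot\Gamma\subset V\oplus V'$, check $U^K=\Gamma$, and then show both projections $U\to V$, $U\to V'$ are isomorphisms (nonzero kernel would force $U^K$ to meet $0\oplus (V')^K$ or $V^K\oplus 0$ nontrivially, contradicting that $\Gamma$ is a graph). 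With that adjustment your argument is complete.
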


\subsubsection{Finite slope representations}
let $(\pi,V)$ be an irreducible representation of $\mathcal{H}_p$ defined over a $p$-adic field $L$. We say $\pi$ is admissible overconvergent of weight $\lambda\in\mathfrak{X}(L)$ if it is admissible and a subqoutient of $H^q(\tilde S_G, \mathcal{D}_\lambda(L))$. Since $\pi$ is admissible, that for any $K^p$, an element in $\mathcal{H}_p(K^p)$ acts on $V$ as an endomorphism of finite rank. By the fact that $\mathcal{H}_p=\varinjlim_{K^p}\mathcal{H}_p(K^p)$, there is a well-defined trace map:
\begin{equation}
J_{\pi}(f):=tr(\pi(f))
\end{equation}
for any $f\in\mathcal{H}_p$. We say $\pi$ is of level $K^p$ if $\pi^{K^p}$ as a representation of $\mathcal{H}_p(K^p)$ is not trivial. Let $\sigma$ be an irreducible representation of $\mathcal{H}_p(K^p)$. We say $\sigma$ is overconvergent of level $K^p$ and weight $\lambda$ if it is of the form $\pi^{K^p}$ for some admissible overconvergent $\pi$ with level $K^p$ and weight $\lambda$. Then $\sigma$ is finite dimensional and can be realized in the cohomology space $H^q(S_G(K^pI), \mathcal{D}_\lambda(L))$. For fixed $K^p$, the Hecke algebra $R_{\mathcal{S},p}$ is included in the center of $\mathcal{H}_p$. So the restriction of $\sigma$ to $R_{\mathcal{S},p}$ is a character, which is denoted by  $\theta_\sigma$. We call $\theta_\sigma$ an overconvergent Hecke eigensystem of level $K^p$ and weight $\lambda$. For such $\theta$, obviously $H^q(S_G(K^pI), \mathcal{D}_\lambda(L))[\theta]\neq0$. 

\vspace{1pc}
Let $\theta$ be a $\overline{\mathbb{Q}}_p$-valued character of $\mathcal{U}_p$. To recall the definition of the slope of $\theta$, we assume at the moment that $G$ is split at $p$ (refer \cite[\S4.1.2]{Urban} for general situation). If $\theta(u_t)=0$ for some $t\in T^+$, then we say that $\theta$ is of infinite slope. Otherwise, we say $\theta$ is of finite slope. It is easy to check that $\theta$ is of finite slope if and only if there is $t\in T^{++}$ such that $\theta(u_t)\neq0$. In this case, $\theta$ induces a homomorphism from $T(\mathbb{Q}_p)/T(\mathbb{Z}_p)$ to $\overline{\mathbb{Q}}_p^\times$. We then define the slope of $\theta$ to be the element $\mu_\theta\in X^*(T_{/\mathbb{Q}_p})$, such that for any $\mu^\vee\in X_*(T_{/\mathbb{Q}_p})^+$
\begin{equation}
(\mu_\theta,\mu^\vee)=v_p(\theta(u_{\mu^\vee(p)})).
\end{equation}
Now we define the slope of an overconvergent Hecke eigensystem $\theta$ as the slope of its restriction to $\mathcal{U}_p$. For a overconvergent representation $\pi$ or $\sigma$, define its slope as the slope of the overconvergent Hecke eigensystem associated to it. It is easy to see, $\pi$, $\sigma$ or $\theta$ is of finite slope if and only if it is realized in $H^q_{fs}(S_G, \mathcal{D}_\lambda(L))$. Moreover, for any $\mu\in X^*(T_{/\mathbb{Q}_p})$, we define
\begin{equation}
H^q(S_G(K_f), \mathcal{D}_\lambda(L))^{\leq\mu}=\bigoplus_{\theta\mid\mu_{\theta}\leq\mu}H^q(S_G(K_f), \mathcal{D}_\lambda(L))[\theta].
\end{equation}
It is easy to see that
\begin{equation}
\varinjlim_{\mu}H^q(S_G(K_f), \mathcal{D}_\lambda(L))^{\leq\mu}=H^q_{fs}(S_G(K_f), \mathcal{D}_\lambda(L)).
\end{equation}

\vspace{1pc}
Let $\theta$ be a $\overline{\mathbb{Q}}_p$-valued character of $\mathcal{U}_p$ and $\lambda$ an algebraic weight. We say that $\theta$ is non-critical with respect to $\lambda$, if its slope $\mu_\theta$ is. This means that for any $w\neq id$ in $\mathcal{W}_G$, the Weyl group of $G$, $\mu_\theta\notin w\cdot\lambda-\lambda+X^*(T)^+$.

\vspace{1pc}
\subsubsection{$\iota$-invariant finite slope representations}
Let $\rho$ be a finite slope overconvergent representation of $\mathcal{H}$, where $\mathcal{H}$ can be $\mathcal{H}_p$, $\mathcal{H}_p(K^p)$ or $R_{\mathcal{S},p}$. We denote by $\rho^\iota$ the $\iota$-twist of $\rho$, that is, the representation of $\mathcal{H}$ on $V_\rho$, which sends $f\in\mathcal{H}$ to $\rho^\iota(f):= \rho(f^\iota)$. We say $\rho$ is $\iota$-invariant if $\rho^\iota\cong\rho$. By \cite[Appendix]{BLS}, $\rho$ is $\iota$-invariant if and only if it can be extended to $^\iota\mathcal{H}$. Indeed, if $\rho^\iota\cong\rho$, then there is a linear operator $A:V_\rho\rightarrow V_\rho$ of order $l$ such that $A\circ\rho=\rho\circ A$. One can extend $\rho$ by setting $\iota^i$ acting on $V_\sigma$ via $A^i$. Generally, we use capital letters $\Pi$, $\Sigma$ and $\Theta$ for an representation of $^\iota\mathcal{H}_p$, $^\iota\mathcal{H}_p(K^p)$ and $^\iota R_{\mathcal{S},p}$ respectively. The next lemma summarizes results in \cite[Appendix]{BLS} :

\begin{lem}\label{LemmaBLS}
Notations as above 
\begin{itemize}
\item[$(a)$] Assume $\Sigma$ (resp. $\Pi$, $\Theta$) is irreducible, then its restriction to $\mathcal{H}_p(K^p)$ (resp. $\mathcal{H}_p$, $R_{\mathcal{S},p}$) is irreducible if and only if the trace of $\Sigma$ (resp. $\Pi$, $\Theta$) is not trivial on $\iota\times\mathcal{H}_p(K^p)$ (resp. $\iota\times\mathcal{H}_p$, $\iota\times R_{\mathcal{S},p}$).
\item[$(b)$] Assume $\sigma$ (resp. $\pi$, $\theta$ ) is irreducible $\iota$-invariant, then there are exactly $l$ extensions of $\sigma$ (resp. $\pi$, $\theta$) to $^\iota\mathcal{H}_p(k^p)$ (resp. $^\iota\mathcal{H}_p$, $^\iota R_{\mathcal{S},p}$), say $\tilde{\sigma}_1, \dots, \tilde{\sigma}_l$ (resp. $\tilde{\pi}_1, \dots, \tilde{\pi}_l$, $\tilde{\theta}_1, \dots, \tilde{\theta}_l$). Each two of them are differed by a character of order $l$ and are non-isomorphic.
\end{itemize}
\end{lem}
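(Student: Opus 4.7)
The plan is to deduce both parts uniformly from Clifford theory applied to the crossed-product inclusion $\mathcal{H}\subset{^\iota\mathcal{H}}=\mathcal{H}\rtimes\langle\iota\rangle$, where $\mathcal{H}$ stands for any of $\mathcal{H}_p(K^p)$, $\mathcal{H}_p$ or $R_{\mathcal{S},p}$, combined with Schur's lemma. To apply Schur I first reduce to a setting where the endomorphism ring of the irreducible representation is $L$: for $\mathcal{H}_p(K^p)$ the representation $\sigma$ is finite dimensional by admissibility; for $\mathcal{H}_p$ and $R_{\mathcal{S},p}$ one passes through the bijection of Proposition \ref{Ktype} (applied to a sufficiently small level $K^p$) to reduce $\pi$, respectively $\theta$, to the finite-dimensional case.

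For part (b), start with an irreducible $\iota$-invariant $\sigma$, so there exists an isomorphism $A:V_\sigma\to V_\sigma$ with $A\circ\sigma(f)=\sigma(f^\iota)\circ A$ for every $f\in\mathcal{H}$. Iterating $l$ times shows $A^l$ commutes with $\sigma(\mathcal{H})$, so by Schur it is a scalar $c\in L^\times$; rescaling $A\mapsto\zeta A$ with $\zeta^l=c^{-1}$ normalizes $A^l=\mathrm{id}$. An extension $\tilde\sigma$ of $\sigma$ to $^\iota\mathcal{H}$ is uniquely determined by the operator $\tilde\sigma(\iota)$, which must intertwine $\sigma$ with $\sigma^\iota$ and satisfy $\tilde\sigma(\iota)^l=\mathrm{id}$; hence $\tilde\sigma(\iota)=\zeta' A$ for some $\zeta'\in\mu_l$, giving exactly $l$ extensions. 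Two such extensions differ by the character $\langle\iota\rangle\to\mu_l\subset L^\times$ sending $\iota$ to the ratio of their values, and distinct characters produce non-isomorphic extensions, since any intertwiner between them would have to commute with $\sigma$, hence be scalar, forcing the two characters to agree.

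For part (a), decompose the restriction $\Sigma|_{\mathcal{H}}$ via Clifford theory. Either it is already irreducible, in which case the analysis of (b) identifies $\Sigma(\iota)$ with a nonzero scalar multiple of an intertwiner $A$; then $\mathrm{tr}\bigl(\Sigma(\iota\times f)\bigr)=\mathrm{tr}\bigl(A\,\sigma(f)\bigr)$ is a nonzero linear functional on $\mathcal{H}$ because $A$ is invertible and $\sigma$ is faithful on its image. Otherwise the restriction breaks as $\bigoplus_{i=0}^{k-1}\sigma_0^{\iota^i}$ for some divisor $k>1$ of $l$, and $\Sigma(\iota)$ cyclically permutes these pairwise non-isomorphic summands. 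In a basis adapted to this block decomposition, for every $f\in\mathcal{H}$ the operator $\Sigma(\iota\times f)=\Sigma(\iota)\Sigma(f)$ is strictly block-off-diagonal (it carries the $i$-th summand into the $(i+1)$-st), so its trace vanishes identically. Thus the trace is nontrivial on $\iota\times\mathcal{H}$ precisely when $\Sigma|_\mathcal{H}$ is irreducible, which is the assertion of (a).

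The main obstacle I anticipate is the honest verification that Schur's lemma applies in the forms required for each of $\mathcal{H}_p(K^p)$, $\mathcal{H}_p$, $R_{\mathcal{S},p}$, so that the intertwiner $A$ exists and is unique up to scalar; this is the technical content encapsulated by \cite[Appendix]{BLS} and is precisely what allows the clean Clifford-theoretic counting above. Once that is in hand, both (a) and (b) reduce to the finite cyclic group argument sketched here.
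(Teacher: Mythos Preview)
The paper does not give its own proof of this lemma; it simply records it as a summary of results from \cite[Appendix]{BLS}. Your proposal therefore supplies exactly what the paper omits: the standard Clifford-theoretic argument for the crossed product $\mathcal{H}\subset\mathcal{H}\rtimes\langle\iota\rangle$, which is indeed the content of the cited appendix.

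Your argument is correct. One small point of precision in part~(a): the phrase ``$\sigma$ is faithful on its image'' is tautological and does not by itself yield the nontriviality of $f\mapsto\mathrm{tr}(A\,\sigma(f))$. What you actually need is that $\sigma(\mathcal{H})$ spans $\mathrm{End}_L(V_\sigma)$ (Burnside or Jacobson density, available here since $V_\sigma$ is finite dimensional over $L$ and irreducible); then, since $A$ is invertible, the trace pairing $B\mapsto\mathrm{tr}(AB)$ is nondegenerate and hence nonzero on some $\sigma(f)$. With that clarification the proof goes through as written.
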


\begin{remark}
By Proposition \ref{Ktype}, if $\sigma=\pi^{K^P}$, we can assume that 
\begin{equation}\label{convention}
\tilde{\pi}_i^{K^p}=\tilde{\sigma}_i.
\end{equation}
Throughout this paper, we make this convention.
\end{remark}

\vspace{1pc}
Let $\tilde{\sigma}$ be a representation of $^\iota\mathcal{H}_p(K^p)$ which extends a $\iota$-invariant overconvergent representation $\sigma$ of $\mathcal{H}_p(K^p)$. We write
\begin{equation}
J_{\tilde{\sigma}}(f):=tr(\tilde\sigma(\iota\times f)).
\end{equation}
Last lemma tells that $J_{\tilde{\sigma}}$ is not trivial. It is also easy to see the Hecke eigensystem $\theta_\sigma$ satisfies
\begin{equation}
\theta(f)=\theta(f^\iota):=\theta^\iota(f)
\end{equation}
for any $f\in R_{\mathcal{S},p}$. We say such a Hecke eigensystem $\iota$-invariant. Let $\tilde{\theta}_{\tilde{\sigma}}$ be the restriction of $\tilde{\sigma}$ to $^\iota{R_{\mathcal{S},p}}$. Since $\iota$ is of finite order and $\tilde{\theta}_{\tilde{\sigma}}$ is finite dimensional, that $\tilde{\theta}_{\tilde{\sigma}}$ is diagonalizable under $\iota$. So $\tilde{\theta}_{\tilde{\sigma}}$ is a direct sum of one dimensional representations of $^\iota R_{\mathcal{S},p}$, which must be of the form $\tilde{({\theta}_\sigma)}_i$. So we have
\begin{equation}
\tilde{\theta}_{\tilde{\sigma}}=\bigoplus_{i=1}^l(\tilde{({\theta}_\sigma)}_i)^{m(\tilde{({\theta}_\sigma)}_i, \tilde{\theta}_{\tilde{\sigma}})}
\end{equation}
where, $m(\tilde{({\theta}_\sigma)}_i, \tilde{\theta}_{\tilde{\sigma}})$ is the multiplicity of $\tilde{({\theta}_\sigma)}_i$ in $\tilde{\theta}_{\tilde{\sigma}}$, 
\begin{equation}
\sum_{i}m(\tilde{({\theta}_\sigma)}_i, \tilde{\theta}_{\tilde{\sigma}})=\dim{\sigma}.
\end{equation}
If $f\in R_{\mathcal{S},p}$, then
\begin{equation}\label{ob1}
J_{\tilde{\sigma}}(f)=tr(\iota\mid\tilde{\sigma})\theta_\sigma(f)
\end{equation}
Since $\iota$ is of finite order, all its eigenvaules must be $p$-adic units. We remark here a simple but important observation that 
\begin{equation}\label{ob2}
v_p(tr(\iota\mid\tilde{\sigma}))\geq0.
\end{equation}

\vspace{1pc}
Let $\theta$ be a finite slope overconvergent Hecke eigensystem with slope $\mu_\theta$. It is easy to check that
\begin{equation}
\mu_{\theta^\iota}=\mu_\theta^{\iota}
\end{equation} 
So we conclude:
\begin{lem}
If a finite slope overconvergent representation (resp. Hecke eigensystem) is $\iota$-invariant, then its slope $\mu_\theta$ is $\iota$-invariant.
\end{lem}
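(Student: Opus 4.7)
The plan is to deduce the lemma as an immediate formal consequence of the identity $\mu_{\theta^\iota}=\mu_\theta^\iota$ flagged in the sentence just preceding the statement. Once that identity is established, the hypothesis that $\theta$ is $\iota$-invariant as a character of $R_{\mathcal{S},p}$ (and hence as a character of $\mathcal{U}_p$) forces $\mu_\theta=\mu_{\theta^\iota}=\mu_\theta^\iota$, which is the claim for Hecke eigensystems. The statement for representations then follows at no extra cost: the slope of a finite slope overconvergent representation $\rho$ of $\mathcal{H}_p$ or $\mathcal{H}_p(K^p)$ was defined in \S3.3.1 as the slope of its associated Hecke eigensystem on $R_{\mathcal{S},p}$, and an isomorphism $\rho\cong\rho^\iota$ restricts to an equality of central characters on $R_{\mathcal{S},p}$.

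To verify the identity I would simply unfold the definitions. Fix a dominant coweight $\mu^\vee\in X_*(T_{/\mathbb{Q}_p})^+$; since $\iota$ stabilizes the pair $(B,T)$, its induced action on $X_*(T)$ preserves the dominant cone, so $(\mu^\vee)^\iota$ is again dominant. Combining the definition $\theta^\iota(f)=\theta(f^\iota)$ with the rule $u_t^\iota=u_{t^\iota}$ recorded in \S2.1, and with the compatibility between the $\iota$-action on $T(\mathbb{Q}_p)$ and the induced action on $X_*(T)$ via the evaluation $\mu^\vee\mapsto\mu^\vee(p)$, one obtains
\begin{equation*}
\theta^\iota\bigl(u_{\mu^\vee(p)}\bigr)=\theta\bigl(u_{\mu^\vee(p)}^\iota\bigr)=\theta\bigl(u_{(\mu^\vee)^\iota(p)}\bigr).
\end{equation*}
Applying $v_p$, then using the definition of the slope $\mu_\theta$ for the dominant coweight $(\mu^\vee)^\iota$, and finally the $\iota$-equivariance of the duality pairing between $X^*(T)$ and $X_*(T)$ recorded in \S2.2.1, one finds
\begin{equation*}
(\mu_{\theta^\iota},\mu^\vee)=v_p\bigl(\theta(u_{(\mu^\vee)^\iota(p)})\bigr)=(\mu_\theta,(\mu^\vee)^\iota)=(\mu_\theta^\iota,\mu^\vee).
\end{equation*}
Since this holds for every dominant $\mu^\vee$ and both sides are $\mathbb{Z}$-linear in $\mu^\vee$, the equality $\mu_{\theta^\iota}=\mu_\theta^\iota$ follows in $X^*(T_{/\mathbb{Q}_p})$.

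I do not expect any real obstacle: the entire argument is a short definition-chase built on three ingredients already in place, namely the $\iota$-stability of $T^+$ and $T^{++}$ (which ensures that the notion of slope and the Atkin-Lehner algebra $\mathcal{U}_p$ are both preserved by the twist), the identity $u_t^\iota=u_{t^\iota}$ from \S2.1, and the $\iota$-equivariance of the canonical pairing $X^*(T)\times X_*(T)\to\mathbb{Z}$ from \S2.2.1. The only mild point of care is bookkeeping the exponent on $\iota$ when passing between its action on $T$, on $X_*(T)$, and on $\mathcal{U}_p$, but regardless of that bookkeeping the relation $\mu_{\theta^\iota}=\mu_\theta^{\pm\iota}$ specializes, under the hypothesis $\theta^\iota=\theta$, to $\mu_\theta=\mu_\theta^\iota$, so the lemma holds in any case.
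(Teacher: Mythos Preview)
Your proposal is correct and follows exactly the paper's approach: the paper simply asserts that $\mu_{\theta^\iota}=\mu_\theta^\iota$ is ``easy to check'' and then states the lemma as an immediate consequence, which is precisely what you do (with the check spelled out). Your caveat about the $\iota$ versus $\iota^{-1}$ bookkeeping is apt---with the paper's conventions one actually lands on $(\mu^\vee)^{\iota^{-1}}$ rather than $(\mu^\vee)^\iota$ in the intermediate step---but as you note this is irrelevant once $\theta=\theta^\iota$.
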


\subsubsection{classicity}
Now we state a twisted analogue to \cite[proposition 4.3.10]{Urban}, which is the classicity theorem in the cohomological setting. For a dominant weight $\lambda \in X^*(T)$ and $t\in T^+$, define 
\begin{equation}
N(\lambda,t):=\inf_{w\neq id}|t^{w\cdot\lambda-\lambda}|_p
\end{equation} 
and 
\begin{equation}\label{Nlambda}
N^\iota(\lambda, t):=\inf_{i=1}^lN(\lambda,t^{\iota^i})
\end{equation}

\begin{prop}\label{classicity}
Let $\lambda=\lambda^{alg}\epsilon$ be an arithmetic weight of conductor $p^{n_\lambda}$, and $\mu$ a slope which is non-critical with respect to $\lambda^{alg}$. Then for any positive integer $m\geqslant n_\lambda$, we have the canonical isomorphism
\begin{equation}
H^*(S_G(K^pI),\mathcal{D}_\lambda(L))^{\leq\mu}\cong H^*(S_G(K^pI_m),\mathbb{V}^\vee_{\lambda^{alg}}(L,\epsilon))^{\leq\mu}
\end{equation}
Similarly, with respect to $f=f^p\otimes u_t$ with $t\in T^{++}$, for any $h\leqslant v_p(N^\iota(\lambda, t))$,
\begin{equation}
H^*(S_G(K^pI),\mathcal{D}_\lambda(L))_\iota^{\leqslant h}\cong H^*(S_G(K^pI_m),\mathbb{V}^\vee_{\lambda^{alg}}(L,\epsilon))_\iota^{\leqslant h}
\end{equation}
 Here $\mathbb{V}^\vee_{\lambda^{alg}}(L,\epsilon):=(\mathbb{V}_{\lambda^{alg}}(L)(\epsilon))^\vee$.
\end{prop}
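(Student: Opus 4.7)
The plan is to reduce both isomorphisms to Urban's classicity theorem \cite[Proposition 4.3.10]{Urban}, which handles the untwisted case. The first isomorphism is literally Urban's statement: once $\mu$ is non-critical with respect to $\lambda^{alg}$, the natural comparison map $\mathcal{D}_\lambda \to \mathbb{V}^\vee_{\lambda^{alg}}(L,\epsilon)$ (dual to the algebraic-into-analytic inclusion of induced modules) induces an isomorphism on the $\leq \mu$-slope cohomology. Since no $\iota$-equivariance is involved here, there is nothing new to verify.

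For the second isomorphism I would apply Urban's theorem separately for each of the $l$ admissible operators $f^{\iota^i} = f^p \otimes u_{t^{\iota^i}}$, $i = 1,\dots,l$, and then take the intersection. The key observation is that, by the definition (\ref{Nlambda}) of $N^\iota(\lambda,t)$, the hypothesis $h \leq v_p(N^\iota(\lambda,t))$ is exactly $h \leq v_p(N(\lambda, t^{\iota^i}))$ for every $i$. Consequently Urban's classicity applies to each $f^{\iota^i}$, producing Hecke-equivariant isomorphisms
\begin{equation*}
H^*(S_G(K^pI), \mathcal{D}_\lambda(L))^{\leq h}_{f^{\iota^i}} \xrightarrow{\sim} H^*(S_G(K^pI_m), \mathbb{V}^\vee_{\lambda^{alg}}(L,\epsilon))^{\leq h}_{f^{\iota^i}},
\end{equation*}
all induced by the same comparison map at the level of coefficients. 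Since that map is Hecke-equivariant for every operator in $\mathcal{H}_p(K^p)$, Lemma 2.3.2 (the slope-compatibility lemma invoked in the proof of Proposition \ref{prop322}) shows it respects the $\leq h$-slope decomposition with respect to each $f^{\iota^i}$ simultaneously. Because the operators $f^{\iota^i}$ all lie in the commutative center $R_{\mathcal{S},p}$, their slope projectors (which are polynomials in $f^{\iota^i}$ by Lemma \ref{slopedecomp}) commute, so the composite $p_1\cdots p_l$ projects onto the intersection $\bigcap_i (-)^{\leq h}_{f^{\iota^i}}$. Intertwining this composite with $\phi$ then transports the isomorphism to the intersections on both sides, yielding the claimed isomorphism on $\iota$-twisted slope parts.

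The main subtlety, and where I expect the real bookkeeping to lie, is setting up the $\iota$-twisted slope part $(-)^{\leq h}_\iota$ coherently on both sides of the classicity isomorphism. Proposition \ref{prop322} constructs it on the overconvergent side at Iwahori level; one must verify the exact analogue for $H^*(S_G(K^pI_m), \mathbb{V}^\vee_{\lambda^{alg}}(L,\epsilon))$ at deeper level $I_m$. This requires checking that $\iota$ stabilizes $K^pI_m$ (which follows from the standing assumption that $(B,T,I_m)$ and $K^p$ are $\iota$-stable), that $^\iota\mathcal{H}_p(K^p)$ acts on this classical cohomology via the functoriality of \S3.1, and that the comparison map intertwines $\iota$ on both sides so that the two intersection definitions correspond under it. Once this is in place, the intersection of Hecke-equivariant slope-respecting isomorphisms is automatically an isomorphism, and both equations of the proposition follow.
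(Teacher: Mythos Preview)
Your proposal is correct and is essentially the approach the paper intends: the paper's entire proof is the single line ``The proof is same to \cite[proposition 4.3.10]{Urban},'' so your reduction of the first isomorphism to Urban's statement and of the second to applying Urban's classicity to each $f^{\iota^i}$ (using that $h\le v_p(N^\iota(\lambda,t))$ forces $h\le v_p(N(\lambda,t^{\iota^i}))$ for all $i$) and then intersecting is exactly the implicit argument. You have in fact supplied more detail than the paper does, including the compatibility checks for the $\iota$-action at level $I_m$ and the commutativity of the slope projectors, all of which are routine but worth noting.
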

The proof is same to \cite[proposition 4.3.10]{Urban}.

\subsubsection{spectral expansion of twisted overconvergent traces}
\begin{prop}\label{induction1}
Let $\lambda\in\mathfrak{X}^\iota(L)$. For any $\iota$-invariant finite slope overconvergent representation $\pi$ of $\mathcal{H}_p$, there are $l$ integers $\{m_i^q(\pi, \lambda)\}_{i=i}^{l}$, such that for all $f\in\mathcal{H}_p'$, 
\begin{equation}
tr(\iota\times f\mid H^q_{fs}(\tilde S_G, \mathcal{D}_\lambda(L))) = \sum_{\pi | \pi^\iota\cong\pi}\sum_{i=1}^{l} m^q_i(\pi, \lambda)J_{\tilde\pi_i}(f)
\end{equation}
\end{prop}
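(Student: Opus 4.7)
The plan is to establish the formula by an isotypic decomposition of the finite slope cohomology under $^\iota\mathcal{H}_p$, paralleling the untwisted spectral expansion in \cite{Urban}. By linearity of trace in $f$, I would reduce to the case where $f = f^p \otimes u_t$ is a single admissible operator with $t \in T^{++}$. Then Proposition \ref{prop232} says $f$ acts compactly on the complex $R\Gamma^*(K^pI, \mathcal{D}_\lambda(L))$, so Lemma \ref{slopedecomp} produces finite-dimensional slope-$\leq h$ pieces, and Proposition \ref{prop322} lifts the action to $^\iota\mathcal{H}_p(K^p)$. Thus each $H^q_{fs}(S_G(K^pI), \mathcal{D}_\lambda(L))^{\leqslant h}_\iota$ is a finite-dimensional $^\iota\mathcal{H}_p(K^p)$-module on which $tr(\iota \times f)$ is well defined, and where semi-simplifying (for trace purposes only) is legitimate.

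Next, I would decompose each such module into $\mathcal{H}_p(K^p)$-isotypic components. A pair of the form $\sigma \oplus \sigma^\iota$ with $\sigma \not\cong \sigma^\iota$ contributes $0$ to $tr(\iota \times f)$, since $\iota$ permutes the two summands and hence acts purely off-diagonally in such a block. So only the $\iota$-invariant irreducibles $\sigma = \pi^{K^p}$ contribute. For each such $\sigma$, its isotypic component is stable under $^\iota\mathcal{H}_p(K^p)$ and, by Lemma \ref{LemmaBLS}(b), decomposes as $\bigoplus_{i=1}^l \tilde\sigma_i^{\oplus m^q_i(\pi,\lambda)}$ where $\tilde\sigma_1,\dots,\tilde\sigma_l$ are the $l$ inequivalent extensions of $\sigma$ to $^\iota\mathcal{H}_p(K^p)$. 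Taking the $\iota$-twisted trace and using the convention (\ref{convention}) which identifies $tr(\iota\times f\mid \tilde\sigma_i)$ with $J_{\tilde\pi_i}(f)$ yields the claimed expansion.

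The main obstacle I anticipate is the bookkeeping in passing to the inductive limits in $K^p$ and $h$ that define $H^q_{fs}(\tilde S_G, \mathcal{D}_\lambda(L))$: I must verify that for each fixed admissible $f$ only finitely many $\pi$ contribute, which uses that $f$ is supported at some level $K^p$ together with the finite-dimensionality of each slope piece. I also need that the multiplicities $m^q_i(\pi,\lambda)$ depend on $\pi$ alone, not on the auxiliary $K^p$; this is handled via Proposition \ref{Ktype}, which identifies irreducible $\mathcal{H}_p$-representations with their $K^p$-invariants, combined with the convention (\ref{convention}) that makes the $\tilde\pi_i$ and $\tilde\sigma_i$ coherently compatible across levels. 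Once these finiteness and well-definedness points are established, the formula becomes an honest finite spectral expansion for each $f \in \mathcal{H}'_p$.
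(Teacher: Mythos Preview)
Your proposal is correct and follows essentially the same route as the paper. The paper also passes to finite-dimensional $\iota$-stable slope pieces, invokes the $^\iota\mathcal{H}_p$-action from Proposition \ref{prop322}, and argues that only $\iota$-invariant constituents contribute to the twisted trace. The one organizational difference is that the paper works with actual irreducible submodules $V_\pi$ of the cohomology and partitions them into three sets according to whether $\iota*$ fixes $V_\pi$ as a subspace (set $A$), moves it to a disjoint copy even though $\pi\cong\pi^\iota$ (set $B$), or sends it to a non-isomorphic $\pi^\iota$ (set $C$); it then shows the $B$- and $C$-orbits contribute zero because $\iota$ permutes the summands, and defines $m^q_i(\pi,\lambda)$ as the count of $\tilde\pi_i$'s occurring in $A$. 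Your isotypic/semi-simplification argument absorbs the $B$-type contributions into the multiplicities (a $B$-orbit becomes $\bigoplus_i\tilde\sigma_i$, whose twisted traces cancel since the extensions differ by $l$-th roots of unity), so your $m^q_i$ may differ from the paper's by such cancelling contributions---but the proposition only asserts existence, not uniqueness, of the integers, so both conventions are valid.
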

\begin{proof}
Since $f$ is admissible, the trace is convergent. Fix $t\in T^{++}$, for any $h\in\mathbb{Q}_{\geq 0}$, consider the $\iota$-stable $\leq h$-slope part $H^q(\tilde{S}_G, \mathcal{D}_\lambda(L))_\iota^{\leq h}$ of $H^q_{fs}(\tilde{S}_G, \mathcal{D}_\lambda(L))$ with respect to $u_t$. Here we define 
\begin{equation}
H^q(\tilde{S}_G, \mathcal{D}_\lambda(L))_\iota^{\leq h}:=\varinjlim_{K^p}H^q({S}_G(K^pI), \mathcal{D}_\lambda(L))_\iota^{\leq h}.
\end{equation}
It is equipped with an admissible $*$-action of $\mathcal{H}_p$ since $u_t$ is in the center of $\mathcal{H}_p$. As the proof of Proposition \ref{prop322}, this action extends to $^\iota\mathcal{H}_p$. 
Let $(\pi, V)$ be a finite slope overconvergent $\mathcal{H}_p$-submodule of $H^q(\tilde S_G, \mathcal{D}_\lambda(L))_\iota^{\leqslant h}$, and $\iota*(V)$ its image under the $*$-action of $\iota$. Consider the next three sets of finite slope overconvergent representations $A,B$ and $C$ whose elements are counted with multiplicity: $A=\{\pi|\iota*(V_\pi)=V_\pi\}$, $B=\{\pi|\pi^\iota\cong\pi, \iota*(V_\pi)\cap V_\pi=\emptyset\}$ and $C=\{\pi|\pi^\iota\ncong\pi\}$. 

If $\pi\in B$ or $C$, write 
\begin{equation}W_\pi=\bigoplus _{i=1}^l\iota^{i}*(V_\sigma)\end{equation}
as a $^\iota\mathcal{H}_p$-submodule of $H^q(\tilde{S}_G, \mathcal{D}_\lambda(L))_\iota^{\leq h}$. Then $\iota*$ permutes the components of $W_\pi$,  and $tr(\iota\times f | W_\sigma)$ is trivial. If $\pi\in A$ (this implies that $\sigma$ is $\iota$-invariant), then $V_\pi$ itself is an irreducible $^\iota\mathcal{H}_p$-submodule of $H^q(\tilde S_G, \mathcal{D}_\lambda(L))_\iota^{\leqslant h}$. In particular, $V_\pi=\tilde\pi_i$ for some $i$. Now for any $\iota$-invariant $\pi$, let $m^q_{i,h}(\pi, \lambda)$ be the mulitplicity of $\tilde{\pi}_i$ appearing in $A$. We have 
\begin{equation}tr(\iota\times f; H^q(\tilde S_G, \mathcal{D}_\lambda(L)))_\iota^{\leqslant h} = \sum_{\pi | \pi^\iota\cong\pi}\sum_{i=1}^{l} m^q_{i,h}(\pi, \lambda)J_{\tilde\pi_i}(f).\end{equation}
Let $h$ go to infinite, the multiplicity $m^q_{i,h}$ converges to some $m^q_i\in\mathbb{Z}$, and the proposition follows.
\end{proof}

\vspace{1pc}
\begin{remark}
If $G$ admits multiplicity one theorem, then $B=\emptyset$ in the proof above (otherwise, since $\sigma$ is self-dual, that $V_\sigma\cong V_{\sigma^\iota}\cong V^{*\iota}_\sigma$ would appear with multiplicitiy at least two).
\end{remark}

\vspace{3pc}
\section{TWISTED FRANKE'S TRACE FORMULA}
In this section, we prove a twisted version of Franke's trace formula \cite[\S7.7]{Franke}. In order to do so we review some results of Franke first.
 
\subsection{Notation}
Let $G$ be a quasi-split connected reductive group. We fix a minimal parabolic subgroup $P_0$ of $G$ containing $B$ and write its Langlands decomposition $P_0=M_0A_0N_0$. Generally, for a standard parabolic subgroup $P$ of $G$, we consider its Langlands decomposition $P=M_PA_PN_P$ such that $A_P\subset A_0$ and $M_0\subset M_P$, where $L_P=M_PA_P$ is the standard Levi subgroup of $P$ and $A_P$ is a maximal spit torus in the center of $L_P$. In particular, $A_0=T$. Write $\check{\mathfrak{a}}_P=X^*(P)\otimes\mathbb{R}$ and $\check{\mathfrak{a}}_P^G$ its subspace of elements whose restriction to $A_G$ are trivial. If $P=P_0$, we denote $\check{\mathfrak{a}}_{0}^G:=\check{\mathfrak{a}}_{P_0}^G$. We let $\check{\mathfrak{a}}_0^{G+}\subset\check{\mathfrak{a}}_0^G$ be the open positive Weyl chamber, and $^+\check{\mathfrak{a}}_0^G$ the open positive cone dual to it. 

\vspace{1pc}
Let $\mathfrak{g}$ be the Lie algebra of $G_\infty$, $\mathfrak{m}_G\subset\mathfrak{g}$ the intersection of kernals for all rational characters of $G$, and $Z(\mathfrak{m}_G)$ the center of the universal enveloping algebra of $\mathfrak{m}_G$. For any standard parabolic subgroup $P$, define the height function $H_P: P(\mathbb{A})\rightarrow\mathfrak{a}_{P}$ such that for any  $x\in P(\mathbb{A})$ and any character $\xi$ of $P$,
\begin{equation}
\prod_v|\xi(x)|_v=e^{\langle\xi,H_P(x) \rangle}.
\end{equation}
We also consider $H_P$ as a function on $G$ by the Iwasawa decomposition. Let $\mathbb{V}^\vee_\lambda$ be as in the last section. Assume $A_G$ acts on $\mathbb{V}^\vee_\lambda$ by a character $\xi_\lambda$, let $\mathcal{I}_\lambda\subset Z(\mathfrak{m}_G)$ be the annihilator of $\xi_\lambda$. For any $G(\mathbb{A}_f)$-module $M$ and $\mu\in\check{\mathfrak{a}}_G$, we denote by $M(\mu)$ the twist of $M$ in which the action of $g\in G(\mathbb{A}_f)$ on $M$ is multiplied by the factor $e^{\langle\mu,H_G(g) \rangle}$.

\vspace{1pc}
Let $R=R_G$ (\emph{resp.} $R^+=R^+_G$, $\Delta=\Delta_G$ ) be the set of roots (\emph{resp.} positive roots, simple positive roots ) with respect to the Borel pair $(B,T)$. If $L$ is a Levi subgroups of $G$, let $\rho_L$ be the half sum of all positive roots of $L$ with respect to $T$, in particular, write $\rho=\rho_G$. For a parabolic subgroup $P$, let $\rho_P\in\check{\mathfrak{a}}_P^G$ be the half sum of all the positive roots of $P$ in $N_P$. For parabolic subgroups $P\subset Q$, there is a natural projection $X^*(T)^+\otimes\mathbb{R}\rightarrow\check{\mathfrak{a}}_P^G\rightarrow\check{\mathfrak{a}}_Q^G$, under which the images of $\rho_G$ are $\rho_P$ and $\rho_Q$. Let $\mathcal{W}_G$ be the Weyl group of $G$ and fix $w_0^G\in\mathcal{W}_G$ a longest element.  We define
\begin{equation}
\mathcal{W}^L: = \{w\in\mathcal{W}_G\ |\ w^{-1}(\alpha)\in R^+,\ \forall \alpha \in R^+\cap R_L\}.
\end{equation}
As \cite[(7)]{Urban}, for any $w\in\mathcal{W}^L$,
\begin{equation}
w\cdot\lambda=w(\lambda+\rho)-\rho=w(\lambda+\rho_P)-\rho_P.
\end{equation} 

There is the Kostant decomposition:
\begin{equation}\label{Kostant}
H^q(\mathfrak{n},\mathbb{V}_\lambda^\vee(\mathbb{C}))=\bigoplus_{w\in\mathcal{W}^L\ l(w)=n-q}{\mathbb{V}^{L, \vee}_{w(\lambda+\rho_P)+\rho_P}}(\mathbb{C}),
\end{equation}
where $\mathfrak{n}$ is the Lie algebra of $N_\infty$, $n=dim (\mathfrak{n})$ and $\mathbb{V}^L_\mu$ is the finite dimensional irreducible algebraic representation of $L$ with highest weight $\mu$. Define
\begin{equation}
\mathcal{W}^L_{Eis}:= \{w\in\mathcal{W}^L\ |\ w^{-1}(\beta^\vee)> 0,\forall\beta\in R_P\}.
\end{equation}
If $\lambda$ is regular and $w\in\mathcal{W}^L_{Eis}$, then the Eisenstein series associated to a class in $H^*(S_L, {\mathbb{V}^{L,\vee}_{w\cdot\lambda}})$ defines an Eisenstein class in $H^*(S_G, \mathbb{V}_\lambda^\vee)$.

\subsection{The Eisenstein spectral sequence}
Let $V_G=C_{umg}^\infty(G(\mathbb{Q})A(\mathbb{R})^0\backslash G(\mathbb{A}))$ be the space of $C^\infty$-functions of uniform moderate growth, and $R_g$ the natural right translation action of $g\in G(\mathbb{A})$ on $V_G$.  Let $A_\lambda$ be the subspace of $V_G$ consisting of the functions which are annihilated by some power of $\mathcal{I}_\lambda$. One is interested in the cohomology group
\begin{equation}\label{original}
 H^*(S_G, \mathbb{V}_\lambda^\vee(\mathbb{C})).
\end{equation}
In \cite{Franke}, Franke proved that it can be computed by $(\mathfrak{m}_G,K_\infty)$ cohomology with coefficients in $A_\lambda$ as
\begin{equation}
\begin{aligned}
  & H^*(\mathfrak{m}_G,K_\infty; C^\infty(G(\mathbb{Q})A(\mathbb{R})^0\backslash G(\mathbb{A}))\otimes \mathbb{V}_\lambda^\vee(\mathbb{C}))(\xi_\lambda)\\
 &= H^*(\mathfrak{m}_G,K_\infty; A_\lambda\otimes \mathbb{V}_\lambda^\vee(\mathbb{C}))(\xi_\lambda).
 \end{aligned}
\end{equation}
Indeed, Franke proved that last cohomology space can be computed as the limit of a spectral sequence which is compiled via the Laurent coefficients of Eisenstein series. 

\vspace{1pc}
let $\mathcal{C}:=\mathcal{C}_G$ be the set of clases of associate parabolic $\mathbb{Q}$-subgroups of $G$. For $\set{P}\in\mathcal{C}_G$, let $V_G(\set{P})$ be the subspace of $V_G$ consists of functions which are negligible along all $Q\notin\set{P}$, that is, the space of functions $\phi\in V_G$, such that for every parabolic subgroup $Q\notin\set{P}$ and every $g\in A_Q(\mathbb{A})\mathbb{K}$, $R_g\phi_{N_Q}$ is orthogonal to the space of cuspidal functions on $L_Q(\mathbb{Q})\backslash L_Q(\mathbb{A})^1$, where $\phi_{N_Q}$ is the constant term of $\phi$ along $N_Q$, defined by
\begin{equation}
\phi_{N_Q}=\int_{N_Q(\mathbb{Q})\backslash N_Q(\mathbb{A})}\phi(ng)dn
\end{equation}
with respect to the normalized Haar measure:
\begin{equation}
\int_{N_Q(\mathbb{Q})\backslash N_Q(\mathbb{A})}dn=1.
\end{equation}
Set $A_{\lambda, \set{P}}:= A_\lambda \cap V_G(\set{P})$, then as $(\mathfrak{g}, K_\infty, G(\mathbb{A}_f))$-modules,
\begin{equation}\label{step1}
A_\lambda = \bigoplus_{\set{P}\in \mathcal{C}}A_{\lambda, \set{P}},
\end{equation}
and the cohomology $(\ref{original})$ equals
\begin{equation}
\bigoplus_{\set{P}}H^*(\mathfrak{m}_G,K_\infty; A_{\lambda,\set{P}}\otimes\mathbb{V}_\lambda^\vee(\mathbb{C}))(\xi_\lambda)
\end{equation}

\vspace{1pc}
For each $\set{P}$, there is a descending filtration (of finite length) $A_{\lambda, \set{P}}^{T,p}$ on the space $A_{\lambda, \set{P}}$. This filtration depends on a finite supported $\mathbb{Z}$-valued function $T$ which is defined on the closure $\overline{\check{\mathfrak{a}}_0^+}$, such that: 
\begin{itemize}
\item[$(T)$]  $T(\mu)<T(\nu)$ if $\mu\neq\nu$ and $\nu\in\mu-\overline{^+\check{\mathfrak{a}}_0}$.
\end{itemize}
The  successive quotients $A_{\lambda, \set{P}}^{T,p}/A_{\lambda, \set{P}}^{T,p+1}$ can be described in term of Eisenstein series. Following \cite[\S6]{Franke}, define $\mathcal{M}^{T,p}_{\lambda, \set{P}}$ to be the set of triples $t=(P,\Lambda,\chi):=(P_t,\Lambda_t,\chi_t)$ with the following properties:
\begin{itemize}
\item[$(a)$] $P\in\set{P}$ is a standard parabolic subgroup;
\item[$(b)$] $\Lambda:A_P(\mathbb{A})/A(\mathbb{R})^0A_P(\mathbb{Q})\rightarrow\mathbb{C}^\times$ is a continuous character. Let $\lambda_t\in(\check{\mathfrak{a}}_P^G)_\mathbb{C}$ be the differential of the archimedean component of $\Lambda$, we assume $Re(\lambda_t)\in\overline{\check{\mathfrak{a}}_P^{+}}$ and $T(Re(\lambda_t))=p$.
\item[$(c)$] $\chi: \mathfrak{Z}(\mathfrak{m}_G):\rightarrow\mathbb{C}^\times$ is a character. 
\item[$(d)$] $\lambda_t\in supp_{t}(\mathcal{I}_\lambda)$, i.e. for any $x\in\mathcal{I}_\lambda$, $\xi(x)(\lambda_t+\chi_t)=0$, where $\xi: \mathfrak{Z(m)}_G\rightarrow S(\mathfrak{t}\cap\mathfrak{m}_G)^{\mathcal{W}_G}$ is the Harish-Chandra isomorphism.
\end{itemize}
For $t,t'\in\mathcal{M}^{T,p}_{\lambda, \set{P}}$, define a morphism from $t$ to $t'$ to be an element of the Weyl set $\Omega(\mathfrak{a}_t,\mathfrak{a}_{t'})$ which maps $\Lambda_t$ to $\Lambda_{t'}$ and $\chi_t$ to $\chi_{t'}$. So $\mathcal{M}^{T,p}_{\lambda, \set{P}}$ is a groupoid. Let $\mathcal{C}^{T,p}_{\lambda, \set{P}}$ be a set of representatives for the isomorphism classes of objects of $\mathcal{M}^{T,p}_{\lambda, \set{P}}$.

\vspace{1pc}
For $t\in\mathcal{M}^{T,p}_{\lambda, \set{P}}$, define $V(t)$ to be the space of square integrable $\mathbb K\cap P(\mathbb{A})$-finite functions $f$ on $P(\mathbb{Q})A_P(\mathbb{R})^0N_P(\mathbb{A})\backslash P(\mathbb{A})$ with the following properties
\begin{itemize}
\item[$(a)$] For any parabolic subgroup $Q\subsetneq P$, $f_{N_P}$ is orthogonal to the space of cuspidal forms on $M_Q(\mathbb{Q})\backslash M_Q(\mathbb{A})$.
\item[$(b)$] $f(ag)=e^{-\langle\lambda_t,H_P(a)\rangle}\Lambda(a)f(g)$ for any $a\in A_P(\mathbb{A})$.
\item[$(c)$] $f$ is a $\chi$-eigenvector of $\mathfrak{Z}(\mathfrak{m}_G)$.
\end{itemize}
We let $W(t)=Ind_P^GV(t)$ be the space of $\mathbb K$-finite functions on the space $P(\mathbb{Q})A_P(\mathbb{R})^0N_P(\mathbb{A})\backslash G(\mathbb{A})$ such that for any $g\in G(\mathbb{A})$, the function $f(xg)$ of $x\in P(\mathbb{Q})A_P(\mathbb{R})^0N_P(\mathbb{A})\backslash P(\mathbb{A})$ belongs to $V(t)$. 

\vspace{1pc}
Let $S(t)$ be the symmetric algebra $S((\check{\mathfrak{a}}_P^G)_\mathbb{C})$, which is the space of polynomials on $({\mathfrak{a}}_P^G)_\mathbb{C}$, and also viewed as the algebra of finite sums of iterated derivatives at $\lambda_t$. $S(t)$ is equipped with the structure of $\mathfrak{a}_P$-module defined by the rule:
\begin{itemize}
\item[(A)] For $\xi\in\mathfrak{a}_P$, $\delta\in S(t)$ and any $\eta\in\mathfrak{a}_P^G$
\begin{equation}
(\xi\delta)(\eta):=e^{\langle\xi,\lambda_t+\rho_{P_t}\rangle}\delta(\eta+\xi).
\end{equation}
\end{itemize}
It is then extended to a $\mathfrak{p}$-module structure by letting $\mathfrak{m}$ and $\mathfrak{n}$ act trivially. $S(t)$ is also equipped with a $P(\mathbb{A}_f)$-module structure by the rule:
\begin{itemize}
\item[(B)] For any $x\in P(\mathbb{A}_f)$ and $\eta\in\mathfrak{a}_P^G$
 \begin{equation}
 (x\delta)(\eta)=e^{\langle\eta+\rho_{P_t},H_P(x)\rangle}\delta(\eta).
 \end{equation}
\end{itemize}
So we get a functor from the groupoid $\mathcal{M}^{T,p}_{\lambda, \set{P}}$ to the category of $(\mathfrak{g},K_\infty,G(\mathbb{A}_f))$-modules, it assigns to $t$ a module 
\begin{equation}
M(t):=W(t)\otimes S(t)=Ind_P^GV(t)\otimes S(t),
\end{equation} 

\vspace{1pc}
For $f\in W(t)$ and $\mu\in(\check{\mathfrak{a}}_P^G)_{\mathbb{C}}$, define the Eisenstein series
\begin{equation}\label{ES}
E(f,\mu):=\sum_{\gamma\in P(\mathbb{Q})\backslash G(\mathbb{Q})}e^{\langle\mu+\rho_P,H_P(\gamma g)\rangle}f(\gamma g).
\end{equation}
Moreover, for $f\otimes\delta\in W(t)\otimes S(t)$, let $\mathbf{MW}\delta E(f,\mu)\in V_G(\set{P})$ be the main value of the Laurent expansion of $\delta E(f,\mu)$ at $\lambda_\infty$ (refer \cite[\S6]{Franke}). For $\set{P}\in\mathcal{C}$, let $\mathcal{C}(\set{P})\subset\mathcal{C}$ be the subset defined by the property:
\begin{itemize}
\item[(P)] $\set{Q}\in\mathcal{C}(\set{P})$, if there is a parabolic $Q\in\set{Q}$ such that $Q$ contains some parabolic subgroup in $\set{P}$.
\end{itemize}
\cite[Theorem 14]{Franke} implies that the quotient $A_{\lambda, \set{P}}^{T,p}/A_{\lambda, \set{P}}^{T,p+1}$ is spanned by the main values $\mathbf{MW}\delta E(f,\mu)$ for all $f\otimes\delta$ in $M(t)$ when $t$ is running over all $\mathcal{M}^{T,p}_{\lambda, \set{Q}}$ and $\set{Q}$ in $\mathcal{C}(\set{P})$. 

\vspace{1pc}
We now state Franke's theorem (\cite[Theorem 19]{Franke}) which gives a spectral squence to compute the cohomology space (\ref{original}):

\begin{thm}[Franke's Eisenstein Spectral Sequence (ESS)]
Let $\lambda$ be a regular algebraic weight, then there is a spectral sequence:
\begin{equation*}
E_1^{p,q}=\bigoplus_{t\in\mathcal{C}^{T,p}_{\lambda, \set{P}}}H^{p+q}(\mathfrak{m}_G,K_\infty; M(t)\otimes\mathbb{V}_\lambda^\vee)\Rightarrow H^{p+q}(\mathfrak{m}_G,K_\infty; A_{\lambda,\set{P}}\otimes\mathbb{V}_\lambda^\vee).
\end{equation*}
Moreover, this spectral sequence degenerates.
\end{thm}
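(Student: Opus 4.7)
The plan is to follow Franke's original strategy: produce the spectral sequence from the descending filtration $\{A_{\lambda,\set{P}}^{T,p}\}_p$ on $A_{\lambda,\set{P}}$ recalled in \S4.2, identify its $E_1$-page via the Main Value construction, and deduce degeneration from the regularity of $\lambda$.

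First, by the defining condition $(T)$ the function $T:\overline{\check{\mathfrak{a}}_0^+}\to\mathbb{Z}$ has finite support, so the filtration has finitely many non-trivial steps. Applying the $(\mathfrak{m}_G,K_\infty)$-cohomology functor with coefficients in $\mathbb{V}_\lambda^\vee$ to the short exact sequences $0\to A_{\lambda,\set{P}}^{T,p+1}\to A_{\lambda,\set{P}}^{T,p}\to A_{\lambda,\set{P}}^{T,p}/A_{\lambda,\set{P}}^{T,p+1}\to 0$ yields in the standard way a bounded, hence convergent, spectral sequence with $E_1^{p,q}=H^{p+q}(\mathfrak{m}_G,K_\infty;(A_{\lambda,\set{P}}^{T,p}/A_{\lambda,\set{P}}^{T,p+1})\otimes\mathbb{V}_\lambda^\vee)$ abutting to $H^{p+q}(\mathfrak{m}_G,K_\infty;A_{\lambda,\set{P}}\otimes\mathbb{V}_\lambda^\vee)$.

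Second, I would invoke Franke's Theorem 14 to identify each graded piece. That result shows the successive quotient is spanned by the main values $\mathbf{MW}\delta E(f,\mu)$ as $f\otimes\delta$ ranges over $M(t)$, with $t$ varying in $\mathcal{M}^{T,p}_{\lambda,\set{Q}}$ and $\set{Q}$ running through $\mathcal{C}(\set{P})$. The negligibility condition built into $V_G(\set{P})$ together with the constant-term formula for an Eisenstein series along $N_Q$ kills every contribution with $\set{Q}\neq\set{P}$, while the groupoid automorphisms in $\mathcal{M}^{T,p}_{\lambda,\set{P}}$ are quotiented out by passing to the representative set $\mathcal{C}^{T,p}_{\lambda,\set{P}}$, leaving the direct sum $\bigoplus_{t\in\mathcal{C}^{T,p}_{\lambda,\set{P}}}M(t)$ as required on the $E_1$-page.

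Third, degeneration is the subtle point, and it is here that the regularity of $\lambda$ is essential. Using the Kostant decomposition (\ref{Kostant}), the weights $w(\lambda+\rho_P)-\rho_P$ indexing the summands of $H^*(\mathfrak{n},\mathbb{V}_\lambda^\vee)$ are pairwise distinct and correspond to different central characters under the Harish-Chandra isomorphism $\xi$ appearing in condition $(d)$ for the objects of $\mathcal{M}^{T,p}_{\lambda,\set{P}}$. A higher differential $d_r$ on the $E_r$-page would have to intertwine incompatible $\mathfrak{Z}(\mathfrak{m}_G)$-eigencharacters on the summands of $M(t)\otimes\mathbb{V}_\lambda^\vee$, and must therefore vanish; the details are the content of \cite[Theorem 19]{Franke}. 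The main obstacle is precisely this degeneration step: the spectral sequence and the identification of $E_1$ are essentially formal consequences of Franke's filtration and Main Value map, but the vanishing of higher differentials requires a careful matching of infinitesimal characters together with a control of the action of $S(t)$ as the algebra of iterated derivatives at $\lambda_t$, which is exactly the part for which we rely on Franke's theorem directly rather than reproducing the argument.
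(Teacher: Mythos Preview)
The paper does not prove this theorem at all: it is simply stated with the attribution ``\cite[Theorem 19]{Franke}'' and then used. Your proposal therefore goes well beyond what the paper does, and as a sketch of Franke's strategy it is broadly on target: the spectral sequence does arise from the finite filtration $\{A_{\lambda,\set{P}}^{T,p}\}$, the $E_1$-identification does go through the Main Value map, and degeneration does hinge on regularity.

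That said, two points in your sketch are oversimplified. First, in Step~2 the Main Value map from $\bigoplus_t M(t)$ to the graded piece is a priori only a surjection; the paper itself notes (just after the theorem, citing Franke--Schwermer) that the only relations among the $\mathbf{MW}\delta E(f,\mu)$ come from functional equations at \emph{singular} $\lambda_t$, so injectivity on cohomology for regular $\lambda$ requires this extra input, not just negligibility along $Q\notin\set{P}$. Second, your central-character argument for degeneration is a plausible heuristic but not the actual mechanism in Franke: the differentials are maps between cohomologies of different $M(t)$'s, and the $\mathfrak{Z}(\mathfrak{m}_G)$-action on $A_\lambda$ is already nilpotent modulo $\mathcal{I}_\lambda$, so eigencharacter separation alone does not kill $d_r$. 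You are right to flag this as the genuine obstacle and to defer to \cite[Theorem 19]{Franke}; just be aware that the argument there is more delicate than what you wrote.
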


\vspace{1pc}
Now write $K^t_\infty:=K_\infty\cap P_t(\mathbb{R})$. We compute the summand of $E^{p,q}_1$:
\begin{equation}
\begin{aligned}
&\ \ \ \  H^{p+q}(\mathfrak{m}_G,K_\infty; M(t)\otimes\mathbb{V}_\lambda^\vee)\\
&=H^{p+q}(\mathfrak{m}_G,K_\infty; Ind_{P_t}^{G}V(t)\otimes S(t)\otimes\mathbb{V}_\lambda^\vee)\\
&=Ind_{P_t(\mathbb{A}_f)}^{G(\mathbb{A}_f)}H^{p+q}(\mathfrak{m}_G\cap\mathfrak{p}_t,K_\infty^t; V(t)\otimes S(t)\otimes\mathbb{V}_\lambda^\vee)\\
&=\bigoplus_{i+j=p+q}Ind_{P_t(\mathbb{A}_f)}^{G(\mathbb{A}_f)}H^i(\mathfrak{l}_t,K_\infty^t; H^j(\mathfrak{n}_t; \mathbb{V}^\vee_\lambda)\otimes V(t)\otimes S(t)).
\end{aligned}
\end{equation}
Apply the Kostant decomposition (\ref{Kostant}), we have 
\begin{equation}
\begin{aligned}
& H^i(\mathfrak{l}_t,K_\infty^t; H^j(\mathfrak{n}_t; \mathbb{V}^\vee_\lambda)\otimes V(t)\otimes S(t))=\\
&\bigoplus_{w\in\mathcal{W}^{L_t}\ l(w)=n_t-j}H^i(\mathfrak{l}_t,K_\infty^t;{\mathbb{V}^{L, \vee}_{w(\lambda+\rho_{P_t})+\rho_{P_t}}}\otimes V(t)\otimes S(t)).
\end{aligned}
\end{equation}
Using the notation of \cite{Franke}, for any $\Theta\in\check{\mathfrak{a}}_t$, let $\mathbb{C}_\Theta$ be the one dimensional vector space $\mathbb{C}$ on which $x\in\mathfrak{a}_t$ acts by muliplication of $e^{\langle  x,\Theta\rangle}$. Twisting $\mathbb{V}^{L, \vee}_{w(\lambda+\rho_{P_t})+\rho_{P_t}}\otimes V(t)$ by a proper $\mathbb{C}_\Theta$ to make it a trivial $\mathfrak{a}_t$-module, we apply K$\ddot{u}$nneth theorem for each summand in last equation with respect to $\mathfrak{l=m+a}$. Then a standard computation shows that
\begin{equation}
\begin{aligned}
& H^i(\mathfrak{l}_t,K_\infty^t; H^j(\mathfrak{n}_t; \mathbb{V}^\vee_\lambda(\mathbb{C}))\otimes V(t)\otimes S(t))=\\
& \bigoplus_{w\in\mathcal{W}^t_j }H^i(\mathfrak{m}_t,K^t_\infty;\mathbb{V}^{L_t,\vee}_{w(\lambda+\rho_{P_t})+\rho_{P_t}}(\mathbb{C})\otimes V(t))\otimes\mathbb{C}_{\rho_{R_t}+\lambda_t}
\end{aligned}
\end{equation}
where $\mathcal{W}^t_j$ is the subset of $w\in\mathcal{W}^L$, such that $l(w)=n-j$ and the natual projection of $w(\lambda+\rho_{P_t})$ to $\check{\mathfrak{a}}_{P_t}^G$ is $\lambda_t$. Combing all the results above, for $\lambda$ regular, the $E_1$-term, $E_1^{p,q}$, of (ESS) can be computed by
\begin{equation}
\bigoplus_{t\in\mathcal{C}_{\lambda,\set{P}}^{T,p}}\bigoplus_{i+j=p+q}\bigoplus_{w\in\mathcal{W}^t_j }Ind_{P_t(\mathbb{A}_f)}^{G(\mathbb{A}_f)}H^i(\mathfrak{m}_t,K^t_\infty;\mathbb{V}^{L_t,\vee}_{w(\lambda+\rho_{P_t})+\rho_{P_t}}(\mathbb{C})\otimes V(t))\otimes\mathbb{C}_{\rho_{R_t}+\lambda_t}
\end{equation}
Now Franke's theorem implies $H^r(S_G, \mathbb{V}_\lambda^\vee(\mathbb{C}))(\xi_\lambda^{-1})$ equals:
\begin{equation}\label{formula1}
\begin{aligned}
&\bigoplus_{\set{P}}\bigoplus_p\bigoplus_{t\in\mathcal{C}_{\lambda,\set{P}}^{T,p}}H^{r}(\mathfrak{m}_G,K_\infty; M(t)\otimes\V)=\\
&\bigoplus_{\set{P}}\bigoplus_p\bigoplus_{t\in\mathcal{C}_{\lambda,\set{P}}^{T,p}}\bigoplus_{w\in\mathcal{W}^t}Ind_{P_t(\mathbb{A}_f)}^{G(\mathbb{A}_f)}H^{r+l(w)-n_t}(\mathfrak{m}_t,K^t_\infty;\\
&\mathbb{V}^{L_t,\vee}_{w(\lambda+\rho_{P_t})+\rho_{P_t}}(\mathbb{C})\otimes V(t))\otimes\mathbb{C}_{\rho_{P_t}+\lambda_t}
\end{aligned}
\end{equation}
where $\mathcal{W}^t$ is the subset of $w\in\mathcal{W}^L$, such that  the natual projection of $w(\lambda+\rho_{P_t})$ to $\check{\mathfrak{a}}_{P_t}^G$ is $\lambda_t$.

\subsection{Twisted Franke's trace formula}
Let $\lambda\in X^*(T)$ be a regular dominant $\iota$-invariant weight, $f\in\mathcal{H}_p\subset C_c^\infty(G(\mathbb{A}_f))$ an admissible $p$-adic Hecke operator. In this section, we use (\ref{formula1}) to compute the alternating trace 
\begin{equation}\label{al}
tr(\iota\times f\mid H^*(S_G, \mathbb{V}_\lambda^\vee(\mathbb{C}))):=\sum_{r}(-1)^rtr(\iota\times f\mid H^r(S_G, \mathbb{V}_\lambda^\vee(\mathbb{C}))).
\end{equation}
The alternating trace $tr(f\mid H^*(S_G, \mathbb{V}_\lambda^\vee(\mathbb{C})))$ without twisting was computed by Franke and Urban in \cite[\S7.7]{Franke} and \cite[Theorem 1.4.2]{Urban}. Here we have to study how $\iota$ acts on each step from (\ref{step1}) to (\ref{formula1}).

\subsubsection{$\iota$-action on $(\mathfrak{m}_G,K_\infty)$-cohomology}
Consider the complex 
\begin{equation}
C^*(\mathfrak{m}_G,K_\infty; \V):=Hom_{K_\infty}(\wedge^*(\mathfrak{m}_G/\mathfrak{k_{\infty}}),V_G\otimes\V),
\end{equation}
which computes the $(\mathfrak{m}_G,K_\infty)$-cohomology $H^{*}(\mathfrak{m}_G,K_\infty; V_G\otimes\V)$. Let $\iota_*: \mathfrak{m}_G\rightarrow \mathfrak{m}_G$ be the push-forward map induced from $\iota: G(\mathbb{R})\rightarrow G(\mathbb{R})$. Define $\iota: V_G\rightarrow V_G$ by sending $\varphi$ to $\varphi^\iota$, such that for any $[g]\in S_G$, $\varphi^\iota([g])=\varphi([g]^\iota)$. Now $\iota$ acts on $Hom_{K_\infty}(\wedge^q(\mathfrak{m}_G/\mathfrak{k_{\infty}}),V_G\otimes\V)$ by sending $\phi$ to $\phi^\iota$, such that 
\begin{equation}\label{formula2}
\phi^\iota: X\mapsto (\phi((\iota^{-1})_*X))^\iota
\end{equation}
for any $X\in\wedge^q(\mathfrak{m}_G/\mathfrak{k_{\infty}})$. It is easy to check that the action is well defined up to homotopy. 

\vspace{1pc}
Let $\alpha: C^*(\mathfrak{m}_G,K_\infty; \V)\rightarrow\Omega^*(S_G,\V)$ be the morphism of complexes which induces  $H^q(\mathfrak{m}_G,K_\infty; V_G\otimes\V)\cong H^q(S_G,\V)$. Concretely, for $\phi\in C^q(\mathfrak{m}_G,K_\infty; \V)$, $\alpha$ assigns it a $q$-th differential form $\tau_\phi$, such that for any $g\in G(\mathbb{A})^1$,
\begin{equation}
\tau_\phi(\bar{v}_1\wedge\cdots\wedge\bar{v}_q)([g])=\phi(\bar{v}_1([e]),\cdots, \bar{v}_q([e]))([g]),
\end{equation}
where $\bar{v}$ indicates a left invariant vector field on $S_G$ and $\bar{v}([e])$ its value at $[e]\in S_G$. Since $\phi(\bar{v}_1([e]),\cdots, \bar{v}_q([e]))\in V_G\otimes\V$, $\phi(\bar{v}_1([e]),\cdots, \bar{v}_q([e]))([g])$ means to evaluate its first component at $[g]$. Compare with \S3.1.7, it is easy to check that the $\iota$-action on $H^q(\mathfrak{m}_G,K_\infty; V_G\otimes\V)$ is compatible with the $\iota$-action on $H^q(S_G, \V)$ defined in \S3.1.

\subsubsection{image of $A_{\lambda,\set{P}}$ under $\iota$}
Apparently that $A_\lambda$ is stable under $\iota$, we now study the behavior of decomposition (\ref{step1}) under $\iota$. Let $A^\iota_{\lambda,\set{P}}$ be the image of $A_{\lambda,\set{P}}$ under $\iota$. Given an associate class $\set{P}$, let $\set{P^\iota}$ be the class whose elements are $P^{\iota}$ for all $P\in\set{P}$. Apparently the map $\set{P}\mapsto\set{P^\iota}$ permutes the assoicate classes, let $\set{P}^\iota:=\set{P^\iota}$.

\begin{lem}
\begin{equation}
A^\iota_{\lambda,\set{P}}=A_{\lambda,\set{P^{\iota^{-1}}}} 
\end{equation}
\end{lem}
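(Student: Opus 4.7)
The plan is to trace how the constant-term integral transforms under $\iota$ and then to read off the image of each summand in $(\ref{step1})$. The central computation is that, for $\phi\in V_G$ and a standard parabolic $Q$,
\begin{equation*}
(\phi^\iota)_{N_Q}(g)=\phi_{N_{Q^\iota}}(g^\iota),
\end{equation*}
which I will obtain by the substitution $n\mapsto n^\iota$ in the defining integral. This substitution is legitimate because $\iota$ is $\mathbb{Q}$-rational, carries $N_Q$ isomorphically onto $N_{Q^\iota}$, and preserves the normalized Haar measure --- the latter being forced by $\int_{N_Q(\mathbb{Q})\backslash N_Q(\mathbb{A})}dn=1$ together with the $\mathbb{Q}$-rationality, which identifies fundamental domains on both sides.

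Next I would right-translate this identity and change variable $h\mapsto h^\iota$ on $L_Q(\mathbb{Q})\backslash L_Q(\mathbb{A})^1$, so that $R_g(\phi^\iota)_{N_Q}$ on $L_Q$ gets identified, up to $\iota$-pullback, with $R_{g^\iota}\phi_{N_{Q^\iota}}$ on $L_{Q^\iota}$. Since $\iota$ carries cuspidal functions on $L_Q(\mathbb{Q})\backslash L_Q(\mathbb{A})^1$ to cuspidal functions on $L_{Q^\iota}(\mathbb{Q})\backslash L_{Q^\iota}(\mathbb{A})^1$ --- the constant-term-vanishing condition along a proper parabolic $R\subset L_Q$ pulls back to the same condition along $R^\iota\subset L_{Q^\iota}$ --- this will show that $\phi^\iota$ is negligible along $Q$ if and only if $\phi$ is negligible along $Q^\iota$.

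The lemma then follows quickly, since the condition $Q\notin\set{P^{\iota^{-1}}}$ is by definition the same as $Q^\iota\notin\set{P}$: for $\phi\in A_{\lambda,\set{P}}$, the negligibility hypothesis on $\phi$ transports under the previous equivalence to give $\phi^\iota\in A_{\lambda,\set{P^{\iota^{-1}}}}$, hence $A^\iota_{\lambda,\set{P}}\subseteq A_{\lambda,\set{P^{\iota^{-1}}}}$, and running the same argument with $\iota$ replaced by $\iota^{-1}$ and then applying $\iota$ yields the reverse inclusion. No serious obstacle is expected: the only genuinely delicate points are the invariance of Haar measure under $\iota$, handled by $\mathbb{Q}$-rationality and the normalization, and the compatibility of $\iota$ with the cuspidality condition, which is immediate from the fact that $\iota$ permutes the parabolic subgroups of $L_Q$.
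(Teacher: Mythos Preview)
Your proposal is correct and follows essentially the same approach as the paper's proof: both compute $(\phi^\iota)_{N_Q}=(\phi_{N_{Q^\iota}})^\iota$ via the substitution $n\mapsto n^\iota$ (justified by the normalization of Haar measure), deduce $R_g(\phi^\iota)_{N_Q}=(R_{g^\iota}\phi_{N_{Q^\iota}})^\iota$, and then use that $\iota$ identifies $L^2_{cusp}$ on $L_Q$ with $L^2_{cusp}$ on $L_{Q^\iota}$ to conclude. Your write-up is slightly more explicit about why the Haar measure is preserved and why cuspidality is $\iota$-stable, but the argument is the same.
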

\begin{proof}
Given $\phi\in A_{\lambda, \set{P}}$, we have to show that, for any parabolic $Q\notin\set{P^{\iota^{-1}}}$ and  $g\in A_Q(\mathbb{A})\mathbb K$, $R_g(\phi^\iota)_{N_Q}\perp L^2_{cusp}(L_Q(\mathbb{Q})\backslash L_Q(\mathbb{A})^1)$. Let $dn_Q$ be the normalized Haar measure on $N_Q$, then $dn_{Q^\iota}$ is same to the Haar measure on $N_Q^\iota=N_{Q^\iota}$ induced by the map $\iota: N_Q\rightarrow N_Q^\iota=N_{Q^\iota}$. Now a direct computation shows that 
\begin{equation}
(\phi^\iota)_{N_Q}=(\phi_{N_{Q^\iota}})^\iota.
\end{equation}
Then $g^\iota\in A_{Q^\iota}(\mathbb{A})\mathbb K$, and
\begin{equation}
R_g(\phi^\iota)_{N_Q}=R_g((\phi_{N_{Q^\iota}})^\iota)=(R_{g^\iota}\phi_{N_{Q^\iota}})^\iota.
\end{equation}
Noting that the restriction of $\iota$ on $L^2_{cusp}(L_Q(\mathbb{Q})\backslash L_Q(\mathbb{A})^1)$ identify its image with $L^2_{cusp}(L_{Q^{\iota^{-1}}}(\mathbb{Q})\backslash L_{Q^{\iota^{-1}}}(\mathbb{A})^1)$, we have
\begin{equation}
L^2_{cusp}(L_{Q^{\iota}}(\mathbb{Q})\backslash L_{Q^{\iota}}(\mathbb{A})^1)^\iota=L^2_{cusp}(L_Q(\mathbb{Q})\backslash L_Q(\mathbb{A})^1).
\end{equation}
Now the conclusion follows.
\end{proof}
This lemma implies that, in the formula (\ref{formula1}), only those summand parameterized by $\set{P}=\set{P}^\iota$ will contribute to the twisted trace.

\subsubsection{on Eisenstein series}
Consider an associate class $\set{P}$ and $t=(P,\Lambda,\chi)\in\mathcal{M}_{\lambda,\set{P}}^{T,p}$ for some $p$. For $\varphi\in W(t)$ and $\mu\in(\check{\mathfrak{a}}_P^G)_\mathbb{C}$, let $E_t(\varphi,\mu)(g):=E(\varphi,\mu)(g)$ be the Eisenstein series defined in (\ref{ES}), then $A_\lambda$ is spanned by the principal values of derivatives of all such $E(\varphi,\mu)$. 

\begin{lem}
\begin{equation}
E_t(\varphi,\mu)^\iota=E_{t^\iota}(\varphi^\iota,\mu^{\iota^{-1}})
\end{equation}
where we define 
\begin{itemize}
\item[$(a)$] $\Lambda^{\iota^{-1}}: A_{P^{\iota^{-1}}}(\mathbb{A})/A(\mathbb{R})^0A_{P^{\iota^{-1}}}(\mathbb{Q})\rightarrow\mathbb{C}^\times$ as $\Lambda^{\iota^{-1}}(a):=\Lambda(a^{\iota})$;
\item[$(b)$] $\chi^{\iota^{-1}}:\mathfrak{m}_G\rightarrow\mathbb{C}^\times$ as $\chi^{\iota^{-1}}(x):=\chi(x^{\iota})$;
\item[$(c)$] $t^\iota:=(P^{\iota^{-1}}, \Lambda^{\iota^{-1}}, \chi^{\iota^{-1}})$
\item[$(d)$] $\varphi^\iota(g):=\varphi(g^\iota)$. Then $\varphi^\iota\in W(t^\iota)$.
\item[$(e)$] $\mu^{\iota^{-1}}\in(\check{\mathfrak{a}}_{P^{\iota^{-1}}}^G)_\mathbb{C}$, as a character, it is defined by $\mu^{\iota^{-1}}(a):=\mu(a^\iota)$. In particular, it induces a homomorphism $\iota: S((\check{\mathfrak{a}}_P^G)_\mathbb{C})\rightarrow S((\check{\mathfrak{a}}_{P^{\iota^{-1}}}^G)_\mathbb{C})$.
\end{itemize}
So we have a homomorphism between vector spaces
\begin{equation}
\iota: M(t)\rightarrow M(t^\iota)
\end{equation}
such that for $\varphi\otimes\delta\in M(t)=W(t)\otimes S(t)$
\begin{equation}
\mathbf{MW}_{\lambda_t}\delta E_t(\varphi,\mu)^\iota=\mathbf{MW}_{\lambda_{t^\iota}}\delta^\iota E_{t^\iota}(\varphi^\iota,\mu^{\iota^{-1}}).
\end{equation}
Moreover, $\iota$ induces an homomorphism between the $(\mathfrak{m}_G, K_\infty)$-cohomology group, 
\begin{equation}
\iota: H^*(\mathfrak{m}_G, K_\infty; M(t)\otimes\V)\rightarrow H^*(\mathfrak{m}_G, K_\infty; M(t^\iota)\otimes\V)
\end{equation}
\end{lem}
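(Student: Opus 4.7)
The plan is to establish the identity $E_t(\varphi,\mu)^\iota=E_{t^\iota}(\varphi^\iota,\mu^{\iota^{-1}})$ by a direct change of variables in the Eisenstein sum \eqref{ES}, and then to deduce the remaining two assertions (on main values and cohomology) as formal consequences. Starting from $E_t(\varphi,\mu)^\iota(g)=E_t(\varphi,\mu)(g^{\iota^{-1}})$ and expanding, I would perform the substitution $\gamma\mapsto\gamma^{\iota^{-1}}$, which induces a bijection between $P_t(\mathbb{Q})\backslash G(\mathbb{Q})$ and $P_t^{\iota^{-1}}(\mathbb{Q})\backslash G(\mathbb{Q})$ and satisfies $\gamma^{\iota^{-1}}g^{\iota^{-1}}=(\gamma g)^{\iota^{-1}}$ inside each summand.

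Term by term, three compatibilities have to be verified. First, the functorial property of the height function yields $\langle\nu, H_{P_t}(x^{\iota^{-1}})\rangle=\langle\nu^{\iota^{-1}}, H_{P_t^{\iota^{-1}}}(x)\rangle$ for any $\nu\in(\check{\mathfrak{a}}_{P_t}^G)_\mathbb{C}$; applied to $\nu=\mu+\rho_{P_t}$ together with $\rho_{P_t}^{\iota^{-1}}=\rho_{P_t^{\iota^{-1}}}$ (since $\iota$ permutes positive roots compatibly), this produces the required exponential factor. Second, $\varphi(x^{\iota^{-1}})=\varphi^\iota(x)$ follows from definition (d), converting the remaining factor into $\varphi^\iota(\gamma g)$. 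Third, I must check $\varphi^\iota\in W(t^\iota)$: equivariance under $A_{P_t^{\iota^{-1}}}(\mathbb{A})$ by $\Lambda^{\iota^{-1}}$ is immediate from definitions; being a $\chi^{\iota^{-1}}$-eigenvector of $\mathfrak{Z}(\mathfrak{m}_G)$ follows from $\iota_*$-invariance of $\mathfrak{Z}(\mathfrak{m}_G)$; and cuspidality of $(\varphi^\iota)_{N_Q}$ along proper $Q\subsetneq P_t^{\iota^{-1}}$ reduces via $(\varphi^\iota)_{N_Q}=(\varphi_{N_{Q^\iota}})^\iota$ to the analogous orthogonality for $\varphi$ itself, exactly as in the preceding lemma on $A_{\lambda,\set{P}}^\iota$.

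For the main-value assertion, I would use that $\iota$ induces an isomorphism $(\check{\mathfrak{a}}_{P_t}^G)_\mathbb{C}\to(\check{\mathfrak{a}}_{P_t^{\iota^{-1}}}^G)_\mathbb{C}$ sending $\lambda_t$ to $\lambda_{t^\iota}$, and hence identifies $S(t)$ with $S(t^\iota)$ compatibly with the module structures (A) and (B); thus $\delta E_t(\varphi,\mu)$, expanded in $\mu$ about $\lambda_t$, is sent by $\iota$ to $\delta^\iota E_{t^\iota}(\varphi^\iota,\mu^{\iota^{-1}})$ expanded about $\lambda_{t^\iota}$, and extraction of the main value commutes with $\iota$. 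For the cohomology statement, I would invoke the functoriality of $(\mathfrak{m}_G,K_\infty)$-cohomology from \S4.3.1 applied to the automorphism $\iota_*$ of $\mathfrak{m}_G$ (preserving $K_\infty$) together with the coefficient map $\iota: M(t)\otimes\mathbb{V}_\lambda^\vee(\mathbb{C})\to M(t^\iota)\otimes\mathbb{V}_\lambda^\vee(\mathbb{C})$ just constructed; crucially $\lambda\in\mathfrak{X}^\iota$ ensures $\mathbb{V}_\lambda^\vee(\mathbb{C})$ is $\iota$-stable.

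The most delicate step will be the cuspidality check in the third compatibility, which amounts to translating the orthogonality satisfied by $\varphi$ (along parabolics inside $P_t$) through the bijection $Q\leftrightarrow Q^{\iota^{-1}}$ between standard parabolics and exploiting $\iota$-invariance of the normalized Haar measures on the unipotent radicals, closely paralleling the argument for $A^\iota_{\lambda,\set{P}}$ above; otherwise the proof is essentially bookkeeping of $\iota^{\pm 1}$ indices through the various characters, pairings, and module structures.
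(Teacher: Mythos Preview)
Your proposal is correct and follows exactly the route the paper takes: the paper's proof simply asserts that the Eisenstein identity and the main-value identity are ``from definition directly,'' and that the cohomology statement follows because $\iota$ is compatible with the action defined in \S4.3.1 (formula (4.3.1)). Your write-up is a faithful unpacking of what ``from definition'' means here---the change of variables $\gamma\mapsto\gamma^{\iota^{-1}}$, the height-function compatibility, the check that $\varphi^\iota\in W(t^\iota)$ (including the cuspidality condition via $(\varphi^\iota)_{N_Q}=(\varphi_{N_{Q^\iota}})^\iota$), and the functoriality of $(\mathfrak{m}_G,K_\infty)$-cohomology---and nothing more is needed.
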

\begin{proof}
$(4.3.9)$ and $(4.3.11)$ are from definition directly. To show $(4.3.12)$, one only has to notice that $\iota$ is compatible with (\ref{formula2}).
\end{proof} 

\begin{remark}
The definition of $\mathbf{MW}_{\lambda_t}$ actually depends on the choice of a regular element $\xi_t\in\check{\mathfrak{a}}_P^G$, so here $(4.3.11)$ depends on the choice $\xi_{t^\iota}=\xi_t^{\iota^{-1}}$. However, just as the situation in \cite{Franke}, it does not matter our goal.
\end{remark}

Recall that the quotient $A_{\lambda, \set{P}}^{T,p}/A_{\lambda, \set{P}}^{T,p+1}$ is spanned by the elements of the form $\mathbf{MW}_{\lambda_t}\delta E(f,\mu)$. As observed by Franke and Schwermer in \cite{FS}, the only relations between these vectors are the relations provided by the functional equation of the Eisenstein series for singular $\lambda_t$. So if $t\cong t^\iota$ in $\cup_p\mathcal{M}_{\lambda,\set{P}}^{T,p}$, then the image of $H^*(\mathfrak{m}_G, K_\infty; M(t)\otimes\V)$ and $H^*(\mathfrak{m}_G, K_\infty; M(t^\iota)\otimes\V)$ in $H^*(\mathfrak{m}_G, K_\infty; A_{\lambda,\set{P}}\otimes\V)$ coincide. This implies that, in the first step of (\ref{formula1}), only those terms with $t\cong t^\iota$ will contribute to the twisted trace.

\subsubsection{}
For a standard parabolic subgroup $P$ and the associate class $\set{P}$ containing $P$, consider a triple $(P,\mu,\chi)\in\mathcal{M}_{\lambda,\set{P}}^{T,T(\mu)}$, and let $n_{P}(\mu)$ be the cardinality of the isomorphism class containing $(P,\mu,\chi)$. $n_{P}(\mu)$ is the number of Weyl chambres to which $\mu$ belongs, in particular, if $\mu$ is regular, then $n_P(\mu)=1$. Define:
\begin{equation}
\Upsilon:=\set{\mu\in\check{\mathfrak{a}}_0\mid pr_{\check{\mathfrak{a}}_0\rightarrow\check{\mathfrak{a}}_P^G}(\mu+\rho) \text{ is regular in }\check{\mathfrak{a}}_P^G, \forall P},
\end{equation}
$\Upsilon$ is dense in $\check{\mathfrak{a}}_0$. From now on, assume that 
\begin{itemize}
\item[$(R)$] $\lambda\in\Upsilon$.
\end{itemize}
With assumption $(R)$, $\mathcal{W}^t=\emptyset$ for any $t\in\mathcal{M}_{\lambda,\set{P}}^{T,p}$, unless $\lambda_t$ is regular. In particular, in the alternating trace (\ref{al}), only those $t$ with $\lambda_t$ regular will contribute to the trace. In this case, $n_p(\lambda_t)=1$, and $t\cong t^\iota\Leftrightarrow t=t^\iota$.

\vspace{1pc}
Combing the discussion in last several sections, we compute (\ref{al}) $=$
\begin{equation}\label{4314}
\begin{aligned}
\sum_{t=(P,\Lambda,\chi)\cong t^\iota}&\frac{1}{n_P(\lambda_t)}tr(\iota\times f \mid ( Ind_{P_t(\mathbb{A}_f)}^{G(\mathbb{A}_f)}\bigoplus_{w\in\mathcal{W}^t}\\
&H^{*+l(w)-n_t}(\mathfrak{m}_t,K^t_\infty;\mathbb{V}^{L_t,\vee}_{w(\lambda+\rho_{P_t})+\rho_{P_t}}(\mathbb{C})\otimes V(t))\otimes\mathbb{C}_{\rho_{R_t}+\lambda_t})(\xi_\lambda))\\
=\sum_{P=P^{\iota}}\sum_{\Lambda=\Lambda^\iota}&\sum_{\chi=\chi^\iota}(-1)^{n_P}tr(\iota\times f_L\mid\\
&(\bigoplus_{w\in\mathcal{W}^t}H^{*+l(w)}(\mathfrak{m}_P,K_\infty^t;\mathbb{V}^{L,\vee}_{w(\lambda+\rho_{P})+\rho_{P}}(\mathbb{C})\otimes V(t)))(\xi_{w(\lambda+\rho_{P})+\rho_{P}})),
\end{aligned}
\end{equation}
where, on the right side of this equation, $t=(P,\Lambda,\chi)$, $\mu$ is the differential of the Archimedean part of $\Lambda$ and $P=LN=MAN$ the Langlands decomposition. For a Levi subgroup $L$, $f_L$ is defined by
\begin{equation}\label{lower}
f_L(l)=e^{\langle\rho_P,H_P(l)\rangle}\int_{\mathbb K_f}\int_{N_P(\mathbb{A}_f)}f(klnk^{-1})dndk,
\end{equation} 
where the Haar measures are normalized with respect to the Iwasawa decomposition as in \cite[\S7.7]{Franke}. The twisting factor appears in (\ref{lower}) since we have twisted the character $\xi_\lambda$ in the $(\mathfrak{m}_G, K_\infty)$-cohomology. The twisting term $\mathbb{C}_{\rho_P+\mu}$ disappears, since as a $\mathfrak{a}_P$-module it does not contrube to the trace. Now combing all $\chi$ and all the finite parts $\Lambda_f$ of $\Lambda$ in the summation, by the definition $(a)$ of $V(t)$, (\ref{4314}) equals
\begin{equation}\label{formula4}
\begin{aligned}
&\sum_{P=P^{\iota}}\sum_{\mu=\mu^\iota}(-1)^{n_P}tr(\iota\times f_L\mid(\bigoplus_{w\in\mathcal{W}^t}H^{*+l(w)}(\mathfrak{m}_P,K_\infty^L;\\
&\mathbb{V}^{L,\vee}_{w(\lambda+\rho_{P})+\rho_{P}}(\mathbb{C})\otimes L^2_{disc}(A_P(\mathbb{R})^0L_P(\mathbb{Q})\backslash L_P(\mathbb{A}))))(\xi_{w(\lambda+\rho_{P})+\rho_{P}}))
\end{aligned}
\end{equation}

\subsubsection{}
Now we study the action of $\iota$ on the direct sum over Weyl elements in last formula. For every $\varphi\in \mathbb{V}^{L}_{w(\lambda+\rho_{P})+\rho_{P}}(\mathbb{C})\subset C(L_P(\mathbb{Q})\backslash L_P(\mathbb{A}))$, $\varphi^\iota(x)=\varphi(x^{\iota^{-1}})$. It is easy to check that \begin{equation}
(\mathbb{V}^{L}_{w(\lambda+\rho_{P})+\rho_{P}}(\mathbb{C}))^\iota = \mathbb{V}^{L}_{(w(\lambda+\rho_{P})+\rho_{P})^\iota}(\mathbb{C}).
\end{equation}

\begin{lem} 
Group $G$ and operator $\iota$ as before, $\iota$ acts on the Weyl group $\mathcal{W}=N_G(T)/T$ via $[x]\mapsto [x^\iota]$ for any $x \in N_G(T)$. Then
\begin{itemize}
\item[$(1)$] Let $S_\alpha$ be a simple reflection in $\mathcal{W}$ corresponding to a simple root $\alpha$, then $(S_\alpha)^\iota=S_{\alpha^\iota}$. In particular, $\iota$ preserves the length of a Weyl element.
\item[$(2)$] For any $w\in\mathcal{W}$, $(w(\alpha))^\iota=w^\iota(\alpha^\iota)$. 
\item[$(3)$] Let $P\in\mathcal{P}$, then $\rho_P^\iota=\rho_P$.
\end{itemize}
In particular
\begin{equation}\label{formula3}
(\mathbb{V}^{L}_{w(\lambda+\rho_{P})+\rho_{P}}(\mathbb{C}))^\iota=\mathbb{V}^{L}_{w^\iota(\lambda+\rho_{P})+\rho_{P}}(\mathbb{C}).
\end{equation}
Moreover, if $\lambda$ is regular, $(\mathbb{V}^{L}_{w(\lambda+\rho_{P})+\rho_{P}}(\mathbb{C}))^\iota=\mathbb{V}^{L}_{w(\lambda+\rho_{P})+\rho_{P}}(\mathbb{C})$ if and only if $w^\iota=w$.
\end{lem}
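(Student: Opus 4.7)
The plan is to verify parts $(1)$--$(3)$ in turn using only that $\iota$ stabilizes the pair $(B,T)$, and then assemble them into the two displayed consequences. Since $\iota(T)=T$, $\iota$ permutes $N_G(T)$ and descends to an automorphism $w\mapsto w^\iota$ of $\mathcal{W}=N_G(T)/T$. For $(1)$, the $\iota$-stability of $(B,T)$ implies $\iota$ acts on the based root datum and permutes $\Delta$, sending $\alpha\mapsto\alpha^\iota\in\Delta$. Then $S_\alpha^\iota$ is an element of $\mathcal{W}$ that, by conjugation, must send $\alpha^\iota\mapsto -\alpha^\iota$ and fix the hyperplane $(\alpha^\iota)^\perp$; this characterizes it as $S_{\alpha^\iota}$. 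Since $\iota$ permutes simple reflections, a reduced expression $w=s_{\alpha_1}\cdots s_{\alpha_k}$ is carried to $s_{\alpha_1^\iota}\cdots s_{\alpha_k^\iota}$, yielding length preservation.

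Part $(2)$ is a direct computation: if $x\in N_G(T)$ represents $w$, then for any $t\in T$,
\begin{equation*}
(w(\alpha))^\iota(t)=w(\alpha)(t^{\iota^{-1}})=\alpha(x^{-1}t^{\iota^{-1}}x)=\alpha^\iota(\iota(x)^{-1}t\iota(x))=w^\iota(\alpha^\iota)(t).
\end{equation*}
For $(3)$, the case entering the sum (\ref{4314}) is $P=P^\iota$, in which $\iota$ stabilizes $N_P$, permutes its set of positive roots, and hence fixes their half-sum $\rho_P$. Combining $(2)$, $(3)$ and the hypothesis $\lambda^\iota=\lambda$ gives
\begin{equation*}
(w(\lambda+\rho_P)+\rho_P)^\iota=w^\iota(\lambda+\rho_P)+\rho_P,
\end{equation*}
so since $\iota$ also stabilizes $B\cap L$, the $\iota$-twist of the irreducible algebraic $L$-module of highest weight $\mu$ is the irreducible $L$-module of highest weight $\mu^\iota$, giving the displayed isomorphism inside the lemma.

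For the ``moreover'' clause, the nontrivial direction is: if $w^{-1}w^\iota$ stabilizes $\lambda+\rho_P$, then $w^\iota=w$. This reduces to showing $\lambda+\rho_P$ is $\mathcal{W}$-regular. For $\alpha\in R^+\cap R_L$, $\rho_P$ is $\mathcal{W}_L$-invariant, so $\langle\rho_P,\alpha^\vee\rangle=0$ and $\langle\lambda+\rho_P,\alpha^\vee\rangle=\langle\lambda,\alpha^\vee\rangle\neq 0$ by regularity of $\lambda$; for a simple $\alpha$ not lying in $R_L$ both summands pair strictly positively with $\alpha^\vee$. Hence the $\mathcal{W}$-stabilizer of $\lambda+\rho_P$ is trivial and $w^{-1}w^\iota=e$. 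The main obstacle I anticipate is this last regularity check: one must confirm that the notion of ``regular'' in play (cf.\ the set $\Upsilon$ in \S4.3.4) is strong enough to kill the stabilizer of $\lambda+\rho_P$ rather than merely of $\lambda$. Everything else is mechanical, using only that $\iota$ preserves $(B,T)$ and has finite order.
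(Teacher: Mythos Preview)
Your proof is correct and follows essentially the same route as the paper's. The paper argues $(1)$ via the characterization of $S_\alpha$ as the unique nontrivial element of $N_{G_\alpha}(T)/T$, $(2)$ by the identical direct computation you give, and $(3)$ via the adjoint-determinant formula $\rho_P(t)=\det(Ad(t)|_{\mathfrak{n}_P})^{1/2}$ together with $Ad(t)\circ\iota=\iota\circ Ad(t^\iota)$; your permutation-of-roots argument for $(3)$ is an equivalent rephrasing, and your restriction to $P=P^\iota$ is the only case used. The paper does not spell out the ``moreover'' clause at all, so your regularity argument (showing $\lambda+\rho_P$ lies in the open chamber) is additional detail rather than a different strategy; your own computation already shows that ordinary regular dominance of $\lambda$ suffices, so the worry about $\Upsilon$ is unnecessary here.
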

\begin{proof}
The proof is straightforward. Concretely speaking, $(1)$ follows from the fact that $S_\alpha$ is the only nontrivial element in $N_{G_\alpha}(T)/T$ (see, e.g. \cite[IV]{Milne}) and $(S_\alpha)^\iota$ is a non-trivial element in $N_{G_{\alpha^\iota}}(T)/T$. $(2)$ follows from a direct computation: let $[x]$ be a representative of $w$, for any $t\in T$, $(w(\alpha))^\iota(t)=w(\alpha)(t^{\iota^{-1}})=\alpha(x^{-1}t^{\iota^{-1}x})$ and $w^\iota(\alpha^\iota)(t)=\alpha^{\iota}((x^\iota)^{-1}tx^\iota)=\alpha(x^{-1}t^{\iota^{-1}x})$. $(3)$ follows from the definition that $\rho_P(t):=det( Ad(t)|_{\mathfrak{n}_P})^{1/2}$(see, e.g. \cite[III]{BW}) and the commutative diagram $ad(t)\circ\iota=\iota\circ ad(t^\iota)$. Finally, one deduces (\ref{formula3}) from $(1)-(3)$ directly.
\end{proof}

\vspace{1pc}
Now let $\mathcal{W}^{t,\iota}$ be the subset of $\mathcal{W}^{t}$ consisting of elements which are invariant under $\iota$. Apply the lemma, (\ref{formula4}) equals 
\begin{equation}\label{formula5}
\begin{aligned}
&\sum_{P=P^{\iota}}\sum_{\mu=\mu^\iota}\sum_{{w\in\mathcal{W}^{t,\iota}}}(-1)^{n_P-l(w)}tr(\iota\times f_L\mid(H^{*}(\mathfrak{m}_P,K_\infty^L;\\
&\mathbb{V}^{L,\vee}_{w(\lambda+\rho_{P})+\rho_{P}}(\mathbb{C})\otimes L^2_{disc}(A_P(\mathbb{R})^0L_P(\mathbb{Q})\backslash L_P(\mathbb{A}))))(\xi_{w(\lambda+\rho_{P})+\rho_{P}}))
\end{aligned}
\end{equation}
Recall that for any $w\in\mathcal{W}^t$, $w(\lambda+\rho)=\mu$. So when $\mu$ is running over all classes $t\in\mathcal{M}_{\lambda,\set{P}}^{T}$, $w$ is running over $\mathcal{W}^{L}_{Eis}$. Let $\mathcal{W}^{L,\iota}_{Eis}$ be the subset of $\mathcal{W}^{L}_{Eis}$ consisting of elements which are invariant under $\iota$, the previous formula equals：
\begin{equation}
\begin{aligned}
&\sum_{P=P^{\iota}}\sum_{w\in\mathcal{W}_{Eis}^{L,\iota}}(-1)^{n_P-l(w)}tr(\iota\times f_L\mid(H^{*}(\mathfrak{m}_P,K_\infty^L;\\
&\mathbb{V}^{L,\vee}_{w(\lambda+\rho_{P})+\rho_{P}}(\mathbb{C})\otimes L^2_{disc}(A_P(\mathbb{R})^0L_P(\mathbb{Q})\backslash L_P(\mathbb{A}))))(\xi_{w(\lambda+\rho_{P})+\rho_{P}}))\\
&=\sum_{P=P^{\iota}}\sum_{w\in\mathcal{W}_{Eis}^{L,\iota}}(-1)^{n_P-l(w)}tr(\iota\times f_L\mid(H^{*}(\mathfrak{m}_P,K_\infty^L;\\
&\mathbb{V}^{L,\vee}_{w(\lambda+\rho_{P})+\rho_{P}}(\mathbb{C})\otimes L^2_{cusp}(A_P(\mathbb{R})^0L_P(\mathbb{Q})\backslash L_P(\mathbb{A}))))(\xi_{w(\lambda+\rho_{P})+\rho_{P}}))
\end{aligned}
 \end{equation}
where the equality holds since for $\lambda\in\Upsilon$, $w(\lambda+\rho_{P})+\rho_{P}$ is regular. 

\vspace{1pc}
Finally, combing all the computation above, it is easy to deduces:
\begin{thm}[Twisted Franke's trace formula] Assume $\lambda$ is regular in $\Upsilon$, then for any $f\in C_c^\infty(G(\mathbb{A}_f))$,
\begin{eqnarray*}
& &tr^{st}(\iota\times f | H^* (S_G, \mathbb{V}^\vee_\lambda(\mathbb{C})))\\
&=&\sum_{P=P^\iota}\sum_{\substack{w\in\mathcal{W}^{L,\iota}_{Ein}}}(-1)^{l(w)+n_P}tr^{st}(\iota\times f_L | H^*_{cusp}(S_L, \mathbb{V}^{L, \vee}_{w(\lambda+\rho_{P})+\rho_{P}}))
\end{eqnarray*}
where the notation $st$ indicates that we are using the standard Hecke action.
\end{thm}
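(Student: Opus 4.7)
The plan is to assemble Franke's Eisenstein spectral sequence (Theorem 4.3.2) with the $\iota$-equivariance analysis developed in \S4.3.1--\S4.3.3, then read off the alternating twisted trace term by term. First I would invoke Franke's identification of $H^{*}(S_G,\mathbb{V}_\lambda^\vee(\mathbb{C}))$ with the $(\mathfrak{m}_G,K_\infty)$-cohomology of $A_\lambda\otimes\mathbb{V}_\lambda^\vee(\mathbb{C})$, and use the complex-level formula (4.3.2) to check that the resulting $\iota$-action agrees with the sheaf-theoretic action of \S3.1.7. Lemma 4.3.6 then shows that $\iota$ permutes the decomposition $A_\lambda=\bigoplus_{\{P\}}A_{\lambda,\{P\}}$ by $\{P\}\mapsto\{P^{\iota^{-1}}\}$, so only the diagonal classes $\{P\}=\{P\}^\iota$ contribute to $tr(\iota\times f\mid\cdot)$.

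Next I would use Lemma 4.3.8 to transport the $\iota$-action through the Eisenstein series construction: on each successive quotient $A_{\lambda,\{P\}}^{T,p}/A_{\lambda,\{P\}}^{T,p+1}$, the Franke--Schwermer presentation by principal values $\mathbf{MW}_{\lambda_t}\delta E(\varphi,\mu)$ becomes $\iota$-equivariant under $t\mapsto t^\iota$, so only parameters with $t\cong t^\iota$ survive in the twisted trace; under the regularity hypothesis $\lambda\in\Upsilon$ this collapses to $t=t^\iota$, i.e.\ $P=P^\iota$, $\Lambda=\Lambda^\iota$, $\chi=\chi^\iota$, and $n_P(\lambda_t)=1$. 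Then I would expand $H^{p+q}(\mathfrak{m}_G,K_\infty;M(t)\otimes\mathbb{V}_\lambda^\vee)$ via Frobenius reciprocity, Kostant, and Künneth, producing summands indexed by $w\in\mathcal{W}^t$; by Lemma 4.3.13, $\iota$ sends the $w$-summand to the $w^\iota$-summand, so only $w=w^\iota$ contribute, inserting the sign $(-1)^{l(w)}$ from the cohomological degree shift. Reassembling all $\Lambda$ whose differentials run over the allowed $\mu$'s, one sees that $w$ ranges over $\mathcal{W}^{L,\iota}_{Eis}$, and for such regular weights the discrete spectrum on the $\xi_{w(\lambda+\rho_P)+\rho_P}$-isotypic piece agrees with the cuspidal spectrum. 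The Hecke operator $f$ descends to $f_L$ by the Iwasawa unfolding (4.3.15), with the twisting factor $e^{\langle\rho_P,H_P(l)\rangle}$ accounting for the normalization induced by twisting $\mathbb{V}_\lambda^\vee$ by $\xi_\lambda$.

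The main obstacle is the bookkeeping of $\iota$-equivariance through the Eisenstein expansion: one must check that the functional equations responsible for the Franke--Schwermer relations among the vectors $\mathbf{MW}_{\lambda_t}\delta E(\varphi,\mu)$ are themselves $\iota$-equivariant, so that $\iota$ descends well to $A_{\lambda,\{P\}}^{T,p}/A_{\lambda,\{P\}}^{T,p+1}$ and so that the spectral sequence degeneration commutes with $\iota$; and when $t\cong t^\iota$ but $t\neq t^\iota$, that the orbit summands indeed give vanishing contribution to the twisted trace (an argument in the style of the $B,C$-class analysis in Proposition 3.3.5). A secondary subtlety is ensuring that the Haar-measure normalizations used in (4.3.15) are $\iota$-stable, which reduces to the identity $\rho_P^\iota=\rho_P$ of Lemma 4.3.13(3) together with the $\iota$-invariance of the fixed good maximal compact $\mathbb{K}$ at finite places and of $K_\infty$ at infinity.
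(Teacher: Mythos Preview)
Your proposal is correct and follows essentially the same route as the paper: decompose along $\{P\}$, then along $t$, then along $w$, using Lemmas 4.3.6, 4.3.8, and 4.3.13 to see that only $\iota$-fixed indices survive, and conclude by the Kostant--K\"unneth expansion together with the passage from discrete to cuspidal via regularity of $w(\lambda+\rho_P)+\rho_P$. The obstacle you flag about $t\cong t^\iota$ versus $t=t^\iota$ is handled in the paper exactly as you anticipate: the hypothesis $\lambda\in\Upsilon$ forces the only contributing $\lambda_t$ to be regular, whence $n_P(\lambda_t)=1$ and the isomorphism class of $t$ is a singleton, so no orbit-sum argument is needed.
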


\subsection{Cuspidal decomposition formula}
Let $\lambda=\lambda^{alg}\epsilon$ be an arithmetic regular dominant weight in $\mathfrak{X}^\iota$, define
\begin{equation}
I^{cl}_G(\iota\times f, \lambda): = tr^*(\iota\times f | H^*(S_G, \mathbb{V}^\vee_{\lambda^{alg}}(L,\epsilon))),
\end{equation}
 where $*$ indicates that we are using the $*$-action defined in Section 3.1.

\begin{lem}
Assume $f=f^p\otimes u_t \in\mathcal{H}_p'$, then
\begin{equation}
I^{cl}_G(\iota\times f, \lambda)=\lambda(\xi(t))tr^{st}(\iota\times f | H^*(S_G, \mathbb{V}^{\vee}_{\lambda^{alg}}(\mathbb{C},\varepsilon))).\end{equation}
\end{lem}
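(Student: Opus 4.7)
The plan is to reduce the statement to the explicit comparison between the $*$-action and the standard sheaf-theoretic Hecke action that is already recorded in the paragraph containing (\ref{twistfactor}), combined with the observation that the action of $\iota$ is convention-free.

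First, I would note that the operator $\iota$ on $H^*(S_G, \mathbb{V}^\vee_{\lambda^{alg}}(\mathbb{C},\epsilon))$ does not depend on which convention of Hecke action we use. This is precisely the content of the last paragraph of \S3.1.7: the $\iota$-action defined by functoriality (used throughout \S3 to set up the $*$-action of ${}^\iota\mathcal{H}_p(K^p)$) coincides with the sheaf-theoretic $\iota$-action given by $(\iota^{-1})^*\otimes\iota$ on the de Rham complex. Hence whether we compute $\mathrm{tr}^*$ or $\mathrm{tr}^{st}$, the $\iota$ factor in $\iota\times f$ is the same endomorphism of the cohomology.

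Second, (\ref{twistfactor}) gives for $\delta\in\Delta^+$ acting on $\mathbb{V}_{\lambda^{alg}}$ the identity $\delta*\phi=\lambda(\xi(t_\delta))^{-1}(\delta\cdot\phi)$; dualizing and incorporating the finite-order character $\epsilon$ (which factors through a finite quotient of $\mathcal T(\mathbb{Z}_p)$ and so is unaffected by $\xi$ via (\ref{split})), the paper records just after (\ref{twistfactor}) that for any $f=f^p\otimes u_t\in\mathcal{H}_p'$, the $*$-action of $f$ on $H^q(S_G, \mathbb{V}^\vee_{\lambda^{alg}}(\mathbb{C},\epsilon))$ equals the standard action of $f$ multiplied by the scalar $\lambda(\xi(t))$.

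Third, since $\lambda(\xi(t))\in\mathbb{C}^\times$ is a scalar it commutes with $\iota$, so degree by degree
\[
(\iota\times f)_{*}\;=\;\iota\circ f_{*}\;=\;\lambda(\xi(t))\cdot\iota\circ f_{st}\;=\;\lambda(\xi(t))\cdot(\iota\times f)_{st}
\]
as endomorphisms of $H^q(S_G,\mathbb{V}^\vee_{\lambda^{alg}}(\mathbb{C},\epsilon))$. Taking the alternating trace over $q$ and using additivity yields the claimed identity. The only point requiring any vigilance is the second paragraph: tracking that the inversion in passing from $\mathbb{V}_{\lambda^{alg}}$ to $\mathbb{V}^\vee_{\lambda^{alg}}(\mathbb{C},\epsilon)$ produces the factor $\lambda(\xi(t))$ rather than $\lambda(\xi(t))^{-1}$, but this is already the consequence of (\ref{twistfactor}) stated in \S3.1.7 and so requires no new argument.
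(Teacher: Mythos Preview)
Your proposal is correct and follows essentially the same route as the paper, which simply records that the lemma is a direct consequence of (\ref{twistfactor}) together with \cite[Lemma~4.3.8]{Urban}. Your write-up is in fact more explicit than the paper's one-line justification: you spell out that the $\iota$-action is convention-free (from \S3.1.7) and that the scalar $\lambda(\xi(t))$ commutes past $\iota$, while the paper's additional citation of \cite[Lemma~4.3.8]{Urban} is there to cover the routine passage between $L$-coefficients in the definition of $I^{cl}_G$ and $\mathbb{C}$-coefficients on the right-hand side via the fixed identification $\hat{\overline{\mathbb{Q}}}_p\cong\mathbb{C}$.
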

Just like \cite[4.5.1 (29)]{Urban}, this is a direct consequence of (\ref{twistfactor}) and \cite[Lemma 4.3.8]{Urban}.

\vspace{1pc}
For any $f\in\mathcal{H}_p$, define the classical cuspidal alternating twisted trace by:
\begin{equation}
I^{cl}_{G,0}(\iota\times f, \lambda):=meas(K^p)\lambda(\xi(t))tr^{st}(\iota\times f| H^*_{cusp}(S_G(K_f),\mathbb{V}^\vee_\lambda(\mathbb{C},\epsilon)))
\end{equation}
if $f\in\mathcal{H}_p(K^p)$. This is well defined since $\iota$ is well defined on the cuspidal cohomology.

\vspace{1pc}
For any $w\in \mathcal{W}^L$, $f\in\mathcal{H}_p(G)$, define $f^{reg}_{L, w}\in\mathcal{H}_p(L)$ as below. For $f=f^p\otimes u_t \in \mathcal{H}_p(G)$, define:
\begin{equation}
f^{reg}_{L, w}: = e^{\langle\rho_P,H_P(l)\rangle}\epsilon_{\xi,w}(t)(f^p)'_L\otimes u_{wtw^{-1}},
\end{equation}
where the factor $\epsilon_{\xi,w}(t):=\xi(t)^{w^{-1}(\rho_P)+\rho_P}|t^{w^{-1}(\rho_P)+\rho_P}|_p$, which is trivial according to our choice of $\xi$ as (\ref{split}), $(f^p)'_L$ is the usual non-normalized constant term
\begin{equation}
(f^p)'_L(l^p)=\int_{\mathbb K_f^p}\int_{N(\mathbb{A}_f^p)}f_p(k^pl^pn^p(k^p)^{-1})dk^pdn^p.
\end{equation} 
For general $f$, the definition is given by linear extension.

\vspace{1pc}
\begin{thm}\label{cuspidalformula}
Let $\lambda$ arithmetic and regular such that $\lambda^{alg}\in\Upsilon$, then for any $f$ as above, $ I^{cl}_G(\iota\times f, \lambda)$ equals
\begin{equation*}
\sum_{P=P^\iota}\sum_{\substack{v\in\mathcal{W}^{L,\iota}_{Ein}}}\sum_{\substack{w\in\mathcal{W}^{L,\iota}}}(-1)^{l(v)+n_P}\xi(t)^{\lambda-w^{-1}v*\lambda}I^{cl}_{L,0}(\iota\times f^{reg}_{L,w},v(\lambda+\rho_{P})+\rho_{P})
\end{equation*}
\end{thm}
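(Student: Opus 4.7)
The plan is to combine the twisted Franke's trace formula (Theorem~4.3.10) with Lemma~4.4.1 applied on both $G$ and each Levi $L$, and then expand the normalized constant term $f_L$ into a finite linear combination of the operators $f^{reg}_{L,w}$ indexed by Weyl representatives. The $\iota$-twisted cohomological manipulations are already fully encoded in Theorem~4.3.10; what remains is an essentially local Hecke-algebra computation at $p$, parallel to Urban's treatment in the untwisted case.

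First, I would use Lemma~4.4.1 to rewrite $I^{cl}_G(\iota\times f,\lambda)=\lambda(\xi(t))\,tr^{st}(\iota\times f\mid H^*(S_G,\mathbb{V}^\vee_{\lambda^{alg}}(\mathbb{C},\epsilon)))$, and then apply Theorem~4.3.10 (legitimate since $\lambda^{alg}\in\Upsilon$ is regular). The output is a sum over $P=P^\iota$ and $v\in\mathcal{W}^{L,\iota}_{Eis}$ of standard traces $tr^{st}(\iota\times f_L\mid H^*_{cusp}(S_L,\mathbb{V}^{L,\vee}_{v(\lambda^{alg}+\rho_P)+\rho_P}(\mathbb{C},\epsilon')))$, where $f_L$ is the normalized constant term of $f$ along $P$ and $\epsilon'$ is induced from $\epsilon$ by the splitting $v(\lambda+\rho_P)+\rho_P=[v(\lambda^{alg}+\rho_P)+\rho_P]\epsilon'$.

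Next, I would decompose $f_L$ inside $\mathcal{H}_p(L)$. Away from $p$ there is nothing to do: the prime-to-$p$ part already coincides with $(f^p)'_L$. At $p$, the Iwahori decomposition $(\ref{Iwahori})$ together with the Iwasawa decomposition of $L(\mathbb{Q}_p)$ expresses $(u_t)_L$ as a finite sum over a system of Weyl representatives $w\in\mathcal{W}^L$, with coefficients precisely the factors $\epsilon_{\xi,w}(t)$ appearing in the definition of $f^{reg}_{L,w}$. This is the same combinatorial expansion carried out by Urban in \cite{Urban}, \S4.5, transported verbatim. The orbit-counting trick from the proof of Theorem~4.3.10 and from \S4.3.5 then restricts the effective sum to $w\in\mathcal{W}^{L,\iota}$: any non-$\iota$-invariant $w$ generates an $\iota$-orbit of Hecke contributions that are cyclically permuted by $\iota$, hence produce a zero twisted trace on the corresponding direct summand. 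Finally, applying Lemma~4.4.1 in reverse on each Levi piece produces $I^{cl}_{L,0}(\iota\times f^{reg}_{L,w},v(\lambda+\rho_P)+\rho_P)$, accompanied by a compensating twist $[v(\lambda+\rho_P)+\rho_P](\xi(wtw^{-1}))^{-1}$; combined with the overall prefactor $\lambda(\xi(t))$ from the first step and with $\epsilon_{\xi,w}(t)$, this rearranges into $\xi(t)^{\lambda-w^{-1}v*\lambda}$.

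The main obstacle is the bookkeeping underlying the last displayed identity. One must verify two things: that the Iwahori-coset decomposition of $(u_t)_L$ from the untwisted case is compatible with the $\iota$-action, and that the accumulated product of twist characters collapses exactly to $\xi(t)^{\lambda-w^{-1}v*\lambda}$. The first point follows from the standing assumption that $(B,T,I_m)$, $T^+$ and $T^{++}$ are all $\iota$-stable, so that the chosen coset representatives transport covariantly under $\iota$; this makes Urban's combinatorics go through unchanged. The second point reduces to the Weyl-equivariance of the splitting $\xi$ together with the normalization $(\ref{split})$ and the relation $v(\lambda+\rho_P)+\rho_P=v\cdot\lambda+2\rho_P$ on the Levi; modulo these, the manipulation is the one Urban performs to derive his cuspidal decomposition formula, and it commutes with insertion of $\iota$ since by construction every summand that survives has $P=P^\iota$, $v=v^\iota$ and $w=w^\iota$.
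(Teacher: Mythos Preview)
Your proposal is correct and follows essentially the same approach as the paper's proof. The paper also reduces by linearity to $f=1_{K^p}\otimes u_t$, applies Lemma~4.4.1 together with the twisted Franke formula, and then expands the constant term at $p$ via the Bruhat decomposition $G(\mathbb{Q}_p)=\bigsqcup_{w\in\mathcal{W}^L}P(\mathbb{Q}_p)wI$, yielding $(Ind_{P(\mathbb{Q}_p)}^{G(\mathbb{Q}_p)}\sigma_\mu)^I\cong(\sigma_\mu^{I_L})^{\mathcal{W}^L}$ on which $\iota$ acts by $w\mapsto w^\iota$; this is exactly your ``orbit-counting trick'' restricting to $\mathcal{W}^{L,\iota}$, and the remaining twist-factor bookkeeping (with the index $[\mathcal{N}_w(\mathbb{Z}_p):t\mathcal{N}_w(\mathbb{Z}_p)t^{-1}]=|t^{w^{-1}(\rho_P)+\rho_P}|_p^{-1}$ playing the role you assign to $\epsilon_{\xi,w}(t)$) matches your final step.
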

\begin{proof}
The proof is essentially same to \cite[Lemma 4.6.2]{Urban}. Since both sides of the equation are linear on $f$, it is innocuous to assume that $f=f^p\otimes u_t$ and $f^p=1_{K^p}$. If $\lambda=\lambda^{alg}\varepsilon$, the finite order character $\varepsilon$ simply appears in every step of the proof by multipling a twisting factor, so we only have to deal with the case  that $\lambda=\lambda^{alg}$ algebraic. By the twisted Franke's trace formula and lemma 4.4.1, $ I^{cl}_{G}(\iota\times f, \lambda)$ equals:
\begin{equation}\label{447}
\lambda(\xi(t))\sum_{P=P^\iota}\sum_{\substack{v\in\mathcal{W}^{L,\iota}_{Ein}}}(-1)^{l(v)+n_P}tr^{st}(\iota\times f_L|H^*_{cusp}(S_L, \mathbb{V}^{L,\vee}_{v\cdot\lambda+2\rho_P})).
\end{equation}
For group $H=L, N$ or $P$, write $K^p_H:=K^p\cap H(\mathbb{A}_f^p)$. Then for $f=1_{K^p}\otimes u_t$, we have 
\begin{equation}
f_L(l)=meas(K^p)meas(K_N^p)1_{K_L^p}\otimes(u_t)_L.
\end{equation}
So (\ref{447}) equals
\begin{equation}
\begin{aligned}
&\lambda(\xi(t))\sum_{P=P^\iota}\sum_{\substack{v\in\mathcal{W}^{L,\iota}_{Ein}}}(-1)^{l(v)+n_P}meas(K^p)meas(K_N^p)\\
& \cdot tr^{st}(\iota\times1_{K_L^p}\otimes(u_t)_L |H^*_{cusp}(S_L, \mathbb{V}^{L,\vee}_{v\cdot\lambda+2\rho_P}))\\
&=\lambda(\xi(t))\sum_{P=P^\iota}\sum_{\substack{v\in\mathcal{W}^{L,\iota}_{Ein}}}(-1)^{l(v)+n_P}meas(K^p)meas(K_P^p)\\
&\cdot tr^{st}(\iota\times(u_t)_L |H^*_{cusp}(S_L, \mathbb{V}^{L,\vee}_{v\cdot\lambda+2\rho_P})^{K^p_L}).
\end{aligned}
\end{equation}
Write $\mu:=v\cdot\lambda+2\rho_P$ and $\sigma_\mu:=H^*_{cusp}(S_L, \mathbb{V}^{L,\vee}_\mu(\mathbb{C}))^{K_L^p}$. Since $\iota$ is well defined on $\sigma_\mu$, viewing $Ind_{P(\mathbb{Q}_p)}^{G(\mathbb{Q}_p)}\sigma_\mu$ as the restriction of $Ind_{P(\mathbb{Q}_p)\rtimes\langle\iota\rangle}^{G(\mathbb{Q}_p)\rtimes\langle\iota\rangle}\sigma_\mu$ to $G(\mathbb{Q}_p)$, we have 
\begin{equation}
tr^{st}(\iota\times (u_t)'_L | \sigma_\mu)=tr^{st}(\iota\times u_t|Ind_{P(\mathbb{Q}_p)}^{G(\mathbb{Q}_p)}\sigma_\mu).
\end{equation}
According to the decomposition
\begin{equation}
G(\mathbb{Q}_p)=\bigsqcup_{w\in\mathcal{W}^L}P({\mathbb{Q}_p})wI,
\end{equation}
there is
\begin{equation}
(Ind_{P(\mathbb{Q}_p)}^{G(\mathbb{Q}_p)}\sigma_\mu)^I\cong(\sigma_\mu^{I_L})^{\mathcal{W}^L},
\end{equation}
where the isomorphism is given by $\phi\mapsto(\phi(w))_{w\in\mathcal{W}^L}$. In particular, $\iota$ acts on the right side by sending $(\phi(w))$ to $(\phi(w^\iota))$. Let $\mathcal{W}^{L,\iota}$ be the subset of $\mathcal{W}^L$ consisting of elements which are invariant under $\iota$. Write $N_w:=N\cap w^{-1}Nw$ and $I_L:=I\cap L(\mathbb{Q}_p)=wIw^{-1}\cap L(\mathbb{Q}_p)$, (4.4.8) equals
\begin{equation}
\begin{aligned}
&\lambda(\xi(t))\sum_{P=P^\iota}\sum_{\substack{v\in\mathcal{W}^{L,\iota}_{Ein}}}(-1)^{l(v)+n_P}meas(K^p)meas(K_P^p)\\
&\cdot|\rho_P(t)|_ptr^{st}(\iota\times ItI\mid(Ind_{P(\mathbb{Q}_p)}^{G(\mathbb{Q}_p)}\sigma_\mu)^I)\\
&=\lambda(\xi(t))\sum_{P=P^\iota}\sum_{\substack{v\in\mathcal{W}^{L,\iota}_{Ein}}}(-1)^{l(v)+n_P}meas(K^p)meas(K_P^p)|\rho_P(t)|_p\\
&\sum_{w\in\mathcal{W}^{L,\iota}}[\mathcal{N}_w(\mathbb{Z}_p):t\mathcal{N}_w(\mathbb{Z}_p)t^{-1}]tr^{st}(\iota\times I_Lwtw^{-1}I_L\mid \sigma_\mu^{I_L})
\end{aligned}
\end{equation}
Noting that
\begin{equation}
\begin{aligned}
& tr^{st}(\iota\times I_Lwtw^{-1}I_L\mid \sigma_\mu^{I_L})\\
&= tr^{st}(\iota\times I_Lwtw^{-1}I_L\mid H^*_{cusp}(S_L(K^p_LI_L), \mathbb{V}^{L,\vee}_{\mu}))\\
&= \frac{tr^{st}(\iota\times (1_{K^p})_L\otimes u_{wtw^{-1}}\mid H^*_{cusp}(S_L(K^p_LI_L), \mathbb{V}^{L,\vee}_{\mu}))}{meas(K^p)meas(K_P^p)},
\end{aligned}
\end{equation}
and 
\begin{equation}
[\mathcal{N}_w(\mathbb{Z}_p):t\mathcal{N}_w(\mathbb{Z}_p)t^{-1}]=|(w^{-1}(\rho_P)+\rho_P)(t)|_p^{-1},
\end{equation}
$(4.4.12)$ equals
\begin{equation}
\begin{aligned}
&\lambda(\xi(t))\sum_{P=P^\iota}\sum_{\substack{v\in\mathcal{W}^{L,\iota}_{Ein}}}(-1)^{l(v)+n_P}|\rho_P(t)|_p\sum_{w\in\mathcal{W}^{L,\iota}}\\
&[\mathcal{N}_w(\mathbb{Z}_p):t\mathcal{N}_w(\mathbb{Z}_p)t^{-1}]tr^{st}(\iota\times (1_{K^p})_L\otimes u_{wtw^{-1}}\mid \sigma_\mu^{I_L})\\
&=\sum_{P=P^\iota}\sum_{\substack{v\in\mathcal{W}^{L,\iota}_{Ein}}}(-1)^{l(v)+n_P}|\rho_P(t)|_p\sum_{w\in\mathcal{W}^{L,\iota}}\lambda(\xi(t))\\
& w^{-1}(\mu)(\xi(t))^{-1}|(w^{-1}(\rho_P)+\rho_P)(t)|_p^{-1}I^{cl}_{L,0}(\iota\times (1_{K^p})_L\otimes u_{wtw^{-1}},\mu)\\
&=\sum_{P=P^\iota}\sum_{\substack{v\in\mathcal{W}^{L,\iota}_{Ein}}}\sum_{w\in\mathcal{W}^{L,\iota}}(-1)^{l(v)+n_P}\xi(t)^{\lambda-w^{-1}(\mu)+w^{-1}(\rho_P)+\rho_P}\\
&\cdot|\rho_P(t)|_pI^{cl}_{L,0}(\iota\times \epsilon_{\xi,w}(t)(1_{K^p})_L\otimes u_{wtw^{-1}},\mu)\\
&=\sum_{P=P^\iota}\sum_{\substack{v\in\mathcal{W}^{L,\iota}_{Ein}}}\sum_{\substack{w\in\mathcal{W}^{L,\iota}}}(-1)^{l(v)+n_P}\xi(t)^{\lambda-w^{-1}v*\lambda}I^{cl}_{L,0}(\iota\times f^{reg}_{L,w},\mu).
\end{aligned}
\end{equation}
Here in the last two equations, we used (\ref{split}) again and substituted $\mu=v\cdot\lambda+2\rho_P$. This completes the proof.
 \end{proof}

\vspace{3pc}
\section{TWISTED FINITE SLOPE CHARACTER DISTRIBUTIONS}

\subsection{twisted finite slope character distributions}
In this section, we define the notion of twisted finite slope character distribution, which is a twisted version of Urban's finite slope character distributions in \cite[\S4.1.10]{Urban}.
 
\begin{defn}
Let $\iota$ be an automorphism of $G$ with finite order $l$, $L$ a finite extension of $\mathbb{Q}_p$ in $\overline{\mathbb{Q}}_p$. An $L$-valued $\iota$-twisted finite slope character distribution ($\iota$-twisted FSCD) is a $\mathbb{Q}_p$-linear map $J: \mathcal{H}_p'\rightarrow L$, such that for any $\iota$-invariant finite slope overconvergent representation $\pi$ of $\mathcal{H}_p$, there is a set of $l$ integers $\bar{m}_J(\pi):=\{m_{J,i}(\pi)| i=1, \dots, l\}$, satisfying:
\begin{itemize}
 \item[$(1)$] for any $t\in T^{++}$, $h\in\mathbb{Q}$ and $K^p$, there are finitely many $\pi$ of slope $\leqslant h$ and such that $\bar{m}_J(\pi)\neq 0$, $\pi^{K^p}\neq 0$.
\item[$(2)$] for any $f\in\mathcal{H}_p'$, $$J(f)=\sum_{\pi}\sum_{i}m_{J,i}(\pi) J_{\tilde\pi_i}(f)$$
\end{itemize}
\end{defn}
For any irreducible $\iota$-invariant finite slope representation $\pi$, we define the multiplicity of $\pi$ in $J$ by 
\begin{equation}
m_J(\pi):=\sum_im_{J,i}(\pi).
\end{equation}

\vspace{1pc}
We say $J$ is effective if it is non-trivial and all its coefficients $m_{J,i}(\pi)$ are non-negative. Given a twisted FSCD $J$, for any $f\in\mathcal{H}'_p$, define the Fredholm determinant of $f$ associated to $J$ by \begin{equation}P_J(X, f):=\prod_\pi\prod_{i}\det(1-X\tilde\pi_i(\iota\times f))^{m_{J,i}(\pi)} \end{equation}
As \cite[Lemma 4.1.12]{Urban}, $P_J(X, f)$ is an entire power series for all $f=f^p\otimes u_t\in\mathcal{H}_p'$, if and only if  $J$ is effective.

\vspace{1pc}
If $J$ is effective, for any $\iota$-invariant $K^p$, define $V_J(K^p)$ to be the completion of 
\begin{equation}
\bigoplus_{\pi}\bigoplus_i(V_{\tilde\pi_i}^{K^p})^{m_{J,i}(\pi)}
\end{equation}
under the super norm $\|\sum_iv_i\|:=sup_i\|v_i\|$. It is a $p$-adic Banach space over $\mathbb{C}_p$ with an action of $^\iota\mathcal{H}_p(K^p)$ such that an element $f$ in $\mathcal{H}_p'(K^p)$ acting completely continuously and 
\begin{equation}
J(f)=meas(K^p)tr(\iota\times f|V_J(K^p)).
\end{equation}
This observation leads us to give the next definition:
\begin{defn}
Fix $K^p$, an $L$-valued $\iota$-twisted finite slope character distribution of level $K^p$ is a $\mathbb{Q}_p$-linear map $J': \mathcal{H}_p'(K^p)\rightarrow L$, such that for any $\iota$-invariant finite slope overconvergent representation $\sigma$ of $\mathcal{H}_p(K^p)$, there is a set of $l$ integers $\bar{m}_{J'}(\sigma):=\{m_{J',i}(\sigma)| i=1, \dots, l\}$, satisfying:
\begin{itemize}
 \item[$(1)$] for any $t\in T^{++}$ and $h\in\mathbb{Q}$, there are finitely many $\sigma$ of slope $\leqslant h$ and such that $\bar{m}_{J'}(\sigma)\neq 0$.
\item[$(2)$] for any $f\in\mathcal{H}_p'(K^p)$, $$J'(f)=\sum_{\sigma}\sum_{i}m_{J',i}(\sigma) J_{\tilde\sigma_i}(f)$$
\end{itemize}
\end{defn}

\begin{lem}
If $J$ is twisted finite slope character distribution, then $J'_{K^p}:=meas(K^p)^{-1}J$ is a twisted finite slope character distribution of level $K^p$. 
\end{lem}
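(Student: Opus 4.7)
The plan is to exhibit $J'_{K^p}$ as a twisted FSCD of level $K^p$ by transporting the spectral expansion of $J$ through the bijection between admissible representations $\pi$ of $\mathcal{H}_p$ of level $K^p$ and irreducible representations $\sigma$ of $\mathcal{H}_p(K^p)$ furnished by Proposition \ref{Ktype}. Given an $\iota$-invariant finite slope overconvergent representation $\sigma$ of $\mathcal{H}_p(K^p)$, pick $\pi$ with $\pi^{K^p}\cong\sigma$; the convention (\ref{convention}) allows us to match $\iota$-extensions so that $\tilde\sigma_i=\tilde\pi_i^{K^p}$ for each $i$. Define $m_{J',i}(\sigma):=m_{J,i}(\pi)$; this depends only on the isomorphism class of $\sigma$.

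Condition (1) is immediate. Applying hypothesis (1) for $J$ to the given $\iota$-stable $K^p$ shows that for every $t\in T^{++}$ and $h\in\mathbb{Q}$, only finitely many $\pi$ with $\pi^{K^p}\neq 0$ and slope $\leq h$ have $\bar m_{J}(\pi)\neq 0$. Under our bijection this bounds the number of $\sigma$ with $\bar m_{J'}(\sigma)\neq 0$ in the corresponding range, as required.

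The substance of the lemma lies in condition (2), which reduces to the trace identity
\begin{equation*}
J_{\tilde\pi_i}(f)\;=\;meas(K^p)\,J_{\tilde\sigma_i}(f),\qquad f\in\mathcal{H}_p'(K^p).
\end{equation*}
This is a standard $K^p$-bi-invariance argument, already implicit in the formula $J(f)=meas(K^p)\,tr(\iota\times f\mid V_J(K^p))$ stated just above the lemma: for $K^p$-bi-invariant $f$ the operator $\pi(f)$ annihilates every $K^p$-isotypic component of $V_\pi$ other than $V_\pi^{K^p}$, and its restriction to $V_\pi^{K^p}=V_{\tilde\pi_i}^{K^p}$ agrees with $\sigma(f)$ once the Haar measure factor $meas(K^p)$ arising from the idempotent $e_{K^p}=meas(K^p)^{-1}\mathbf{1}_{K^p}$ is accounted for. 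Because $\tilde\pi_i^{K^p}=\tilde\sigma_i$ as $^\iota\mathcal{H}_p(K^p)$-modules, the identity persists under the $\iota$-twist. Contributions of $\pi$ with $\pi^{K^p}=0$ vanish on $\mathcal{H}_p'(K^p)$, so restricting the spectral expansion of $J$ to $\mathcal{H}_p'(K^p)$ and dividing by $meas(K^p)$ yields exactly the expansion demanded of $J'_{K^p}$.

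The expected obstacle is the bookkeeping of the measure normalization together with the identification of $\iota$-extensions on both sides: one must verify that the convention (\ref{convention}) makes the trace identity compatible with the $\iota$-twisting, and that the bijection $\pi\leftrightarrow\sigma$ respects the set of $l$ extensions described in Lemma \ref{LemmaBLS}(b) (so that the matching $m_{J',i}(\sigma):=m_{J,i}(\pi)$ is index-preserving and not merely a bijection up to permutation of the $l$ extensions). Once these compatibilities are pinned down, the lemma is essentially a relabeling of terms in the spectral expansion of $J$.
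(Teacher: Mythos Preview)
Your proposal is correct and follows essentially the same approach as the paper: both reduce to the trace identity $meas(K^p)^{-1}J_{\tilde\pi_i}(f)=tr(\iota\times f\mid \tilde\pi_i^{K^p})=J_{\tilde\sigma_i}(f)$ for $f\in\mathcal{H}_p'(K^p)$, invoke the convention (\ref{convention}) to match extensions index-by-index, and set $m_{J',i}(\sigma)=m_{J,i}(\pi)$ (which the paper records explicitly as Corollary~\ref{cor1}). The only cosmetic difference is that the paper spells out the decomposition of $\tilde\pi^{K^p}$ as a $^\iota\mathcal{H}_p(K^p)$-module and uses Lemma~\ref{LemmaBLS}(a) to discard constituents with reducible restriction, whereas you invoke (\ref{convention}) directly to identify $\tilde\pi_i^{K^p}$ with the irreducible $\tilde\sigma_i$; these amount to the same thing.
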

\begin{proof}
Let $\pi$ be a $\iota$-invariant finite slope overconvergent representation of $\mathcal{H}_p$ and $\tilde{\pi}$ an extension of $\pi$ to $^\iota\mathcal{H}_p$. Since $K^p$ is $\iota$-invariant, for $f\in\mathcal{H}_p(K^p)$, 
\begin{equation}
meas(K^p)^{-1}J_{\tilde{\pi}}(f)=tr(\iota\times f\mid \tilde\pi^{K^p})
\end{equation}
(if $\tilde{\pi}^{K^p}=0$, both sides are $0$). Let $\tilde\sigma$ be an irreducible constitute of  $^\iota\mathcal{H}_p(K^p)$ acting on $\tilde\pi^{K^p}$. If the restriction of $\tilde\sigma$ on $\mathcal{H}_p(K^p)$ is reducible, by Lemma \ref{LemmaBLS}, $tr(\iota\times f\mid \tilde\sigma)=0$. So we have
\begin{equation}
\begin{aligned}
tr(\iota\times f\mid \tilde\pi^{K^p}) &=\sum_{\tilde\sigma}m(\tilde{\sigma},\tilde{\pi}^{K^p})tr(\iota\times f\mid\tilde{\sigma})\\
& =\sum_{\sigma}\sum_{j=1}^{l}m(\tilde{\sigma}_j,\tilde{\pi}^{K^p})J_{\tilde{\sigma}_j}(f).
\end{aligned}
\end{equation}
In the first equality, the sum of $\tilde{\sigma}$ is running over all irreducible constitute of $^\iota\mathcal{H}_p(K^p)$ in $\tilde\pi^{K^p}$ and $m(\tilde{\sigma},\tilde{\pi}^{K^p})$ is its multiplicity, which equals $1$ by Proposition \ref{Ktype}; in the second equality, the sum of ${\sigma}$ is running over all irreducible constitute of $\mathcal{H}_p(K^p)$ in $\pi^{K^p}$ such that $\sigma$ is $\iota$-invariant and $m(\tilde{\sigma}_j,\tilde{\pi}^{K^p})$ the multiplicities of $\tilde{\sigma}_j$ in $\tilde{\pi}^{K^p}$, which are all $0$ except for one $j$. Now 
\begin{equation}
\begin{aligned}
meas(K^p)^{-1}J(f)&=\sum_{\sigma^\iota\cong\sigma}\sum_{j}(\sum_im_{J,i}(\pi)m(\tilde{\sigma}_j,\tilde{\pi}_i^{K^p}))J_{\tilde{\sigma}_j}(f)\\
&=\sum_{\sigma^\iota\cong\sigma}\sum_{j}m_{J,j}(\pi)J_{\tilde{\sigma}_j}(f).
\end{aligned}
\end{equation}
This verifies the condition $(2)$ in the definition. Condition $(1)$ is a direct consequence of Definition 5.1.1 (1).
\end{proof}

\begin{cor}\label{cor1}
If $\sigma=\pi^{K^p}$ is a finite slope overconvergent representation of $\mathcal{H}_p(K^p)$, then 
\begin{equation}
m_{J'_{K^p},i}(\sigma)=m_{J,i}(\pi).
\end{equation}
\end{cor}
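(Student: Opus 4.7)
The plan is to read off the corollary directly from the explicit spectral expansion already computed in the proof of Lemma 5.1.4. Specifically, at the end of that argument the author obtained
\begin{equation*}
meas(K^p)^{-1}J(f)=\sum_{\sigma\,:\,\sigma^\iota\cong\sigma}\sum_{j=1}^{l}m_{J,j}(\pi)\,J_{\tilde\sigma_j}(f),
\end{equation*}
where the inner sum arose because, under the standing convention (\ref{convention}) $\tilde\pi_i^{K^p}=\tilde\sigma_i$ and by Proposition \ref{Ktype}, the multiplicities $m(\tilde\sigma_j,\tilde\pi_i^{K^p})$ are just the Kronecker delta $\delta_{ij}$. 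My first step is simply to rewrite this formula with the left-hand side replaced by $J'_{K^p}(f)$ for $f\in\mathcal{H}'_p(K^p)$.

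Next, I would compare this with the defining expansion of $J'_{K^p}$ guaranteed by Lemma 5.1.4, namely
\begin{equation*}
J'_{K^p}(f)=\sum_\sigma\sum_{j=1}^{l}m_{J'_{K^p},j}(\sigma)\,J_{\tilde\sigma_j}(f).
\end{equation*}
Subtracting the two expressions, the corollary reduces to showing that the family of twisted characters $\{J_{\tilde\sigma_j}\}$, indexed by distinct pairs $(\sigma,j)$ with $\sigma$ running over $\iota$-invariant finite slope overconvergent representations of $\mathcal{H}_p(K^p)$ and $j\in\{1,\dots,l\}$, is linearly independent on $\mathcal{H}'_p(K^p)$. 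Given this, matching coefficients term by term yields $m_{J'_{K^p},i}(\sigma)=m_{J,i}(\pi)$ at once.

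For the linear independence I would appeal to Lemma \ref{LemmaBLS}(b): the $l$ extensions $\tilde\sigma_1,\dots,\tilde\sigma_l$ of a given $\iota$-invariant $\sigma$ are pairwise non-isomorphic irreducible representations of $^\iota\mathcal{H}_p(K^p)$, and extensions belonging to different $\sigma$ are non-isomorphic for the obvious reason that their restrictions to $\mathcal{H}_p(K^p)$ are. Combined with the admissibility/finiteness condition (1) in Definition 5.1.3, which guarantees that only finitely many of these characters appear below any given slope and level, linear independence of characters of pairwise non-isomorphic irreducible representations then forces uniqueness of the expansion.

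The only delicate point I foresee is making the linear-independence step rigorous in the twisted setting, since one is comparing traces of $\iota\times f$ rather than ordinary characters. The cleanest way will be to reduce to the untwisted case by evaluating on elements of the form $\iota^k\times f$ for $k=0,\dots,l-1$ and invoking Lemma \ref{LemmaBLS}(b), which says extensions differ by characters of $\langle\iota\rangle$; this converts the non-isomorphism of the $\tilde\sigma_j$ into standard linear independence of characters of $\langle\iota\rangle$ together with linear independence of the ordinary characters of the underlying irreducibles $\sigma$ (which is classical for admissible representations of a locally profinite Hecke algebra). Once this reduction is in hand the rest is bookkeeping.
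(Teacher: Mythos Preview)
Your first step is exactly what the paper does: the proof of Lemma 5.1.4 ends with the identity
\[
meas(K^p)^{-1}J(f)=\sum_{\sigma\,:\,\sigma^\iota\cong\sigma}\sum_{j=1}^{l}m_{J,j}(\pi)\,J_{\tilde\sigma_j}(f),
\]
and the corollary simply records that these displayed integers $m_{J,j}(\pi)$ serve as the coefficients $m_{J'_{K^p},j}(\sigma)$ in Definition 5.1.3. The paper gives no further argument; the corollary is read off from that computation together with the convention $\tilde\pi_i^{K^p}=\tilde\sigma_i$.

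Where your proposal goes astray is in the second half, where you try to upgrade this to a uniqueness statement via linear independence of the $J_{\tilde\sigma_j}$. That linear independence is \emph{false}. By Lemma \ref{LemmaBLS}(b) the extensions $\tilde\sigma_1,\dots,\tilde\sigma_l$ differ by characters of $\langle\iota\rangle$, so concretely $\tilde\sigma_j(\iota)=\zeta^{\,j-1}\tilde\sigma_1(\iota)$ for a primitive $l$-th root of unity $\zeta$, while $\tilde\sigma_j|_{\mathcal{H}_p(K^p)}=\sigma$ for all $j$. Hence
\[
J_{\tilde\sigma_j}(f)=tr\big(\tilde\sigma_j(\iota)\,\sigma(f)\big)=\zeta^{\,j-1}\,tr\big(\tilde\sigma_1(\iota)\,\sigma(f)\big)=\zeta^{\,j-1}J_{\tilde\sigma_1}(f),
\]
so for a fixed $\sigma$ the $l$ twisted characters are scalar multiples of one another as functions on $\mathcal{H}'_p(K^p)$. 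Only the combination $\sum_j\zeta^{\,j-1}m_{J',j}(\sigma)$ is determined by the expansion, not the individual $m_{J',j}(\sigma)$. Your proposed workaround of evaluating on $\iota^k\times f$ for varying $k$ does not help either: the distribution $J'_{K^p}$ is a map on $\mathcal{H}'_p(K^p)$, and $J_{\tilde\sigma}(f)$ has the single factor of $\iota$ built in by definition, so there is no access within this framework to the other cosets.

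In short, drop the linear-independence paragraph entirely. Definition 5.1.3 only asserts the \emph{existence} of integers $m_{J',i}(\sigma)$ with the stated properties; the proof of Lemma 5.1.4 exhibits $m_{J,i}(\pi)$ as one such choice, and the corollary is precisely the statement that this choice is made. Nothing more is claimed or needed.
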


\vspace{1pc}
Similarly, we define the Fredholm determinant for any $f\in\mathcal{H}'_p$ associated to $J'$ by
\begin{equation}
P_{J'}(X, f):=\prod_\sigma\prod_{i}\det(1-X\tilde\sigma_i(\iota\times f))^{m_{J',i}(\sigma)}
 \end{equation}
We say that $J'$ is effective if it is non-trivial and  all its coefficients $m_{J',i}(\sigma)$ are non-negative. In this case, we define $V_{J'}$ as completion of 
\begin{equation}
\bigoplus_{\sigma}\bigoplus_i(V_{\tilde\sigma_i})^{m_{J',i}(\sigma)}
\end{equation}
under the super norm $\|\sum_iv_i\|$. Then
\begin{equation}
J'(f)=tr(\iota\times f\mid V_{J'}).
\end{equation}
If $J'=J'_{K^p}$ for some effective $J$, then it is obvious that $J'$ is effective, $V_{J'}= V_J(K^p)$ and for any $f\in\mathcal{H}_p'(K^p)$
\begin{equation}
P_{J_{K^p}'}(X, f)\mid P_{J}(X, f).
\end{equation}

\subsection{Some $\iota$-twisted distributions}
For $\lambda\in \mathfrak{X}^\iota(L)$ and $f\in \mathcal{H}_p$, define 
\begin{equation}I^\dagger_G(\iota\times f, \lambda): = tr(\iota \times f\mid H^*_{fs}(S_G, \mathcal{D}_\lambda(L))).\end{equation}
If $f\in\mathcal{H}_p(K^p)$, then
\begin{equation}
\begin{aligned}
I^\dagger_G(\iota\times f, \lambda)&= meas(K^p)\times tr(\iota \times f\mid H^*_{fs}(S_G(K^pI), \mathcal{D}_\lambda(L)))\\
&=meas(K^p)\times tr(\iota \times f\mid R\Gamma^*(S_G(K^pI), \mathcal{D}_\lambda(L))).
\end{aligned}
\end{equation}
We also write 
\begin{equation}
I^{'\dagger}_G(\iota\times f, \lambda, K^p): = tr(\iota \times f\mid H^*_{fs}(S_G(K^pI), \mathcal{D}_\lambda(L))).
\end{equation}
Let $\mathcal{P}_G^\iota$ (\emph{resp.} $\mathcal{L}_G^\iota$) be the set of standard parabolic (\emph{resp.} Levi) subgroups which are invariant under $\iota$. For $L\in\mathcal{L}_G^\iota$ and $w\in\mathcal{W}^{L,\iota}_{Eis}$, we define distributions $I^\dagger_{G,0}(\iota\times f, \lambda) $ and $I^\dagger_{G,L,w}(\iota\times f, \lambda)$ by induction on the unipotent rank of $G$:

\vspace{1pc}
If $rk(G)=0$, define 
\begin{equation}
I^\dagger_{G,0}(\iota\times f, \lambda)=I^\dagger_{G,G}(\iota\times f, \lambda): = I^\dagger_{G}(\iota\times f, \lambda);
\end{equation}
Given a positive integer $r$ and assume the distributions have been defined for cases that $rk(G)$ is less than $r$, then for proper $L\in\mathcal{L}^\iota_G$ and $f=f^p\otimes u_t$, define
\begin{equation}
I^{cl}_{G,L,w}(\iota\times f,\lambda): = I^{cl}_{L,0}(\iota\times f^{reg}_{L,w},w\cdot\lambda+2\rho_P),
\end{equation}
for regular dominant weight $\lambda\in\Upsilon$. For general $p$-adic weight, define
\begin{equation}
I^\dagger_{G,L,w}(\iota\times f,\lambda): = I^\dagger_{L,0}(\iota\times f^{reg}_{L,w},w\cdot\lambda+2\rho_P),
\end{equation}
\begin{equation}
I^\dagger_{G,L}(\iota\times f,\lambda): = \sum_{w\in\mathcal{W}^{L,\iota}_{Eis}}(-1)^{l(w)+dim\mathfrak{n}_L}I^\dagger_{G,L, w}(\iota\times f, \lambda),
\end{equation}
and then define:
$$I^\dagger_{G,0}(\iota\times f, \lambda): = I^\dagger_G(\iota\times f,\lambda)-\sum_{proper\ L\in\mathcal{L}_G^\iota}I^\dagger_{G,L}(\iota\times f, \lambda)$$

\begin{prop}\label{prop3}
For $?=L,\set{L,w}$ or $0$, $I^\dagger_{G,?}(\iota\times f, \lambda)$ is a $\iota$-twisted FSCD. 
\end{prop}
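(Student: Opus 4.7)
The plan is to induct on the semisimple (equivalently, unipotent) rank of $G$. The base case is $I^\dagger_G$ itself, which I would treat first since it is the only distribution on which the whole inductive bookkeeping ultimately rests. The other cases reduce to this base case applied to proper Levi subgroups, together with a Frobenius-reciprocity identity relating Levi traces to induced $G$-traces.

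First I would show that $I^\dagger_G(\iota\times f,\lambda)$ is a twisted FSCD. $\mathbb{Q}_p$-linearity in $f$ is immediate. Applying Proposition \ref{induction1} in each cohomological degree $q$ and taking the alternating sum, set
\begin{equation*}
m_i(\pi,\lambda) := \sum_q (-1)^q\, m^q_i(\pi,\lambda),
\end{equation*}
which realizes the expansion required in Definition 5.1.1(2). The finiteness condition (1) follows from Lemma \ref{slopedecomp} and Proposition \ref{prop322}: for fixed $K^p$, admissible $f=f^p\otimes u_t$, and slope bound $h$, the space $H^*_{fs}(S_G(K^pI),\mathcal{D}_\lambda(L))^{\leq h}_\iota$ is finite-dimensional, so at most finitely many $\iota$-invariant finite-slope $\pi$ with $\pi^{K^p}\neq 0$ and slope $\leq h$ can have nonzero contribution.

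Next I would establish the inductive step for $I^\dagger_{G,L,w}$ with proper $L\in\mathcal{L}_G^\iota$. The transfer $f\mapsto f^{reg}_{L,w}$ is $\mathbb{Q}_p$-linear from $\mathcal{H}_p'(G)$ to $\mathcal{H}_p'(L)$; admissibility is preserved because for $t\in T^{++}$ and $w\in\mathcal{W}^{L,\iota}_{Eis}$ one has $wtw^{-1}\in T^{++}_L$ (positivity on $L$-coroots is unchanged under $w$). Since the unipotent rank of $L$ is strictly smaller than that of $G$, the inductive hypothesis applies to $I^\dagger_{L,0}$, yielding
\begin{equation*}
I^\dagger_{G,L,w}(\iota\times f,\lambda) \;=\; \sum_{\pi_L^\iota\cong\pi_L}\sum_{i=1}^{l} m^L_{0,i}(\pi_L,w\cdot\lambda+2\rho_P)\, J_{\tilde{\pi}_{L,i}}(f^{reg}_{L,w}).
\end{equation*}
To promote this to an expansion in terms of $\iota$-invariant representations of $\mathcal{H}_p(G)$, I would use the identity
\begin{equation*}
J_{\tilde{\pi}_L}(f^{reg}_{L,w}) \;=\; J_{\widetilde{\mathrm{Ind}_P^G\pi_L}}(f)
\end{equation*}
obtained by running the Iwahori-level decomposition $G(\mathbb{Q}_p)=\bigsqcup_{w\in\mathcal{W}^L}P(\mathbb{Q}_p)wI$ exactly as in the proof of Theorem \ref{cuspidalformula} (formulas (4.4.10)--(4.4.15)), but now at the level of overconvergent traces rather than classical cohomology. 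Decomposing the parabolic induction into its irreducible $\iota$-stable constituents via Lemma \ref{LemmaBLS} and matching extensions to $^\iota\mathcal{H}_p$ on the $L$ and $G$ sides according to the convention (\ref{convention}), one then reads off integer multiplicities $m_i(\pi,\lambda)$ for the $G$-side representations. The finiteness condition (1) for $I^\dagger_{G,L,w}$ transfers from the $L$-side: a slope bound on $\mathcal{H}_p'(G)$ restricts the possible slopes on $L$ (via the identification $u_t\mapsto u_{wtw^{-1}}$), and a level $K^p$ on $G$ yields the level $K^p\cap L(\mathbb{A}_f^p)$ on $L$. Finally, $I^\dagger_{G,L}=\sum_{w}(-1)^{l(w)+\dim\mathfrak{n}_L}I^\dagger_{G,L,w}$ and $I^\dagger_{G,0}=I^\dagger_G-\sum_{\text{proper }L}I^\dagger_{G,L}$ are finite $\mathbb{Q}_p$-linear combinations of twisted FSCDs, hence themselves twisted FSCDs (the class is closed under finite linear combinations: add the multiplicities with appropriate signs, and the finiteness condition is preserved by any finite sum).

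The main obstacle will be the parabolic-induction identity $J_{\tilde{\pi}_L}(f^{reg}_{L,w})=J_{\widetilde{\mathrm{Ind}_P^G\pi_L}}(f)$ at the overconvergent/twisted level. Two subtleties must be handled carefully: first, the exact match between the $l$ extensions $\tilde{\pi}_{L,i}$ on the Levi side and the $l$ extensions $\tilde{\pi}_i$ on the $G$ side, which needs a consistent choice of the intertwining operator $A$ furnished by Lemma \ref{LemmaBLS}; second, the bookkeeping of the twisting factor $\epsilon_{\xi,w}(t)$ appearing in the definition of $f^{reg}_{L,w}$, which (as noted after its definition) is trivial under our normalization (\ref{split}) but must not be silently dropped when one passes from the classical Hecke action to the $*$-action on distributions. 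Beyond this point, the argument is a direct adaptation of the cuspidal-decomposition computation already carried out in the proof of Theorem \ref{cuspidalformula}.
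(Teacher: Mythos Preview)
Your proposal is correct and follows essentially the same route as the paper: reduce to $I^\dagger_G$ via induction on the unipotent rank, invoke Proposition~\ref{induction1} for that base case, and for the inductive step use the Frobenius-reciprocity identity $J_{\tilde\sigma^L_{i}}(f^{reg}_{L,w})=\mathrm{tr}(\iota\times f\mid I^G_{L,w})$, where the paper writes the induced $\mathcal{H}_p$-module explicitly as $I^G_{L,w}=\mathrm{ind}_{L(\mathbb{A}_f^p)}^{G(\mathbb{A}_f^p)}(\sigma_f^p)\otimes\theta_{\sigma,w}$ with $\theta_{\sigma,w}(u_t)=|\rho_P(t)|_p\,\theta_\sigma(u_{wtw^{-1}})$. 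The paper dispatches this identity with ``it is easy to see'' and does not dwell on the extension-matching or $\epsilon_{\xi,w}$ bookkeeping you flag; your more careful treatment of these points is welcome but not a departure in strategy.
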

\begin{proof}
For $L\in\mathcal{L}_G^\iota$, let $\sigma^L$ be an irreducible finite slope representation of $\mathcal{H}_{p}^L$, define
\begin{equation}
I^G_{L,w}: = ind_{L(\mathbb{A}_f^p)}^{G(\mathbb{A}_f^p)}(\sigma_f^p)\otimes \theta_{\sigma, w},
\end{equation}
where $ind_{L(\mathbb{A}_f^p)}^{G(\mathbb{A}_f^p)}$ is the normalized induction by multipling the factor $e^{\langle\rho_P,H_P(g)\rangle}$, $\theta_{\sigma, w}$ is the character of $\mathcal{U}_p$ defined by 
\begin{equation}
u_t\mapsto |\rho_P(t)|_p\theta_\sigma(u_{wtw^{-1}}).
\end{equation}
If $\sigma^L$ is $\iota$-invariant, let $\tilde\sigma^L_{i}$ be one of its irreducible extensions to $^\iota\mathcal{H}_{p}^L$. It is easy to see
\begin{equation}\label{induce}
J_{\tilde\sigma^L_{i}}(f^{reg}_{L,w})=tr(\iota\times f | I^G_{L,w}).
\end{equation}
So by the induction process in the definition above, one only has to show the proposition for $I^\dagger_G(\iota\times f, \lambda)$. This follows from proposition \ref{induction1}.
\end{proof}

\subsubsection{Classical distributions}
The distributions $I^\dagger_?$ defined above will be $p$-adic interpolations of the classical distributions $I^{cl}_?$, which are defined from traces on classical cohomology groups. For an arithmetic regular dominant weight $\lambda\in\Upsilon$ and $f\in\mathcal{H}_p(K^p)$, we compute $meas(K^p)^{-1}I^{cl}_{G,0}(\iota\times f,\lambda)=$
\begin{equation}
\begin{aligned}
 &\ \ \ \ \lambda(\xi(t))tr^{st}(\iota\times f | H^*_{cusp}(S_G(K^pI), \mathbb{V}_\lambda^\vee(\mathbb{C})))\\
 &=\lambda(\xi(t))tr^{st}(\iota\times f | H^*(\mathfrak{m}_G,K_\infty; L^2_{cusp}(G(\mathbb{Q})A_G(\mathbb{R})^0\backslash G(\mathbb{A}))^K\otimes\mathbb{V}_\lambda^\vee(\mathbb{C}))(\xi_\lambda))
 \\ &= \sum_{\pi}\lambda(\xi(t))m_{cusp}(\pi)tr^{st}(\iota\times f|H^*(\mathfrak{m}_G,K_\infty; (\pi^{fin})^{K_f}\otimes \mathbb{V}_\lambda^\vee(\mathbb{C})))
 \\&=\sum_{\pi}m_{cusp}(\pi)tr^{st}(\iota|H^*(\mathfrak{m}_G,K_\infty; \pi_\infty^{fin}\otimes \mathbb{V}_\lambda^\vee(\mathbb{C})))tr(\iota\times f|\pi_{f}^{K_f})\\
& =meas(K^p)^{-1}\sum_{\pi}m_{cusp}(\pi)tr^{st}(\iota|H^*(\mathfrak{m}_G,K_\infty; \pi_\infty^{fin}\otimes \mathbb{V}_\lambda^\vee(\mathbb{C})))tr(\iota\times f|\pi_{f}^{I})
 \end{aligned}
\end{equation}
where, like Proposition \ref{induction1}, the summation is running over all cuspidal representations $\pi\subset L^2_{cusp}(G(\mathbb{Q})\backslash G(\mathbb{A}), \xi_\lambda)$ such that $\pi^\iota=\pi$. $\pi^{fin}$ is the Harish-Chandra module of $\pi$, i.e., the subspace of $\pi$ consisting of smooth vectors that generate a finite dimensional vector space under $K_\infty$. $m_{cusp}(\pi)$ is the multiplicity of $\pi$ in $L^2_{cusp}$, that is,
\begin{equation}
L^2_{cusp}(G(\mathbb{Q})\backslash G(\mathbb{A}), \xi_\lambda)=\bigoplus_\pi\pi^{m_{cusp}(\pi)}.
\end{equation} 

\vspace{1pc}
By Proposition \ref{Ktype}, if $\pi_f^I\neq0$ then it is $\iota$-invariant and irreducible as a $C_c^{\infty}(I\backslash G(\mathbb{A}_f)/I, \overline{\mathbb{Q}}_p)$-module. A constitute of $\pi_f^I$ restricting on $\mathcal{H}_p$ gives a $p$-stabilization of $\pi_f$, and there are only finitely many such $p$-stabilizations (see \cite[\S4.1.9]{Urban}). So 
\begin{equation}\label{formula6}
tr(\iota\times f|\pi_{f}^{I})=\sum_{\rho}m(\rho,\pi_f^I)tr(\iota\times f\mid \rho),
\end{equation}
where $\rho$ is running over all irreducible $\mathcal{H}_p$-submodules of $\pi_f^I$ such that $\rho^\iota=\rho$, and  $m(\rho,\pi_f^I)$ is the multiplicity of $\rho$ in $\pi_f^I$. 

\vspace{1pc}
Now we compute the Lefschetz number
\begin{equation}
\begin{aligned}
L(\iota,\pi,\lambda):&=tr^{st}(\iota|H^*(\mathfrak{m}_G,K_\infty; \pi_\infty^{fin}\otimes \mathbb{V}_\lambda^\vee(\mathbb{C})))\\
&=\sum_{i}(-1)^itr^{st}(\iota|H^i(\mathfrak{m}_G,K_\infty; \pi_\infty^{fin}\otimes \mathbb{V}_\lambda^\vee(\mathbb{C}))).
\end{aligned}
\end{equation}

The next theorem is easily deduced from Proposition 5.4, 5.5 and Theorem 5.6 in the original paper of Vogan and Zuckerman \cite[\S5]{VZ}:

\begin{thm}
Let $\pi$ be a cuspidal representation (so it is essentially unitary) such that $H^*(\mathfrak{m}_G,K_\infty; \pi_\infty^{fin}\otimes\mathbb{V}_\lambda^\vee(\mathbb{C}))\neq 0$, then there exists a $\theta$-stable parabolic subalgebra $\mathfrak{q}=\mathfrak{l}+\mathfrak{u}$ such that
\begin{equation}
dim\mathbb{V}_\lambda^\vee(\mathbb{C})/\mathfrak{u}\mathbb{V}_\lambda^\vee(\mathbb{C}) = 1
\end{equation}
and $\pi_\infty$ is of the form $A_\mathfrak{q}(w_0\lambda)$.
Moreover, one has
\begin{equation}
H^i(\mathfrak{m}_G,K_\infty; \pi_\infty^{fin}\otimes\mathbb{V}_\lambda^\vee(\mathbb{C}))\cong Hom_{\mathfrak{l\cap k}}(\wedge^{i-R_\mathfrak{q}}(\mathfrak{l\cap k}),\mathbb{C})
\end{equation}
\end{thm}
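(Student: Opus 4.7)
The plan is to import the Vogan--Zuckerman classification essentially verbatim, with only a small bookkeeping adjustment to pass between $(\mathfrak{g},K_\infty)$-cohomology and $(\mathfrak{m}_G,K_\infty)$-cohomology. First I would note that $\pi$ being cuspidal forces $\pi_\infty$ to be unitary after twisting by the central character $\xi_\lambda$, and in particular $\pi_\infty^{fin}$ is an irreducible admissible unitary $(\mathfrak{m}_G,K_\infty)$-module. The difference between $\mathfrak{g}$ and $\mathfrak{m}_G$ is merely the quotient by $\mathfrak{a}_G=\operatorname{Lie}(A_G(\mathbb{R})^0)$, and the hypothesis $H^*(\mathfrak{m}_G,K_\infty;\pi_\infty^{fin}\otimes\mathbb{V}_\lambda^\vee(\mathbb{C}))\neq0$ already enforces that the central characters of $\pi_\infty^{fin}$ and $\mathbb{V}_\lambda$ match, so passing between the two flavors of relative Lie algebra cohomology is routine.

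Next I would invoke Vogan--Zuckerman Proposition 5.4: any irreducible unitary $(\mathfrak{g},K_\infty)$-module with nontrivial cohomology twisted by a finite dimensional representation is of the form $A_\mathfrak{q}(\mu)$ for some $\theta$-stable parabolic $\mathfrak{q}=\mathfrak{l}+\mathfrak{u}$ and some admissible character $\mu$ of $\mathfrak{l}$. Applied to $\pi_\infty^{fin}$, this produces the pair $(\mathfrak{q},\mu)$. I would then apply Proposition 5.5 of \cite{VZ}, which pins down the compatibility between $\mu$ and $\mathbb{V}_\lambda^\vee$: the coefficient module must admit a one-dimensional $\mathfrak{u}$-coinvariant quotient on which $\mathfrak{l}$ acts by the dual of $\mu$. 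Identifying this quotient with $\mathbb{V}_\lambda^\vee(\mathbb{C})/\mathfrak{u}\mathbb{V}_\lambda^\vee(\mathbb{C})$ gives the stated one-dimensionality, and comparing highest weights forces $\mu=w_0\lambda$.

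Finally, Theorem 5.6 of \cite{VZ} delivers the explicit shape of the cohomology. Its content is that under the previous compatibility, one has the isomorphism
\begin{equation*}
H^i(\mathfrak{m}_G,K_\infty;A_\mathfrak{q}(w_0\lambda)\otimes\mathbb{V}_\lambda^\vee(\mathbb{C}))\cong\operatorname{Hom}_{\mathfrak{l}\cap\mathfrak{k}}(\wedge^{i-R_\mathfrak{q}}(\mathfrak{l}\cap\mathfrak{k}),\mathbb{C}),
\end{equation*}
with $R_\mathfrak{q}$ as in \cite{VZ}. Substituting $\pi_\infty^{fin}\cong A_\mathfrak{q}(w_0\lambda)$ from the previous step produces the desired formula.

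I do not expect any genuine obstacle here: the result is a direct transcription of \cite[\S5]{VZ}, and the only mild subtlety is to verify that the passage from $(\mathfrak{g},K_\infty,\xi_\lambda)$-cohomology to $(\mathfrak{m}_G,K_\infty)$-cohomology (which is implicit throughout Section~4 of the present paper) does not shift the index or alter $R_\mathfrak{q}$. This is immediate since $\mathfrak{a}_G$ is central and acts on $\mathbb{V}_\lambda^\vee$ by the fixed character $\xi_\lambda^{-1}$, so its contribution to the standard complex is the trivial one-dimensional factor and drops out once one restricts to the $\xi_\lambda$-isotypic component.
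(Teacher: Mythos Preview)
Your proposal is correct and matches the paper's approach exactly: the paper does not give a proof but simply states that the theorem is easily deduced from Proposition~5.4, Proposition~5.5, and Theorem~5.6 of \cite{VZ}, which are precisely the three ingredients you invoke in the same order. Your extra remark about passing between $(\mathfrak{g},K_\infty)$- and $(\mathfrak{m}_G,K_\infty)$-cohomology is a harmless elaboration of something the paper leaves implicit.
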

To understand the theorem here we have to recall some notation from \cite{VZ} as well. $\theta$ is a usual Cartan involution of $G$, which gives a Cartan decomposition 
\begin{equation}
\mathfrak{g=p+k}.
\end{equation} 
For a $\theta$-stable parabolic algebra $\mathfrak{q}$ defined as in \cite[\S2]{VZ} and an admissible weight $\lambda$ defined as in \cite[(5.1)]{VZ}, $A_\mathfrak{q}(\lambda)$ is an irreducible $\mathfrak{g}$-module whose restriction on $\mathfrak{k}$ contains a represntation $\mu(\mathfrak{q},\lambda)$, which is the representation of  $K_\infty$ of highest weight $\lambda+2\rho(\mathfrak{p\cap u})$, as in \cite[Theorem 5.3]{VZ}. Here $R_\mathfrak{q}=dim (\mathfrak{p\cap u})$. 

\begin{cor}
Assumptions as last theorem, if $\lambda$ is regular, then $\mathfrak{u}$ is maximal unipotent and $\mathfrak{q}$ is Borel.
\end{cor}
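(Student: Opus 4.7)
The plan is to translate the numerical condition $\dim \mathbb{V}_\lambda^\vee(\mathbb{C})/\mathfrak{u}\mathbb{V}_\lambda^\vee(\mathbb{C}) = 1$ supplied by the previous theorem into a statement about $\mathfrak{u}$-invariants inside $\mathbb{V}_\lambda(\mathbb{C})$, identify the resulting invariant space with an irreducible representation of the Levi $L$ via Kostant's theorem, and then use the regularity of $\lambda$ to force $L$ to equal the Cartan $T$.

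First, I would invoke the standard duality between coinvariants and invariants for a finite-dimensional $\mathfrak{u}$-module $V$, namely $(V^\vee)^{\mathfrak{u}} = (V/\mathfrak{u}V)^\vee$. Applied to $V = \mathbb{V}_\lambda^\vee(\mathbb{C})$, this gives
\[
\dim \mathbb{V}_\lambda^\vee(\mathbb{C})/\mathfrak{u}\mathbb{V}_\lambda^\vee(\mathbb{C}) = \dim \mathbb{V}_\lambda(\mathbb{C})^{\mathfrak{u}}.
\]
Next, since $\mathfrak{q} = \mathfrak{l} + \mathfrak{u}$ is a parabolic subalgebra of $\mathfrak{g}$, Kostant's theorem in its degree-zero form identifies $\mathbb{V}_\lambda(\mathbb{C})^{\mathfrak{u}} = H^0(\mathfrak{u}, \mathbb{V}_\lambda(\mathbb{C}))$ with the irreducible $L$-representation $\mathbb{V}^L_\lambda$ of highest weight $\lambda|_{\mathfrak{t}_{\mathfrak{l}}}$, where $\mathfrak{t}_{\mathfrak{l}}$ is a Cartan of $\mathfrak{l}$ which, after conjugating $\mathfrak{q}$ within its $K_\infty$-orbit, may be taken to coincide with our fixed $\mathfrak{t}$. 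Hence the hypothesis of the preceding theorem becomes $\dim \mathbb{V}^L_\lambda = 1$.

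Finally, an irreducible finite-dimensional representation of a reductive Lie algebra is one-dimensional precisely when its highest weight kills every coroot of the derived subalgebra, equivalently $\langle \lambda, \alpha^\vee \rangle = 0$ for every root $\alpha$ of $L$ with respect to $T$. Regularity of $\lambda$ forbids this equality for any root of $G$, so $L$ has no roots and $\mathfrak{l} = \mathfrak{t}$; therefore $\mathfrak{q}$ is a Borel subalgebra and $\mathfrak{u}$ is its maximal unipotent radical. I do not foresee a genuine obstacle; the only cosmetic point requiring care is the alignment between the Cartan sitting inside the $\theta$-stable Levi and the Cartan used to define regularity of $\lambda$, which is dealt with by the conjugation mentioned above without affecting the dimension counts.
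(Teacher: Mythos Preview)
Your argument is correct and follows essentially the same route as the paper's proof: both extract from the one-dimensionality of $\mathbb{V}_\lambda^\vee/\mathfrak{u}\mathbb{V}_\lambda^\vee$ the statement that a regular weight extends to a character of $\mathfrak{l}$, which forces $\mathfrak{l}=\mathfrak{t}$. The only cosmetic difference is packaging: the paper phrases this via Frobenius reciprocity, embedding $\mathbb{V}_\lambda^\vee$ into $\mathrm{Ind}_{\mathfrak{q}}^{\mathfrak{g}}(\chi)$ for a character $\chi$ of $\mathfrak{l}$ with $\chi|_{\mathfrak{t}}=-w_0\lambda$, whereas you dualize to invariants and invoke Kostant in degree zero to identify $\mathbb{V}_\lambda^{\mathfrak{u}}\cong\mathbb{V}^L_\lambda$; these are dual formulations of the same fact and neither buys anything the other does not.
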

\begin{proof}
It's a simple observation from the previous theorem. The fact that $\mathbb{V}_\lambda^\vee(\mathbb{C})/\mathfrak{u}\mathbb{V}_\lambda^\vee(\mathbb{C})$ is one dimensional implies that $\mathbb{V}_\lambda^\vee(\mathbb{C})$ can be realized in $Ind_{\mathfrak{q}}^{\mathfrak{g}}(\chi)$ with some character $\chi$ of $\mathfrak{l}$ whose restriction to $\mathfrak{t}$ is $-w_0\lambda$. However, since $\lambda$ is regular, $-w_0\lambda$ is regular too. This means that $-w_0\lambda$ cannot be extended to a character of any Levi subgroup which contains $T$ proporly (see e.g. \cite[\S II.1.18, II.2.4]{Jantzen}). So $\mathfrak{u}$ is maximal unipotent and $\mathfrak{q}$ is Borel.
\end{proof}

The corollary implies that there are only finitely many cuspidal $\pi$ such that $H^*(\mathfrak{m}_G,K_\infty; \pi_\infty^{fin}\otimes\mathbb{V}_\lambda^\vee(\mathbb{C}))\neq 0$, and their infinity parts $\pi_\infty$ are $A_\mathfrak{b}(w_0\lambda)$. Write $R=R_\mathfrak{b}$, then for each such $\pi$
\begin{equation}
\begin{aligned}
L(\iota,\pi,\lambda)
&=\sum_{i}(-1)^itr(\iota|Hom_{\mathfrak{l\cap k}}(\wedge^{i-R_\mathfrak{q}}(\mathfrak{t\cap p}),\mathbb{C}))
\\&=(-1)^R\sum_{i}(-1)^itr(\iota|Hom_{\mathfrak{l\cap k}}(\wedge^{i}(\mathfrak{t\cap p}),\mathbb{C}))
\end{aligned}
\end{equation}
So we define $q_{G,\iota}:=L(\iota,\pi,\lambda)$ for any $\lambda$ arithmetic regular dominant in $\Upsilon$, it is an integer depends on $G$ and $\iota$ only. In particular
\begin{equation}
\begin{aligned}
I^{cl}_{G,0}(\iota\times f,\lambda)&=\sum_{\pi=\pi^\iota}q_{G,\iota}m_{cusp}(\pi)tr(\iota\times f|\pi_{f}^{I})\\
&=\sum_{\rho=\rho^\iota}q_{G,\iota}\sum_{\rho\subset\pi^I_f}m_{cusp}(\pi)m(\rho,\pi_f^I)tr(\iota\times f\mid \rho)\\
&=\sum_{\rho\cong\rho^\iota}q_{G,\iota}\sum_{i=1}^l(\sum_{\rho\subset\pi^I_f}m_{cusp}(\pi)m(\tilde{\rho}_i, \pi_f^{I}))J_{\tilde{\rho}_i}(f).
\end{aligned}
\end{equation}

\vspace{1pc}
The discussion above together with (\ref{induce}) implies:
\begin{prop}
Let $\lambda$ be an arithmetic regular dominant weight in $\Upsilon$. For $?=\emptyset,\set{L,w}$ or $0$, $I^{cl}_{G,?}(\iota\times f, \lambda)$ is a $\iota$-twisted finite slope character distributions. 
\end{prop}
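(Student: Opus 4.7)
The plan is to prove the three cases $?=0,\{L,w\},\emptyset$ by induction on the unipotent rank of $G$, mirroring the inductive definition of $I^{cl}_{G,?}$ itself and following the same three-line argument used for Proposition \ref{prop3}. When $\mathrm{rk}(G)=0$ the three distributions coincide and everything reduces to the cuspidal computation; for the inductive step I would first handle $I^{cl}_{G,0}$ directly, then deduce $I^{cl}_{G,L,w}$ from the Levi-level inductive hypothesis via $(\ref{induce})$, and finally assemble $I^{cl}_{G}$ by invoking the twisted cuspidal decomposition formula of Theorem \ref{cuspidalformula}.

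For $I^{cl}_{G,0}$, the computation that occupies the last page of \S5.2 already exhibits the required spectral expansion
$$I^{cl}_{G,0}(\iota\times f,\lambda)=\sum_{\rho\cong\rho^{\iota}}\sum_{i=1}^{l}m_{i}(\rho,\lambda)\,J_{\tilde{\rho}_{i}}(f),\qquad m_{i}(\rho,\lambda)=q_{G,\iota}\!\!\sum_{\rho\subset\pi_{f}^{I}}m_{cusp}(\pi)\,m(\tilde{\rho}_{i},\pi_{f}^{I})\in\mathbb{Z},$$
which is precisely condition (2) of Definition 5.1.1. Condition (1) then reduces to the classical finiteness statement that, for fixed $K^{p}$, fixed $t\in T^{++}$ and fixed slope bound $h$, only finitely many $\iota$-invariant $p$-stabilizations of cuspidal $\pi$ of weight $\lambda$ and level $K^{p}$ have slope $\leq h$; this follows from the finite-dimensionality of classical cuspidal cohomology for each fixed $(\lambda,K^{p})$ combined with the finiteness of $p$-stabilizations of a single $\pi_{p}$ (see \cite[\S4.1.9]{Urban}).

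For $I^{cl}_{G,L,w}$, the inductive hypothesis applied to the proper Levi $L$ gives $I^{cl}_{L,0}(\iota\times -,w\cdot\lambda+2\rho_{P})$ as an $\iota$-twisted FSCD on $\mathcal{H}_{p}(L)$, spectrally expanded in terms of $\iota$-invariant finite slope overconvergent $\sigma^{L}$. Feeding $f^{reg}_{L,w}$ into this expansion and using $(\ref{induce})$ rewrites each term as $\mathrm{tr}(\iota\times f\mid I^{G}_{L,w}(\sigma^{L}))$; decomposing the admissible parabolic induction $I^{G}_{L,w}(\sigma^{L})$ into irreducible $\mathcal{H}_{p}(G)$-constituents and applying Lemma \ref{LemmaBLS} (so that $\langle\iota\rangle$-orbits of non-$\iota$-invariant constituents contribute $0$ to the twisted trace) regroups everything into the required $G$-spectral form. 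For $I^{cl}_{G}$ itself, Theorem \ref{cuspidalformula} expresses it as a finite $\mathbb{Q}_{p}$-linear combination
$$I^{cl}_{G}(\iota\times f,\lambda)=\sum_{P=P^{\iota}}\sum_{\substack{v\in\mathcal{W}^{L,\iota}_{Eis}\\w\in\mathcal{W}^{L,\iota}}}(-1)^{l(v)+n_{P}}\,\xi(t)^{\lambda-w^{-1}v*\lambda}\,I^{cl}_{L,0}\bigl(\iota\times f^{reg}_{L,w},\,v(\lambda+\rho_{P})+\rho_{P}\bigr),$$
of distributions already shown to be $\iota$-twisted FSCDs. The character twist $t\mapsto\xi(t)^{\lambda-w^{-1}v*\lambda}$ on $T^{+}$ only shifts the Hecke eigenvalues on each $\tilde{\rho}_{i}$ by an explicit character and therefore preserves the class of twisted FSCDs; finite $\mathbb{Q}_{p}$-linear combinations of such distributions are clearly themselves twisted FSCDs (the multiplicity functions add and condition (1) is preserved since only finitely many summands occur).

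The main difficulty is bookkeeping rather than conceptual: tracking how $\iota$-invariance behaves under parabolic induction, ensuring that the character twist appearing in Theorem \ref{cuspidalformula} can be absorbed coherently into the eigenvalues of the $\tilde{\rho}_{i}$, and verifying that the multiplicities defined through this chain of inductions are genuinely integers (not rational numbers picked up from the Weyl-orbit counts $n_{P}(\lambda_{t})$ absorbed earlier in the proof of the twisted Franke formula). Once these compatibilities are checked — which essentially amounts to reapplying Lemma \ref{LemmaBLS} at each step and noting that $\xi(t)^{\lambda-w^{-1}v*\lambda}$ is a genuine character of $T^{+}$ — the proof becomes a routine assembly of the three pieces above, entirely parallel to Proposition \ref{prop3}.
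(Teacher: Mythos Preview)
Your proposal is correct and uses the same ingredients as the paper: the spectral expansion of $I^{cl}_{G,0}$ computed in the discussion just preceding the proposition, equation (\ref{induce}) for the $\{L,w\}$ case, and the cuspidal decomposition formula (Theorem \ref{cuspidalformula}) to assemble $?=\emptyset$. The paper's own proof is much terser: it treats all of your spectral-expansion bookkeeping as already established by ``the discussion above together with (\ref{induce})'', and devotes its few lines to the one point you do \emph{not} explicitly address --- that the representations $\rho$ appearing in the expansion are of \emph{finite slope}. The argument is that for admissible $f=f^p\otimes u_t$ the $p$-component of any classical $\sigma$ is one-dimensional, so $J_{\tilde\sigma}(f)=J_{\tilde\sigma^p}(f^p)\cdot\xi\cdot\theta_\sigma(u_t)$ for some $l$-th root of unity $\xi$; if $\sigma$ had infinite slope then $\theta_\sigma(u_t)=0$ and the term contributes nothing. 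You should insert this observation (a one-liner) to make your argument complete; conversely, your more explicit organization of the inductive structure and your verification of condition (1) via finite-dimensionality of classical cuspidal cohomology fill in details the paper leaves implicit.
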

\begin{proof}
We only have to show the representations appearing in the twisted traces are of finite slope. Let $\sigma$ be an classical cuspidal representation of $\mathcal{H}_p(K^p)$, its $p$-component $\sigma_p$ must be of dimension one. So for any $f=f^p\otimes u_t\in\mathcal{H}_p(K^p)$
\begin{equation}
J_{\tilde{\sigma}}(f)=J_{\tilde{\sigma}^p}(f^p)\xi\theta_\sigma(u_t)
\end{equation}
for some $l$-th root of unit $\xi$. Now as observed in \S3.3.2, if $\sigma$ is of infinite slope, $\theta_\sigma(u_t)=0$.
\end{proof}

\begin{prop}\label{prop2}
The character distribution $I^{cl}_{G,0}(\iota\times f,\lambda)\neq0$ if and only if $q_{G,\iota}\neq0$. In this case, define $e_{G,\iota}:=q_{G,\iota}^{-1}$, then $e_{G,\iota}I^{cl}_{G,0}(\iota\times f,\lambda)$ is effective. 
\end{prop}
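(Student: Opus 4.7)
The plan is to read off the assertion directly from the explicit spectral expansion of $I^{cl}_{G,0}(\iota\times f,\lambda)$ derived in the paragraph immediately preceding the proposition. Writing
\begin{equation*}
I^{cl}_{G,0}(\iota\times f,\lambda)=\sum_{\rho\cong\rho^\iota}\sum_{i=1}^{l} q_{G,\iota}\,N_{\rho,i}\,J_{\tilde\rho_i}(f),
\end{equation*}
with $N_{\rho,i}:=\sum_{\rho\subset\pi^I_f}m_{cusp}(\pi)\,m(\tilde\rho_i,\pi_f^I)\in\mathbb{Z}_{\geq 0}$, makes it manifest that every multiplicity $m_{I^{cl}_{G,0},i}(\rho)$ of the distribution is an integer multiple of the single Lefschetz-type integer $q_{G,\iota}$ with non-negative cofactor.

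The ``only if'' direction is then immediate: if $q_{G,\iota}=0$, every coefficient above vanishes and $I^{cl}_{G,0}\equiv 0$ as a distribution on $\mathcal{H}_p'$.

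For the converse, assuming $q_{G,\iota}\neq 0$, I would exhibit at least one triple $(\pi,\rho,i)$ with $N_{\rho,i}>0$; equivalently, the existence of a $\iota$-invariant classical cuspidal automorphic representation $\pi$ admitting an Iwahori-fixed vector at $p$ and a $\iota$-invariant $p$-stabilization $\rho$. Such representations exist for at least one arithmetic regular dominant weight $\lambda\in\Upsilon$ by standard existence results; for $G=GL_n$ with $\iota$ of Cartan type, the abundance of self-dual cuspidal forms provides an immediate supply. This existence input is the only non-formal ingredient and is the main potential obstacle.

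Finally, set $e_{G,\iota}:=q_{G,\iota}^{-1}$. Multiplying through, the coefficients of $e_{G,\iota}I^{cl}_{G,0}$ become exactly the $N_{\rho,i}\in\mathbb{Z}_{\geq 0}$, with at least one strictly positive by the previous paragraph. By the definition of effectiveness in \S5.1, $e_{G,\iota}I^{cl}_{G,0}$ is therefore a non-trivial $\iota$-twisted FSCD with non-negative multiplicities, hence effective, completing the plan.
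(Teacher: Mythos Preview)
Your plan is essentially the paper's own argument: read off the claim from the explicit expansion
\[
I^{cl}_{G,0}(\iota\times f,\lambda)=q_{G,\iota}\sum_{\rho\cong\rho^\iota}\sum_{i=1}^{l} N_{\rho,i}\,J_{\tilde\rho_i}(f),\qquad N_{\rho,i}\in\mathbb{Z}_{\geq 0},
\]
so that the ``only if'' direction and the effectiveness of $e_{G,\iota}I^{cl}_{G,0}$ are immediate. The paper proceeds identically and does not even spell out the effectiveness part separately.

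The one place your proposal diverges from the paper is precisely the step you flag as ``the main potential obstacle'': producing a $\iota$-invariant $p$-stabilization $\rho$ so that some $N_{\rho,i}>0$. You defer this to unspecified ``standard existence results''. The paper instead gives a one-line internal argument: once a $\iota$-invariant cuspidal $\pi$ with $\pi_p^I\neq 0$ is in hand (this is where the Cartan-type hypothesis is used, cf.\ Remark~\ref{lateadd}), note that $\mathcal{U}_p$ is commutative so the $p$-component of any $\rho$ is a character, and since $\iota$ has finite order it is diagonalizable on the finite-dimensional space $\pi_p^I$; any $\iota$-eigenline then furnishes the required $\iota$-stable one-dimensional piece. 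This avoids importing any external existence statement for self-dual $p$-stabilizations and works uniformly for general $G$, not just $GL_n$. You should replace your appeal to outside input with this diagonalizability observation.
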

\begin{proof}
We only have to prove the first statement. The ``only if" part is obvious. Noting that $\mathcal{U}_p$ is commutative, $\rho_p$ as in (\ref{formula6}) must be a character $\theta_\rho$. So we only have to show that there exits a one  dimensional subspace of $\pi^I_p$ which is stable under $\iota$. This is true, since $\pi^I_p$ is finite dimensional and it is diagonalizable under $\iota$. 
\end{proof}

\begin{remark}\label{ob3}
In last proposition, we have to divide $q_{G,\iota}$ to make sure that the distribution has integral coefficients. If $\iota$ is of order $2$, the Lefschetz numbers are necessarily integral. In this case, we define $e_{G,\iota}=sign(q_{G,\iota})$. All the results and discussion will be same but the distribution carries more information.
\end{remark}

\begin{remark}\label{lateadd}
Throughout this paper we assume that $\iota$ is of Cartan-type to make sure that the cuspidal cohomology is not always trivial (\cite[Theorem 10.6]{BLS}). However, our results hold for any $\mathbb{Q}$-rational, finite order automorphism such that the Lefschetz number is non-trivial.
\end{remark}

Now for any $K^p$, write
\begin{equation}
V^{cl,\lambda}_{G,0}(K^p):=V_{e_{G,\iota}I^{cl}_{G,0}(\lambda)}(K^p) \text{ and }  V^{cl,\lambda,'}_{G,0}(K^p):=V_{(e_{G,\iota}I^{cl,'}_{G,0}(\lambda))_{K^p}}
\end{equation}
\begin{cor}
\begin{equation}
V^{cl,\lambda}_{G,0}(K^p)=V^{cl,\lambda,'}_{G,0}(K^p)
\end{equation}
\end{cor}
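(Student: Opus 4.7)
The plan is to reduce this corollary to the general compatibility established in Section 5.1 between the two constructions of $V_J$ at fixed level, together with the fact that the classical distributions $I^{cl}_{G,0}$ and $I^{cl,'}_{G,0}$ differ precisely by the measure factor $meas(K^p)$. So the statement is essentially bookkeeping rather than new content.

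First, I would unwind the definitions: by construction $I^{cl}_{G,0}(\iota\times f,\lambda) = meas(K^p)\,\lambda(\xi(t))\,tr^{st}(\iota\times f\mid H^*_{cusp}(S_G(K_f),\mathbb{V}_\lambda^\vee(\mathbb{C},\epsilon)))$ for $f\in\mathcal{H}_p(K^p)$, while $I^{cl,'}_{G,0}$ is the corresponding level-$K^p$ trace without the $meas(K^p)$ factor. Hence $(e_{G,\iota} I^{cl}_{G,0}(\lambda))_{K^p} = e_{G,\iota} I^{cl,'}_{G,0}(\lambda)$ as $\iota$-twisted FSCDs of level $K^p$, which is exactly the hypothesis needed to invoke the remark immediately following the definition of $V_{J'}$, namely that $V_{J'} = V_{J}(K^p)$ whenever $J' = J'_{K^p}$ for an effective $J$. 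Effectivity of $e_{G,\iota} I^{cl}_{G,0}(\lambda)$ is granted by Proposition 5.2.7.

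Next I would verify that the multiplicity data match up, so that the direct sums defining the two completions are literally the same. Corollary 5.1.5 supplies $m_{J'_{K^p},i}(\sigma) = m_{J,i}(\pi)$ whenever $\sigma = \pi^{K^p}$, and Proposition 2.3.2 (the $K$-type/Hecke-algebra bijection) ensures that the irreducible $\iota$-invariant finite-slope overconvergent $\mathcal{H}_p$-representations $\pi$ with $\pi^{K^p}\neq 0$ correspond bijectively to the irreducible $\iota$-invariant finite-slope overconvergent $\mathcal{H}_p(K^p)$-representations $\sigma$, with $V_{\tilde\pi_i}^{K^p} = V_{\tilde\sigma_i}$ under our convention (\ref{convention}). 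Assembling these identifications, the direct sums
\begin{equation*}
\bigoplus_{\pi}\bigoplus_i (V_{\tilde\pi_i}^{K^p})^{m_{e_{G,\iota}I^{cl}_{G,0}(\lambda),i}(\pi)}
\quad \text{and}\quad
\bigoplus_{\sigma}\bigoplus_i (V_{\tilde\sigma_i})^{m_{(e_{G,\iota}I^{cl,'}_{G,0}(\lambda))_{K^p},i}(\sigma)}
\end{equation*}
coincide term by term.

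Finally, taking completions with respect to the sup-norm on either side yields the same $\mathbb{C}_p$-Banach space, which is the claimed equality $V^{cl,\lambda}_{G,0}(K^p) = V^{cl,\lambda,'}_{G,0}(K^p)$. There is no real obstacle here; the only point one must be careful about is that the bijection $\pi \leftrightarrow \pi^{K^p}$ in Proposition 2.3.2, when applied in the $\iota$-equivariant setting, really does send the extensions $\tilde\pi_i$ to the corresponding $\tilde\sigma_i$ with matching index $i$, which is precisely the convention fixed in (\ref{convention}).
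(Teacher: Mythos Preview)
Your argument is correct and follows exactly the approach the paper intends: the corollary is stated immediately after the general observation that $V_{J'}=V_J(K^p)$ whenever $J'=J'_{K^p}$ for an effective $J$, and the paper gives no further proof because it is meant as a direct instance of that fact with $J=e_{G,\iota}I^{cl}_{G,0}(\lambda)$. Your extra unpacking via Corollary~\ref{cor1} and the convention (\ref{convention}) simply spells out why that general observation holds in this case, which the paper leaves implicit.
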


\vspace{1pc}
\subsubsection{Congruence between overconvergent and classical distributions}
\begin{lem}\label{lemma1}
Let $\lambda$ be a regular arithmetic weight and $f=f^p\otimes u_t \in \mathcal{H}_p'(K^p)$ $\mathbb{Z}_p$-valued, then
\begin{equation}
I_G^\dagger(\iota\times f, \lambda)\equiv I_G^{cl}(\iota\times f, \lambda) \hspace{1pc}mod \hspace{1pc} N^\iota(\lambda, t)Meas(K^p),
\end{equation}
where $N^\iota(\lambda, t)$ is defined in $(\ref{Nlambda})$.
\end{lem}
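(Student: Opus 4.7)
The plan is to set $h=v_p(N^\iota(\lambda,t))$ and split the twisted trace on both cohomology complexes according to the slope of $u_t$. On the $\leq h$-slope part, the twisted classicity of Proposition \ref{classicity} will identify the two sides exactly; on the $>h$-slope part, each spectral summand will be individually divisible by $N^\iota(\lambda,t)$, and after multiplication by $Meas(K^p)$ this will yield the desired congruence.

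Spectrally, I would first expand both twisted traces using Proposition \ref{induction1} (at level $K^p$, via Corollary \ref{cor1}), writing
\begin{equation*}
Meas(K^p)^{-1}I_G^\dagger(\iota\times f,\lambda)=\sum_{\sigma^\iota\cong\sigma}\sum_{i=1}^l m_i^\dagger(\sigma)J_{\tilde\sigma_i}(f)
\end{equation*}
and analogously for $I_G^{cl}$, where the sums run over irreducible $\iota$-invariant finite slope overconvergent (respectively classical) representations of $\mathcal{H}_p(K^p)$. I would then split each sum according to whether the slope of $\sigma$ with respect to $u_t$ is $\leq h$ or $>h$. The second assertion of Proposition \ref{classicity} asserts that the $\iota$-stable $\leq h$-slope parts of the two complexes are canonically $^\iota\mathcal{H}_p(K^p)$-equivariantly isomorphic; hence the $\leq h$-slope sub-sums on the two sides are term-by-term identical.

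For a $\sigma$ of slope $>h$ contributing to either side, since $u_t\in\mathcal{U}_p$ lies in a commutative algebra and acts on $\sigma$ by the scalar $\theta_\sigma(u_t)$, I can factor
\begin{equation*}
J_{\tilde\sigma_i}(f)=J_{\tilde\sigma_i^p}(f^p)\cdot\theta_\sigma(u_t).
\end{equation*}
The prime-to-$p$ factor $J_{\tilde\sigma_i^p}(f^p)$ is a $p$-adic integer, because $f^p$ is $\mathbb{Z}_p$-valued and the finite-order automorphism $\iota$ acts on $\tilde\sigma_i^p$ with root-of-unity eigenvalues by observation (\ref{ob2}), allowing $\iota\times f^p$ to be realized on a $\mathbb{Z}_p$-lattice coming from an integral cochain model. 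Meanwhile $v_p(\theta_\sigma(u_t))>h$ by the slope hypothesis, so each summand lies in $p^{h+1}\cdot\mathbb{Z}_p\subset N^\iota(\lambda,t)\cdot\mathbb{Z}_p$. Multiplying by $Meas(K^p)$, the $>h$-slope contributions to $I_G^\dagger$ and $I_G^{cl}$ each lie in $N^\iota(\lambda,t)\cdot Meas(K^p)\cdot\mathbb{Z}_p$, and the lemma follows.

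The main obstacle is the $^\iota\mathcal{H}_p(K^p)$-equivariance of the classicity isomorphism of Proposition \ref{classicity}: one must verify that the natural reduction $\mathcal{D}_\lambda\twoheadrightarrow\mathbb{V}^\vee_{\lambda^{alg}}(\epsilon)$ commutes with the $\iota$-action defined in \S2.3, which follows from the functoriality of $f\mapsto f^\iota$ on the induced modules but requires careful tracking through the proof. A secondary technicality is the integrality of $J_{\tilde\sigma_i^p}(f^p)$, resolved by fixing integral models of all relevant cohomology groups simultaneously preserved by $\mathcal{H}_p(K^p)$ and by $\iota$.
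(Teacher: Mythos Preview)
Your proposal is correct and follows essentially the same approach as the paper: both set $h=v_p(N^\iota(\lambda,t))$, invoke Proposition~\ref{classicity} to identify the $\iota$-stable $\leq h$-slope parts of the overconvergent and classical cohomology, and use the spectral expansion together with the observation~(\ref{ob2}) to show that the $>h$-slope contributions are divisible by $N^\iota(\lambda,t)Meas(K^p)$. Your writeup is in fact more explicit than the paper's terse argument (which cites (\ref{ob1}) and (\ref{ob2}) directly), and your identification of the two technical points---$^\iota\mathcal{H}_p(K^p)$-equivariance of the classicity map and integrality of the prime-to-$p$ twisted trace---is exactly what underlies the paper's one-line appeal to these facts; one small imprecision is the claim that high-slope summands lie in $p^{h+1}\mathbb{Z}_p$, since slopes need not be integral, but the correct statement (valuation strictly greater than $h$) yields the same conclusion.
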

\begin{proof}
Both sides of the congruence are $mear(K^p)\times \mathbb{Z}_p$-valued by definition. Let $h=v_p(N^\iota(\lambda, t))$. Noting that for any $a\in\mathbb{Z}$ and $t\in T^+$,
\begin{equation}
|a\mu_\theta(t)|_p=|\theta(u_{t^a})|_p,
\end{equation}
$(\ref{ob1})$ and the observation $(\ref{ob2})$ imply that
$$I_G^\dagger(\iota\times f, \lambda)\equiv tr(\iota\times f| H^*_{fs}(S_G(K^pI),\mathcal{D}_\lambda(L)))^{\leqslant h}_\iota\hspace{1pc}mod \hspace{1pc} N^\iota(\lambda, t)Meas(K^p)$$
$$I_G^{cl}(\iota\times f, \lambda)\equiv tr(\iota\times f| H^*(S_G(K^pI_m),\mathbb{V}^\vee_\lambda(L)))^{\leqslant h} _\iota\hspace{1pc}mod \hspace{1pc} N^\iota(\lambda, t)Meas(K^p).$$
Then the lemma is obtained from Proposition \ref{classicity}.
\end{proof}

\begin{prop}\label{prop3}
Let $f=f^p\otimes u_t \in \mathcal{H}_p(K^p)$ be $\mathbb{Z}_p$-valued and $\lambda$ regular arithmetic. Then
\begin{equation}
I_{G,0}^\dagger(\iota\times f, \lambda)\equiv I_{G,0}^{cl}(\iota\times f, \lambda) \hspace{1pc}mod \hspace{1pc} N^\iota(\lambda, t)Meas(K^p).
\end{equation}
\end{prop}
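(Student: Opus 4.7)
The plan is induction on the semisimple rank of $G$. The base case, when $G$ has no proper $\iota$-stable Levi, is immediate from Lemma \ref{lemma1}, since $I^\dagger_{G,0}=I^\dagger_G$ and $I^{cl}_{G,0}=I^{cl}_G$ by construction. For the inductive step, assume the result for every reductive group of smaller semisimple rank. Lemma \ref{lemma1} at the level of $G$ yields
\begin{equation*}
I^\dagger_G(\iota\times f,\lambda)\equiv I^{cl}_G(\iota\times f,\lambda)\pmod{N^\iota(\lambda,t)\,Meas(K^p)},
\end{equation*}
and I would combine this with the definition
\begin{equation*}
I^\dagger_{G,0}=I^\dagger_G-\sum_{L\neq G}\sum_{w\in\mathcal{W}^{L,\iota}_{Eis}}(-1)^{l(w)+n_P}I^\dagger_{L,0}(\iota\times f^{reg}_{L,w},w\cdot\lambda+2\rho_P)
\end{equation*}
together with the twisted cuspidal decomposition of $I^{cl}_G$ provided by Theorem \ref{cuspidalformula}; in the latter the summand $P=G$, $v=w=\mathrm{id}$ isolates exactly $I^{cl}_{G,0}$.

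Subtracting these expansions reduces the congruence, for each proper $\iota$-stable Levi $L$, to a comparison between the single sum $\sum_w(-1)^{l(w)+n_P}I^\dagger_{L,0}(\iota\times f^{reg}_{L,w},w\cdot\lambda+2\rho_P)$ on the overconvergent side and the double sum $\sum_{v,w}(-1)^{l(v)+n_P}\xi(t)^{\lambda-w^{-1}v\cdot\lambda}I^{cl}_{L,0}(\iota\times f^{reg}_{L,w},v\cdot\lambda+2\rho_P)$ on the classical side. On the diagonal $v=w$ the twisting exponent $\lambda-w^{-1}v\cdot\lambda$ vanishes, the weights and signs coincide, and the inductive hypothesis applied to $L$ with Hecke operator $f^{reg}_{L,w}$ and weight $w\cdot\lambda+2\rho_P$ cancels the two contributions modulo the Levi-level modulus produced by the induction. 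A direct unwinding of \eqref{Nlambda} together with $K^p_L=K^p\cap L(\mathbb{A}_f^p)$ confirms that this Levi modulus divides $N^\iota(\lambda,t)\,Meas(K^p)$, so the diagonal cancellation is valid to the required precision.

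The remaining task, which is the main technical obstacle, is to show that the off-diagonal classical terms with $v\neq w$ collapse modulo $N^\iota(\lambda,t)\,Meas(K^p)$. For each fixed $L$ and $w$ the plan is to reassemble the partial sum $\sum_{v\in\mathcal{W}^{L,\iota}_{Ein}}(-1)^{l(v)}\xi(t)^{\lambda-w^{-1}v\cdot\lambda}I^{cl}_{L,0}(\iota\times f^{reg}_{L,w},v\cdot\lambda+2\rho_P)$ into a twisted trace on the normalized parabolic induction $I^G_{L,w}$ of \eqref{induce}, so that the functional equations of the associated intertwining operators, combined with the $\iota$-equivariance recorded in \eqref{formula3}, produce the required cancellation in the spirit of \cite[Lemma 4.6.3]{Urban}. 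Because $|\xi(t)^{\lambda-w^{-1}v\cdot\lambda}|_p\geq 1$ on the off-diagonal, no termwise $p$-adic estimate on the twisting factor is available, and the cancellation must be extracted globally from the $\iota$-equivariant intertwining structure of the Eisenstein cohomology rather than from a direct slope bound.
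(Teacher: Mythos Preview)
Your induction scheme and use of Theorem~\ref{cuspidalformula} match the paper, and the diagonal $v=w$ terms are handled correctly. The genuine gap is in your treatment of the off-diagonal terms: your assertion that $|\xi(t)^{\lambda-w^{-1}v\cdot\lambda}|_p\geq 1$ for $v\neq w$ is false, and it is precisely the opposite inequality that finishes the proof without any appeal to intertwining operators.

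Indeed, by the normalization~(\ref{split}) one has $\xi(t)^{\mu}=|t^{\mu}|_p^{-1}$ for algebraic $\mu$, so
\[
\xi(t)^{\lambda-w^{-1}v\cdot\lambda}=|t^{\lambda-w^{-1}v\cdot\lambda}|_p^{-1}=|t^{(w^{-1}v)\cdot\lambda-\lambda}|_p.
\]
Since $v\neq w$ forces $u:=w^{-1}v\neq\mathrm{id}$ in $\mathcal{W}_G$, the definition $N(\lambda,t)=\inf_{u\neq\mathrm{id}}|t^{u\cdot\lambda-\lambda}|_p$ gives immediately
\[
v_p\bigl(\xi(t)^{\lambda-w^{-1}v\cdot\lambda}\bigr)\;\geq\; v_p\bigl(N(\lambda,t)\bigr)\;\geq\; v_p\bigl(N^\iota(\lambda,t)\bigr),
\]
i.e.\ $N^\iota(\lambda,t)$ \emph{divides} $\xi(t)^{\lambda-w^{-1}v\cdot\lambda}$. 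Because each $I^{cl}_{L,0}(\iota\times f^{reg}_{L,w},v\cdot\lambda+2\rho_P)$ lies in $Meas(K^p)\mathbb{Z}_p$, every off-diagonal summand is already $\equiv 0\pmod{N^\iota(\lambda,t)Meas(K^p)}$ termwise. This is exactly how the paper proceeds: after this divisibility observation, Theorem~\ref{cuspidalformula} collapses modulo $N^\iota(\lambda,t)Meas(K^p)$ to
\[
I^{cl}_{G,0}\equiv I^{cl}_G-\sum_{P\neq G}\sum_{w\in\mathcal{W}^{L,\iota}_{Eis}}(-1)^{l(w)+n_P}I^{cl}_{L,0}(\iota\times f^{reg}_{L,w},w\cdot\lambda+2\rho_P),
\]
and Lemma~\ref{lemma1} together with the inductive hypothesis finishes. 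Your proposed route through functional equations of intertwining operators is therefore unnecessary, and as stated it is also not a proof but a hope; you should replace that paragraph with the one-line divisibility above.
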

\begin{proof}
We prove the proposition by induction on $rk(G)$. The case $rk(G)=0$ is just the lemma \ref{lemma1} above. Now assume the proposition is proved for any proper Levi subgroup $L\in\mathcal{L}_G^\iota$. Noting that $N^\iota(\lambda, t)$ divides $\xi(t)^{\lambda-w^{-1}v*\lambda}$ if $v\neq w$, the cuspidal decomposition formula, Theorem \ref{cuspidalformula} implies that, modulo $N^\iota(\lambda, t)Meas(K^p)$, $I^{cl}_{G,0}(\iota\times f,\lambda)$ is congruent to
\begin{equation}
I^{cl}_G(\iota\times f,\lambda)-\sum_{P\in\mathcal{P}^\iota, P\neq G}\sum_{w\in\mathcal{W}^{L,\iota}_{Eis}}(-1)^{l(w)+n_P}I^{cl}_{L,0}(\iota\times f^{reg}_{L,w},w\cdot\lambda+2\rho_P).
\end{equation}
Now using the induction hypotheses and Lemma \ref{lemma1} again, it is congruent to
\begin{equation}
I^{\dagger}_G(\iota\times f,\lambda)-\sum_{P\in\mathcal{P}^\iota, P\neq G}I^{\dagger}_{G,L}(\iota\times f,w\cdot\lambda+2\rho_P),
\end{equation}
which is, by definition, $I_{G,0}^\dagger(\iota\times f, \lambda)$.
\end{proof}

\begin{cor}\label{limit}
let $\set{\lambda_n}$ be a highly regular sequence of $\iota$-invariant dominant weights which converges $p$-adically to a weight $\lambda\in\mathfrak{X}^\iota(L)$. Then for any $f=f^p\otimes u_t\in \mathcal{H}_p'$, there is 
\begin{equation}
\lim_{n\rightarrow\infty}I^{cl}_{G,?}(\iota\times f, \lambda_n)=I^\dagger_{G,?}(\iota\times f, \lambda)
\end{equation}
for $?=\emptyset,0$.
\end{cor}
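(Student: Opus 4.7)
The plan is to combine the mod-$N^\iota$ congruences (Lemma \ref{lemma1} and Proposition \ref{prop3}) with the $p$-adic continuity of $\lambda\mapsto I^\dagger_{G,?}(\iota\times f,\lambda)$. Since $\set{\lambda_n}$ is highly regular and $t\in T^{++}$, the definition $(\ref{Nlambda})$ forces $N^\iota(\lambda_n,t)\to 0$, so the right-hand side of each congruence vanishes in the limit. Thus once continuity of $I^\dagger_{G,?}(\iota\times f,-)$ at $\lambda$ is established, one obtains
\[
\lim_n I^{cl}_{G,?}(\iota\times f,\lambda_n)=\lim_n I^\dagger_{G,?}(\iota\times f,\lambda_n)=I^\dagger_{G,?}(\iota\times f,\lambda).
\]

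For $?=\emptyset$, I would pick an affinoid $\mathfrak{U}\subseteq\mathfrak{X}^\iota$ containing $\lambda$ and all $\lambda_n$ with $n\gg 0$, form the $\mathcal{O}(\mathfrak{U})$-module $R\Gamma^*(K^pI,\mathcal{D}_\mathfrak{U})$, and use Proposition \ref{prop}(b) together with the slope decomposition Lemma \ref{slopedecomp} over $\mathcal{O}(\mathfrak{U})$ to cut out a finite-rank $\leq h$-slope submodule. This submodule specializes at any $\mu\in\mathfrak{U}$ to $H^*(S_G(K^pI),\mathcal{D}_\mu(L))^{\leq h}$, so its twisted trace is a rigid analytic, hence $p$-adically continuous, function of $\mu$; letting $h$ grow past the slope of $f$ yields continuity of $I^\dagger_G(\iota\times f,-)$ at $\lambda$. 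For $?=0$, I would run an induction on $rk(G)$: the base case is the previous step since $I^\dagger_{G,0}=I^\dagger_G$ in rank $0$, and the inductive step uses the defining relation
\[
I^\dagger_{G,0}(\iota\times f,\lambda)=I^\dagger_G(\iota\times f,\lambda)-\sum_{\text{proper }L\in\mathcal{L}_G^\iota}\sum_{w\in\mathcal{W}_{Eis}^{L,\iota}}(-1)^{l(w)+\dim\mathfrak{n}_L}I^\dagger_{L,0}(\iota\times f_{L,w}^{reg},w\cdot\lambda+2\rho_P),
\]
in which the first term is continuous by the $?=\emptyset$ case and each proper-Levi summand by the inductive hypothesis (noting that $\lambda\mapsto w\cdot\lambda+2\rho_P$ is affine, hence continuous, in $\lambda$).

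The step I expect to require the most care is the analytic-continuity claim: one must apply Lemma \ref{slopedecomp} to the compact Frechet $\mathcal{O}(\mathfrak{U})$-module structure provided by Proposition \ref{prop}(b), verify that the Fredholm projector $E_Q(u_t)$ commutes with $\iota$ (which follows from $\iota\circ u_t\circ\iota^{-1}=u_{t^\iota}$ and restriction to the $\iota$-stable slope part $(-)^{\leq h}_\iota$ of Proposition \ref{prop322}), and check that the specialization maps are compatible with these projectors. Once this is in place, the case $?=\emptyset$ is immediate from combining continuity with Lemma \ref{lemma1}, and the induction for $?=0$ is purely formal.
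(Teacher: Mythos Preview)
Your approach is correct and is exactly what the paper intends: combine the congruences of Lemma~\ref{lemma1} and Proposition~\ref{prop3} (so that $I^{cl}_{G,?}(\iota\times f,\lambda_n)-I^\dagger_{G,?}(\iota\times f,\lambda_n)\to 0$ as $N^\iota(\lambda_n,t)\to 0$) with the $p$-adic continuity of $\lambda\mapsto I^\dagger_{G,?}(\iota\times f,\lambda)$; the paper states the corollary without proof and then records precisely this continuity as Proposition~\ref{analytic} in the next subsection.

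One simplification: you do not need the slope decomposition at all for the continuity step. By Proposition~\ref{prop}(b), $R\Gamma^*(K^pI,\mathcal{D}_\mathfrak{U})$ is a compact Frechet $\mathcal{O}(\mathfrak{U})$-module on which the admissible $f$ (hence $\iota\times f$) acts compactly, so the Fredholm trace $\mathrm{tr}(\iota\times f\mid R\Gamma^*(K^pI,\mathcal{D}_\mathfrak{U}))$ already lies in $\mathcal{O}(\mathfrak{U})$ and specializes to $I^{\dagger}_G(\iota\times f,\lambda)$ at each $\lambda\in\mathfrak{U}$; this is exactly the content of the proof of Proposition~\ref{analytic}. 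Your cutting-to-$\leq h$ and then letting $h\to\infty$ works, but is an unnecessary detour. The inductive extension to $?=0$ via the defining recursion is the right (and only) way to handle that case.
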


\subsection{Analyticity with respect to weight}
Now we study $I^\dagger_{G,?}(\iota\times f, \lambda)$ as the weight $\lambda$ varying over the weight space $\mathfrak{X}^\iota$. Let $\mathfrak{U}$ be an open affinoid of $\mathfrak{X}^\iota$ and $\mathcal{O}^0(\mathfrak{U})$ the ring of analytic functions on $\mathfrak{U}$ bounded by $1$. For any finite extension $L$ of $\mathbb{Q}_p$ in $\overline{\mathbb{Q}}_p$, define 
\begin{equation}
\Lambda_{\mathfrak{X}^\iota}:=\varprojlim_{\mathfrak{U}\subset \mathfrak{X}^\iota}\mathcal{O}^0(\mathfrak{U})\subset\mathcal{O}(\mathfrak{X}^\iota).
\end{equation} 
\begin{equation}
\Lambda_{\mathfrak{X}^\iota,L}:=\Lambda_{\mathfrak{X}^\iota}\otimes L
\end{equation}

\begin{prop}\label{analytic}
Fix $f\in\mathcal{H}_p'(K^p)$, then as functions of  $\lambda\in\mathfrak{X}^\iota$, $I^\dagger_G(\iota\times f, \lambda)$, $I^\dagger_{G,M,w}(\iota\times f, \lambda)$ and $I^\dagger_{G,0}(\iota\times f, \lambda)$ are all in $\Lambda_{\mathfrak{X}^\iota,\mathbb{Q}_p}$. In particular, they are analytic over $\mathfrak{X}^\iota$.
\end{prop}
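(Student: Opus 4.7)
My strategy is to first establish the claim for $I^\dagger_G(\iota\times f,\lambda)$ on each affinoid $\mathfrak{U}\subset\mathfrak{X}^\iota$, and then deduce the two remaining statements inductively from the definitions in \S5.2. By linearity it suffices to treat an admissible operator $f=f^p\otimes u_t$ with $t\in T^{++}$. By Proposition~\ref{prop}, $\mathcal{D}_{\mathfrak{U}}$ is an $\mathcal{O}(\mathfrak{U})$-projective compact Frechet space on which $u_t$ acts compactly, and the specialization $\mathcal{D}_\mathfrak{U}\otimes_\lambda L\cong\mathcal{D}_\lambda(L)$ is compatible with the Hecke action; since $\mathfrak{U}\subset\mathfrak{X}^\iota$, the morphism $\iota$ constructed in \S3 lifts to the family $R\Gamma^*(K^pI,\mathcal{D}_{\mathfrak{U}})$ up to homotopy, together with $f$, giving a commutative diagram $\iota\circ f=f^\iota\circ\iota$ on the whole complex.

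Next, the analyticity step. Applying Lemma~\ref{slopedecomp} to the compact operator $R\Gamma(t)$ acting on the Frechet $\mathcal{O}(\mathfrak{U})$-module $R\Gamma^*(K^pI,\mathcal{D}_{\mathfrak{U}})$, one obtains, over a covering of $\mathfrak{U}$ by possibly smaller admissible opens, a decomposition into $f$-stable pieces $N_f(Q)\oplus F_f(Q)$ with $N_f(Q)$ of finite rank over $\mathcal{O}(\mathfrak{U})$. Because $\iota$ permutes the slope pieces associated with $f$ and $f^{\iota^i}$ exactly as in Proposition~\ref{prop322}, one can intersect the $l$ such pieces to obtain an $\iota$-stable finite rank $\mathcal{O}(\mathfrak{U})$-submodule on which both $\iota$ and $f$ act by $\mathcal{O}(\mathfrak{U})$-linear maps. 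The trace $tr(\iota\times f\mid\cdot)$ of this operator is therefore a rigid analytic function on $\mathfrak{U}$, and by the compatibility of slope decomposition with specialization together with Proposition~\ref{prop322}, its value at each $\lambda\in\mathfrak{U}$ is precisely $I^\dagger_G(\iota\times f,\lambda)/meas(K^p)$. Gluing over affinoids gives an element of $\mathcal{O}(\mathfrak{X}^\iota)$.

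The boundedness step is the main obstacle and requires the congruences of \S5.2.2. Since the arithmetic regular dominant weights are Zariski dense in $\mathfrak{X}^\iota$, it suffices to show that on such weights the values of $I^\dagger_G(\iota\times f,\lambda)$ lie in a bounded subset of $\overline{\mathbb{Q}}_p$ when $f$ has integral $p$-adic coefficients. For this I would invoke Lemma~5.2.6, which gives $I^\dagger_G(\iota\times f,\lambda)\equiv I^{cl}_G(\iota\times f,\lambda)\pmod{N^\iota(\lambda,t)meas(K^p)}$, together with the observation~(\ref{ob2}) that the eigenvalues of $\iota$ are $p$-adic units, so the classical twisted trace is integral up to the factor $\lambda(\xi(t))$ from Lemma~4.4.1, which is itself bounded on $\mathfrak{U}$ since $\mathfrak{U}$ is an affinoid. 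Combining analyticity with boundedness on a dense set of $L$-points yields membership in $\Lambda_{\mathfrak{X}^\iota,L}$; descent of coefficients from $L$ to $\mathbb{Q}_p$ follows from the fact that the operators involved and the $\mathbb{Z}$-valued multiplicities $m^q_i$ of Proposition~\ref{induction1} are independent of $L$.

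Finally, the statements for $I^\dagger_{G,L,w}$ and $I^\dagger_{G,0}$ follow by induction on the unipotent rank of $G$. Indeed, $I^\dagger_{G,L,w}(\iota\times f,\lambda)=I^\dagger_{L,0}(\iota\times f^{reg}_{L,w},w\cdot\lambda+2\rho_P)$ where the twist $w\cdot\lambda+2\rho_P$ is a rigid analytic function of $\lambda$ valued in the weight space of $L$, and $f^{reg}_{L,w}$ is independent of $\lambda$; the inductive hypothesis for $L$ gives membership in $\Lambda_{\mathfrak{X}^\iota_L,\mathbb{Q}_p}$, which pulls back to $\Lambda_{\mathfrak{X}^\iota,\mathbb{Q}_p}$. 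Then $I^\dagger_{G,0}=I^\dagger_G-\sum_{L\neq G}I^\dagger_{G,L}$ is a finite $\mathbb{Z}$-linear combination of such functions, completing the proof.
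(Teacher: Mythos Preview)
Your inductive reduction to $I^\dagger_G$ and your last paragraph handling $I^\dagger_{G,L,w}$ and $I^\dagger_{G,0}$ are fine and match the paper. The substantive differences are in the middle two steps.

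\textbf{Analyticity.} Your route through slope decomposition works in principle but is unnecessarily elaborate. The paper simply observes that $\iota\times f$ is a compact $\mathcal{O}(\mathfrak{U})$-linear endomorphism of the orthonormalizable Banach module $R\Gamma^*(K^pI,\mathcal{D}_{\mathfrak{U},n(\mathfrak{U})})$ (a finite direct sum of copies of $\mathcal{D}_{\mathfrak{U},n(\mathfrak{U})}$), so its Fredholm trace $F_{\mathfrak{U}}$ lies in $\mathcal{O}(\mathfrak{U})$ outright; specialization (Proposition~\ref{prop}(a)) then gives $F_{\mathfrak{U}}(\lambda)=meas(K^p)^{-1}I^\dagger_G(\iota\times f,\lambda)$. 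No slope truncation or gluing of finite-rank pieces is needed.

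\textbf{Boundedness.} Here there is a genuine gap. Membership in $\Lambda_{\mathfrak{X}^\iota,\mathbb{Q}_p}=\Lambda_{\mathfrak{X}^\iota}[1/p]$ requires a \emph{uniform} bound over all affinoids, not merely boundedness on each $\mathfrak{U}$ separately. Your argument via Lemma~\ref{lemma1} and $I^{cl}_G=\lambda(\xi(t))\cdot(\text{integral})$ does not provide this: the factor $\lambda(\xi(t))=|\lambda^{alg}(t)|_p^{-1}$ is only defined at arithmetic weights and is not uniformly bounded there (for instance take dominant $\lambda^{alg}$ with negative entries). Moreover, the integrality of $I^\dagger_G$ that you extract from Lemma~\ref{lemma1} is already asserted in its proof (``both sides are $meas(K^p)\mathbb{Z}_p$-valued by definition''), which rests on exactly the integral-lattice fact you are trying to establish; the detour through $I^{cl}_G$ is circular. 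The paper's argument is direct: for $\mathbb{Z}_p$-valued $f$, both $\iota$ and $f$ preserve the $\mathcal{O}^0(\mathfrak{U})$-lattice $R\Gamma^*(K^pI,\mathcal{D}^0_{\mathfrak{U},n})$, so the Fredholm trace $F_{\mathfrak{U}}$ lies in $\mathcal{O}^0(\mathfrak{U})$ for every $\mathfrak{U}$, and the family $(F_{\mathfrak{U}})_{\mathfrak{U}}$ assembles to an element of $\Lambda_{\mathfrak{X}^\iota}$.
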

\begin{proof}
The proof is same to \cite[Theorem 4.7.3]{Urban}, so we only sketch it here. By the induction process in defining $I^\dagger_{G,?}(\iota\times f, \lambda)$, it suffices to prove the proposition for $I^\dagger_G(\iota\times f, \lambda)$. Locally over an open affinoid $\mathfrak{U}\subset\mathfrak{X}^\iota$, for $n\geq n(\mathfrak{U})$, Lemma \ref{lemma2} and Proposition \ref{prop}(a) imply that 
\begin{equation}
R\Gamma^*(K^pI,\mathcal{D}_{\mathfrak{U},n})\otimes_\lambda L\cong R\Gamma^*(K^pI,\mathcal{D}_{\lambda,n}(L)).
\end{equation} 
Therefore $F_{\mathfrak{U}}:=meas(K^p)tr(\iota\times f|R\Gamma^*(K^pI,\mathcal{D}_{\mathfrak{U},n(\mathfrak{U})}))$ (viewed as a function on $\mathfrak{U}$ via specialization) is in $\mathcal{O}(\mathfrak{U})$, such that, for any $\lambda\in\mathfrak{U}$, $F_{\mathfrak{U}}(\lambda)=I^\dagger_G(\iota\times f, \lambda)$. Moreover,  since $\iota$ and $f\in\mathcal{H}_p'$ preserve the $\mathcal{O}^0(\mathfrak{U})$-lattice $R\Gamma^*(K^pI,\mathcal{D}^0_{\mathfrak{U},n})$, that $F_{\mathfrak{U}}$ is in $\mathcal{O}^0(\mathfrak{U})$, where $\mathcal{D}^0_{\mathfrak{U},n}$ is the $\mathcal{O}^0(\mathfrak{U})$ dual of $\mathcal{A}_{\mathfrak{U}, n}^0$. So we have 
\begin{equation}
I^\dagger_{G}(\iota\times f,-)=\varprojlim_{\mathfrak{U}\subset \mathfrak{X}^\iota}F_{\mathfrak{U}}\in\varprojlim_{\mathfrak{U}\subset \mathfrak{X}^\iota}\mathcal{O}^0(\mathfrak{U})=\Lambda_{\mathfrak{X}^\iota}.
\end{equation}
\end{proof}

\subsection{Effectivity}
\begin{prop}\label{effective}
If $q_{G,\iota}\neq0$, then $e_{G,\iota}I^\dagger_{G,0}(\iota\times f,\lambda)$ is an effective $\iota$-twisted finite slope character distribution.
\end{prop}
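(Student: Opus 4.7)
The plan is to verify that for every $\iota$-invariant finite slope overconvergent representation $\pi$, the coefficients $e_{G,\iota}\,m_{J,i}(\pi)$ of $J:=e_{G,\iota}I^\dagger_{G,0}(\cdot,\lambda)$ are non-negative integers. The strategy is $p$-adic interpolation: deduce effectivity from the classical effectivity (Proposition~\ref{prop2}) using the congruence between overconvergent and classical distributions (the congruence proposition following Lemma~\ref{lemma1}) together with the analyticity in $\lambda$ (Proposition~\ref{analytic}), exploiting that classical regular arithmetic weights are Zariski dense in $\mathfrak{X}^\iota$.

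First I would fix an $\iota$-invariant level $K^p$ and reduce, via Lemma~5.1.2 and Corollary~\ref{cor1}, to the analogous effectivity statement for the level-$K^p$ distribution $(e_{G,\iota}I^\dagger_{G,0}(\cdot,\lambda))'_{K^p}$. Fix $t\in T^{++}$ and a slope bound $h\in\mathbb{Q}_{\geq 0}$. By Proposition~\ref{prop232} and Proposition~\ref{prop322} the $\iota$-stable $\leq h$-slope part $H^*_{fs}(S_G(K^pI),\mathcal{D}_\lambda(L))_\iota^{\leq h}$ is finite dimensional and carries a well-defined $^\iota\mathcal{H}_p(K^p)$-action; its spectral decomposition records the truncated multiplicities $m_{i,h}(\pi)$ of the finitely many $\pi$ of slope $\leq h$, and these saturate $m_{J,i}(\pi)$ as $h\to\infty$. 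Effectivity of $J$ therefore reduces to showing $e_{G,\iota}\,m_{i,h}(\pi)\geq 0$ for each $h$.

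Second, at a classical weight $\lambda=\lambda^{alg}\varepsilon$ with $\lambda^{alg}$ regular in $\Upsilon$ and $h\leq v_p(N^\iota(\lambda,t))$, the classicity theorem (Proposition~\ref{classicity}) identifies the $\iota$-stable $\leq h$-slope overconvergent cohomology with its classical counterpart. Running this classicity on each Levi entering the inductive construction of $I^\dagger_{G,0}$ and following the proof of Lemma~\ref{lemma1} in this slope regime upgrades the congruence into an exact equality on the $\leq h$-slope piece:
\begin{equation*}
e_{G,\iota}\,I^\dagger_{G,0}(\iota\times f,\lambda)\big|^{\leq h}\;=\;e_{G,\iota}\,I^{cl}_{G,0}(\iota\times f,\lambda)\big|^{\leq h}.
\end{equation*}
By Proposition~\ref{prop2} the right hand side is effective, which yields the required non-negativity at all such classical $\lambda$. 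For arbitrary $\lambda_0\in\mathfrak{X}^\iota(L)$ I would then work over an open affinoid $\mathfrak{U}\ni\lambda_0$. Applying Lemma~\ref{slopedecomp} with $A=\mathcal{O}(\mathfrak{U})$ to the compact operator $u_t$ on $R\Gamma^*(K^pI,\mathcal{D}_\mathfrak{U})$ produces a coherent $\mathcal{O}(\mathfrak{U})$-submodule $M^{\leq h}_\mathfrak{U}$ whose fibre at any $\lambda$ recovers the slope piece, with $\iota$ and $^\iota\mathcal{H}_p(K^p)$ acting $\mathcal{O}(\mathfrak{U})$-linearly. Since the classical weights $\lambda_n$ regular in $\Upsilon$ with $h\leq v_p(N^\iota(\lambda_n,t))$ are Zariski dense in $\mathfrak{U}$, and by the previous paragraph every $\tilde\sigma_i$ appears in $M^{\leq h}_{\mathfrak{U},\lambda_n}$ with non-negative integer multiplicity (after weighting by $e_{G,\iota}$), these multiplicities, being locally constant integer-valued functions of $\lambda$, remain non-negative at $\lambda_0$. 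Letting $h\to\infty$ covers every finite slope $\pi$ and concludes.

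The main obstacle is the rigidity claim underpinning the last step: that the multiplicity of each $\tilde\sigma_i$ in the $\leq h$-slope spectral decomposition is a locally constant integer-valued function of $\lambda\in\mathfrak{U}$. This requires a family version of Lemma~\ref{LemmaBLS}, namely $\mathcal{O}(\mathfrak{U})$-linear idempotents commuting with $\iota$ that cut out $\tilde\sigma_i$-isotypic summands of $M^{\leq h}_\mathfrak{U}$. My approach would be to refine the Fredholm-factorization idempotents of Lemma~\ref{slopedecomp} to eigenvalue-isotypic summands for each commuting Hecke generator, then use that the finite order of $\iota$ forces its eigenvalues on each such summand to be locally constant roots of unity, so that averaging over $\langle\iota\rangle$ produces $\mathcal{O}(\mathfrak{U})$-linear projectors onto the extensions $\tilde\sigma_i$; their ranks, being continuous integer-valued functions on the reduced rigid space $\mathfrak{U}$, are then locally constant.
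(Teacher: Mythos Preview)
Your approach has a genuine gap. The quantity $I^\dagger_{G,0}$ is \emph{not} the trace of $\iota\times f$ on any single coherent $\mathcal{O}(\mathfrak{U})$-module: it is defined inductively by subtracting from $I^\dagger_G$ the Eisenstein contributions $I^\dagger_{G,L}$ coming from proper $\iota$-stable Levi subgroups, each of which is itself an alternating sum. Consequently the coefficients $m_{J,i}(\pi)$ are \emph{virtual} multiplicities, and there is no module $M^{\leq h}_\mathfrak{U}$ whose spectral decomposition realizes them. Your proposed rigidity step, namely that the multiplicity of a fixed $\tilde\sigma_i$ in $M^{\leq h}_\mathfrak{U}$ is a locally constant integer-valued function of $\lambda$, therefore does not even get off the ground: there is no such module, and moreover the representations $\sigma$ themselves depend on $\lambda$, so ``the multiplicity of $\tilde\sigma_i$'' is not a function on $\mathfrak{U}$ at all.

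The paper avoids this entirely by working not with multiplicities but with the Fredholm determinant $P^\dagger_{G,0}(\iota\times f,\lambda,X)$. The key point (stated just after the definition of effectivity) is that a twisted FSCD is effective if and only if its Fredholm determinant is an \emph{entire} power series for every admissible $f$. Over an affinoid $\mathfrak{U}$ one writes $P^\dagger_{G,0}=T/B$ as a ratio of coprime Fredholm series and argues by contradiction that $B=1$: if $B$ had a zero, the flatness of Fredholm hypersurfaces over $\mathfrak{U}$ would produce a pole $(\lambda',x')$ with $\lambda'$ arbitrarily regular in $\Upsilon$ and $|x'|_p<N^\iota(\lambda',t)$. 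But the congruence Proposition~\ref{prop3} forces any such pole to be a pole of the \emph{classical} determinant $P^{cl}_{G,0}(\iota\times f,\lambda',X)$, which is entire by Proposition~\ref{prop2}. This geometric argument on the zero locus of $B$ is what replaces your attempted local constancy of multiplicities, and it works precisely because entireness of a Fredholm series is a condition that does make sense and does propagate in analytic families.
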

\begin{proof}
Since algebraic regular dominant weights are dense in the weight space, by Proposition \ref{analytic}, it suffices to prove the proposition for algebraic regular dominant weights $\lambda$ in $\Upsilon$. Since $q_{G,\iota}\neq0$, 
by Proposition \ref{prop2}, $e_{G,\iota}I^{cl}_{G,0}(\iota\times f,\lambda)$ is effective. Let $P^{cl}_{G.0}(\iota\times f,\lambda, X)$ and $P^\dagger_{G.0}(\iota\times f,\lambda, X)$ be the Fredholm determinants associated to $e_{G,\iota}I^{cl}_{G,0}(\iota\times f,\lambda)$ and $e_{G,\iota}I^\dagger_{G,0}(\iota\times f,\lambda)$ respectively, define
\begin{equation}
P^{\dagger-cl}_{G,0}(\iota\times f, \lambda, X)=\frac{P^\dagger_{G.0}(\iota\times f,\lambda, X)}{P^{cl}_{G.0}(\iota\times f,\lambda, X)}.
\end{equation}

Now we need the lemma below, which is a direct consequence of Proposition \ref{prop3}.
\begin{lem}\label{lemma3}
If $\lambda$ is regular, then $P^{\dagger-cl}_{G,0}(\iota\times f, \lambda, X)$ is a meromorphic function on $\mathbb{A}^1_{rig}(\mathbb{C}_p)$, its zeros and poles are all lying in $$\{x\in\mathbb{C}_p | |x|_p\geq N^\iota(\lambda, t) \}$$
\end{lem}
\begin{proof}
\emph{of the lemma: }If $J_1$ and $J_2$ are two twisted finite slope character distributions, then so is $J_1-J_2$, and $P_{J_1-J_2}(X,f)=P_{J_1}(X,f)/P_{J_2}(X,f)$. Write $I^{\dagger-cl}_{G,0}(\iota\times f, \lambda):=I^{\dagger}_{G,0}(\iota\times f, \lambda)-I^{cl}_{G,0}(\iota\times f, \lambda)$. Proposition \ref{prop3} implies that, for $\mathbb{Z}_p$-valued $f=f^p\otimes u_t\in\mathcal{H}_p(K^p)$, $I^{\dagger-cl}_{G,0}(\iota\times f, \lambda)'_{K^p}\equiv 0$ mod $N^{\iota}(\lambda,t)$. So $P^{\dagger-cl}_{G,0}(\iota\times f, \lambda, X)\equiv1$ mod $N^{\iota}(\lambda,t)$. This proves the lemma.
\end{proof}

Now we can run the same argument as in the proof of \cite[Theorem 4.7.3]{Urban} to show our proposition. Choose a closed affinoid subdomain $\mathfrak{U}\subset \mathfrak{X}^\iota$ which contains one hence dense algebraic weights in $\Upsilon$.  Shrink $\mathfrak{U}$ if necessary so that we can write $P^\dagger_{G.0}(\iota\times f,\lambda, X)$ as a quotient of relatively prime Fredholm series over $\mathfrak{U}$, that is,
\begin{equation}
P^\dagger_{G.0}(\iota\times f,\lambda, X)=\frac{T(\iota\times f,\lambda, X)}{B(\iota\times f , \lambda, X)},
\end{equation}
with both $T(\iota\times f , \lambda, X)$ and $B(\iota\times f , \lambda, X)$ are in $\mathcal{O}(\mathfrak{U}\times \mathbb{A}^1_{rig})$. Assume $B(\iota\times f , \lambda, X)\neq 1$, let $\mathfrak{W}$ be the Fredholm subvariety of $\mathfrak{U}\times\mathbb{A}^1_{rig}$ cut out by $T(\iota\times f , \lambda, X)$ and $B(\iota\times f , \lambda, X)$, that is, $\mathfrak{W}=Z(B)-Z(T)$. Since the projection $pr: Z(B)\rightarrow\mathfrak{U}$ is flat, that its image $pr(\mathfrak{W})$ also contains dense algebraic weights. Now let $w=(\lambda, x)\in \mathfrak{W}(\overline{\mathbb{Q}}_p)$ with $\lambda$ algebraic, we can choose $w'=(\lambda', x')$ $p$-adically close to $w$ such that $\lambda'$ is algebraic regular dominant in $\Upsilon$ and $|x'|_p < N^\iota(\lambda', t)$. So by Lemma \ref{lemma3}, $x'$ must be a pole of  $P^{cl}_{G.0}(\iota\times f,\lambda', X)$. However, since $e_{G,\iota}I^{cl}_{G,0}(\iota\times f, \lambda')$ is effective, $P^{cl}_{G.0}(\iota\times f,\lambda', X)$ is entire. So our assumption leads to a contradiction. This implies that $P^\dagger_{G.0}(\iota\times f,\lambda, X)$ is entire, therefore, $e_{G,\iota}I^\dagger_{G,0}(\iota\times f,\lambda)$ is also effective.
\end{proof}

\begin{cor}
For any $\iota$-invariant standard Levi subgroup of $G$, there exists a number $e_{L,\iota}$ such that, for any $w\in\mathcal{W}_{Eis}^{L,\iota}$, $e_{L,\iota}I^{cl}_{G,L,w}(\iota\times f,\lambda)$ and $e_{L,\iota}I^\dagger_{G,L,w}(\iota\times f, \lambda)$ are effective, unless $I^{cl}_{G,L,w}(\iota\times f,\lambda)$ is trivial for some algrbraic regular weight $\lambda\in\Upsilon$
\end{cor}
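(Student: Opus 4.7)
The plan is to reduce the statement to Proposition \ref{effective} applied to $L$ in place of $G$, equipped with the restriction of $\iota$ (which is again of Cartan type in view of Remark \ref{lateadd}). By definition,
\begin{equation*}
I^{cl}_{G,L,w}(\iota\times f,\lambda)=I^{cl}_{L,0}(\iota\times f^{reg}_{L,w},w\cdot\lambda+2\rho_P),\quad I^{\dagger}_{G,L,w}(\iota\times f,\lambda)=I^{\dagger}_{L,0}(\iota\times f^{reg}_{L,w},w\cdot\lambda+2\rho_P),
\end{equation*}
and the weight $w\cdot\lambda+2\rho_P$ is $\iota$-invariant since $w\in\mathcal{W}^{L,\iota}_{Eis}$, $P\in\mathcal{P}^\iota_G$ and $\lambda\in\mathfrak{X}^\iota$. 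So the corollary reduces, through the linear map $f\mapsto f^{reg}_{L,w}$, to the effectivity of $e_{L,\iota}I^\dagger_{L,0}$ and $e_{L,\iota}I^{cl}_{L,0}$ as distributions on $\mathcal{H}_p(L)$.

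Assuming the Lefschetz number $q_{L,\iota}$ is non-zero, set $e_{L,\iota}:=q_{L,\iota}^{-1}$ (or $\mathrm{sign}(q_{L,\iota})$ when $\iota$ has order $2$, as in Remark \ref{ob3}). Proposition \ref{prop2} applied to $L$ then gives that $e_{L,\iota}I^{cl}_{L,0}(\iota\times g,\mu)$ is effective for every arithmetic regular dominant $\mu\in\Upsilon_L$, and Proposition \ref{effective} applied to $L$ extends the effectivity to $e_{L,\iota}I^\dagger_{L,0}(\iota\times g,\mu)$ over all of $\mathfrak{X}^\iota$.

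Pulling back along $f\mapsto f^{reg}_{L,w}$, identity (\ref{induce}) says that an irreducible $\iota$-invariant finite slope overconvergent $\sigma^L$ of $\mathcal{H}^L_p$ contributes to $e_{L,\iota}I^{\dagger}_{G,L,w}(\iota\times f,\lambda)$ precisely through the trace of the $\iota$-stable $G$-representation $I^G_{L,w}(\sigma^L)=\mathrm{ind}^{G(\mathbb{A}^p_f)}_{L(\mathbb{A}^p_f)}(\sigma^L_f)\otimes\theta_{\sigma^L,w}$. Decomposing $I^G_{L,w}(\sigma^L)$ as a $^{\iota}\mathcal{H}_p$-module into irreducible constituents $\tilde{\pi}_i$ (using Lemma \ref{LemmaBLS} to match up extensions consistently with the convention (\ref{convention})), the multiplicities $m_{e_{L,\iota}I^\dagger_{G,L,w},i}(\pi)$ become non-negative integer sums of the multiplicities $m_{e_{L,\iota}I^\dagger_{L,0},j}(\sigma^L)$, hence non-negative. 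The same computation works verbatim for $I^{cl}_{G,L,w}$.

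Finally, the dichotomy: the above proof of existence of $e_{L,\iota}$ breaks down exactly when $q_{L,\iota}=0$. But then the explicit Vogan--Zuckerman identity at the end of \S5.2.1 forces $I^{cl}_{L,0}(\iota\times g,\mu)=0$ for \emph{every} arithmetic regular dominant $\mu\in\Upsilon_L$, and consequently $I^{cl}_{G,L,w}(\iota\times f,\lambda)=0$ for the corresponding algebraic regular $\lambda\in\Upsilon$, which is the exceptional case in the statement. The main delicate point in the argument will be the last paragraph's matching of the $l$ extensions $\{\tilde{\sigma}^L_i\}$ against the extensions of the constituents of $I^G_{L,w}(\sigma^L)$: Lemma \ref{LemmaBLS}(b) guarantees the pairing is consistent up to a permutation and twist by characters of order $l$, and this is precisely what is needed to transport non-negativity of the $m_{J,i}$'s from the $L$-side to the $G$-side.
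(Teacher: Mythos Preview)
Your proposal is correct and follows exactly the paper's approach: the paper's proof is the single sentence ``It follows from the definition of $I^\dagger_{G,L,w}(\iota\times f, \lambda)$ and an exactly same argument for $L$ as above,'' and you have simply unpacked what that sentence means, namely reducing via the definition $I^{?}_{G,L,w}(\iota\times f,\lambda)=I^{?}_{L,0}(\iota\times f^{reg}_{L,w},w\cdot\lambda+2\rho_P)$ and then rerunning Propositions \ref{prop2} and \ref{effective} with $L$ in place of $G$. Your extra care with (\ref{induce}) and the decomposition of $I^G_{L,w}(\sigma^L)$ into irreducible $^{\iota}\mathcal{H}_p$-constituents makes explicit the step (implicit in Proposition \ref{prop3}) that transports effectivity from the $L$-side to the $G$-side, and your handling of the exceptional case $q_{L,\iota}=0$ is the correct reading of the ``unless'' clause.
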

\begin{proof}
It follows from the definition of $I^\dagger_{G,L,w}(\iota\times f, \lambda)$ and an exactly same argument for $L$ as above.
\end{proof}

\subsection{Multiplicities}
For $?={cl}$ or $\dagger$, write the Fredholm determinants associated to $e_{G,\iota}I^{?}_{G,0}(\iota\times f, \lambda)$ by $P^?_{G,0}(\iota\times f, \lambda, X)$, to $e_{L,\iota}I^?_{G,L,w}(\iota\times f, \lambda)$ by $P^?_{G,L,w}(\iota\times f, \lambda, X)$. Let $\pi$ be a finite slope overconvergent representation, write the multiplicities:
\begin{equation}
\bar{m}^{\iota,?}_{G,0}(\pi, \lambda):=\bar{m}_{e_{G,\iota}I^{?}_{G,0}(\lambda)}(\pi)\text{ , } {m}^{\iota,?}_{G,0}(\pi, \lambda):={m}_{e_{G,\iota}I^?_{G,0}(\lambda)}(\pi),
\end{equation}
\begin{equation}
\bar{m}^{\iota,?}_{G,L,w}(\pi, \lambda):=\bar{m}_{e_{L,\iota}I^?_{G,L,w}(\lambda)}(\pi)
\text{ , }
 {m}^{\iota,?}_{G,L,w}(\pi, \lambda):={m}_{e_{L,\iota}I^?_{G,L,w}(\lambda)}(\pi),
\end{equation}
For given $K^p$, if $\theta$ is an overconvergent Hecke eigensystem of level $K^p$, write its multiplicity in $V^{?, \lambda}_{G,0}(K^p)$  by $m_{G,0}^{?,\iota}(\theta, \lambda)$ . By Proposition \ref{prop3}, if $\theta$ or $\pi$ is not $\iota$-invariant, its multiplicities are $0$.

\vspace{1pc}
\begin{lem}\label{multiplicity}
Let $\lambda$ be an arithmetic regular weight and $\pi$ an $\iota$-invariant finite slope overconvergent representation, which is non-critical with respect to $\lambda^{alg}$, then
\begin{equation}\label{sum}
m_{G,0}^{\iota, cl}(\pi, \lambda)=m_{G,0}^{\iota, \dagger}(\pi, \lambda)\end{equation}
\end{lem}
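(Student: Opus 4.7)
The plan is to derive this identity from the classicity isomorphism (Proposition~\ref{classicity}) at the $G$-level, and then to propagate it through the inductive definition of $I^?_{G,0}$ by induction on the semisimple rank of $G$.

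For the first step, I would use the non-criticality of $\mu_\pi$ with respect to $\lambda^{alg}$ together with the $\iota$-invariance of $\mu_\pi$ to select an admissible $t \in T^{++}$ such that $h := v_p(\theta_\pi(u_t)) < v_p(N^\iota(\lambda,t))$; since $\theta_\pi^\iota = \theta_\pi$, the same inequality holds automatically for every $\iota$-conjugate $t^{\iota^i}$. Proposition~\ref{classicity} then yields a Hecke- and $\iota$-equivariant isomorphism
\[ H^*_{fs}(S_G(K^pI),\mathcal{D}_\lambda(L))^{\leq h}_\iota \;\cong\; H^*(S_G(K^pI_m),\mathbb{V}^\vee_{\lambda^{alg}}(L,\epsilon))^{\leq h}_\iota, \]
and taking twisted traces on both sides, combined with the spectral expansion of Proposition~\ref{induction1} applied to each side, yields equality of the $\pi$-multiplicities $m^{\iota,cl}_G(\pi,\lambda) = m^{\iota,\dagger}_G(\pi,\lambda)$ in the total distributions $I^?_G$.

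To pass from $I^?_G$ to $I^?_{G,0}$, I would induct on $\mathrm{rk}(G)$; the base case $\mathrm{rk}(G)=0$ is immediate since $I^?_{G,0}=I^?_G$ there. Unwinding
\[ I^?_{G,0}(\iota\times f,\lambda) = I^?_G(\iota\times f,\lambda) - \sum_{\substack{L\in\mathcal{L}_G^\iota\\L\neq G}}\sum_{w\in\mathcal{W}^{L,\iota}_{Eis}}(-1)^{l(w)+\dim\mathfrak{n}_L}\, I^?_{L,0}\bigl(\iota\times f^{reg}_{L,w},\,w\cdot\lambda+2\rho_P\bigr) \]
and invoking the identification (\ref{induce}) of $J_{\tilde\sigma^L_i}(f^{reg}_{L,w})$ with the twisted trace on the normalized parabolic induction $I^G_{L,w}(\sigma^L)$, one sees that the $\pi$-multiplicity in each term $I^?_{G,L,w}(\iota\times f,\lambda)$ is a sum of multiplicities $m^{\iota,?}_{L,0}(\sigma^L,\,w\cdot\lambda+2\rho_P)$ taken over $\iota$-invariant cuspidal $\sigma^L$ on $L$ whose normalized induction contains $\pi$. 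I expect the main obstacle to be verifying that each such $\sigma^L$ is itself non-critical with respect to the algebraic part of the shifted weight $w\cdot\lambda^{alg}+2\rho_P$, so that the induction hypothesis at $L$ actually applies; this should follow from tracking the slope transformation under parabolic induction using the explicit twisting factor $\epsilon_{\xi,w}(t)$ in the definition of $f^{reg}_{L,w}$ together with the normalization (\ref{split}) of $\xi$. Granting this, summing the induction-hypothesis equalities $m^{\iota,cl}_{L,0}(\sigma^L,\cdot)=m^{\iota,\dagger}_{L,0}(\sigma^L,\cdot)$ with the signs above and combining with the first-step equality at $G$ produces the desired identity $m^{\iota,cl}_{G,0}(\pi,\lambda)=m^{\iota,\dagger}_{G,0}(\pi,\lambda)$.
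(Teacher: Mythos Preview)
Your inductive unwinding has a real gap: the displayed recursive formula
\[ I^?_{G,0}(\iota\times f,\lambda) = I^?_G(\iota\times f,\lambda) - \sum_{L\neq G}\sum_{w\in\mathcal{W}^{L,\iota}_{Eis}} (-1)^{l(w)+\dim\mathfrak{n}_L}\, I^?_{L,0}(\iota\times f^{reg}_{L,w},\,w\cdot\lambda+2\rho_P) \]
is the \emph{definition} of $I^\dagger_{G,0}$, but it is \emph{not} an identity for $I^{cl}_{G,0}$. The classical cuspidal distribution is defined directly as the trace on cuspidal cohomology, and its relation to $I^{cl}_G$ is the twisted Franke formula (Theorem~\ref{cuspidalformula}), which carries a \emph{double} sum over $v\in\mathcal{W}^{L,\iota}_{Eis}$ and $w\in\mathcal{W}^{L,\iota}$ together with the twisting factors $\xi(t)^{\lambda-w^{-1}v*\lambda}$. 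These factors are divisible by $N^\iota(\lambda,t)$ only when $v\neq w$, so your recursive formula for $?=cl$ holds merely as a \emph{congruence} modulo $N^\iota(\lambda,t)$, not as an exact equality. Comparing $\pi$-multiplicities term by term therefore does not go through, and the non-criticality check on the Levi side that you flag as the main obstacle is not actually the crux of the problem.

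The paper instead starts from the already-established congruence $I^\dagger_{G,0}\equiv I^{cl}_{G,0}\pmod{N^\iota(\lambda,t)\,\mathrm{meas}(K^p)}$ of Proposition~\ref{prop3} (where the rank induction has already been absorbed), and extracts the single $\pi$ by a Jacobson density argument: choose $f\in\mathcal{H}_p(K^p)$ with $\pi(f)=\mathrm{id}_{\pi^{K^p}}$ and $\rho(f)=0$ for every other $\rho$ of slope $\leq v_p(N^\iota(\lambda,t))$ occurring in either distribution, and compare the Fredholm determinants $P^?_{G,0}(\iota\times f_0,\lambda,X)$ for $f_0=(1_{K^p}\otimes u_t)f$. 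By Lemma~\ref{lemma3} these agree modulo $N^\iota(\lambda,t)$, and after replacing $t$ by a sufficiently high power one forces the two polynomials $\prod_i\det(1-\tilde\pi_i^{K^p}(\iota\times 1_{K^p}\otimes u_t)X)^{m^{\iota,?}_{G,0,i}(\pi,\lambda)}$ to coincide exactly; comparing their degrees gives $m^{\iota,cl}_{G,0}(\pi,\lambda)=m^{\iota,\dagger}_{G,0}(\pi,\lambda)$.
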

\begin{proof}
Assume $\pi$ is of level $K^p$. Since $\pi$ is non-critical with respect to $\lambda$, there is $t\in T^{++}$ such that $v_p(N^{\iota}(\lambda,t))>v_p(\theta_\pi(u_t))$. if necessary, we can replace $t$ by $t^N$ for some positive integer $N$ to make $v_p(N^{\iota}(\lambda,t))-v_p(\theta_\pi(u_t))$ arbitrarily large. Now consider the finite set of $\iota$-invariant finite slope overconvergent representations:
\begin{equation*}
\Sigma^{t}_{K^p}=\set{\rho \mid \rho^{K^p}\neq0, v_p(\rho(u_t))\leq v_p(N^\iota(\lambda,t)),  \bar{m}^{\iota,\dagger}_{G,0}(\rho, \lambda)\text{ or } \bar{m}^{\iota,cl}_{G,0}(\rho, \lambda)\neq0}
\end{equation*}
Since $\Sigma^{t}_{K^p}$ is finite, by Jacobson's Lemma, there is $f\in\mathcal{H}_p(K^p)$ such that $\pi(f)=id_{\pi^{K^p}}$ and $\rho(f)=0$ for any $\rho\in\Sigma^{t}_{K^p}$ that is not isomorphic to $\pi$. 

Consider $f_0=(1_{K^p}\otimes u_t)f$, we have for $?=\dagger$ or $cl$ 
\begin{equation}
P^?_{G,0}(\iota\times f_0, \lambda, X)=\prod_{i=1}^l\det{(1-\tilde\pi_i(\iota\times1_{K^p}\otimes u_t)X)}^{{m}^{\iota,?}_{G,0,i}(\pi, \lambda)}S^?_{G,0}(X),
\end{equation}
where $S^?_{G,0}(X)$ is the product of the determinants of all representations whose slopes are strictly greater than $v_p(N^\iota(\lambda,t))$. Noticing that $\Sigma^{t^M}_{K^p}=\Sigma^{t}_{K^p}$ for any positive integer $M$, we can therefore choose $f$ independently for any $M$. So if necessary, we can replace $t$ by $t^M$ such that $f_0=(1_{K^p}\otimes u_t)f$ is $\mathbb{Z}_p$-valued. Then by Proposition \ref{prop3} and Lemma \ref{lemma3}, we have 
\begin{equation}
\begin{aligned}
&\prod_{i=1}^l\det{(1-\tilde\pi^{K^p}_i(\iota\times1_{K^p}\otimes u_t)X)}^{{m}^{\iota,\dagger}_{G,0,i}(\pi, \lambda)}\\
\equiv&\prod_{i=1}^l\det{(1-\tilde\pi^{K^p}_i(\iota\times1_{K^p}\otimes u_t)X)}^{{m}^{\iota,cl}_{G,0,i}(\pi, \lambda)} \ \ \ mod\ N^\iota(\lambda,t)
\end{aligned}
\end{equation}
and they share the same zeros (order counted) mod $N^\iota(\lambda,t)$. If necessary, replace $t$ by $t^N$ as we observed at the beginning of the proof, we can assume that $v_p(N^\iota(\lambda,t))$ is strictly greater than the $p$-adic valuation of all the coefficients of $\prod_{i=0}^l\det{(1-\tilde\pi^{K^p}_i(\iota\times1_{K^p}\otimes u_t)X)}^{{m}^{\iota,?}_{G,0,i}(\pi, \lambda)}$, so we have 
\begin{equation}\label{formula6}
\begin{aligned}
&\prod_{i=1}^l\det{(1-\tilde\pi^{K^p}_i(\iota\times1_{K^p}\otimes u_t)X)}^{{m}^{\iota,\dagger}_{G,0,i}(\pi, \lambda)}\\
=&\prod_{i=1}^l\det{(1-\tilde\pi^{K^p}_i(\iota\times1_{K^p}\otimes u_t)X)}^{{m}^{\iota,cl}_{G,0,i}(\pi, \lambda)}.
\end{aligned}
\end{equation}
In particular, they have the same degree, which are $\dim(\pi)m_{G,0}^{\iota, \dagger}(\pi, \lambda)$ and $\dim(\pi)m_{G,0}^{\iota, cl}(\pi, \lambda)$ respectively. So $m_{G,0}^{\iota, cl}(\pi, \lambda)=m_{G,0}^{\iota, \dagger}(\pi, \lambda)$.

\end{proof}

Noting that $\tilde\pi^{K^p}_i(\iota\times1_{K^p}\otimes u_t)=\tilde\pi^{K^p}_i(\iota)\theta_{\pi}(u_t)$ and $\pi^{K^p}$ is finite dimensional, we can compute (\ref{formula6}) more explicitly. Fix $\xi$ a primitive $l$-th roots of unity, let $k_j^{(i)}$ be the multiplicity of $\xi^j$ as an eigenvalue of $\iota$ in $\tilde\pi_i^{K^p}$. Then for any $i=1,\cdots, l$,
\begin{equation}
\sum_{j=i}^lk_j^{(i)}=\dim\pi^{K^p}
\end{equation}
and 
\begin{equation}
\begin{aligned}
&\prod_{i=1}^l\det{(1-\tilde\pi^{K^p}_i(\iota\times1_{K^p}\otimes u_t)X)}^{{m}^{\iota,?}_{G,0,i}(\pi, \lambda)}\\
=&\prod_{i=1}^l(\prod_{j=1}^l(1-\xi^j\theta_\pi(u_t)X)^{k_j^{(i)}})^{{m}^{\iota,?}_{G,0,i}(\pi, \lambda)}\\
=&\prod_{j=1}^l(1-\xi^j\theta_\pi(u_t)X)^{\sum_i{k_j^{(i)}}{{m}^{\iota,?}_{G,0,i}(\pi, \lambda)}}.
\end{aligned}
\end{equation}
This gives us varies identities between the multiplicities. In particular, compare the coefficients of degree $1$, we have
\begin{equation}\label{difference}
\sum_{i=1}^l\xi^i{m}^{\iota,\dagger}_{G,0,i}(\pi, \lambda)=\sum_{i=1}^l\xi^i{m}^{\iota,cl}_{G,0,i}(\pi, \lambda).
\end{equation}
If $l=2$, $(\ref{sum})$ and $(\ref{difference})$ imply that 
\begin{equation}
\bar{m}_{G,0}^{\iota, cl}(\pi, \lambda)=\bar{m}_{G,0}^{\iota, \dagger}(\pi, \lambda).
\end{equation}

\vspace{1pc}
Now assume that $\iota$ is of order $2$, everything is defined as in Remark \ref{ob3}. Let $\lambda=\lambda^{alg}\epsilon$ be an arithmetic weight. If $\pi$ is a $\iota$-invariant finite slope cuspidal representation, its $\iota$-twisted Euler-Poincare characteristic $m_{EP}^{\iota}(\pi,\lambda)$ is defined by:
\begin{gather}\label{epdef}
\begin{aligned}
\sum_q(-1)^q tr(\iota\mid Hom_{\mathcal{H}_p}(\pi, H^q(\tilde S_G,\mathbb{V}_{\lambda^{alg}}^\vee(\mathbb{C},\epsilon)))).
\end{aligned}
\end{gather}
If $\pi$ is of level $K^p$, then $m_{EP}^\iota(\pi,\lambda)$ equal:
\begin{gather}
\begin{aligned}
 \sum_q(-1)^q tr(\iota\mid Hom_{\mathcal{H}_p(K^p)}(\pi^{K^p}, H^q(S_G(K^pI),\mathbb{V}_{\lambda^{alg}}^\vee(\mathbb{C},\epsilon)))).
\end{aligned}
\end{gather}
Then a computation as in \S5.2 shows that
\begin{equation}\label{ep}
m_{EP}^\iota(\pi,\lambda)=m^{\iota,cl}_{G,0}(\pi,\lambda).
\end{equation}

\vspace{1pc}
We close this section by considering the multiplicities of Hecke eigensystems:
\begin{cor}\label{arithmetic}
Let $\lambda$ be an arithmetic regular weight and $\theta$ a finite slope $\iota$-invariant overconvergent Hecke eigensysem, which is non-critical with respect to $\lambda^{alg}$, then
\begin{equation}
m_{G,0}^{\iota, cl}(\theta, \lambda)=m_{G,0}^{\iota, \dagger}(\theta, \lambda)\end{equation}
\end{cor}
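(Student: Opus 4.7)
The plan is to reduce this statement to Lemma \ref{multiplicity} by decomposing the $\theta$-multiplicity in $V^{?,\lambda}_{G,0}(K^p)$ as a sum of representation multiplicities. Since $R_{\mathcal{S},p}$ lies in the center of $\mathcal{H}_p(K^p)$, the defining decomposition
$$V^{?,\lambda}_{G,0}(K^p)\;=\;\bigoplus_{\pi}\bigoplus_{i=1}^{l}\bigl(V_{\tilde\pi_i}^{K^p}\bigr)^{m^{\iota,?}_{G,0,i}(\pi,\lambda)}$$
is $R_{\mathcal{S},p}$-equivariant, and each summand attached to a pair $(\pi,i)$ carries the central character $\theta_\pi$. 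Extracting the $\theta$-isotypic component yields the identity
$$m^{\iota,?}_{G,0}(\theta,\lambda)\;=\;\sum_{\pi:\,\theta_\pi=\theta}\dim(\pi^{K^p})\cdot m^{\iota,?}_{G,0}(\pi,\lambda),$$
and this sum is finite by the finiteness built into the definition of a twisted finite slope character distribution, since every contributing $\pi$ shares the fixed slope $\mu_\theta$ and level $K^p$.

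The hypothesis that $\theta$ is non-critical with respect to $\lambda^{alg}$ is a condition on the slope $\mu_\theta$ alone; any $\pi$ appearing above satisfies $\mu_\pi=\mu_{\theta_\pi}=\mu_\theta$, and so is itself non-critical with respect to $\lambda^{alg}$. Applying Lemma \ref{multiplicity} to each such $\pi$ gives $m^{\iota,cl}_{G,0}(\pi,\lambda)=m^{\iota,\dagger}_{G,0}(\pi,\lambda)$, and summing these equalities weighted by $\dim(\pi^{K^p})$ delivers the desired identity for $\theta$.

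An alternative route, which more closely parallels the Fredholm-determinant argument in the proof of Lemma \ref{multiplicity}, would work directly at the level of $R_{\mathcal{S},p}$: commutativity of $R_{\mathcal{S},p}$ together with Jacobson density produces, from the finite set of $\iota$-invariant overconvergent eigensystems of slope $\leq v_p(N^\iota(\lambda,t))$ and level $K^p$ with nonzero multiplicity, an element $f\in R_{\mathcal{S},p}$ separating $\theta$ from the others. Setting $f_0=(1_{K^p}\otimes u_t)f$ and invoking the congruence $P^\dagger_{G,0}(\iota\times f_0,\lambda,X)\equiv P^{cl}_{G,0}(\iota\times f_0,\lambda,X)$ modulo $N^\iota(\lambda,t)$, one then replaces $t$ by a sufficiently large power to upgrade the congruence to equality on the $\theta$-factor. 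The only delicate point along either route is the bookkeeping that relates eigensystem multiplicity to representation multiplicity, which is an essentially formal consequence of the identification above.
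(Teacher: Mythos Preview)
Your proposal is correct and takes essentially the same approach as the paper: the paper's proof consists of exactly the decomposition formula $m_{G,0}^{\iota, ?}(\theta, \lambda)=\sum_{\sigma|_{R_{\mathcal{S},p}}=\theta}m_{G,0}^{\iota, ?}(\sigma, \lambda)\dim{\sigma}$ followed by an appeal to Lemma~\ref{multiplicity}, which is precisely your first route (your $\pi$ and $\dim(\pi^{K^p})$ correspond to the paper's $\sigma$ and $\dim\sigma$ via Corollary~\ref{cor1}). Your additional remark that non-criticality passes from $\theta$ to each contributing $\pi$ because it is purely a slope condition is a helpful clarification that the paper leaves implicit.
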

\begin{proof}
This is a direct consequence of Lemma \ref{multiplicity} and the formula that 
\begin{equation}\label{mult}
m_{G,0}^{\iota, ?}(\theta, \lambda)=\sum_{\sigma|_{R_{\mathcal{S},p}}=\theta}m_{G,0}^{\iota, ?}(\sigma, \lambda)\dim{\sigma}.
\end{equation}
\end{proof}

\vspace{3pc}
\section{TWISTED EIGENVARIETIES}
In this section, assuming that $e_G:=e_{G,\iota}\neq 0$, we construct eigenvarieties which parametrize $\iota$-invariant finite slope overconvergent Hecke eigensystems of $G$. We call such an eigenvariety a $\iota$-twisted eigenvariety of $G$.

\subsection{Twisted spectral varieties}
Consider the effective distribution $e_GI^\dagger_{G.0}(\iota\times f,\lambda)$. For fixed $K^p$ and $f\in\mathcal{H}_p(K^p)$, write $V^{\dagger,\lambda}_{G,0}(K^p)$ and $P^\dagger_{G.0}(\iota\times f,\lambda, X)$ as last section.

\begin{prop}[twisted spectral varieties]
For any $f=f^p\otimes u_{t}\in \mathcal{H}'_p(K^p)$ with $t\in T^{++}$, there is a rigid analytic space $\mathfrak{S}^\iota(f)\subset \mathfrak{X}^\iota\times\mathbb{A}^1_{rig}$, such that $(\lambda, \alpha)\in\mathfrak{S}^\iota(f)(\overline{\mathbb{Q}}_p)$ if and only if $\alpha^{-1}$ is an eigenvalue of $\iota\times f$ on $V^{\dagger,\lambda}_{G,0}(K^p)$.
\end{prop}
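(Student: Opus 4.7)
The strategy is to realize $\mathfrak{S}^\iota(f)$ as the Fredholm hypersurface in $\mathfrak{X}^\iota\times\mathbb{A}^1_{rig}$ cut out by a two-variable characteristic power series built from $\iota\times f$. The natural candidate is the Fredholm determinant of the effective distribution $e_G I^\dagger_{G,0}(\iota\times f,\lambda)$, namely
\[
P^\dagger_{G,0}(\iota\times f,\lambda,X)=\prod_\pi\prod_i\det\bigl(1-X\,\tilde\pi_i(\iota\times f)\bigr)^{m^{\iota,\dagger}_{G,0,i}(\pi,\lambda)}.
\]
By Proposition \ref{effective} this series is entire in $X$ for every $\lambda$, and by construction it is precisely the characteristic power series of $\iota\times f$ acting on $V^{\dagger,\lambda}_{G,0}(K^p):=V_{e_G I^\dagger_{G,0}(\lambda)}(K^p)$, so the eigenvalue description of the fibres is automatic once the product is shown to define a rigid analytic hypersurface.

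The core work is promoting fibrewise entirety in $X$ to the structure of a Fredholm series over $\mathfrak{X}^\iota$. By the Newton identities the coefficient of $X^n$ in $P^\dagger_{G,0}(\iota\times f,\lambda,X)$ is a universal polynomial in the twisted trace values $I^\dagger_{G,0}(\iota\times(f^p)^{(k)}\otimes u_{t^k},\lambda)$, and Proposition \ref{analytic} places every such trace in $\Lambda_{\mathfrak{X}^\iota,\mathbb{Q}_p}$. Hence the coefficients $a_n(\lambda)$ are analytic on $\mathfrak{X}^\iota$ and uniformly bounded by $1$ on each affinoid $\mathfrak{U}$. To obtain uniform decay $\sup_{\lambda\in\mathfrak{U}}|a_n(\lambda)|\,r^n\to 0$ for every $r>0$, I would identify $P^\dagger_{G,0}(\iota\times f,\lambda,X)$ locally with a quotient of honest Fredholm determinants coming from the compact action of $\iota\times f$ on the complex $R\Gamma^*(K^pI,\mathcal{D}_\mathfrak{U})$ (Propositions \ref{prop}(b) and \ref{prop322}): the inductive definition of $I^\dagger_{G,0}$ as $I^\dagger_G$ minus Eisenstein contributions from proper $\iota$-stable Levi subgroups translates, via Proposition \ref{induction1}, into an identity $P^\dagger_{G,0}=P^\dagger_G/\prod_{L\subsetneq G}P^\dagger_{G,L}$ of characteristic power series, and each factor on the right is a Fredholm series over $\mathfrak{U}$ by induction on the unipotent rank. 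With both numerator and denominator of Fredholm type and the quotient known to be entire fibrewise by Proposition \ref{effective}, one concludes that $P^\dagger_{G,0}(\iota\times f,-,-)$ is itself a Fredholm series over $\mathfrak{X}^\iota$.

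Once this is established, $\mathfrak{S}^\iota(f):=Z(P^\dagger_{G,0}(\iota\times f,-,-))\subset\mathfrak{X}^\iota\times\mathbb{A}^1_{rig}$ is a rigid analytic subspace by the standard Coleman--Serre--Buzzard construction of Fredholm hypersurfaces, and the point-theoretic description follows from the factorization of the series together with Corollary \ref{cor1}, which matches multiplicities in $J$ with multiplicities in $J'_{K^p}$. The main obstacle is precisely the inductive quotient step: the abstract definition makes fibrewise entirety automatic, but to guarantee that the quotient of Fredholm series over an affinoid stays Fredholm, one must verify that the cancellations between Eisenstein contributions from distinct proper Levis happen \emph{analytically in $\lambda$}, not merely fibrewise. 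This in turn rests on the induction-on-rank argument used to prove Proposition \ref{effective}, now carried out in parametrized form over each affinoid $\mathfrak{U}\subset\mathfrak{X}^\iota$.
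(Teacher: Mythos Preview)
Your approach is correct and matches the paper's: $\mathfrak{S}^\iota(f)$ is defined as the Fredholm hypersurface cut out by $P^\dagger_{G,0}(\iota\times f,\lambda,X)$ in $\mathfrak{X}^\iota\times\mathbb{A}^1_{rig}$. The paper's proof is a single sentence (citing \cite[Proposition 5.1.6]{Urban}) because the Fredholm-series property over $\mathfrak{X}^\iota$ that you carefully justify was already established inside the proof of Proposition~\ref{effective}, where the local quotient representation $P^\dagger_{G,0}=T/B$ was shown to satisfy $B=1$; your inductive-quotient discussion thus re-derives what is already available at this point.
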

\begin{proof}
This is same to \cite[Proposition 5.1.6]{Urban}. $\mathfrak{S}^\iota(f)$ is simply defind as the Fredholm hypersurface cut out by $P^\dagger_{G.0}(\iota\times f,\lambda, X)$ in $\mathfrak{X}^\iota\times\mathbb{A}^1_{rig}$.
\end{proof}

\subsection{Full eigenvariety}
For later use, we summarize some results of \cite{Xiang}. Given $K^p$, let $\widetilde{R}_{\mathcal{S},p}$ be the $p$-adic completion of $R_{\mathcal{S},p}[u^{-1}_t, t\in T^+]$. Define $\mathfrak{R}_{\mathcal{S},p}$ as the $p$-adic analytic space, such that for any $L/\mathbb{Q}_p$ in $\overline{\mathbb{Q}}_p$, 
\begin{equation}
\mathfrak{R}_{\mathcal{S},p}(L)=Hom_{ct\ alg}(\widetilde{R}_{\mathcal{S},p}, L)
\end{equation}
By construction, $\theta\in\mathfrak{R}_{\mathcal{S},p}(L)$ is of finite slope. $\mathfrak{R}_{\mathcal{S},p}(L)$ has the canonical $p$-adic topology induced by the metric $|\theta-\theta'|=:\sup_{f\in R_{\mathcal{S},p}}|\theta(f)-\theta'(f)|_p$.

\vspace{1pc}
Set $\mathfrak{Y}=\mathfrak{X}\times \mathfrak{R}_{\mathcal{S},p}$, its $L$-points $\mathfrak{Y}(L)$ are pairs $(\lambda, \theta)$. The full eigenvariety is a rigid analytic space $\mathfrak{E}:=\mathfrak{E}_{K^p}$ over $\mathbb{Q}_p$, which is a subspace of $\mathfrak{Y}$ as a $p$-adic analytic space. $\mathfrak{E}$ is equipped with a projection onto $\mathfrak{X}$, such that $(\lambda, \theta)\in\mathfrak{E}_{K^p}(L)$ if and only if $H_{fs}^*(S_G(K^pI), \mathcal{D}_\lambda(L))[\theta]\neq 0$, and, for any $f\in R_{\mathcal{S},p}$, $R_f(\theta):=\theta(f)^{-1}$ is an eigenvalue of $f$ acting on $H^*(S_G(K^pI),\mathcal{D}_\lambda(L))$. Indeed, for any $f\in {R}_{\mathcal{S},p}$, $(\lambda, \theta)\mapsto(\lambda, R_f(\theta))$ gives a projection from $\mathfrak{E}_{K^p}$ onto a subvariety $\mathfrak{S}_f$ of $\mathfrak{X}\times \mathbb{A}^1_{rig}$, and $\mathfrak{S}_f$ is a piece of the spectral variety which parameterizes Hecke eigenvalues of $f$. For detail, refer \cite[\S6, 7, 8]{Xiang}.

\subsection{a big twisted eigenvariety}
We constructed our first twisted eigenvariety $\mathfrak{G}^\iota$ in this section using the method and notation in \cite{Xiang}. Most results are parallel to \cite{Xiang}, so we omit the proofs. $\mathfrak{G}^\iota$ gives us a big space so that we can construct other twisted eigenvarities inside it.

\subsubsection{Spectral varieties}
\noindent
Let $\mathfrak{U}$ be an open affinoid subdomain of $\mathfrak{X}^\iota$, consider the action of $^\iota R_{\mathcal{S}, p}$ on $R\Gamma(K^pI,\mathcal{D}_\mathfrak{U}):=\oplus R\Gamma^q(K^pI,\mathcal{D}_\mathfrak{U})$ as in \S3.1. By Proposition \ref{prop} and the discussion in \S3.1.3, for any $f\in R_{\mathcal{S},p}$ admissible, there is a power series $P_{\mathfrak{U}}(f,\lambda,X)\in\mathcal{O}(\mathfrak{U})\{\{X\}\}$,  such that for any $\lambda\in\mathfrak{U}$, the specialization of $P_{\mathfrak{U}}(f,\lambda,X)$ at $\lambda$ is the Fredholm determinant of $f$ acting on $R\Gamma(K^pI,\mathcal{D}_\lambda)$.

\begin{lem}[Urban]\label{urbanlem}
Let $j: N\hookrightarrow M$ be a continuous injection of $L$-Banach spaces. Let $u_N$ and $u_M$ be respectively compact endomorphisms of $N$ and $M$ such that $j\circ u_N=u_M\circ j$. Then $M/j(N)$ has slope decomposition with respect to $u_{M/N}=u_M$ $($ mod $j(N)$ $)$, and
$$\det(1-Xu_M)=\det(1-Xu_N)\det(1-Xu_{M/N})$$
\end{lem}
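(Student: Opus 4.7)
The plan is to reduce the statement to finite dimensional linear algebra on the slope-$\leq h$ pieces and then pass to the limit. Fix a rational $h \geq 0$.

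First, invoke the Serre--Coleman slope decomposition (the Banach analogue of Lemma 3.2.1) applied to the compact operators $u_N$ and $u_M$. This produces $u$-stable decompositions $N = N^{\leq h} \oplus N^{>h}$ and $M = M^{\leq h} \oplus M^{>h}$ with finite dimensional slope-$\leq h$ pieces, and the projectors onto these pieces are given by a common power series $\phi_h$ in the operator, depending only on the Fredholm factorizations cutting out slope $\leq h$. The intertwining $j \circ u_N = u_M \circ j$ propagates to $j \circ \phi_h(u_N) = \phi_h(u_M) \circ j$, whence $j(N^{\leq h}) \subseteq M^{\leq h}$ and $j(N^{>h}) \subseteq M^{>h}$; moreover $j(N^{\leq h})$ is automatically closed in $M$, being finite dimensional.

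Since $\phi_h(u_M)$ preserves $j(N)$, it descends to a bounded projector $\bar{\phi}_h$ on $M/j(N)$ commuting with $u_{M/N}$, yielding a splitting
\[ M/j(N) = M^{\leq h}/j(N^{\leq h}) \;\oplus\; M^{>h}/j(N^{>h}), \]
in which the first summand is finite dimensional with all eigenvalues of $u_{M/N}$ of slope $\leq h$ while the second has no such eigenvalues. Uniqueness of the slope decomposition then identifies this as the $\leq h$-slope decomposition of $u_{M/N}$, which proves the first assertion. On the slope-$\leq h$ pieces one has the short exact sequence of finite dimensional $u$-equivariant $L$-spaces
\[ 0 \to N^{\leq h} \to M^{\leq h} \to (M/j(N))^{\leq h} \to 0, \]
from which multiplicativity of characteristic polynomials gives
\[ \det(1 - X u_M \mid M^{\leq h}) = \det(1 - X u_N \mid N^{\leq h}) \cdot \det(1 - X u_{M/N} \mid (M/j(N))^{\leq h}). \]

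Because the full Fredholm determinant of a compact operator factors canonically as its slope-$\leq h$ piece times an entire series having no reciprocal zeros of slope $\leq h$, the identity of slope-$\leq h$ factors above pins down the slope-$\leq h$ part of the desired Fredholm identity. Letting $h \to \infty$ then promotes the agreement to the full Fredholm determinants, completing the proof. The principal obstacle is the second step: checking that $(M/j(N))^{\leq h}$ and $(M/j(N))^{>h}$ really constitute a genuine slope decomposition of $u_{M/N}$. The delicate point is that $j(N)$ need not be closed in $M$, so $M/j(N)$ is not \emph{a priori} Banach; however, since $j(N^{\leq h})$ is automatically closed and the decomposition is implemented by the concrete bounded operator $\bar{\phi}_h$, the whole construction descends cleanly to the quotient and the spectral behavior on each summand can be checked directly.
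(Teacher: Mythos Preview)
The paper does not prove this lemma at all: immediately after the statement it simply records ``This lemma is \cite[Proposition 2.3.9]{Urban}.'' So there is no in-paper argument to compare against, and your proposal is supplying a proof where the paper only gives a citation.

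Your overall strategy---reduce to the finite-dimensional slope-$\leq h$ pieces, use the short exact sequence there, and pass to the limit---is the right one and is essentially how Urban's argument runs. One step, however, is under-justified as written. You assert that ``the projectors onto these pieces are given by a common power series $\phi_h$ in the operator.'' A priori this is false: the Riesz projector $E_Q$ produced by the Serre--Coleman/Buzzard machinery depends on the factorization $P=QS$ of the Fredholm series of the \emph{specific} operator, and the slope-$\leq h$ factors $Q_N$ and $Q_M$ of $\det(1-Xu_N)$ and $\det(1-Xu_M)$ need not coincide. So ``$j\circ\phi_h(u_N)=\phi_h(u_M)\circ j$'' does not follow for free.

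There are two clean fixes. Either take $\phi_h:=E_{Q_M}$, observe that $E_{Q_M}(u_N)$ converges (entire series in a compact operator) and is idempotent because $j$ is injective and $E_{Q_M}(u_M)$ is, and then check directly that its image equals $N^{\leq h}$ using the finite-dimensional characterization of the slope-$\leq h$ part as the sum of generalized eigenspaces of slope $\leq h$; this recovers exactly the ``common $\phi_h$'' you want. Or, more simply, invoke the lemma already quoted in this paper inside the proof of Proposition~\ref{prop322} (Urban's Lemma 2.3.2): any continuous map intertwining two compact operators respects their slope decompositions. That lemma gives $j(N^{\leq h})\subseteq M^{\leq h}$ and $j(N^{>h})\subseteq M^{>h}$ immediately, after which the rest of your argument goes through unchanged.
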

This lemma is \cite[Proposition 2.3.9]{Urban}. Apply lemma \ref{urbanlem} to the situation $M=N=R\Gamma(K,\mathcal{D}_\mathfrak{U})$ and $j=\iota*$, we have
\begin{equation}
P_{\mathfrak{U}}(f,\lambda,X)=P_{\mathfrak{U}}(f^\iota,\lambda,X)
\end{equation}
Let $P_{\mathfrak{U}}(f,\lambda,X)=Q_{\mathfrak{U}}(X)S_{\mathfrak{U}}(X)$ be a polynomial decomposition as in Lemma \ref{slopedecomp}, and  
\begin{equation}
R\Gamma(K,\mathcal{D}_\mathfrak{U})=N_f(Q_\mathfrak{U})\oplus F_f(Q_\mathfrak{U})
\end{equation}
the corresponding $\mathcal{O}(\mathfrak{U})$-module decomposition. Apply Lemma \ref{urbanlem} to the situation $N=N_f(Q_\mathfrak{U})$, $M=R\Gamma(K,\mathcal{D}_\mathfrak{U})$, $j=\iota*$, $u_N=f$ and $u_M=f^\iota$, we have
\begin{equation}
N_{f^\iota}(Q_\mathfrak{U})=\iota*(N_f(Q_\mathfrak{U}))
\end{equation}
Define
\begin{equation}
N_{f,\iota}(Q_\mathfrak{U})=\bigcap_{i=1}^l N_{f^{\iota^i}}(Q_\mathfrak{U})
\end{equation}
Now $^\iota\mathcal{H}_p(K^p)$ acts on $N_{f,\iota}(Q)$.


\vspace{1pc}
Recall that, in \cite[Proposition 6.4]{Xiang}, we defined a weight space $\mathfrak{W}\subset\mathfrak{X}$, such that for $\lambda\in\mathfrak{X}$, $\lambda\in\mathfrak{W}$ if and only if $H_{fs}(S_G(K),\mathcal{D}_\lambda)\neq 0$. Similarly, define
\begin{equation}
\mathfrak{W}_{Q, \mathfrak{U}}^\iota:= supp_{\mathcal{O}(\mathfrak{U})}\widetilde{H(N_{f,\iota}^*(Q_\mathfrak{U}))}\subset\mathfrak{U}
\end{equation}
and $\mathfrak{W}^\iota$ the subspace of $\mathfrak{X}^\iota$ obtained by gluing $\mathfrak{W}_{Q,\mathfrak{U}}^\iota$ for all $\mathfrak{U}$ and $Q_\mathfrak{U}$. Then $\lambda\in\mathfrak{W}_Q^\iota(f)(\overline{\mathbb{Q}}_p)$ if and only if $H(N_{f,\iota}^*(Q))\neq 0$. As a direct consequence of the fact that
\begin{equation}
H_{fs}(S_G(K),\mathcal{D}_\lambda)=\varinjlim_{h}H(S_G(K),\mathcal{D}_\lambda)^{\leq h}_f=\varinjlim_{h}\bigcap_iH(S_G(K),\mathcal{D}_\lambda)^{\leq h}_{f^{\iota^i}},
\end{equation}
we have: 
\begin{prop}
\begin{equation}
\mathfrak{W}^\iota=\mathfrak{X}^\iota\cap\mathfrak{W}
\end{equation}
\end{prop}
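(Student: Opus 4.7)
The plan is to prove the two inclusions separately, using that each local piece $\mathfrak{W}^\iota_{Q,\mathfrak{U}}$ is, by construction, controlled by the cohomology of $N^*_{f,\iota}(Q_\mathfrak{U})$ and its specialization to a point $\lambda\in\mathfrak{U}$.

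For the inclusion $\mathfrak{W}^\iota\subseteq\mathfrak{X}^\iota\cap\mathfrak{W}$, I would take $\lambda\in\mathfrak{W}^\iota$; by definition of the gluing, $\lambda$ lies in some $\mathfrak{W}^\iota_{Q,\mathfrak{U}}$ for an open affinoid $\mathfrak{U}\subset\mathfrak{X}^\iota$ and a polynomial decomposition $P_{\mathfrak{U}}(f,\lambda,X)=Q_{\mathfrak{U}}(X)S_{\mathfrak{U}}(X)$ coming from an admissible $f$. The support condition on $\widetilde{H(N^*_{f,\iota}(Q_\mathfrak{U}))}$ implies the specialization $N^*_{f,\iota}(Q_\mathfrak{U})\otimes_\lambda\overline{\mathbb{Q}}_p$ has nontrivial cohomology. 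Since by construction $N_{f,\iota}(Q_\mathfrak{U})\subseteq N_f(Q_\mathfrak{U})$ and the specialization of the latter at $\lambda$ sits inside $R\Gamma^*(K^pI,\mathcal{D}_\lambda)^{\leq h}_f$ for the $h$ cut out by $Q$, this forces $H_{fs}^*(S_G(K^pI),\mathcal{D}_\lambda)\neq 0$, hence $\lambda\in\mathfrak{W}$. Together with $\lambda\in\mathfrak{U}\subset\mathfrak{X}^\iota$ this gives the claim.

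For the reverse inclusion $\mathfrak{X}^\iota\cap\mathfrak{W}\subseteq\mathfrak{W}^\iota$, take $\lambda\in\mathfrak{X}^\iota\cap\mathfrak{W}$. Then $H_{fs}^*(S_G(K^pI),\mathcal{D}_\lambda)\neq 0$, and by the displayed identity in the statement, there exists $h\in\mathbb{Q}_{\geq 0}$ and an admissible $f\in R_{\mathcal{S},p}$ such that
\begin{equation*}
\bigcap_{i=1}^l H^*(S_G(K^pI),\mathcal{D}_\lambda)^{\leq h}_{f^{\iota^i}}\neq 0.
\end{equation*}
Choose an open affinoid $\mathfrak{U}\subset\mathfrak{X}^\iota$ containing $\lambda$ and a polynomial $Q_\mathfrak{U}$ isolating the slopes $\leq h$ of $f$ over $\mathfrak{U}$, which is possible after shrinking $\mathfrak{U}$ by Lemma 3.2.1. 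By the commutativity of specialization with $N_f(Q_\mathfrak{U})$ (Proposition 2.3.6 combined with Lemma 3.2.1), the specialization of $N_{f^{\iota^i}}(Q_\mathfrak{U})$ at $\lambda$ equals $H^*(S_G(K^pI),\mathcal{D}_\lambda)^{\leq h}_{f^{\iota^i}}$; intersecting over $i$ shows that the specialization of $N_{f,\iota}(Q_\mathfrak{U})$ at $\lambda$ is nonzero. Therefore $\lambda\in\mathrm{supp}_{\mathcal{O}(\mathfrak{U})}\widetilde{H(N^*_{f,\iota}(Q_\mathfrak{U}))}=\mathfrak{W}^\iota_{Q,\mathfrak{U}}\subseteq\mathfrak{W}^\iota$.

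The main technical point, and the only place one must be careful, is the compatibility of the finite intersection $\bigcap_i$ with specialization, since each $N_{f^{\iota^i}}(Q_\mathfrak{U})$ is a closed $\mathcal{O}(\mathfrak{U})$-submodule of the compact Frechet module $R\Gamma^*(K^pI,\mathcal{D}_\mathfrak{U})$, and one needs the intersection to remain well-behaved under $\otimes_\lambda\overline{\mathbb{Q}}_p$. This is where one invokes the fact (coming from the slope decomposition in Lemma 3.2.1 and the projector formula $p_n=\phi(f_n)$) that each $N_{f^{\iota^i}}(Q_\mathfrak{U})$ is a direct summand and hence flat over $\mathcal{O}(\mathfrak{U})$; the intersection of a finite family of direct summands is again a direct summand, so its formation commutes with specialization.
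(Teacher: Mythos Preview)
Your overall approach is sound and lands on the same identity the paper uses, but the paper handles both inclusions symmetrically in one stroke: since $\mathfrak{W}$ is characterized (over $\mathfrak{X}$) by nonvanishing of $H_{fs}(S_G(K),\mathcal{D}_\lambda)=\varinjlim_h H(S_G(K),\mathcal{D}_\lambda)^{\leq h}_f$ and $\mathfrak{W}^\iota$ (over $\mathfrak{X}^\iota$) by nonvanishing of $\varinjlim_h\bigcap_i H(S_G(K),\mathcal{D}_\lambda)^{\leq h}_{f^{\iota^i}}$, the displayed equality of these two limits gives both inclusions at once. You invoke this identity only for the inclusion $\mathfrak{X}^\iota\cap\mathfrak{W}\subseteq\mathfrak{W}^\iota$; for the other inclusion you instead argue via $N_{f,\iota}(Q_\mathfrak{U})\subseteq N_f(Q_\mathfrak{U})$, which is more work than necessary and, as written, has a gap: nonzero cohomology of a subcomplex does not in general force nonzero cohomology of the ambient complex. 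What rescues it here is precisely that $N_{f,\iota}(Q_\mathfrak{U})$ is a \emph{direct summand} of $R\Gamma^*$, so its cohomology injects; but then you might as well use the identity directly for this inclusion too.

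There is also a genuine error in your final paragraph: the assertion that ``the intersection of a finite family of direct summands is again a direct summand'' is false over a general ring. What is true, and what is actually available here, is that the projectors $E_Q(f^{\iota^i})$ all lie in the image of the commutative algebra $R_{\mathcal{S},p}$ acting on $R\Gamma^*(K^pI,\mathcal{D}_\mathfrak{U})$ (they are power series in the commuting operators $f^{\iota^i}$), hence they pairwise commute; the product of commuting idempotents is an idempotent whose image is the intersection of the images. This is the correct reason $N_{f,\iota}(Q_\mathfrak{U})$ is a direct summand and its formation commutes with base change to $\lambda$. You should replace the general (false) claim by this observation.
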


\vspace{1pc}
For an admissible $f\in R_{\mathcal{S},p}$, define set $\set{f}^\iota:=\{f^{\iota^i}\times\iota^j | 1\leq i,j\leq l\}$. For any $g\in\set{f}^\iota$, define
\begin{equation}
\mathfrak{S}^\iota_{Q,\mathfrak{U},g}:=supp_{\mathcal{O}(\mathfrak{U})[g]}\widetilde{H^*_{f,\iota}(Q)}\subset \mathfrak{U}\times\mathbb{A}^1_{rig}
\end{equation}

\begin{prop}
$\mathfrak{S}^\iota_{Q,\mathfrak{U}, g}$ is locally finite over $\mathfrak{W}^\iota_{Q,\mathfrak{U}}$. A point $s=(\lambda, \alpha)$ of $\mathfrak{U}\times\mathbb{A}^1_{rig}$ is in $\mathfrak{S}^\iota_{Q,\mathfrak{U},g}(\overline{\mathbb{Q}}_p)$ if and only if $\lambda\in\mathfrak{W}^\iota_{Q,\mathfrak{U}}(\overline{\mathbb{Q}}_p)$ and $\alpha^{-1}$ is an eigenvalue of $g$ acting on $H^*_{f,\iota}(Q)$.
\end{prop}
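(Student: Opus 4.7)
The plan is to adapt the construction of the spectral variety from \cite[Proposition 6.4]{Xiang} to the twisted setting, using that after taking the $\iota$-stable slope piece $N_{f,\iota}(Q_\mathfrak{U})$, the extended algebra generated by elements of $\set{f}^\iota$ acts by operators of finite rank over $\mathcal{O}(\mathfrak{U})$, so the support of the resulting cohomology fits inside a finite rigid-analytic cover of $\mathfrak{W}^\iota_{Q,\mathfrak{U}}$.

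First I would verify that $N_{f,\iota}(Q_\mathfrak{U}) = \bigcap_i N_{f^{\iota^i}}(Q_\mathfrak{U})$ is a finitely generated (projective) $\mathcal{O}(\mathfrak{U})$-module. Each $N_{f^{\iota^i}}(Q_\mathfrak{U})$ is the image of a projector $E_{Q_\mathfrak{U}}(f^{\iota^i})$ acting on the compact projective Frechet module $R\Gamma^*(K^pI,\mathcal{D}_\mathfrak{U})$ (Lemma \ref{slopedecomp} together with Proposition \ref{prop}(b)), so it is of finite rank; the intersection, being a closed submodule of a finitely generated module over the Noetherian affinoid algebra $\mathcal{O}(\mathfrak{U})$, is therefore finitely generated as well. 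Hence $H^*_{f,\iota}(Q_\mathfrak{U}) := H^*(N_{f,\iota}^*(Q_\mathfrak{U}))$ is a finitely generated $\mathcal{O}(\mathfrak{U})$-module. Then I would check, using that $\iota$ cyclically permutes the submodules $N_{f^{\iota^i}}(Q_\mathfrak{U})$ and that every $f^{\iota^i}$ lies in the commutative algebra $R_{\mathcal{S},p}$ (hence commutes with the action defining each projector), that $N_{f,\iota}(Q_\mathfrak{U})$ is stable under the full algebra $^\iota R_{\mathcal{S},p}$, and in particular under every $g\in\set{f}^\iota$. Consequently the subring $\mathcal{O}(\mathfrak{U})[g]\subset\mathrm{End}_{\mathcal{O}(\mathfrak{U})}(H^*_{f,\iota}(Q_\mathfrak{U}))$ is a finite $\mathcal{O}(\mathfrak{U})$-algebra, so $\mathrm{Sp}(\mathcal{O}(\mathfrak{U})[g])\to\mathfrak{U}$ is finite, and the natural map $\mathcal{O}(\mathfrak{U})[X]\to\mathcal{O}(\mathfrak{U})[g]$, $X\mapsto g$, realizes $\mathrm{Sp}(\mathcal{O}(\mathfrak{U})[g])$ as a closed subvariety of $\mathfrak{U}\times\mathbb{A}^1_{rig}$. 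The support $\mathfrak{S}^\iota_{Q,\mathfrak{U},g}$ of the finitely generated $\mathcal{O}(\mathfrak{U})[g]$-module $H^*_{f,\iota}(Q_\mathfrak{U})$ then sits as a closed subvariety inside this finite cover of $\mathfrak{W}^\iota_{Q,\mathfrak{U}}$, which gives the asserted local finiteness.

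For the point-theoretic description, I would specialize at $\lambda\in\mathfrak{U}(\overline{\mathbb{Q}}_p)$: Proposition \ref{prop}(a) gives $\mathcal{D}_{\mathfrak{U}}\otimes_\lambda L\cong\mathcal{D}_\lambda(L)$, and combining this with Lemma \ref{urbanlem} together with the Fredholm determinant factorization $P_\mathfrak{U}(f,\lambda,X)=Q_\mathfrak{U}(X)S_\mathfrak{U}(X)$ shows that the specialization of $N_{f,\iota}(Q_\mathfrak{U})$ at $\lambda$ is the $\iota$-stable $Q$-slope part of $R\Gamma^*(K^pI,\mathcal{D}_\lambda(L))$. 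Hence $H^*_{f,\iota}(Q_\mathfrak{U})\otimes_\lambda L$ is the corresponding slope piece of $H^*(S_G(K^pI),\mathcal{D}_\lambda(L))$ with its induced $g$-action, and $(\lambda,\alpha)\in\mathfrak{S}^\iota_{Q,\mathfrak{U},g}(\overline{\mathbb{Q}}_p)$ holds exactly when the stalk of $\widetilde{H^*_{f,\iota}(Q)}$ at $(\lambda,\alpha)$ is nonzero, i.e.\ when $\alpha^{-1}$ is an eigenvalue of $g$ on this specialization. The main obstacle I expect is the compatibility of specialization with the intersection defining $N_{f,\iota}(Q_\mathfrak{U})$: one must check that fiber-wise the intersection of the specialized slope pieces coincides with the specialization of the intersection, which requires the projectors to be analytic in $\lambda$, and this is where the Banach-type projector formula $E_Q(f)$ from Lemma \ref{slopedecomp}(b) with coefficients polynomial in those of $Q$ and $S$ becomes essential.
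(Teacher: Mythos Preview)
Your proposal is correct and follows essentially the same approach as the paper, which simply states that the proof is identical to \cite[Proposition 6.6]{Xiang}; your sketch spells out precisely the argument that reference contains, adapted to the $\iota$-stable piece $N_{f,\iota}(Q_\mathfrak{U})$. The concern you raise at the end about commuting specialization with the intersection is legitimate but not a genuine obstruction: since the support is defined via the sheafification $\widetilde{H^*_{f,\iota}(Q)}$ over the affinoid algebra $\mathcal{O}(\mathfrak{U})[g]$, the fiberwise description follows directly from Nakayama-type arguments for coherent sheaves on affinoids, without needing to control the intersection pointwise.
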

The proof is same to \cite[Proposition 6.6]{Xiang}. Moreover, discuss as in \cite[\S6]{Xiang}, given $\set{f}^\iota$ and $g\in\set{f}^\iota$, we can glue the local spectral varieties $\mathfrak{S}^\iota_{Q,\mathfrak{U},g}$ for polynomials $Q_\mathfrak{U}$ and open affinoid domains $\mathfrak{U}\subset\mathfrak{X}^\iota$:
\begin{thm}
There is a spectral variety $\mathfrak{S}^\iota_{g}=\mathfrak{S}^\iota_{\mathfrak{W}^\iota, g}$ as a rigid subspace of $\mathfrak{X}^\iota\times \mathbb{A}^{rig}_1$, such that, $s=(\lambda, \alpha)\in\mathfrak{S}^\iota_{g}(\overline{\mathbb{Q}}_p)$ if and only if $\lambda\in\mathfrak{W}^\iota(\overline{\mathbb{Q}}_p)$ and $\alpha^{-1}$ is an eigenvalue of $g$ acting on $H^*_{fs}(S_G(K),\mathcal{D}_\lambda)$.
\end{thm}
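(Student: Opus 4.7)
The plan is to imitate the gluing construction in \cite[\S6]{Xiang}, now carried out for the $\iota$-stable slope pieces $N_{f,\iota}(Q_{\mathfrak{U}})$ instead of the full slope pieces. Concretely, I would first fix an admissible affinoid cover $\{\mathfrak{U}_i\}$ of $\mathfrak{X}^\iota$ and, over each $\mathfrak{U}_i$, exhaust $\mathfrak{W}^\iota\cap \mathfrak{U}_i$ by the local pieces $\mathfrak{W}^\iota_{Q,\mathfrak{U}_i}$ as $Q=Q_{\mathfrak{U}_i}$ ranges over all Fredholm polynomial factors of $P_{\mathfrak{U}_i}(f,\lambda,X)$ with invertible leading coefficient and relatively prime cofactor $S_{\mathfrak{U}_i}$. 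By Lemma~\ref{slopedecomp} each such $Q$ cuts out a finite rank projective $\mathcal{O}(\mathfrak{U}_i)$-module $N_f(Q_{\mathfrak{U}_i})$, hence after intersecting with the $^\iota R_{\mathcal{S},p}$-stable pieces $N_{f^{\iota^j}}(Q_{\mathfrak{U}_i})$ we obtain $N_{f,\iota}(Q_{\mathfrak{U}_i})$, on which $g\in\{f\}^\iota$ acts $\mathcal{O}(\mathfrak{U}_i)$-linearly; the characteristic polynomial of $g$ on the cohomology $H^*(N_{f,\iota}(Q_{\mathfrak{U}_i}))$ then defines the Fredholm hypersurface $\mathfrak{S}^\iota_{Q,\mathfrak{U}_i,g}\subset\mathfrak{U}_i\times\mathbb{A}^1_{rig}$, which is already known (by the proposition preceding the theorem) to be locally finite over $\mathfrak{W}^\iota_{Q,\mathfrak{U}_i}$.

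Next I would verify the two compatibility statements needed for gluing. First, if $Q_{\mathfrak{U}_i}\mid Q'_{\mathfrak{U}_i}$ are two admissible choices over the same affinoid, then $N_{f,\iota}(Q_{\mathfrak{U}_i})\subset N_{f,\iota}(Q'_{\mathfrak{U}_i})$, and the quotient is again a slope piece of $H^*(R\Gamma(K^pI,\mathcal{D}_{\mathfrak{U}_i}))$; consequently the corresponding spectral subvarieties are open/closed pieces of one another, so they agree on their common locus. Second, if $\mathfrak{U}_i'\subset\mathfrak{U}_i$ is an affinoid subdomain, then the base change $N_{f,\iota}(Q_{\mathfrak{U}_i})\otimes_{\mathcal{O}(\mathfrak{U}_i)}\mathcal{O}(\mathfrak{U}_i')\cong N_{f,\iota}(Q_{\mathfrak{U}_i'})$ (this is just the compatibility of slope decompositions under flat base change, applied inside each of the $l$ pieces whose intersection defines the $\iota$-stable part). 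Together these imply that $\mathfrak{S}^\iota_{Q,\mathfrak{U}_i,g}\cap(\mathfrak{U}_i'\times\mathbb{A}^1_{rig})$ is covered by the $\mathfrak{S}^\iota_{Q',\mathfrak{U}_i',g}$, so the local pieces satisfy the cocycle condition on triple overlaps and can be glued into a single rigid analytic subspace $\mathfrak{S}^\iota_g\subset\mathfrak{X}^\iota\times\mathbb{A}^1_{rig}$.

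The identification of $\overline{\mathbb{Q}}_p$-points then follows from Proposition~\ref{prop} and the definition of the finite slope cohomology. If $(\lambda,\alpha)\in\mathfrak{S}^\iota_g(\overline{\mathbb{Q}}_p)$, it lies in some $\mathfrak{S}^\iota_{Q,\mathfrak{U},g}$, so $\lambda\in\mathfrak{W}^\iota$ and $\alpha^{-1}$ is an eigenvalue of $g$ on $H^*(N_{f,\iota}(Q))\otimes_{\lambda}\overline{\mathbb{Q}}_p\subset H^*_{fs}(S_G(K),\mathcal{D}_\lambda)^{\leq h}_{f,\iota}$. Conversely, if $\alpha^{-1}$ is an eigenvalue of $g$ on $H^*_{fs}(S_G(K),\mathcal{D}_\lambda)$, the identity
\begin{equation*}
H^*_{fs}(S_G(K),\mathcal{D}_\lambda)=\varinjlim_h\bigcap_{i=1}^l H^*(S_G(K),\mathcal{D}_\lambda)^{\leq h}_{f^{\iota^i}}
\end{equation*}
lets us pick $h$ and a polynomial $Q\in\mathcal{O}(\mathfrak{U})[X]$ above $\lambda$ cutting out the $\leq h$-slope part containing the eigenvector; by Lemma~\ref{slopedecomp} $Q$ can be chosen as the characteristic polynomial on the slope piece, so $(\lambda,\alpha)\in\mathfrak{S}^\iota_{Q,\mathfrak{U},g}\subset\mathfrak{S}^\iota_g$.

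The main obstacle I anticipate is the second compatibility, namely that the formation of the $\iota$-invariant slope piece $N_{f,\iota}(Q_{\mathfrak{U}})$ commutes with affinoid restriction. One cannot appeal directly to \cite[Theorem 3.3]{Buzzard eigenvarieties} because the $\iota$-stable part is defined as an intersection of $l$ different slope decompositions with respect to $f^{\iota^i}$; one has to verify that each of these decompositions base changes correctly and that the intersection is preserved, which requires playing off the projectors $E_Q(f^{\iota^i})$ against base change. Once this point is settled, the rest is a routine gluing argument parallel to \cite[\S6]{Xiang}.
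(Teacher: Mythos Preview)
Your proposal is correct and follows the same route the paper takes: the paper gives no proof beyond the sentence ``discuss as in \cite[\S6]{Xiang}, \ldots we can glue the local spectral varieties $\mathfrak{S}^\iota_{Q,\mathfrak{U},g}$'', and your outline is a faithful reconstruction of exactly that gluing argument, with the two compatibility checks (refinement $Q\mid Q'$ and affinoid restriction $\mathfrak{U}'\subset\mathfrak{U}$) being the standard ones from \cite{Xiang}. The obstacle you flag is genuine but dissolves as you suspect: each $N_{f^{\iota^i}}(Q_{\mathfrak{U}})$ is the image of the projector $E_Q(f^{\iota^i})\in X\mathcal{O}(\mathfrak{U})\{\{X\}\}$ from Lemma~\ref{slopedecomp}(b), and since these projectors are power series in a single operator with coefficients in $\mathcal{O}(\mathfrak{U})$, they base change to the corresponding projectors over $\mathcal{O}(\mathfrak{U}')$; the intersection of their images therefore base changes as well.
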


\begin{cor}\label{cor2}
If $g=\iota\times f$, then 
\begin{equation}
\mathfrak{S}^\iota(f)\subset \mathfrak{S}^\iota_{g}.
\end{equation}
\end{cor}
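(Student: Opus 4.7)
The plan is to unwind the two definitions and reduce the inclusion to the fact that every irreducible constituent appearing in $V^{\dagger,\lambda}_{G,0}(K^p)$ is an $\iota$-invariant finite slope overconvergent representation, hence appears in $H^*_{fs}(S_G(K^pI),\mathcal{D}_\lambda)$ and contributes the same $\iota\times f$-eigenvalues.

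More precisely, let $(\lambda,\alpha)\in\mathfrak{S}^\iota(f)(\overline{\mathbb{Q}}_p)$. By Proposition 6.1.1, $\alpha^{-1}$ is an eigenvalue of $\iota\times f$ acting on $V^{\dagger,\lambda}_{G,0}(K^p)$. By definition this space is the completion of
\begin{equation*}
\bigoplus_{\pi}\bigoplus_{i=1}^{l}(V_{\tilde\pi_i}^{K^p})^{m^{\iota,\dagger}_{G,0,i}(\pi,\lambda)},
\end{equation*}
so $\alpha^{-1}$ must be an eigenvalue of $\iota\times f$ on some $\tilde\pi_i^{K^p}$ with $m^{\iota,\dagger}_{G,0,i}(\pi,\lambda)>0$. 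By Proposition 5.2.1 together with the inductive definition of $I^{\dagger}_{G,0}$, any such $\pi$ is an $\iota$-invariant finite slope overconvergent representation of $\mathcal{H}_p$. By the very definition of overconvergent (\S3.3.1), $\pi$ occurs as a subquotient of $H^*_{fs}(\tilde S_G,\mathcal{D}_\lambda(L))$, and since $\pi^{K^p}\neq 0$ and $K^p$ is $\iota$-stable, $\tilde\pi_i^{K^p}$ occurs as a subquotient of the $^\iota\mathcal{H}_p(K^p)$-module $H^*_{fs}(S_G(K^pI),\mathcal{D}_\lambda(L))$ (Proposition 3.2.2).

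Next I would argue that subquotient-eigenvalues transfer to genuine eigenvalues of $\iota\times f$ on the ambient space. Since $f$ is admissible, fixing $t\in T^{++}$ with $f=f^p\otimes u_t$, the slope decomposition
\begin{equation*}
H^*_{fs}(S_G(K^pI),\mathcal{D}_\lambda(L))=\varinjlim_{h}H^*(S_G(K^pI),\mathcal{D}_\lambda(L))^{\leq h}
\end{equation*}
places $\tilde\pi_i^{K^p}$ inside a finite-dimensional $\iota\times f$-stable slice $H^{*,\leq h}$, where Jordan--Hölder eigenvalues of any subquotient are honest eigenvalues of $\iota\times f$ on the slice (since $\iota\times f=\iota\times f^p\otimes u_t$ commutes with itself and the characteristic polynomial factors through composition series). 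Hence $\alpha^{-1}$ is an eigenvalue of $\iota\times f$ acting on $H^*_{fs}(S_G(K^pI),\mathcal{D}_\lambda(L))$. Finally, the very existence of such a non-trivial eigenvalue forces $H^*_{fs}(S_G(K),\mathcal{D}_\lambda)\neq 0$, so $\lambda\in\mathfrak{W}^\iota$ by Proposition 6.3.1, and therefore $(\lambda,\alpha)\in\mathfrak{S}^\iota_{g}(\overline{\mathbb{Q}}_p)$ by Theorem 6.3.4.

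The main obstacle I expect is the bookkeeping around subquotient versus submodule: one must verify that working in the compact-operator setting (Lemma \ref{slopedecomp} and Proposition \ref{prop}(b)) genuinely lets us read off eigenvalues of $\iota\times f$ on $\tilde\pi_i^{K^p}$ as eigenvalues on the finite-slope cohomology, uniformly in $h$. Beyond that, the argument is a direct definition chase; no further analytic input should be needed since the Fredholm determinant $P^\dagger_{G,0}(\iota\times f,\lambda,X)$ cutting out $\mathfrak{S}^\iota(f)$ is by construction a finite product of characteristic polynomials of $\iota\times f$ on these $\tilde\pi_i^{K^p}$.
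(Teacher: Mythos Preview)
Your argument is correct and is precisely the unwinding of definitions that the paper leaves implicit (the corollary is stated without proof, as an immediate consequence of the pointwise descriptions in Proposition~6.1.1 and Theorem~6.3.4). The only remark is that you do not need to invoke the inductive definition of $I^\dagger_{G,0}$ separately: once Proposition~5.2.1 tells you it is a $\iota$-twisted FSCD, Definition~5.1.1 already forces every $\pi$ in its expansion to be overconvergent, and the subquotient-to-eigenvalue passage on the finite-dimensional slice $H^{*,\le h}$ is exactly the elementary linear-algebra step you describe.
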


\subsubsection{A big twisted eigenvariety}
We build an eigenvariety over the spectral varieties constructed in last subsection as in \cite[\S8]{Xiang}. Let $^\iota\tilde{R}_{\mathcal{S},p}$ be the $p$-adic completion of $^\iota R_{\mathcal{S},p}[u_t^{-1}, t\in T^{+}]$. Since $^\iota R_{\mathcal{S},p}=R_{\mathcal{S},p}\rtimes \langle\iota\rangle$ and $\iota$ is of finite order $l$, that $^\iota\tilde{R}_{\mathcal{S},p}$ is an $l$ pieces union of $\tilde{R}_{\mathcal{S},p}$. Define the $p$-adic space $\mathfrak{B}=\mathfrak{B}_{\mathcal{S},p}$ be such that for any $L/\mathbb{Q}_p$,
\begin{equation}
\mathfrak{B}(L)=Hom_{alg\ ct}(^\iota\tilde{R}_{\mathcal{S},p}, L).
\end{equation}
There is a natural morphism
\begin{equation}\label{i1}
\mathfrak{i}: \mathfrak{B}_{\mathcal{S},p}(L)\longrightarrow\mathfrak{R}_{\mathcal{S},p}(L)
\end{equation}
given by restricting a character $\tilde{\theta}$ of $^\iota\tilde{R}_{\mathcal{S},p}$ to $\tilde{R}_{\mathcal{S},p}$, i.e. $\theta:=\mathfrak{i}(\tilde\theta)=\tilde{\theta}|_{\tilde{R}_{\mathcal{S},p}}$. This morphism is finite and continuous.

\vspace{1pc}
Define $\mathfrak{Z}^\iota=\mathfrak{Z}^\iota_{\mathcal{S},p}:=\mathfrak{X}^\iota\times \mathfrak{B}$. For any admissible $f $ and $g\in\set{f}^\iota\subset {^\iota R_{\mathcal{S},p}}$, we define the morphism of ringed space 
\begin{equation}
R_g: \mathfrak{Z}^\iota\rightarrow\mathfrak{X}^\iota\times \mathbb{A}^{rig}_1\end{equation}
 by $(\lambda, \tilde{\theta})\mapsto (\lambda, \tilde{\theta}(g)^{-1})$ on $L$-points, and
\begin{equation}R_g^{*}: \mathcal{O}(\mathfrak{X}^\iota)\{\{X\}\}\longrightarrow \mathcal{O}(\mathfrak{X}^\iota)\hat{\otimes}{^\iota\tilde{R}_{\mathcal{S},p}}\end{equation}
by $\sum a_nX^n\mapsto \sum a_n(g)^{-n}$ on the function rings. 

\vspace{1pc}
Define the rigid space,
\begin{equation}\tilde{\mathfrak{D}}^\iota:=\prod_{[f]^\iota}\prod_{g\in[f]^\iota}R_g^{-1}\mathfrak{S}_g^\iota\end{equation}
as in \cite[\S8]{Xiang}, where its $G$-topology is defined via $R_g$'s. Concretely, an open subset of $\tilde{\mathfrak{D}}^\iota$ is admissible if it is a union of open subsets of the form $R_{g,1}\times\cdots\times R_{g_r}^{-1}(\mathfrak{V})$ for $\mathfrak{V}$ an open admissible affinoid of $\mathfrak{S}_{g_1}^\iota\times\cdots\times\mathfrak{S}_{g_r}^\iota$; and an admissible covering is the inverse images by $R_g$'s of the admissible coverings of the corresponding spectral varieties. Then we have a parallel result to \cite[Proposition 8.1]{Xiang}:
\begin{prop}\label{prop4}
Assume $\tilde{y}=(\lambda, \tilde\theta)$ is in $\mathfrak{Z}^\iota(\overline{\mathbb{Q}}_p)$, then $\tilde{y}\in\tilde{\mathfrak{D}}^\iota(\overline{\mathbb{Q}}_p)$ if and only if $H^*(S_G(K),\mathcal{D}_\lambda)[\tilde{\theta}]\neq 0$ as a $^\iota R_{\mathcal{S},p}$-module. Moreover, given $\tilde{y}\in\tilde{\mathfrak{D}}^\iota(\overline{\mathbb{Q}}_p)$, there exists an admissible $f$, such that 
\begin{equation}
\bigcap_{g\in[f]^\iota}R_g^{-1}(R_g(\tilde{y}))\bigcap\tilde{\mathfrak{D}}^\iota(\overline{\mathbb{Q}}_p)=\{\tilde{y}\}
\end{equation}
\end{prop}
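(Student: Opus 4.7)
The plan is to mirror the argument of \cite[Proposition 8.1]{Xiang} for the untwisted eigenvariety, using the $\iota$-stable slope decompositions of Proposition \ref{prop322} and the basic observation that an $L$-valued character $\tilde\theta$ of $^\iota R_{\mathcal{S},p}$ is determined by the pair consisting of its restriction $\theta:=\tilde\theta|_{R_{\mathcal{S},p}}$ (which is necessarily $\iota$-invariant) and the $l$-th root of unity $\tilde\theta(\iota)$.

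The easy direction is direct from the definitions. If $v\neq 0$ lies in $H^*(S_G(K^pI),\mathcal{D}_\lambda)[\tilde\theta]$, then for any admissible $f$ and any $g\in\set{f}^\iota$ one has $g\cdot v=\tilde\theta(g)\,v$, so $\tilde\theta(g)$ is an eigenvalue of $g$ on the finite slope cohomology; the spectral variety theorem of \S6.3.1 then gives $R_g(\tilde y)=(\lambda,\tilde\theta(g)^{-1})\in\mathfrak{S}^\iota_g$, and therefore $\tilde y\in\bigcap_g R_g^{-1}\mathfrak{S}^\iota_g\subset\tilde{\mathfrak{D}}^\iota$.

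For the converse I would fix an admissible $f_0=f_0^p\otimes u_{t_0}$ with $\tilde\theta(f_0)\neq 0$ and concentrate on the $\iota$-stable slope-$\leq h$ part $V:=H^*(S_G(K^pI),\mathcal{D}_\lambda)_\iota^{\leq h}$ for some $h\geq v_p(\tilde\theta(f_0)^{-1})$; by Proposition \ref{prop322}, $V$ is a finite-dimensional $^\iota R_{\mathcal{S},p}$-module. The commutative subalgebra $R_{\mathcal{S},p}$ splits $V$ into finitely many generalized eigenspaces $V[\theta_j]$, and the set $\{\theta_j\}$ is permuted by $\iota$ via $\theta_j\mapsto\theta_j^\iota$. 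A standard separation argument using sufficiently many admissible $g\in R_{\mathcal{S},p}$, together with the hypothesis that $\tilde\theta(g)$ is an eigenvalue of $g$ on $H^*_{fs}$ for each such $g$, forces $\theta$ to coincide with some $\theta_j$, so $V[\theta]\neq 0$ and is $\iota$-stable. Since $\iota$ has order $l$, it acts semisimply on $V[\theta]$ with $l$-th roots of unity as eigenvalues; invoking the hypothesis at $g=\iota\times f_0$ pins down $\tilde\theta(\iota)$ as one of these eigenvalues, and a corresponding eigenvector furnishes the desired nonzero element of $H^*(S_G(K^pI),\mathcal{D}_\lambda)[\tilde\theta]$.

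The moreover clause proceeds along the same finiteness pattern: only finitely many characters $\tilde\theta=\tilde\theta_1,\dots,\tilde\theta_N$ of $^\iota R_{\mathcal{S},p}$ occur on this slope-$\leq h$ part, so a Jacobson density argument (as in \cite[Proposition 8.1]{Xiang}) supplies an admissible $f$ whose orbit $\set{f}^\iota$ yields functionals that pairwise separate $\tilde\theta$ from $\tilde\theta_2,\dots,\tilde\theta_N$. Then $\bigcap_{g\in\set{f}^\iota}R_g^{-1}(R_g(\tilde y))\cap\tilde{\mathfrak{D}}^\iota$ consists precisely of points $(\lambda,\tilde\theta')$ with $\tilde\theta'$ among the $\tilde\theta_i$ and $\tilde\theta'(g)=\tilde\theta(g)$ for all $g\in\set{f}^\iota$, which forces $\tilde\theta'=\tilde\theta$. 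The main obstacle I anticipate is the separation step for the extension data rather than the underlying character: one must verify that the twisted operators $\iota^j\times f^{\iota^i}$ in $\set{f}^\iota$, together with their untwisted partners, genuinely detect the scalar $\tilde\theta(\iota)$ and not merely the restriction $\theta$, which is where the crossed-product structure of $^\iota R_{\mathcal{S},p}$ and the requirement $\tilde\theta(f)\neq 0$ enter essentially.
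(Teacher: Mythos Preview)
Your proposal is correct and takes essentially the same approach as the paper: the paper states this result as the direct analogue of \cite[Proposition~8.1]{Xiang} and omits the proof entirely (see the remark opening \S6.3 that ``most results are parallel to \cite{Xiang}, so we omit the proofs''), while you have written out precisely that parallel argument, including the $\iota$-stable slope decomposition of Proposition~\ref{prop322} and the extra separation step for the extension datum $\tilde\theta(\iota)$. The subtlety you flag at the end---that the twisted operators $\iota^j\times f^{\iota^i}$ in $\set{f}^\iota$ must detect $\tilde\theta(\iota)$ and not merely $\theta$---is genuine but resolves as you suggest: since $\iota$ acts semisimply on the finite-dimensional $\iota$-stable eigenspace $V[\theta]$, choosing $f$ with $\theta(f)\neq 0$ ensures that the eigenvalues of $\iota\times f$ on $V[\theta]$ are exactly the products $\zeta\,\theta(f)$ for the occurring roots of unity $\zeta$, so the value $\tilde\theta(\iota\times f)=\tilde\theta(\iota)\theta(f)$ pins down $\tilde\theta(\iota)$.
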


\vspace{1pc}
For $\mathfrak{U}\subset\mathfrak{X}^\iota$ and $P_\mathfrak{U}(f, X)=Q(X)S(X)$ as in \S6.3.1, let $h_\mathfrak{U}$ and $h_\mathfrak{U}^\iota$ be the image of $R_\mathfrak{U}:=\mathcal{O}(\mathfrak{U})\otimes R_{\mathcal{S},p}$ and $R_\mathfrak{U}^\iota:=\mathcal{O}(\mathfrak{U})\otimes{^\iota R_{\mathcal{S},p}}$ in $End_{pf}^b(R\Gamma(K^pI, \mathcal{D}_\mathfrak{U}))$ respectively, and let $h_{\mathfrak{U},Q}$ and $h_{\mathfrak{U},Q}^\iota$ be the image of $R_{\mathfrak{U}}$ and $R_{\mathfrak{U}}^\iota$ in $End_{pf}^b(N_{f, \iota}(Q))$ respectively. Define
\begin{equation}
\tilde{\mathfrak{G}}^{\iota'}_\mathfrak{U}:=sp(h^\iota_\mathfrak{U})
\end{equation}
 and 
 \begin{equation}
 \tilde{\mathfrak{G}}^{\iota}_{\mathfrak{U}, Q}:= supp_{h_{\mathfrak{U},Q}^\iota}\widetilde{H(N^*_{f,\iota}(Q))}
 \end{equation}

\begin{prop}\label{prop5}
$$\tilde{\mathfrak{G}}^{\iota}_{\mathfrak{U}, Q}(\overline{\mathbb{Q}}_p)=\prod_{g\in [f]^\iota}R_g^{-1}\mathfrak{S}^\iota_{Q,\mathfrak{U},g}(\overline{\mathbb{Q}}_p)$$
\end{prop}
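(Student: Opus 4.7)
The plan is to mirror the proof of \cite[Proposition 8.2]{Xiang}, adapted to accommodate the non-commutative twisted algebra $^\iota R_{\mathcal{S},p}$. First I would translate both sides into pointwise eigenvalue data: by the preceding proposition describing $\mathfrak{S}^\iota_{Q,\mathfrak{U},g}$, a pair $(\lambda,\tilde\theta)$ lies in $\bigcap_{g\in[f]^\iota}R_g^{-1}\mathfrak{S}^\iota_{Q,\mathfrak{U},g}(\overline{\mathbb{Q}}_p)$ exactly when, for every $g\in[f]^\iota$, the scalar $\tilde\theta(g)^{-1}$ is an eigenvalue of $g$ on the specialization of $H^*_{f,\iota}(Q)$ at $\lambda$, while $(\lambda,\tilde\theta)\in\tilde{\mathfrak{G}}^{\iota}_{\mathfrak{U},Q}(\overline{\mathbb{Q}}_p)$ precisely when $\tilde\theta$ appears as a joint eigensystem of the full algebra $h^\iota_{\mathfrak{U},Q}$ acting on that same cohomology.

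The forward inclusion is immediate: each $g\in[f]^\iota$ sits inside $h^\iota_{\mathfrak{U},Q}$ by construction, so any character of $h^\iota_{\mathfrak{U},Q}$ automatically realizes $\tilde\theta(g)$ as an eigenvalue of $g$. For the backward inclusion, I would exploit the fact that the family $[f]^\iota=\{f^{\iota^i}\times\iota^j\mid 1\le i,j\le l\}$ carries enough information to determine the character. Admissibility of $f$ forces $\tilde\theta(f)\neq 0$, and the values $\{\tilde\theta(f\times\iota^j)\}_j$ jointly recover $\tilde\theta(\iota)$; then $\{\tilde\theta(f^{\iota^i}\times\iota^j)\}_{i,j}$ recovers the full $\iota$-orbit of $\tilde\theta(f)$. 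The restriction $\theta:=\tilde\theta|_{R_{\mathcal{S},p}}$ is automatically $\iota$-invariant, since $\tilde\theta$ extends across the semi-product, and the algebra $h^\iota_{\mathfrak{U},Q}$ is finite over $\mathcal{O}(\mathfrak{U})$ by the finite $\mathcal{O}(\mathfrak{U})$-rank of $N_{f,\iota}(Q)$, so its characters are separated by a finite set of Hecke operators and $[f]^\iota$ does suffice.

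The main obstacle will be verifying coherence: given only individual eigenvalue conditions, one must produce a common generalized eigenvector realizing $\tilde\theta$ simultaneously on all of $h^\iota_{\mathfrak{U},Q}$. The plan is a two-step decomposition of $H^*_{f,\iota}(Q)_\lambda$. First, decompose under the commuting family $\{f^{\iota^i}\}\subset R_{\mathcal{S},p}$ into joint generalized eigenspaces, thereby cutting out the $\theta$-eigenspace. By the $\iota$-invariance of $\theta$, the operator $\iota$ preserves this finite-dimensional $\theta$-eigenspace and acts on it with finite order; a direct diagonalization then isolates a subspace on which $\iota$ acts by the prescribed scalar $\tilde\theta(\iota)$. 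This subspace exhibits $\tilde\theta$ as a genuine character of $h^\iota_{\mathfrak{U},Q}$ appearing in the cohomology, which closes the backward inclusion and completes the proof.
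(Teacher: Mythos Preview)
Your overall strategy matches the paper's: both simply refer the reader to \cite[Proposition~8.2]{Xiang}. However, your expanded sketch of the backward inclusion has a gap. You argue that since $h^\iota_{\mathfrak{U},Q}$ is finite over $\mathcal{O}(\mathfrak{U})$, its characters are separated by finitely many Hecke operators, and therefore the particular set $[f]^\iota$ suffices; but finiteness only guarantees that \emph{some} finite set separates, not that the orbit of a single admissible $f$ does. More concretely, for a point $(\lambda,\tilde\theta)$ on the right-hand side, the hypothesis constrains $\tilde\theta$ only at the elements $f^{\iota^i}\times\iota^j$. Since $\theta:=\tilde\theta|_{R_{\mathcal{S},p}}$ is automatically $\iota$-invariant, the values $\theta(f^{\iota^i})$ all collapse to the single number $\theta(f)$, and you obtain no information about $\theta(h)$ for $h\in R_{\mathcal{S},p}$ outside the subalgebra generated by the $f^{\iota^i}$. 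Consequently your proposed two-step decomposition cannot begin: there is no reason the $\theta$-eigenspace, for the \emph{full} character $\theta$ rather than merely its value at $f$, is nonzero in the specialized cohomology.

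To repair this you should go back to \cite[\S8]{Xiang} and look carefully at what the analogous statement actually asserts in the untwisted case and how the local pieces indexed by a single $f$ are interpreted before being glued over all $f$ in the definition $\tilde{\mathfrak{G}}^\iota:=\prod_f\tilde{\mathfrak{G}}^\iota_f$ (compare also Proposition~\ref{prop4}, which only produces a separating $f$ \emph{a posteriori} for a given point). The equality in Proposition~\ref{prop5} is a statement about compatible affinoid pieces rather than a naive set-theoretic identity inside $\mathfrak{Z}^\iota(\overline{\mathbb{Q}}_p)$, and your write-up should reflect that.
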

The proof of proposition is same to \cite[Proposition 8.2]{Xiang}. Moreover, an argument as \cite[Proposition 8.2, 8.3, 8,4]{Xiang} shows that we can patch $\tilde{\mathfrak{G}}^{\iota}_{\mathfrak{U}, Q}$ with respect to $\mathfrak{U}$ and $Q$ to obtain a rigid space $\tilde{\mathfrak{G}}^{\iota}_{f}$.  Define 
\begin{equation}\tilde{\mathfrak{G}}^{\iota}:=\prod_{f}\tilde{\mathfrak{G}}^{\iota}_{f}.\end{equation}
It is a reduced rigid analytic space, and by Proposition \ref{prop4}, \ref{prop5} \begin{equation}\tilde{\mathfrak{G}}^{\iota}(\overline{\mathbb{Q}}_p)=\tilde{\mathfrak{D}}^\iota(\overline{\mathbb{Q}}_p).\end{equation}

\vspace{1pc}
Now given $\mathfrak{U}$ and $Q$ as above, define $\mathfrak{i}: \tilde{\mathfrak{G}}^{\iota}\rightarrow \mathfrak{E}$ by locally defining:
\begin{equation}\label{i2}
\mathfrak{i}: \tilde{\mathfrak{G}}^{\iota}_{\mathfrak{U}, Q}:= supp_{h_{\mathfrak{U},Q}^\iota}\widetilde{H(N^*_{f,\iota}(Q))}\rightarrow\mathfrak{E}_{\mathfrak{U}, Q}:=supp_{h_{\mathfrak{U},Q}}\widetilde{H(N^*_{f}(Q))}.
\end{equation}
This is defined by the inclusions $h_{\mathfrak{U},Q}\hookrightarrow h^\iota_{\mathfrak{U},Q}$ and $H(N^*_{f,\iota}(Q))\rightarrow H(N^*_{f}(Q))$. In particular, on $\overline{\mathbb{Q}}_p$-points, $\mathfrak{i}$ sends $(\lambda, \tilde\theta)$ to $(\lambda, \theta:=\tilde\theta|_{R_{\mathcal{S},p}})$. So it is defined on each fibre $\tilde{\mathfrak{G}}^{\iota}_{f}$ coincident with (\ref{i1}) and defined locally on points $R_g^{-1}\mathfrak{S}^\iota_{Q,\mathfrak{U},g}(\overline{\mathbb{Q}}_p)$.  Finally, define $\mathfrak{G}^\iota$ as the image of $\tilde{\mathfrak{G}}^\iota$ under $\mathfrak{i}$. Its points are described by the theorem below

\begin{thm}\label{big}
Assume $y=(\lambda,\theta)$ is in $\mathfrak{E}(\overline{\mathbb{Q}}_p)$, then $y\in\mathfrak{G}^\iota(\overline{\mathbb{Q}}_p)$ implies that $\theta$ is a $\iota$-invariant finite slope overconvergent Hecke eigensystem of weight $\lambda$. For any $f\in R_{\mathcal{S},p}$, $R_f$ maps $\mathfrak{G}^\iota$ to $\mathfrak{S}^\iota_{\mathfrak{W}^\iota, f}$. It is locally finite and surjective. In particular, $\dim\mathfrak{G}^\iota\leq\dim\mathfrak{X}^\iota$.
\end{thm}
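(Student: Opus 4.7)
The plan is to exploit the construction of $\mathfrak{G}^\iota$ as the image of $\tilde{\mathfrak{G}}^\iota$ under $\mathfrak{i}$, and to reduce each assertion to a corresponding statement about $\tilde{\mathfrak{G}}^\iota$ and the spectral varieties $\mathfrak{S}^\iota_g$ that were analyzed in \S6.3.1.

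First I would verify $\iota$-invariance and the eigensystem properties of $\theta$. If $y=(\lambda,\theta)\in\mathfrak{G}^\iota(\overline{\mathbb{Q}}_p)$, then by construction of $\mathfrak{G}^\iota=\mathfrak{i}(\tilde{\mathfrak{G}}^\iota)$ there is a lift $\tilde y=(\lambda,\tilde\theta)\in\tilde{\mathfrak{G}}^\iota(\overline{\mathbb{Q}}_p)$ with $\tilde\theta|_{R_{\mathcal{S},p}}=\theta$. By Proposition \ref{prop4}, $H^*(S_G(K^pI),\mathcal{D}_\lambda)[\tilde\theta]\neq 0$ as a $^\iota R_{\mathcal{S},p}$-module, so in particular $\tilde\theta$ is a bona fide extension of $\theta$ to $^\iota R_{\mathcal{S},p}$. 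Invoking Lemma \ref{LemmaBLS}(b), the mere existence of such an extension forces $\theta^\iota\cong\theta$. Since $\lambda\in\mathfrak{W}^\iota\subset\mathfrak{W}$ by the preceding proposition and $\tilde\theta$ appears inside the finite slope cohomology at $\lambda$ (this is where we use that the construction of $\tilde{\mathfrak{G}}^\iota_{\mathfrak{U},Q}$ only takes the support inside $H^*(N^*_{f,\iota}(Q))$, whose specialization at $\lambda$ embeds in the finite slope cohomology), $\theta$ is a finite slope overconvergent Hecke eigensystem of weight $\lambda$.

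Next I would address the properties of $R_f$. For $f$ admissible, Proposition \ref{prop5} together with Corollary \ref{cor2} shows that on each affinoid piece $\tilde{\mathfrak{G}}^\iota_{\mathfrak{U},Q}$ the map $R_f$ factors through $\mathfrak{S}^\iota_{Q,\mathfrak{U},f}\subset \mathfrak{S}^\iota_{\mathfrak{W}^\iota,f}$; extend this to arbitrary $f\in R_{\mathcal{S},p}$ by linearity exactly as in \cite[\S8]{Xiang}. Local finiteness is inherited from $\tilde{\mathfrak{G}}^\iota$: the local building blocks $\tilde{\mathfrak{G}}^\iota_{\mathfrak{U},Q}=\mathrm{supp}_{h^\iota_{\mathfrak{U},Q}}\widetilde{H(N^*_{f,\iota}(Q))}$ are finite over $\mathfrak{W}^\iota_{Q,\mathfrak{U}}$ because $N^*_{f,\iota}(Q)$ has finite rank over $\mathcal{O}(\mathfrak{U})$ (Lemma \ref{slopedecomp}), and $\mathfrak{i}$ is induced by a finite inclusion $h_{\mathfrak{U},Q}\hookrightarrow h^\iota_{\mathfrak{U},Q}$. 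Surjectivity is the key point: given $(\lambda,\alpha)\in\mathfrak{S}^\iota_{\mathfrak{W}^\iota,f}$, by the characterization in Proposition 6.3.2 the value $\alpha^{-1}$ is an eigenvalue of $f$ on $H^*_{f,\iota}(Q)$ for some slope polynomial $Q$; the $^\iota R_{\mathcal{S},p}$-action on the corresponding eigenspace is semisimple since $\iota$ is of finite order (as in the argument of \S3.3.2 around (5.2.8)), so it decomposes into characters $\tilde\theta$, at least one of which satisfies $\tilde\theta(f)^{-1}=\alpha$. This yields a point of $\tilde{\mathfrak{G}}^\iota$ mapping to $(\lambda,\alpha)$ under $R_f\circ\mathfrak{i}$.

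Finally, the dimension estimate follows by combining the two observations: $\mathfrak{S}^\iota_{\mathfrak{W}^\iota,f}$ is locally finite over $\mathfrak{W}^\iota$ by Proposition 6.3.2 (it is a Fredholm hypersurface over the $\iota$-invariant weight space), so $\dim\mathfrak{S}^\iota_{\mathfrak{W}^\iota,f}=\dim\mathfrak{W}^\iota\leq\dim\mathfrak{X}^\iota$, and since $R_f$ is locally finite on $\mathfrak{G}^\iota$ with image in this spectral variety we conclude $\dim\mathfrak{G}^\iota\leq\dim\mathfrak{X}^\iota$. The main obstacle I anticipate is not any single step but the bookkeeping around lifting a point of the spectral variety to a character of $^\iota R_{\mathcal{S},p}$ in $\tilde{\mathfrak{G}}^\iota$: one must be sure that the character one extracts from $H^*_{f,\iota}(Q)$ really lies in the support of the $^\iota R_{\mathcal{S},p}$-action used to define $\tilde{\mathfrak{G}}^\iota_{\mathfrak{U},Q}$, and the cleanest way is to invoke Proposition \ref{prop5} so that the fiber product description $\prod_{g\in[f]^\iota}R_g^{-1}\mathfrak{S}^\iota_{Q,\mathfrak{U},g}$ makes surjectivity transparent.
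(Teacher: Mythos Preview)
Your proposal is correct and follows the same route as the paper, which in fact gives only a one-line proof: ``This directly follows from the definition, noting that $f\in\{f\}^\iota$.'' The single observation the paper singles out---that $f$ itself belongs to $\{f\}^\iota=\{f^{\iota^i}\times\iota^j\}$ (take $i=j=l$)---is precisely what makes Proposition~\ref{prop5} yield $R_f(\tilde{\mathfrak G}^\iota_{\mathfrak U,Q})\subset\mathfrak S^\iota_{Q,\mathfrak U,f}$, and you invoke that proposition for the same purpose; your appeal to Corollary~\ref{cor2} is unnecessary here (that corollary compares the distribution-based spectral variety $\mathfrak S^\iota(f)$ of \S6.1 with $\mathfrak S^\iota_g$ for $g=\iota\times f$, which is not what is needed for this step), but dropping it does not affect your argument.
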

This is directly follows from the definition, noting that $f\in\set{f}^\iota$. 

\begin{remark}\label{big2}
Generally, $\mathfrak{G}^\iota$ is NOT the eigenvariety parameterizing all $\iota$-invariant Hecke eigensystems. Actually, it parameterizes those $\theta$ such that, as a one-dimensional subspace of $H(S_G(K), \mathcal{D}_\lambda)$, $\iota*$ maps $V_\theta$ to itself, as the set $A$ in the proof of Proposition \ref{induction1}.
\end{remark}

\subsection{Twisted eigenvariety}
Now fix $K^p$, for any $f\in R_{\mathcal{S},p}$, consider the morphism 
$R_{\iota\times f}: \mathfrak{Z}^\iota \rightarrow \mathfrak{X}^\iota\times \mathbb{A}^1_{rig}$ defined as in \S6.3.7, 
and define
\begin{equation}
\tilde{\mathfrak{E}}^\iota=\prod_f (R_{\iota\times f})^{-1}(\mathfrak{S}^\iota(f)),
\end{equation}
where the product is running over admissible Hecke operators $f$.

\begin{thm}[twisted eigenvarities]\label{mainthm}
Given $K^p$ as above, there is a subvariety $\mathfrak{E}_{K^p}^\iota$ of $\mathfrak{E}_{K^p}$, satisfying:
 \begin{itemize}
\item[$(a)$] For any $(\lambda,\theta)\in\mathfrak{E}_{K^p}(\overline{\mathbb{Q}}_p)$, $(\lambda,\theta)$ is in $\mathfrak{E}^\iota_{K^p}(\overline{\mathbb{Q}}_p)$ if and only if $\theta$ is $\iota$-invariant and $m^{\dagger,\iota}_{G,0}(\theta,\lambda)\neq0$.
\item[$(b)$] Every irreducible component of $\mathfrak{E}^\iota_{K^p}$ projects surjectively onto a Zariski dense subset of $\mathfrak{X}^\iota$.
\item[$(c)$] $\mathfrak{E}_{K^p}^\iota$ is equidimensional with the same dimension to $\mathfrak{X}^\iota$,  and every irreducible component is arithmetic.
\end{itemize}
\end{thm}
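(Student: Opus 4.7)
The plan is to define $\mathfrak{E}^\iota_{K^p}$ as the scheme-theoretic image, in $\mathfrak{E}_{K^p}$, of the rigid space $\tilde{\mathfrak{E}}^\iota$ introduced in \S6.4 under the morphism $\mathfrak{i}$ of (\ref{i2}). The first step is to check that this makes sense. By Corollary \ref{cor2}, $\mathfrak{S}^\iota(f)\subset\mathfrak{S}^\iota_{\iota\times f}$ for every admissible $f$, so $R_{\iota\times f}^{-1}(\mathfrak{S}^\iota(f))\subset R_{\iota\times f}^{-1}(\mathfrak{S}^\iota_{\iota\times f})$ sits inside the defining piece of $\tilde{\mathfrak{G}}^\iota$ attached to $\iota\times f$. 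Taking the product over all admissible $f$ gives $\tilde{\mathfrak{E}}^\iota\subset\tilde{\mathfrak{G}}^\iota$, so $\mathfrak{i}$ restricts to a morphism $\tilde{\mathfrak{E}}^\iota\to\mathfrak{G}^\iota\subset\mathfrak{E}_{K^p}$, and the image is the desired $\mathfrak{E}^\iota_{K^p}$. The spectral pieces patch over the affinoid cover of $\mathfrak{X}^\iota$ exactly as in \cite{Xiang}, so local finiteness of $R_{\iota\times f}:\mathfrak{E}^\iota_{K^p}\to\mathfrak{S}^\iota(f)$ is inherited.

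For (a), I would translate the defining condition of $\tilde{\mathfrak{E}}^\iota$: $(\lambda,\tilde\theta)\in\tilde{\mathfrak{E}}^\iota$ iff for every admissible $f$ the scalar $\tilde\theta(\iota\times f)^{-1}$ is a root of $P^\dagger_{G,0}(\iota\times f,\lambda,X)$, i.e.\ an eigenvalue of $\iota\times f$ on $V^{\dagger,\lambda}_{G,0}(K^p)$. Pushing through $\mathfrak{i}$, this means the restriction $\theta=\tilde\theta|_{R_{\mathcal{S},p}}$ is $\iota$-invariant and appears in the semisimplification of $V^{\dagger,\lambda}_{G,0}(K^p)$ as an $R_{\mathcal{S},p}$-character. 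Using Corollary \ref{cor1} (which identifies the multiplicities of $(J^\dagger_{G,0}(\lambda))'_{K^p}$ with those of $I^\dagger_{G,0}(\lambda)$) and formula (\ref{mult}), the existence of such an extension $\tilde\theta$ is equivalent to $m^{\dagger,\iota}_{G,0}(\theta,\lambda)\neq 0$. This establishes (a).

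For (b) and (c), I plan to exploit the locally finite projections $R_{\iota\times f}:\mathfrak{E}^\iota_{K^p}\to\mathfrak{S}^\iota(f)$. Each $\mathfrak{S}^\iota(f)$ is a Fredholm hypersurface over $\mathfrak{X}^\iota$, hence equidimensional of dimension $\dim\mathfrak{X}^\iota$ with every component surjecting onto $\mathfrak{X}^\iota$. This already gives the upper bound $\dim\mathfrak{E}^\iota_{K^p}\le\dim\mathfrak{X}^\iota$ and reduces the remaining content to showing that every irreducible component of $\mathfrak{E}^\iota_{K^p}$ meets a Zariski dense set of arithmetic points lying over $\mathfrak{X}^\iota$. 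For this I would use the density of arithmetic weights $\lambda=\lambda^{alg}\epsilon$ with $\lambda^{alg}\in\Upsilon$ inside $\mathfrak{X}^\iota$, Proposition \ref{effective} (which makes $e_GI^\dagger_{G,0}$ effective), and Corollary \ref{arithmetic} (the classicity identity $m^{\iota,cl}_{G,0}(\theta,\lambda)=m^{\iota,\dagger}_{G,0}(\theta,\lambda)$ at non-critical arithmetic points). The standard argument as in \cite[\S5.4]{Urban}, transplanted to the twisted setting: an irreducible component $\mathfrak{Z}$ surjects locally finitely onto a Zariski open in some $\mathfrak{S}^\iota(f)$; on this open, pick a small enough affinoid so the slopes are bounded by $h$, then use the density of arithmetic $\lambda$ with $v_p(N^\iota(\lambda,t))>h$ (hence non-critical) to exhibit a Zariski dense set of classical points on $\mathfrak{Z}$. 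Dimension count then forces $\dim\mathfrak{Z}=\dim\mathfrak{X}^\iota$ and arithmeticity.

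The main obstacle is the last density step in (c). The delicate point is producing, for each irreducible component of $\mathfrak{E}^\iota_{K^p}$, enough classical arithmetic eigensystems $\theta$ with $m^{\iota,cl}_{G,0}(\theta,\lambda)\neq 0$ whose image points lie Zariski densely; this requires combining the non-critical slope condition (controlled by $N^\iota(\lambda,t)$) with the congruence of Proposition \ref{prop3} and the observation (\ref{ep}) relating $m^{\iota,cl}_{G,0}$ to the twisted Euler--Poincar\'e characteristic, so that the classical multiplicities we see are truly nonzero. Once this is in place, the equidimensionality and the surjectivity in (b) drop out formally from the Fredholm structure of each $\mathfrak{S}^\iota(f)$.
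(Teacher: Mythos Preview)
Your overall architecture matches the paper's: define $\mathfrak{E}^\iota$ as $\mathfrak{i}(\tilde{\mathfrak{E}}^\iota)$, then argue (a) pointwise and (b),(c) via dimension and density. But there is a genuine gap in your treatment of (a), and a related imprecision in (b),(c).

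For (a), the sentence ``Pushing through $\mathfrak{i}$, this means the restriction $\theta=\tilde\theta|_{R_{\mathcal{S},p}}$ \ldots\ appears in the semisimplification of $V^{\dagger,\lambda}_{G,0}(K^p)$'' hides the whole difficulty. Membership $(\lambda,\tilde\theta)\in\tilde{\mathfrak{E}}^\iota$ only says that for each admissible $f$ separately, $\tilde\theta(\iota)\theta(f)$ equals $\zeta_f\,\theta_{\sigma_f}(f)$ for \emph{some} root of unity $\zeta_f$ and \emph{some} constituent $\sigma_f$ of $V^{\dagger,\lambda}_{G,0}(K^p)$, with $\sigma_f$ a priori depending on $f$. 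Nothing forces these to come from a single $\sigma$, so you cannot yet conclude that $\theta$ itself occurs. The paper resolves this (adapting \cite[Proposition~5.2.3]{Urban}) by working in the finite-dimensional piece $W=V^{\le h}$, choosing generators $f_1,\dots,f_r$ of the image of $R_{\mathcal{S},p}$ in $\mathrm{End}(W)$, picking $R$ with $v_p(R)$ larger than all differences of the finitely many relevant eigenvalues, and forming the single admissible operator $f=u_t(1+Rh_r)$ with $h_{i+1}=f_{i+1}(1+Rh_i)$. Matching $\tilde\theta(\iota\times f)$ with one eigenvalue of $\iota\times f$ on a single $V_{\tilde\sigma_i}$ then forces, by a valuation cascade, $\theta(f_i)=\theta_\sigma(f_i)$ for all $i$, hence $\theta=\theta_\sigma$ and $m^{\iota,\dagger}_{G,0}(\theta,\lambda)\neq 0$. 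This step is essential and is absent from your argument.

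For (b),(c), note that $R_{\iota\times f}$ is a map out of $\tilde{\mathfrak{E}}^\iota$ (or $\mathfrak{Z}^\iota$), not out of $\mathfrak{E}^\iota$: after applying $\mathfrak{i}$ you have lost the value $\tilde\theta(\iota)$, so ``$R_{\iota\times f}:\mathfrak{E}^\iota_{K^p}\to\mathfrak{S}^\iota(f)$'' is not defined. The paper instead obtains the dimension upper bound by embedding $\mathfrak{E}^\iota$ in the bigger space $\mathfrak{G}^\iota$ (built from the full eigenvariety machinery of \cite{Xiang}) and using the locally finite map $R_f:\mathfrak{G}^\iota\to\mathfrak{S}^\iota_f$; this is precisely the workaround announced in the introduction, since the twisted distributions are not pseudo-representations and Urban's ``second construction'' is unavailable. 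Your direct route via $\mathfrak{S}^\iota(f)$ can be salvaged if you keep everything upstairs on $\tilde{\mathfrak{E}}^\iota$ and use that $\mathfrak{i}$ is finite, but as written it conflates the two spaces.
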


\begin{proof}
Define $\mathfrak{E}^\iota:=\mathfrak{E}^\iota_{K^p}$ to be the image of $\tilde{\mathfrak{E}}^\iota$ under $\mathfrak{i}$. Its underlying topological space $\mathfrak{E}^\iota(\overline{\mathbb{Q}}_p)$ is given by the image $ \mathfrak{i}(\tilde{\mathfrak{E}^\iota}(\overline{\mathbb{Q}}_p))$. Firstly we show $\mathfrak{E}^\iota(\overline{\mathbb{Q}}_p)\subset\mathfrak{E}_{K^p}(\overline{\mathbb{Q}}_p)$ and the part $(a)$ by modifying the proof of \cite[Proposition 5.2.3]{Urban}. As in \S5.1, write 
\begin{equation}\label{decomp}
V=V^{\dagger, \lambda}_{G,0}(K^p)=\bigoplus_{\sigma}\bigoplus_{i=1}^{l} V_{\tilde\sigma_i}^{m_{G,0,i}^{\iota,\dagger}(\sigma,\lambda)},
\end{equation}
where $\sigma=\pi^{K^p}$ is running over all $\iota$-invariant finite slope representations of $\mathcal{H}_p(K^p)$ appearing in the distribution $e_{G}I^{\dagger}_{G,0}(\iota\times f,\lambda)'_{K^p}$. Given $(\lambda, \tilde{\theta})\in\tilde{\mathfrak{E}^\iota}(L)$, fix $t\in T^{++}$, set $h=v_p(\tilde{\theta}(u_t))$ and 
$$W=V^{\leq h}.$$ 
$\mathcal{H}_p(K^p)$ acts on $W$ since $R_{\mathcal{S},p}$ is in its center. Since every $\sigma$ appearing in $V$ is $\iota$-invariant, that $^\iota\mathcal{H}_p(K^p)$ acts on $W$. Let $h_W$ be the image of $R_{S,p}\rightarrow End_L(W)$. It is finitely generated by the image of finitely many elements $\{f_1,\cdots f_r\}$ in $R_{\mathcal S,p}$. Let $\Omega$ be the set consisting of $\tilde{\theta}(u_t), \tilde{\theta}(f_i)$, and all eigenvalues of $u_t$, $\iota\times u_t$, $f_i$, $\iota\times f_i$ on $W$. Now let $R$ be a number such that for any $\alpha, \alpha' \in \Omega$, $v_p(\alpha-\alpha')\leq v_p(R)$, define operators $h_1=f_1$, $h_{i+1}=f_{i+1}(1+Rh_i)$ and $f=u_t(1+Rh_r)$.

\vspace{1pc}
Since $(\lambda, \tilde{\theta})\in\tilde{\mathfrak{E}^\iota}(L)$, there is $0\neq w_f\in V$ and $\tilde\sigma_i$ appeaing in $V$, such that
\begin{equation}
\tilde{\theta}(\iota\times f)w_f=\sigma({\iota\times f})w_f.
\end{equation}
in particular, $w_f$ is an eigenvector of $\sigma(\iota\times u_t)$ and $\sigma(\iota)$. Denote their eigenvalue by $a_f$ and $b_f$ respectively. Since $\iota$ is of finite order, $b_f$ is a unit. Write $\theta=\sigma|_{R_{S,p}}$, then $(\lambda,\theta)\in\mathfrak{E}_{K^p}(\overline{\mathbb{Q}}_p)$, we want to show $\mathfrak{i}(\tilde{\theta})=\theta$. 

\vspace{1pc}
Indeed, $v_p(b_f)+v_p(\theta(u_t))+v_p(\theta(1+Rh_r))=v_p(\tilde{\theta}(\iota))+v_p(\tilde{\theta}(u_t))+v_p(\tilde{\theta}(1+Rh_r))$. Since $b_f$ and $\tilde{\theta}(\iota)$ are units, by the setting of $R$, $v_p(\theta(u_t))=v_p(\tilde{\theta}(u_t))$. This implies that $\sigma$ actually appears in $W$. On the other hand, $(a_f-\tilde{\theta}(\iota\times u_t))\theta(1+Rh_r)=\tilde{\theta}(\iota\times u_t)(\tilde{\theta}(1+Rh_r)-\theta(1+Rh_r))$. This implies $v_p(a_f-\tilde{\theta}(\iota\times u_t))>v_p(R)$. So $a_f=\tilde{\theta}(\iota\times u_t)$ and $\theta(h_r)=\tilde{\theta}(h_r)$. Repeating the process, we have $\tilde{\theta}(f_i)=\theta(f_i)$ for all $f_i$. Therefor $\tilde{\theta}|_{R_{\mathcal{S},p}}=\theta$ and $\mathfrak{i}(\lambda, \tilde{\theta})=(\lambda,\theta)\in\mathfrak{E}(L)$. In particular $\theta$ is $\iota$-invariant. By our construction of $\theta$ and formula (\ref{mult}), we have $m^{\dagger,\iota}_{G,0}(\theta,\lambda)\neq0$.

\vspace{1pc}
Now we prove the other direction of $(a)$. If $V_i:=V_{\tilde\sigma_i}$ appearing in (\ref{decomp}), let $V_i=\oplus_{\zeta} V_i[\zeta]$ be the eigen decomposition of $V_i$ under $\iota$, then $^\iota R_{S,p}$ acts on each $V_i[\zeta]$. Let $(\lambda, \theta)\in\mathfrak{E}(\overline{\mathbb{Q}}_p)$ be $\iota$-invariant. If $m^{\iota,\dagger}_{G,0}(\theta, \lambda)\neq 0$, there is some $V_i$ such that $V_i[\theta]\neq0$ as a $R_{S,p}$-module. In particular, $\theta$ appears in some $V_i[\zeta]$. Define $\tilde{\theta}$ be the extension of $\theta$ to $^\iota R_{S,p}$ by setting $\tilde{\theta}(\iota)=\zeta$. It is then clear that $(\lambda, \tilde{\theta})\in\tilde{\mathfrak{E}}^\iota(\overline{\mathbb{Q}}_p)$ and $\mathfrak{i}(\lambda, \tilde{\theta})=(\lambda, \theta)$.

\vspace{1pc}
With $(a)$, Theorem \ref{big} and Remark \ref{big2} imply that 
\begin{equation}\label{sitting}
\mathfrak{E}^\iota(\overline{\mathbb{Q}}_p)\subset\mathfrak{G}^\iota(\overline{\mathbb{Q}}_p)
\end{equation}

By construction, for any $f\in R_{\mathcal{S},p}$, there is a commutative diagram:
\begin{equation}\begin{array}[c]{ccccc}
\tilde{\mathfrak{E}}^\iota_{K^p}(\overline{\mathbb{Q}}_p)&\stackrel{\mathfrak{i}}{\rightarrow}&\mathfrak{E}^\iota_{K^p}(\overline{\mathbb{Q}}_p)&\hookrightarrow&\mathfrak{G}^\iota_{K^p}(\overline{\mathbb{Q}}_p)\\
\downarrow\scriptstyle{R_{\iota\times f}}& &\downarrow\scriptstyle{R_f}&&\downarrow\scriptstyle{R_f}\\
\mathfrak{S}^\iota(f)(\overline{\mathbb{Q}}_p)&&(\mathfrak{X}^\iota\times\mathbb{A}_1^{rig})\cap\mathfrak{S}_f(\overline{\mathbb{Q}}_p)&\hookrightarrow&\mathfrak{S}^\iota_f(\overline{\mathbb{Q}}_p)\\
\searrow&&\swarrow&&\downarrow\\
&\mathfrak{X}^\iota&\hookrightarrow&&\mathfrak{W}^\iota=\mathfrak{X}^\iota\\
&\searrow&&\swarrow&\\
&&\mathfrak{X}&&
\end{array}\end{equation}

Consider the first column of the diagram. $R_{\iota\times f}$ is locally finite, and $\mathfrak{S}_f$ is constructed by the Fredholm power series. So by the same argument of \cite[Theorem 5.3.7]{Urban}, $R_{\iota\times f}$ is finite surjective. Now Proposition \ref{prop4} and Corollary \ref{cor2} enable us to run an argument as in \cite[Corollary 5.3.8]{Urban}, so the composition of the first two arrows in the first column is surjective onto a Zariski dense subset of $\mathfrak{X}^\iota$. Since $\mathfrak{i}$ keeps the first coordinate $\lambda$, that the projection from $\mathfrak{E}^\iota(\overline{\mathbb{Q}}_p)$ to $\mathfrak{X}^\iota$ in the second column is also Zariski surjective. So $\dim (\mathfrak{E}^\iota)\geq\dim (\mathfrak{X}^\iota)$. However, (\ref{sitting}) and Theorem \ref{big} imply that $\dim (\mathfrak{E}^\iota)\leq\dim(\mathfrak{G}^\iota)=\dim(\mathfrak{W}^\iota)\leq\dim(\mathfrak{X}^\iota)$. So we have $\dim (\mathfrak{E}^\iota)=\dim (\mathfrak{X}^\iota)$, $\mathfrak{X}^\iota=\mathfrak{W}^\iota$ ( This is the reason of the third row of the diagram) and in particular the $R_f$ in the third column is also surjective. Together with Proposition 6.4.8, an argument as in \cite[Corollary 5.3.8]{Urban} again shows that $R_f$ in the second column is also locally finite and surjective. 

\vspace{1pc}
Now since $\dim(\mathfrak{G}^\iota)=\dim(\mathfrak{X}^\iota)$, denote by $\mathfrak{G}^{\iota,c}$ the union of codimension $0$ arithmetic components (i.e. irreducible components contains a Zariski dense subset of arithmetic points) of $\mathfrak{G}^\iota$. Then the same argument as in \cite[Proposition 8.9]{Xiang} shows that 
\begin{equation}\label{rigid}
\mathfrak{G}^{\iota,c}(\overline{\mathbb{Q}}_p)=\mathfrak{E}^\iota(\overline{\mathbb{Q}}_p)\end{equation} 
This proves $(b)$ and $(c)$ of the theorem. 
\end{proof}

\begin{remark}
By the last observation (\ref{rigid}) in the proof above, throughout this section we can work on the $p$-adic spaces $\mathfrak{E}^\iota(\overline{\mathbb{Q}}_p)$ and finally define the rigid analytic structure on $\mathfrak{E}^\iota$ by the one induced from $\mathfrak{G}^{\iota,c}$. 
\end{remark}

\vspace{3pc}
\section{THE CASE OF $Gl_n$}
\indent 
In this section, we study the case of $G=Gl_n$ over $\mathbb{Q}$. Fix the pair $(B, T)$, where $B$ is the subgroup of upper triangular matrices and $T$ the diagonal subgroup. Then 
\begin{equation}
T^{+}=\set{diag(t_1,\cdots,t_n)\mid v_p(t_1)\geq\cdots\geq v_p(t_n)}
\end{equation}
\begin{equation}
T^{++}=\set{diag(t_1,\cdots,t_n)\mid v_p(t_1)>\cdots> v_p(t_n)}
\end{equation}
Define $g^\iota:=j{(^tg^{-1}})j^{-1}$ for any $g\in G$, where $j=(\delta_{i,n+1-j})_{1\leq i, j\leq n}$ if $n$ is odd; and $$j=\begin{pmatrix} 0 & (\delta_{i,k+1-j})_{1\leq i, j\leq k}\\ -(\delta_{i,k+1-j})_{1\leq i, j\leq k} & 0 \end{pmatrix}$$ if $n=2k$ is even. It is easy to check that $\iota$ is an automorphism of $G$ of order $2$, and $\iota$ stabilizes $(B, T, I_m)$ and $T^+$, $T^{++}$. 

\vspace{1pc}
Let $\pi$ be an automorphic representation of $G$. $\pi$ is $\iota$-invariant if and only if $\pi$ is self-dual in the usual sense. 

\subsection{ $I^\dagger_{G,0}(\iota\times f,\lambda)$ is non-trivial}
Consider the $\iota$-twisted distribution $I^\dagger_{G,0}(\iota\times f,\lambda)$ defined for $G$ and  $\iota$ as in \S5.2, we prove it is non-trivial by computing $q_{G,\iota}$ as in Proposition \ref{prop2}. 
\begin{prop}
Let $\lambda\in \mathfrak{X}^\iota$ be an arithmetic regular dominant weight in $\Upsilon$. Let $\pi$ be a self-dual finite slope cuspidal representation of weight $\lambda$. Assume that $\pi$ is non-critical with respect to $\lambda$. Then $q_{G,\iota}\neq0$. 
\end{prop}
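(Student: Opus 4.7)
The plan is to evaluate $q_{G,\iota}=L(\iota,\pi,\lambda)$ directly using the Vogan--Zuckerman description of $\pi_\infty$ together with formula (5.2.18), and to verify non-vanishing by an explicit Lie algebra computation for $\mathfrak{gl}_n(\mathbb{R})$.

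Because $\lambda$ is regular dominant and $\pi$ is cuspidal cohomological of weight $\lambda$, Theorem 5.2.4 and Corollary 5.2.5 give $\pi_\infty\cong A_{\mathfrak{b}}(w_0\lambda)$ for some $\theta$-stable Borel $\mathfrak{b}=\mathfrak{l}+\mathfrak{u}$ with $\mathfrak{l}$ a $\theta$-stable Cartan. Self-duality of $\pi$ implies $\pi_\infty^\iota\cong\pi_\infty$, so after $G(\mathbb{R})$-conjugation we may take $\mathfrak{b}$ (hence $\mathfrak{l}$) to be $\iota$-stable; in particular $\iota$ preserves the decomposition $\mathfrak{l}=(\mathfrak{l}\cap\mathfrak{k})\oplus(\mathfrak{l}\cap\mathfrak{p})$. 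Since the abelian subalgebra $\mathfrak{l}\cap\mathfrak{k}$ acts trivially on $\wedge^{\bullet}(\mathfrak{l}\cap\mathfrak{p})$, formula (5.2.18) reduces to
\[
q_{G,\iota}=(-1)^{R_{\mathfrak{b}}}\det(I-\iota\mid(\mathfrak{l}\cap\mathfrak{p}\cap\mathfrak{m}_G)^{*}),
\]
so the task is to show $\iota$ has no $+1$-eigenvalue on $\mathfrak{l}\cap\mathfrak{p}\cap\mathfrak{m}_G$.

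Because $\pi$ is cuspidal, $\pi_\infty$ is essentially tempered, which in the V--Z dictionary forces $\mathfrak{l}$ to be a fundamental Cartan rather than the split one. For the involution $\iota(g)=j\,{}^{t}g^{-1}j^{-1}$ of Section 7, I would take $\mathfrak{l}$ to be the $\iota$-stable fundamental Cartan built from rotation--dilation blocks on each pair of indices $\{i,n+1-i\}$ (plus the middle coordinate for $n$ odd). In this basis, $\mathfrak{l}\cap\mathfrak{p}$ is spanned by the scalar-per-pair vectors $E_{ii}+E_{n+1-i,n+1-i}$ (and $E_{(n+1)/2,(n+1)/2}$ for $n$ odd), while $\mathfrak{l}\cap\mathfrak{k}$ is spanned by the skew rotation generators $E_{i,n+1-i}-E_{n+1-i,i}$. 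Using $d\iota|_{\mathfrak{p}}=-\mathrm{Ad}(j)|_{\mathfrak{p}}$ (which follows from $\iota=\mathrm{Ad}(j)\circ\theta$ together with $\theta|_{\mathfrak{p}}=-\mathrm{id}$), a direct matrix computation shows that $\mathrm{Ad}(j)$ fixes each scalar-per-pair vector, so $\iota$ acts as $-\mathrm{id}$ on all of $\mathfrak{l}\cap\mathfrak{p}$. Restricting to the trace-zero part $\mathfrak{l}\cap\mathfrak{p}\cap\mathfrak{m}_G$, whose dimension equals $\lceil n/2\rceil-1$, gives
\[
q_{G,\iota}=(-1)^{R_{\mathfrak{b}}}\cdot 2^{\lceil n/2\rceil-1}\neq 0.
\]

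The main obstacle is justifying the use of the fundamental rather than the split Cartan. For the split diagonal Cartan, $\mathrm{Ad}(j)$ sends $\mathrm{diag}(x_1,\dots,x_n)$ to $\mathrm{diag}(x_n,\dots,x_1)$; its $(-1)$-eigenspace inside $\mathfrak{l}\cap\mathfrak{p}$ has dimension $\lfloor n/2\rfloor$, so $\iota$ would have $+1$-eigenvalues there and the determinant would vanish. Ruling this out rests on the temperedness of cuspidal archimedean components for $Gl_n$ (Harish-Chandra--Wallach), which in the V--Z classification restricts $\mathfrak{l}$ to the most compact $\theta$-stable Cartan. The non-criticality hypothesis on $\pi$ is used to ensure $\pi$ genuinely contributes to the cuspidal cohomology computing $q_{G,\iota}$, so that the Lefschetz number is indeed realized by this specific $\pi_\infty$.
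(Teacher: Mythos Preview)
Your approach is the same as the paper's in spirit: the paper simply cites the Barbasch--Speh computation of the twisted Lefschetz number, and you are carrying out that computation by hand via the Vogan--Zuckerman description and formula (5.2.18). Your explicit identification of the fundamental Cartan and the verification that $d\iota=-\mathrm{Ad}(j)$ acts by $-1$ on each scalar-per-pair vector $E_{ii}+E_{n+1-i,n+1-i}$ are correct, and this yields nonvanishing as required. Your value $2^{\lceil n/2\rceil-1}$ differs from the paper's $2^{\lceil n/2\rceil}$ only because you pass to $\mathfrak{m}_G=\mathfrak{sl}_n$ while the cited computation is done in $\mathfrak{gl}_n$; this does not affect the proposition.

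There is, however, a genuine gap in your justification for selecting the fundamental Cartan. The assertion ``$\pi$ cuspidal implies $\pi_\infty$ essentially tempered'' is the archimedean Ramanujan conjecture and is \emph{not} a theorem of Harish-Chandra--Wallach; for general groups it is open. What is true for $GL_n$ specifically is that cuspidal automorphic representations are globally generic (Shalika, Piatetski-Shapiro), so $\pi_\infty$ is generic; and among the unitary cohomological representations $A_{\mathfrak{b}}(w_0\lambda)$ with regular $\lambda$, the generic one is the one attached to the fundamental Cartan. Alternatively one may invoke Clozel's archimedean purity for cohomological cusp forms on $GL_n$, or simply Speh's classification (which the paper itself cites in the proof of the next corollary). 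You should replace the temperedness step by one of these arguments; as written, the split $\theta$-stable Borel is not ruled out and, as you yourself observe, the determinant would then vanish.
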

\begin{proof}
It is a computation given by Barbasch and Speh in \cite[VI.3]{BS} that  
\begin{equation}
\begin{aligned}
L(\iota,A_\mathfrak{b}(\lambda),\lambda)=&(-1)^{R_\mathfrak{b}}\sum_{i}(-1)^itr(\iota |Hom_{\mathfrak{l\cap k}}(\wedge^i(\mathfrak{l\cap p}), \mathbb{C}))\\
=&(-1)^{\lceil\frac{n}{2}\rceil}2^{\lceil\frac{n}{2}\rceil}\neq 0
\end{aligned}
\end{equation}
\end{proof}

\vspace{1pc}
Since $\iota$ is of order $2$, for $?=cl$ or $\dagger$, we define $e_{G,\iota}I^{?}_{G,0}(\iota\times f,\lambda)$ as in Remark \ref{ob3}. Considering the last paragraph in \S5.5, we have 
\begin{cor}\label{epcor}
Let $\lambda\in \mathfrak{X}^\iota$ be an arithmetic regular dominant weight in $\Upsilon$. Let $\pi$ be a self-dual finite slope cuspidal representation of weight $\lambda$. Assume that $\pi$ is non-critical with respect to $\lambda$. Then $m_{EP}^\iota(\sigma,\lambda)\neq 0$.
\end{cor}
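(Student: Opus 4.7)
The plan is to deduce this corollary directly from the proposition just proved together with two identities from Sections~5.2 and~5.5. First I would invoke the previous proposition to conclude that $q_{G,\iota}=(-1)^{\lceil n/2\rceil}2^{\lceil n/2\rceil}\neq 0$. Since $\iota$ has order two I am in the setting of Remark~\ref{ob3}, so $e_{G,\iota}=\mathrm{sign}(q_{G,\iota})$ and hence $e_{G,\iota}\,q_{G,\iota}=|q_{G,\iota}|>0$. By Proposition~\ref{prop2} the distribution $e_{G,\iota}\,I^{cl}_{G,0}(\iota\times f,\lambda)$ is then effective, so every multiplicity $m^{\iota,cl}_{G,0,i}(-,\lambda)$ is a nonnegative integer.

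Next, by identity (\ref{ep}) I have $m_{EP}^\iota(\pi,\lambda)=m^{\iota,cl}_{G,0}(\pi,\lambda)$, so it is enough to exhibit a nonzero classical multiplicity. Unfolding the spectral decomposition of $I^{cl}_{G,0}$ carried out at the end of Section~5.2.1, this multiplicity reads
\begin{equation*}
m^{\iota,cl}_{G,0}(\pi,\lambda)=e_{G,\iota}\,q_{G,\iota}\sum_{\pi^{\mathrm{aut}}}m_{\mathrm{cusp}}(\pi^{\mathrm{aut}})\,m\!\bigl(\pi,(\pi^{\mathrm{aut}}_f)^{I}\bigr),
\end{equation*}
the sum ranging over cuspidal automorphic representations $\pi^{\mathrm{aut}}\subset L^{2}_{\mathrm{cusp}}(\xi_\lambda)$ whose Iwahori-fixed finite part contains $\pi$ as an $\mathcal{H}_p$-constituent.

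The hypothesis that $\pi$ is a self-dual finite slope cuspidal representation of weight $\lambda$ places $\pi$ in $H^{*}_{fs}(\tilde S_G,\mathcal{D}_\lambda(L))$ as a cuspidal contribution; combined with the non-criticality assumption, Proposition~\ref{classicity} identifies this overconvergent contribution with a classical one, producing at least one cuspidal $\pi^{\mathrm{aut}}$ of which $\pi$ is a $p$-stabilization and hence a strictly positive summand. Every summand being a nonnegative integer and $e_{G,\iota}\,q_{G,\iota}>0$, no cancellation can occur and I conclude $m_{EP}^\iota(\pi,\lambda)>0$.

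The real work has already been done in the preceding proposition, through the Barbasch--Speh computation of $(\mathfrak{m}_G,K_\infty)$-cohomology of $A_{\mathfrak{b}}(w_0\lambda)$; once $q_{G,\iota}\neq 0$ is in hand, the corollary is pure bookkeeping, since effectivity of $e_{G,\iota}\,I^{cl}_{G,0}$ converts this Lefschetz nonvanishing into the nonvanishing of the classical multiplicity of any $\pi$ that genuinely contributes, and (\ref{ep}) then transports the statement to the twisted Euler--Poincar\'e multiplicity. The only point I would want to double-check is that the non-criticality hypothesis is really what is needed to invoke classicity in the cuspidal part, but this is exactly how Proposition~\ref{classicity} is formulated, so I anticipate no substantive obstacle.
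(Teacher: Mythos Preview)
Your approach is close to the paper's and the core idea is the same: both unfold $m_{EP}^\iota$ via the spectral expansion of $I^{cl}_{G,0}$ in \S5.2 and use the Barbasch--Speh computation $q_{G,\iota}=(-1)^{\lceil n/2\rceil}2^{\lceil n/2\rceil}\neq 0$. The paper then writes
\[
m_{EP}^\iota(\pi,\lambda)=q_{G,\iota}\sum_{\rho_\infty=A_{\mathfrak b}(\lambda)} m_{\mathrm{cusp}}(\rho)\,\mathrm{tr}\bigl(\iota\mid \mathrm{Hom}_{\mathcal H_p(K^p)}(\pi_f^{K^p},\rho_f)\bigr)
\]
and invokes the multiplicity one theorem for $GL_n$ to collapse this to a single nonzero term.

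Your effectivity route is fine in principle, but there is a genuine gap at the key step. The sum you extract from \S5.2 ranges only over \emph{self-dual} cuspidal automorphic $\pi^{\mathrm{aut}}$ (this is how the twisted trace in (5.2.20) is organized). You claim a strictly positive summand by producing some cuspidal $\pi^{\mathrm{aut}}$ with $\pi\subset(\pi^{\mathrm{aut}}_f)^I$, but you never check that this $\pi^{\mathrm{aut}}$ is itself self-dual, so it may not occur in the sum at all. The fix is exactly the paper's: since $\pi\cong\pi^\iota$ is also a $p$-stabilization of $(\pi^{\mathrm{aut}})^\iota$, multiplicity one for $GL_n$ forces $\pi^{\mathrm{aut}}\cong(\pi^{\mathrm{aut}})^\iota$. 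Your appeal to Proposition~\ref{classicity} is misplaced here---the existence of $\pi^{\mathrm{aut}}$ is already part of the hypothesis ``cuspidal'', and non-criticality enters only through (\ref{sum}) and (\ref{ep}); what you actually need at this point is multiplicity one. Once that is inserted, your argument and the paper's are equivalent.
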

\begin{proof}
By the definition (\ref{epdef}), combining (\ref{sum}) and (\ref{ep}), we compute as in \S5.2:
\begin{equation} 
m_{EP}^\iota(\pi,\lambda)=\sum_{\rho}m_{cusp}(\rho)L(\iota,\rho,\lambda)tr(\iota | Hom_{\mathcal{H}_p(K^p)}(\pi_f^{K^p}, \rho_f)).
\end{equation}
Since $G=Gl_n$, the cohomological packet at infinity for $\lambda$ has only one element, which is of the form $A_\mathfrak{b}(\lambda)$ as in Theorem 5.2.3 (see also \cite{Speh}). So
\begin{equation}
\begin{aligned}
&\ \ \ \  m_{EP}^\iota(\pi,\lambda)\\
&=L(\iota, A_\mathfrak{b}(\lambda),\lambda)\sum_{\rho_\infty=A_\mathfrak{b}(\lambda)}m_{cusp}(\rho)tr(\iota | Hom_{\mathcal{H}_p(K^p)}(\pi_f^{K^p}, \rho_f))\\
&=(-1)^{\lceil\frac{n}{2}\rceil}2^{\lceil\frac{n}{2}\rceil}\sum_{\rho_\infty=A_\mathfrak{b}(\lambda)}tr(\iota | Hom_{\mathcal{H}_p(K^p)}(\pi_f^{K^p}, \rho_f))\\
&=(-1)^{\lceil\frac{n}{2}\rceil}2^{\lceil\frac{n}{2}\rceil}\neq0,
\end{aligned}
\end{equation}
where the last two lines hold since $Gl_n$ admits the multiplicity one theorem.
\end{proof}

\begin{remark}
Corollary \ref{epcor} implies that the $\iota$-twisted eigenvariety $\mathfrak{E}^\iota_{K^p}$ we constructed in Theorem \ref{mainthm} for $Gl_n$ parameterizes all non-critical self-dual finite slope cuspidal Hecke eigensystems of level $K^p$.
\end{remark}

\subsection{Essentially self-dual representations}
A representation $\pi$ is essentially $\iota$-invariant if there exists an algebraic character $\chi$ of $\mathbb{G}_m$ such that $\chi\circ \det\otimes\pi^\iota\cong \pi$.In case $G=Fl_n$, $\pi$ is essentially $\iota$-invariant if and only if it is essentially self-dual in the usual sense.

\vspace{1pc}
A weight $\lambda\in \mathfrak{X}(L)$ is determined by $n$ $p$-adic characters $\chi_1,\cdots, \chi_n$ of $\mathbb{G}_m(\mathbb{Z}_p)$ such that
\begin{equation}
\lambda: diag(t_1,\cdots,t_n)\mapsto\chi_1(t_1)\cdots\chi_n(t_n)
\end{equation}
An essentially self-dual weight is characterized by $\chi_i\chi_{n+1-i}=\chi_j\chi_{n+1-j}$ for any $1\leq i,j\leq n$. Denote by $\mathfrak{X}^{e}$ the subspace of essentially self-dual weights in $\mathfrak{X}$, then $\dim(\mathfrak{X}^e)=[\frac{n}{2}]+1$. Given a character $\chi$, denote by $\mathfrak{X}^e_\chi$ the subspace of essentially self-dual weights with respect to $\chi$ in $\mathfrak{X}^e$. It  is cut out by the relation $\chi_i\chi_{n+1-i}=\chi_j\chi_{n+1-j}=\chi$, then $\dim(\mathfrak{X}^e_\chi)=[\frac{n}{2}]$.

\vspace{1pc}
We now construct $\mathfrak{E}^e$, an eigenvariety which parameterizes all essentially self-dual finite slope overconvergent Hecke eigensystems of $G$, by applying our method to the group $\tilde{G}=Gl_n\times Gl_1$, with involution $\mu: (g,x)\mapsto (g^\iota, \det(g)x)$. 

\vspace{1pc}
Consider the weight space $\tilde{\mathfrak{X}}=\mathfrak{X}\times \mathfrak{B}^1$. Here $\mathfrak{B}^1$ is the $p$-adic weight space of $Gl_1$, it is the rigid unit ball. Denote by $\tilde{\mathfrak{X}}^\mu$ the $\mu$-invariant subspace of $\tilde{\mathfrak{X}}$. It is easy to check that
\begin{equation}
 \tilde{\mathfrak{X}}^\mu=\{\tilde{\lambda}=(\lambda, \chi) | \lambda\in\mathfrak{X}^e_\chi , \chi\in\mathfrak{B^1}\}.
 \end{equation}
Its first component projects bijectively to $\mathfrak{X}^e$.

\vspace{1pc}
Consider an open compact subgroup  $\tilde{K}_f=K_f\times K^1_f$ of $\tilde{G}(\mathbb{A}_f)$, where $K_f$ is defined as previous sections and $K^1_f$ is a neat open compact subgroup of $Gl_1(\mathbb{A}_f)$. We simply fix $K^1_p=\hat{\mathbb{Z}}$ and normalize the Haar measure on $Gl_1$ such that $meas(\mathbb{Z}_l)=1$ for any finite place $l$. Then 
\begin{equation}
S_{\tilde{G}}(\tilde{K}_f)\cong S_G(K_f)\times (\mathbb{Q}^\times\backslash\mathbb{A}^\times_\mathbb{Q}/\hat{\mathbb{Z}}\mathbb{R}^\times)\simeq S_G(K_f)\times \set{pt}.
\end{equation}
For the group $Gl_1$, it is easy to see that $T_{Gl_1}^{+}=T_{Gl_1}^{++}=T_{Gl_1}(\mathbb{Q}_p)=\mathbb{Q}_p^\times$. So $\mathcal{U}_p(Gl_1)=\mathbb{Z}_p[T^+/T(\mathbb{Z}_p)]=\mathbb{Z}_p[\mathbb{Q}_p^\times/\mathbb{Z}_p^\times]$. This implies that the $p$-adic Hecke algebra for $\tilde{G}$ is given by:
\begin{equation}
\begin{aligned}
\mathcal{H}_p(\tilde{K}^p)&=\mathcal{H}_p(K^p)\times C_c^\infty(\hat{\mathbb{Z}}^p\backslash\mathbb{A}_f^{\times p}/\hat{\mathbb{Z}}^p)\otimes\mathbb{Z}_p[\mathbb{Q}_p^\times/\mathbb{Z}_p^\times]\\
&=\mathcal{H}_p(K^p)\times C_c^\infty(\mathbb{A}_f^{\times }/\hat{\mathbb{Z}},\mathbb{Z}_p)
\end{aligned}
\end{equation}
For any $f\in \mathcal{H}'_{p}(\tilde{K}^p)$ admissible, write $f=(f_G, f_1)$ with $f_G=f_G^p\otimes u_t$, $t\in T^{++}$ and $f_1^p\otimes u_{t'}$ $t'\in\mathbb{Q}_p^\times$.

\vspace{1pc}
For $\tilde{\lambda}=(\lambda,\chi)\in\tilde{\mathfrak{X}}^\mu$ a regular dominant weight, let $\mathbb{V}_{\tilde{\lambda}}$ be the finite dimensional irreducible algebraic representation of $\tilde{G}$ with highest weight $\tilde{\lambda}$ and $\mathcal{D}_{\tilde{\lambda}}$ the local distribution space defined in \S2.3. It is not hard to see
\begin{equation}\label{esstwist}
\mathbb{V}_{\tilde{\lambda}}\cong\mathbb{V}_\lambda\times\chi
\end{equation}
where the right side is understood as the space $\mathbb{V}_\lambda$ together with an action of $Gl_1$ given by multiplying the value of $\chi$ and the isomorphism is given by $\phi\mapsto\phi_1$ such that $\phi_1(g):=\phi(g,1)$ for any $g\in G$. Generally, let $\pi$ be an irreducible representation of $\tilde{G}$, since $Gl_1$ is in the center of $\tilde{G}$, $\pi|_{Gl_1}$ is given by a character $\chi$ and $\pi|_{G}$ is irreducible as well. It is easy to check that $\pi\cong\pi|_{G}\times\chi$, and $\pi$ is $\mu$-invariant if and only if $\pi|_{G}$ is essentially $\iota$-invariant with respect to $\chi$. If $\pi$ is a $\mu$-invariant representation of $\tilde{G}$, as in \S3.3, we can extend $\pi$ to a representation $\tilde{\pi}$ of $\tilde{G}\rtimes\langle\mu\rangle$, then we restrict $\tilde{\pi}$ to $G\rtimes\langle\mu\rangle$. This gives an $\mu$-action on $V_\pi$ such that for any $g\in G$,
\begin{equation}\label{twistrestrict}
\mu\times\chi(\det(g))\pi(g^\iota)=\pi(g)\times\mu
\end{equation}

\vspace{1pc}
Now we can define $\mu$-twisted finite slope character distributions $I^{cl}_{?}(\mu\times f, \tilde{\lambda})$ and $I^\dagger_?(\mu\times f, \tilde{\lambda})$ as $(4.4.1)$, $(4.4.3)$ and \S5.2, where $?=\tilde{G}, (\tilde{G},0), (\tilde{G}, \tilde{M}, 0)$. In particular, $I^{cl}_{\tilde{G}, 0}(\mu\times f, \tilde{\lambda})$ and $I^{\dagger}_{\tilde{G}, 0}(\mu\times f, \tilde{\lambda})$ are essentially effective, as in \S5.4. Moreover, we can compute them explicitly and relate them to the distributions $I^{cl}_{G,0}(\iota\times f_G,\lambda)$ and $I^{\dagger}_{G,0}(\iota\times f_G,\lambda)$ for $G$:
\begin{equation*}
\begin{aligned}
  I^{cl}_{\tilde{G},0}(\mu\times f, \tilde{\lambda})
=&meas(K^p)\tilde{\lambda}(\xi(t,t'))tr(\mu\times f | H^*_{cusp}(S_{\tilde{G}}(\tilde{K}_f), \mathbb{V}_{\tilde{\lambda}}^\vee(\mathbb{C})))\\
=&meas(K^p)\lambda(\xi(t))\chi(t')tr(\mu\times f |H^*_{cusp}(S_{\tilde{G}}(\tilde{K}_f), \mathbb{V}_{\tilde{\lambda}}^\vee(\mathbb{C}))\\
=meas(K^p)&\lambda(\xi(t))\chi(t')\chi^\vee(f_1)tr(\mu\times f_G |H^*_{cusp}(S_G(K_f), \mathbb{V}^\vee_{\lambda}))\\
=&\chi(t')\chi^\vee(f_1)I^{cl}_{{G},0}(\mu\times f_G, {\lambda}),
\end{aligned}
\end{equation*}
where the last second equation follows from \S3.1.8 and (\ref{esstwist}), the action of $\mu$ is given as in $(\ref{twistrestrict})$. In particular, if $\chi$ is trivial, 
\begin{equation}
I^{cl}_{\tilde{G},0}(\mu\times f, \tilde{\lambda})=I^{cl}_{G,0}(\iota\times f_G, \lambda).
\end{equation}
Now Corollary (\ref{limit}) implies that 
\begin{equation}
I^{\dagger}_{\tilde{G},0}(\mu\times f, \tilde{\lambda})=\chi(t')\chi^\vee(f_1)I^{\dagger}_{{G},0}(\mu\times f_G, {\lambda}),
\end{equation}
and if $\chi$ is trivial,
\begin{equation}
I^{\dagger}_{\tilde{G},0}(\mu\times f, \tilde{\lambda})=I^{\dagger}_{G,0}(\iota\times f_G, \lambda).
\end{equation}

\vspace{1pc}
\begin{remark}
Given character $\chi$ and $\lambda\in\mathfrak{X}^e$ which is essentially $\iota$-invariant with respect to $\chi$, inspired by the above computation, we can directly define for $f\in\mathcal{H}_p(K^p)$ that 
\begin{equation}
I^{cl, \chi}_{G,0}(\iota\times f, \lambda):=I^{cl}_{G,0}(\mu_{\chi}\times f, \lambda)
\end{equation}
and
\begin{equation}
I^{\dagger, \chi}_{G,0}(\iota\times f, \lambda):=I^{\dagger}_{G,0}(\mu_{\chi}\times f, \lambda)
\end{equation}
where the lower index in $\mu_\chi$ emphsis that the twisted action $\mu$ on the cohomology spaces are defined according to $\chi$. Just like Proposition \ref{induction1}, one can show that only the trace of representations which are essentially $\iota$-invariant with respect to $\chi$ can contribute to these distributions, and the discussion in \S4-6 works for them as well.
\end{remark}

Now let $\tilde{\lambda}=(\lambda, \chi)$ be an arithmetic weight, $\pi$ a $\mu$-invariant finite slope cuspidal representation of $\tilde{G}$, assume that $\pi|_{Gl_{1}}=\chi$. Let $\pi(\chi^{-1})$ be $\pi$ twisting the inverse of $\chi$ by $Gl_1$, and $\pi_\chi$ be the factorization of $\pi(\chi^{-1})$ to $G\cong\tilde{G}/{Gl_1}$, it is easy to see that $\pi_\chi$ is $\iota$-invariant. Compute by definition, we have 
\begin{equation}
\begin{aligned}
m_{EP}^\mu(\pi,\tilde{\lambda})=m_{EP}^\iota(\pi_\chi,\lambda)\neq0.
\end{aligned}
\end{equation}

\vspace{1pc}
Now by Theorem \ref{mainthm}, we have
\begin{thm}\label{mainthm2}
Assume $G=Gl_n$ there is an eigenvariety $\mathfrak{E}^e\subset\mathfrak{E}$ defined as in Theorem 6.5.1, such that
\begin{itemize}
\item[(a)] there are two projections $p_1: \mathfrak{E}^e\rightarrow\mathfrak{X}^e$ and $p_2: \mathfrak{E}^e\rightarrow\mathfrak{B}^1$, such that $y=(\lambda_y,\theta_y)\in\mathfrak{E}^e(\bar{\mathbb{Q}}_p)$ if and only if $\theta$ is a finite slope overconvergent Heche eigensystem of weight $\lambda_y=p_1(y)$ and is essentially self-dual with respect to $\chi_y=p_2(y)$ with $m^{\dagger,\iota}_{G,0}((\theta\times\chi_y)_{\chi_y},(\lambda\times\chi_y)_{\chi_y})\neq0$ .
\item[(b)] $\mathfrak{E}^e$ is equidimensional of dimension $[\frac{n}{2}]+1$
\item[(c)] For any $\chi\in\mathfrak{B}^1$, set $\mathfrak{E}^e_\chi=p_2^{-1}(\chi)$, then $\mathfrak{E}^e_\chi$ is the eigenvariety parameterizing essentially self-dual Hecke eigensystems of $G$  with respect to $\chi$. In particular, $\mathfrak{E}^e_0=\mathfrak{E}^\iota$.
\end{itemize}
\end{thm}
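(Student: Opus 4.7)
The plan is to apply Theorem \ref{mainthm} to the auxiliary group $\tilde{G} = Gl_n \times Gl_1$ equipped with the Cartan-type involution $\mu$, and then to transfer the resulting twisted eigenvariety to a subvariety of $\mathfrak{E}$ via the correspondence $\tilde\pi \leftrightarrow (\pi_\chi,\chi)$ set up earlier in this section. First I would verify $e_{\tilde G,\mu}\neq 0$: the Barbasch--Speh computation behind Proposition 7.1.1 carries over to $\tilde G$ since the $Gl_1$ factor contributes only one-dimensional cohomology with trivial $\mu$-action, so $q_{\tilde G,\mu}$ is a nonzero integer multiple of $q_{G,\iota}$. By Proposition \ref{effective} applied to $\tilde G$, the distribution $e_{\tilde G,\mu}I^\dagger_{\tilde G,0}(\mu\times f,\tilde\lambda)$ is then effective, and Theorem \ref{mainthm} applied to $(\tilde G,\mu)$ produces a $\mu$-twisted eigenvariety $\tilde{\mathfrak{E}}^\mu$ over $\tilde{\mathfrak{X}}^\mu$, equidimensional of dimension $\dim\tilde{\mathfrak{X}}^\mu = [\frac{n}{2}]+1$, whose points parameterize $\mu$-invariant finite slope overconvergent Hecke eigensystems of $\tilde G$ with nonzero twisted multiplicity.

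Next I would define $\mathfrak{E}^e$ as the image of $\tilde{\mathfrak{E}}^\mu$ under the map sending $((\lambda,\chi),\tilde\theta)$ to $((\lambda,\theta_{\tilde\theta}),\chi)$, where $\theta_{\tilde\theta}$ is the Hecke eigensystem of $G$ obtained from $\tilde\theta$ via the factorization $\tilde\pi = \pi_\chi \times \chi$. Since $\chi$ is recoverable from $\theta_{\tilde\theta}$ via the essential self-duality relation, this image lies naturally inside $\mathfrak{E}$, with $p_1$ and $p_2$ being the projections to the weight coordinate and to $\chi$ respectively. Parts (a) and (b) then follow by translating Theorem \ref{mainthm}(a),(c) through this correspondence: $\tilde\pi$ is $\mu$-invariant if and only if $\pi_\chi$ is essentially $\iota$-invariant with respect to $\chi$, and the identity $m_{EP}^\mu(\tilde\pi,\tilde\lambda) = m_{EP}^\iota(\pi_\chi,\lambda)$ noted in the excerpt translates the multiplicity condition. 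Equidimensionality of part (b) then transfers directly since $\tilde{\mathfrak{X}}^\mu$ projects isomorphically onto its image in $\mathfrak{X}^e\times\mathfrak{B}^1$ of dimension $[\frac{n}{2}]+1$.

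For part (c), the fiber $\mathfrak{E}^e_\chi = p_2^{-1}(\chi)$ corresponds under the above map to the subvariety of $\tilde{\mathfrak{E}}^\mu$ with second weight-coordinate equal to $\chi$, which by definition consists of weights in $\mathfrak{X}^e_\chi$ and eigensystems essentially self-dual with respect to $\chi$; when $\chi$ is trivial, the identity $I^\dagger_{\tilde G,0}(\mu\times f,\tilde\lambda) = I^\dagger_{G,0}(\iota\times f_G,\lambda)$ computed in the excerpt identifies this fiber with the $\iota$-twisted eigenvariety $\mathfrak{E}^\iota$ of $G$ from Theorem \ref{mainthm}. The main obstacle will be upgrading the set-theoretic factorization $\tilde\pi \mapsto (\pi_\chi,\chi)$ to a morphism of rigid analytic spaces whose image is a locally closed analytic subvariety of $\mathfrak{E}$; this requires tracking the analytic dependence on $\chi$ of the $\mu$-twisting operator in formula (\ref{twistrestrict}), and verifying that the $Gl_1$-Hecke data recorded in $\tilde{\mathfrak{E}}^\mu$ is determined entirely by the $Gl_1$-weight $\chi$, so that forgetting the $Gl_1$-factor yields a well-defined embedding rather than merely a projection.
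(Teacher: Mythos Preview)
Your proposal is correct and follows exactly the paper's approach: the paper's entire proof is the single sentence ``Now by Theorem \ref{mainthm}, we have'', relying on the setup of \S7.2 (the group $\tilde G = Gl_n\times Gl_1$ with involution $\mu$, the identification of $\tilde{\mathfrak{X}}^\mu$ with $\mathfrak{X}^e$, the distribution identities, and the equality $m_{EP}^\mu(\pi,\tilde\lambda)=m_{EP}^\iota(\pi_\chi,\lambda)$). Your write-up is in fact more explicit than the paper's own argument, and the technical concern you flag at the end---making the correspondence $\tilde\pi\mapsto(\pi_\chi,\chi)$ rigid-analytic rather than merely set-theoretic---is a point the paper leaves implicit.
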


\vspace{1pc}
\begin{remark}\ {}
\begin{itemize}
\item[$(a)$] As Remark 7.1.3, $\mathfrak{E}^e(\overline{\mathbb{Q}}_p)$ contains all essentially self-dual finite slope cuspidal Hecke eigensysems of $G$.
\item[$(b)$] Applying our theory to $I^{\dagger, \chi}_{G,0}(\iota\times f, \lambda)$, we can obtain $\mathfrak{E}^e_\chi$ directly.
\end{itemize}
\end{remark}

\subsection{Ash-Pollack-Stevens Conjecture}
Let $\theta_0$ be a classical finite slope cuspidal Hecke eigensystem of $Gl_n$ of regular weight $\lambda_0$. $\theta_0$ is called $p$-adic arithmetic rigid, if it is not contained in any arithmetic irreducible component of $\mathfrak{E}_{K^p}$. Conjecture \ref{APS} claims that, if $\theta$ is not essentially self-dual then it is $p$-adic arithmetic rigid. Theorem \ref{mainthm2} gives its inverse:

\begin{cor}
Assume $\theta_0$ is essentially self-dual, then it is not $p$-adic arithmetic rigid, lying in an arithmetic component of $\mathfrak{E}^e$.
\end{cor}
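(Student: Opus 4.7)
The plan is to exhibit $y_0=(\lambda_0,\theta_0)$ as a point of $\mathfrak{E}^e$ and then read off the conclusion from the equidimensional/arithmetic structure of $\mathfrak{E}^e$ supplied by Theorem \ref{mainthm2}. Since $\theta_0$ is essentially self-dual, fix an algebraic character $\chi_0$ of $\mathbb{G}_m$ with $\chi_0\circ\det\otimes\theta_0^\iota\cong\theta_0$; via the reduction of \S7.2 to the pair $(\tilde G,\mu)$ with $\tilde G=Gl_n\times Gl_1$, Theorem \ref{mainthm2}(a) reduces membership $y_0\in\mathfrak{E}^e(\overline{\mathbb{Q}}_p)$ to the single non-vanishing
\begin{equation*}
m^{\dagger,\iota}_{G,0}\bigl((\theta_0\times\chi_0)_{\chi_0},(\lambda_0\times\chi_0)_{\chi_0}\bigr)\neq 0.
\end{equation*}

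For this non-vanishing, I would proceed in three linked steps. First, Corollary \ref{arithmetic} compares the overconvergent multiplicity with the classical one, $m^{\dagger,\iota}_{G,0}=m^{cl,\iota}_{G,0}$, once the non-criticality of $\theta_0$ with respect to $\lambda_0^{alg}$ is arranged. Next, the identity (\ref{ep}) identifies the classical multiplicity with the $\iota$-twisted Euler–Poincar\'e characteristic $m^\iota_{EP}((\pi_0)_{\chi_0},(\lambda_0)_{\chi_0})$ of any cuspidal constituent $\pi_0$ with Hecke eigensystem $\theta_0$. Finally, Corollary \ref{epcor} — resting on the Vogan–Zuckerman description of cohomological packets and the Barbasch–Speh computation of the Lefschetz number on $A_{\mathfrak{b}}(\lambda_0)$, combined with the $Gl_n$ multiplicity one theorem — gives the explicit value $\pm 2^{\lceil n/2\rceil}$, which is visibly non-zero. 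Hence $y_0\in\mathfrak{E}^e(\overline{\mathbb{Q}}_p)$.

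With $y_0$ placed inside $\mathfrak{E}^e$, Theorem \ref{mainthm2}(b)–(c) finishes the argument: $\mathfrak{E}^e$ is equidimensional of dimension $[n/2]+1$, and each irreducible component, inherited as an arithmetic component of some slice $\mathfrak{E}^e_{\chi}$ by Theorem \ref{mainthm}(c) applied to $(\tilde G,\mu)$, projects surjectively onto a Zariski dense subset of $\mathfrak{X}^e$ and therefore contains a Zariski dense set of arithmetic points. Any irreducible component of $\mathfrak{E}^e$ through $y_0$ is thus arithmetic, which is exactly the statement that $\theta_0$ is not $p$-adic arithmetically rigid.

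The genuine obstacle is that $\theta_0$ being classical of regular weight does not \emph{a priori} supply either the non-criticality required by Corollary \ref{arithmetic} or the $\Upsilon$-regularity of $\lambda_0$ demanded by Corollary \ref{epcor}. The natural workaround is to deform $\theta_0$ inside the full eigenvariety $\mathfrak{E}_{K^p}$ of Theorem \ref{big}: at nearby arithmetic points of sufficiently regular weight in $\Upsilon$ and sufficiently low slope, non-criticality holds automatically by the classicity comparison (Proposition \ref{classicity}); at such points the chain of identities above gives non-vanishing, and the analyticity of the twisted distributions in the weight (Proposition \ref{analytic}) then transports the non-vanishing back to the original point $(\lambda_0,\theta_0)$, completing the proof.
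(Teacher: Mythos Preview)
Your approach is essentially the paper's: place $(\lambda_0,\theta_0)$ inside $\mathfrak{E}^e$ and then invoke the equidimensional/arithmetic structure of $\mathfrak{E}^e$. The paper's own proof is extremely brief: it cites Theorem~\ref{mainthm2} for membership and then argues that the set $\Sigma$ of arithmetic non-critical points is Zariski dense in $\mathfrak{E}^e$, with Corollary~\ref{arithmetic} showing each point of $\Sigma$ is classical cuspidal. You do the same, only making the membership step explicit via the chain Corollary~\ref{arithmetic} $\to$ (\ref{ep}) $\to$ Corollary~\ref{epcor}.

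The obstacle you identify (non-criticality of $\theta_0$ and $\Upsilon$-regularity of $\lambda_0$) is real and the paper simply does not address it; its invocation of Theorem~\ref{mainthm2} for membership implicitly relies on Remark~7.2.3(a)/7.1.3, which already builds in the non-criticality hypothesis. Your proposed workaround, however, does not close the gap: Proposition~\ref{analytic} gives analyticity of the \emph{distribution} $\lambda\mapsto I^\dagger_{G,0}(\iota\times f,\lambda)$, not of the multiplicity $m^{\dagger,\iota}_{G,0}(\theta,\lambda)$ of a single eigensystem, and deforming $\theta_0$ inside the full eigenvariety to nearby non-critical points produces \emph{different} eigensystems whose non-zero multiplicities say nothing directly about $m^{\dagger,\iota}_{G,0}(\theta_0,\lambda_0)$. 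In practice one should read the corollary under the standing non-criticality assumption (as in Remarks~7.1.3 and 7.2.3), which is also the regime in which the Ash--Pollack--Stevens phenomenon is formulated.
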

\begin{proof}
By Theorem \ref{mainthm2}, $(\lambda_0,\theta_0)\in\mathfrak{E}^e(\overline{\mathbb{Q}}_p)$. Consider the subset $\Sigma$ of $\mathfrak{E}^e(\bar{\mathbb{Q}}_p)$ consisting of $(\lambda,\theta)$ such that $\lambda$ is arithmetic and $\theta$ is non-critical with respect to $\lambda$. $\Sigma$ is Zariski dense in $\mathfrak{E}^e(\bar{\mathbb{Q}}_p)$ since its projection to $\mathfrak{X}^e$ contains an arithmetic point $\lambda$. By Corollary \ref{arithmetic}, those points in $\Sigma$ are classical and corresponding to cuspidal Hecke eigensystems.
\end{proof}

\vspace{1pc}
The next theorem shows that the smooth hyperthesis on arithmetic points of an eigenvarity may give some hint on the Ash-Pollack-Stevens conjecture.

\begin{thm}
Assume that every arithmetic point in the eigenvariety is smooth. Assume $\theta_0$ is not $p$-adic arithmetic rigid, and its arithmetic component contains one arithmetic, essentially self-dual Hecke eigensystem, then this arithmetic component contains a Zariski dense subset of essentially self-dual Hecke eigensystems.
\end{thm}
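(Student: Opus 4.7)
\emph{Proof plan.} The plan is to transport the essentially self-dual information from the eigenvariety $\mathfrak{E}^e$ constructed in Theorem~7.2.1 back to the full eigenvariety $\mathfrak{E}_{K^p}$, by using the smoothness hypothesis on arithmetic points to control the irreducible component structure at the given arithmetic essentially self-dual point.

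First, I denote by $y_1=(\lambda_1,\theta_1)\in\mathfrak{C}(\overline{\mathbb{Q}}_p)$ the arithmetic essentially self-dual Hecke eigensystem provided by the hypothesis, where $\mathfrak{C}\subseteq\mathfrak{E}_{K^p}$ is the arithmetic component through $\theta_0$. By Theorem~7.2.1(a) (and Remark~7.2.2(a)), the essentially self-dual condition places $y_1$ in $\mathfrak{E}^e\subseteq\mathfrak{E}_{K^p}$. Next, let $\mathfrak{C}^e$ be an irreducible component of $\mathfrak{E}^e$ containing $y_1$. By Theorem~7.2.1(b)(c), $\mathfrak{C}^e$ is equidimensional of dimension $[\tfrac{n}{2}]+1$ and projects to a Zariski dense subset of $\mathfrak{X}^e$, so (by an argument parallel to that in Corollary~7.3.1) the arithmetic essentially self-dual Hecke eigensystems form a Zariski dense subset of $\mathfrak{C}^e$.

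Now I invoke the smoothness hypothesis: since $y_1$ is an arithmetic point of $\mathfrak{E}_{K^p}$, it is smooth, so the rigid analytic local ring $\mathcal{O}_{\mathfrak{E}_{K^p},y_1}$ is a regular integral domain, and in particular $\mathfrak{C}$ is the \emph{unique} irreducible component of $\mathfrak{E}_{K^p}$ passing through $y_1$. Since $\mathfrak{E}^e\hookrightarrow\mathfrak{E}_{K^p}$ (Theorem~7.2.1(a) and the identification made in~\S6.5 and~\S7.2), the irreducible analytic subvariety $\mathfrak{C}^e$ of $\mathfrak{E}_{K^p}$ contains $y_1$, and by uniqueness of the component through $y_1$, we conclude $\mathfrak{C}^e\subseteq\mathfrak{C}$. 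Therefore $\mathfrak{C}$ contains the positive-dimensional analytic subvariety $\mathfrak{C}^e$, on which arithmetic essentially self-dual Hecke eigensystems are Zariski dense.

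The main obstacle is upgrading this last density statement from ``Zariski dense in $\mathfrak{C}^e$'' to ``Zariski dense in $\mathfrak{C}$''; i.e.\ one must know $\mathfrak{C}=\mathfrak{C}^e$. At the smooth point $y_1$ this reduces to the tangent-space comparison $\dim_{y_1}\mathfrak{E}_{K^p}=\dim\mathfrak{C}^e=[\tfrac{n}{2}]+1$, which should follow by arguing that every infinitesimal Hecke-eigensystem deformation of the essentially self-dual $y_1$ in $\mathfrak{E}_{K^p}$ automatically preserves essential self-duality: concretely, one uses that smoothness forces $\mathfrak{E}_{K^p}$ to coincide locally at $y_1$ with its analytic subvariety cut out by the essentially self-dual condition (which by Theorem~7.2.1 is exactly $\mathfrak{E}^e$), since this subvariety is itself smooth of the expected dimension and contains $y_1$. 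Making this last tangent-space calculation precise—so that the inclusion $\mathfrak{C}^e\subseteq\mathfrak{C}$ is actually an equality—is the delicate step and the place where the smoothness hypothesis genuinely enters.
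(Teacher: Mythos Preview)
Your overall architecture is correct and matches the paper: locate the given arithmetic essentially self-dual point $y_1$ on $\mathfrak{E}^e$, use smoothness of $\mathfrak{E}_{K^p}$ at $y_1$ to force uniqueness of the irreducible component through $y_1$, deduce $\mathfrak{C}^e\subseteq\mathfrak{C}$, and then conclude by a dimension comparison. You also correctly identify that the only nontrivial step is establishing $\dim\mathfrak{C}=[\tfrac{n}{2}]+1$.

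The gap is precisely in how you propose to obtain this dimension. Your suggested tangent-space argument---that smoothness at $y_1$ somehow forces $\mathfrak{E}_{K^p}$ to coincide locally with $\mathfrak{E}^e$ because the latter is ``smooth of the expected dimension''---does not work. A smooth point of an ambient rigid space can perfectly well sit on a smooth proper subvariety of strictly smaller dimension; nothing in the hypotheses prevents $\dim_{y_1}\mathfrak{E}_{K^p}>[\tfrac{n}{2}]+1$. In particular, there is no a priori reason that an infinitesimal deformation of an essentially self-dual Hecke eigensystem must remain essentially self-dual, so the claimed tangent-space equality is circular.

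The paper closes this gap by an external input you did not invoke: the result of Ash--Pollack--Stevens \cite{APG}, which gives directly that the arithmetic component $\mathfrak{C}$ containing $(\lambda_0,\theta_0)$ has dimension $[\tfrac{n}{2}]+1$ over $\mathfrak{X}^e$. With that in hand, your inclusion $\mathfrak{C}^e\subseteq\mathfrak{C}$ between irreducible varieties of the same dimension finishes the proof. So the fix is simply to replace your tangent-space paragraph with the citation to \cite{APG} for $\dim\mathfrak{C}=[\tfrac{n}{2}]+1$.
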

\begin{proof}
By \cite{APG}, $(\lambda_0,\theta_0)$ is contained in an arithmetic component of dimension $[\frac{n}{2}]+1$ over $\mathfrak{X}^e$. By our assumption, this component intersects with $\mathfrak{E}^e$ at some smooth point and $\mathfrak{E}^e$ is also of dimension $[\frac{n}{2}]+1$. So the arithmetic component contains an irreducible component of $\mathfrak{E}^e$.
\end{proof}

\begin{remark}
Assume $G=Gl_3$. The theorem 7.3.2 assumes that there is an essentially self-dual point in the arithmetic component. This is not surprising if the eigenvarieties have good geometry. By \cite{Xiang}, we know the full eigenvariety $\mathfrak{E}$ has dimension $\leqslant3$. Let $\mathfrak{A}$ an arithmetic component, it also projects onto $\mathfrak{X}^e$ and is of dimension at least $1$. Since we know that $\mathfrak{E}^e$ has dimension $2$, it should meet $\mathfrak{A}$.
\end{remark}


\end{document}